\documentclass[12pt]{amsart}
\usepackage{amsmath,amsfonts,amssymb,amscd,mathtools,amsxtra,amsthm}
\usepackage[abbrev,alphabetic]{amsrefs}
\usepackage{mathrsfs}
\usepackage{hyperref}
\hypersetup{hidelinks}
\renewcommand{\eprint}[1]{\href{https://arxiv.org/abs/#1}{\nolinkurl{arXiv:#1}}}
\usepackage[margin=1.25in]{geometry}
\usepackage{needspace}
\usepackage{aliascnt}
\usepackage{cleveref}
\usepackage{tikz-cd}
\makeatletter
\def\@tocline#1#2#3#4#5#6#7{\relax
  \ifnum #1>\c@tocdepth 
  \else
    \par \addpenalty\@secpenalty\addvspace{#2}%
    \begingroup \hyphenpenalty\@M
    \@ifempty{#4}{%
      \@tempdima\csname r@tocindent\number#1\endcsname\relax
    }{%
      \@tempdima#4\relax
    }%
    \parindent\z@ \leftskip#3\relax \advance\leftskip\@tempdima\relax
    \rightskip\@pnumwidth plus4em \parfillskip-\@pnumwidth
    #5\small\setlength{\baselineskip}{10.5pt}\leavevmode\hskip-\@tempdima
      \ifcase #1
       \or\or \hskip 1em \or \hskip 2em \else \hskip 3em \fi%
      #6\nobreak\relax
    \hfill\hbox to\@pnumwidth{\@tocpagenum{#7}}\par
    \nobreak
    \endgroup
  \fi}
\makeatother

\newcommand{\Q}{\mathbb Q}
\newcommand{\R}{\mathbb R}
\newcommand{\C}{\mathbb C}
\newcommand{\Supp}{\operatorname{Supp}}

\newcommand{\coeff}{\operatorname{coeff}}
\newcommand{\ord}{\operatorname{ord}}
\newcommand{\Nm}{\operatorname{Nm}}
\newcommand{\lct}{\operatorname{lct}}
\newcommand{\vol}{\operatorname{vol}}
\newcommand{\Nklt}{\operatorname{Nklt}}
\newcommand{\Nlc}{\operatorname{Nlc}}
\newcommand{\Spec}{\operatorname{Spec}}
\newcommand{\Proj}{\operatorname{Proj}}

\theoremstyle{plain}
\newtheorem{theorem}{Theorem}[section]

\newaliascnt{proposition}{theorem}
\newtheorem{proposition}[proposition]{Proposition}
\aliascntresetthe{proposition}

\newaliascnt{lemma}{theorem}
\newtheorem{lemma}[lemma]{Lemma}
\aliascntresetthe{lemma}

\newaliascnt{corollary}{theorem}
\newtheorem{corollary}[corollary]{Corollary}
\aliascntresetthe{corollary}

\theoremstyle{definition}
\newaliascnt{notation}{theorem}
\newtheorem{notation}[notation]{Notation}
\aliascntresetthe{notation}

\newaliascnt{definition}{theorem}
\newtheorem{definition}[definition]{Definition}
\aliascntresetthe{definition}

\theoremstyle{remark}
\newaliascnt{remark}{theorem}
\newtheorem{remark}[remark]{Remark}
\aliascntresetthe{remark}
\newtheorem*{ackn}{Acknowledgements}

\theoremstyle{plain}
\newaliascnt{conjecture}{theorem}
\newtheorem{conjecture}[conjecture]{Conjecture}
\aliascntresetthe{conjecture}

\crefname{conjecture}{Conjecture}{Conjectures}
\Crefname{conjecture}{Conjecture}{Conjectures}

\numberwithin{equation}{section}
\crefname{theorem}{Theorem}{Theorems}
\crefname{proposition}{Proposition}{Propositions}
\crefname{lemma}{Lemma}{Lemmas}
\crefname{corollary}{Corollary}{Corollaries}
\crefname{definition}{Definition}{Definitions}
\crefname{notation}{Notation}{Notation}
\crefname{remark}{Remark}{Remarks}



\title[Construction of log CY boundaries from int-amplified endomorphisms]{Construction of log Calabi--Yau boundaries from int-amplified endomorphisms}
\author{Shou Yoshikawa}
\address{Institute of Science Tokyo, Tokyo 152-8551, Japan}
\email{yoshikawa.s.9fe9@m.isct.ac.jp}
\begin{document}
\begin{abstract}
In this paper, we prove that $\Q$-Gorenstein projective complex varieties admitting int-amplified endomorphisms are of Calabi--Yau type. 
\end{abstract}

\maketitle
\tableofcontents

\section{Introduction}

Let $X$ be a normal projective variety over $\C$.
A surjective endomorphism $h\colon X\to X$ is \emph{polarized} if $h^*H\sim qH$ for an ample Cartier divisor $H$ and an integer $q>1$.
Such an endomorphism is called $q$-polarized.
We say that $X$ is of \emph{Calabi--Yau type} if there is an effective $\Q$-Weil divisor $\Theta$ such that $(X,\Theta)$ is log canonical (lc) and $K_X+\Theta\sim_\Q0$.

Broustet and Gongyo proposed the following conjecture.
\begin{conjecture}[\cite{BroustetGongyo}*{Conjecture 1.2}]\label{conj:BG}
Every normal projective complex variety admitting a polarized endomorphism is of Calabi--Yau type.   
\end{conjecture}

The conjecture has been established for normal projective surfaces and Mori dream spaces by  \cite{BroustetGongyo}*{Theorem 1.3}, for smooth projective threefolds by  \cite{MengThreefoldCY}*{Theorem 1.2}, for Gorenstein terminal projective threefolds by \cite{CZ}*{Theorem 1.2}, and for smooth rationally connected projective varieties by \cite{YoshikawaFanoFibrations}*{Corollary 1.4}.
A related question asks whether log Calabi--Yau boundaries can be obtained directly from the ramification divisors of iterates.
\cite{LuoMeng26}*{Theorem 1.3} proved that, for every $q$-polarized endomorphism $h\colon\mathbf P^2\to\mathbf P^2$, there exists an integer $s\geq1$ such that $(\mathbf P^2,R_{h^s}/(q^s-1))$ is a log Calabi--Yau pair, where $R_{h^s}$ denotes the ramification divisor of $h^s$.
The proof has a methodological connection with \cite{LuoMeng26}: both control a fixed totally invariant locus, construct divisorial eigenvaluations, extract them equivariantly, and use adjunction and inversion of adjunction.

We work more generally with int-amplified endomorphisms.
A surjective endomorphism $h\colon X\to X$ is
\emph{int-amplified} if there exists an ample Cartier divisor
$H$ such that $h^*H-H$ is ample.
Every polarized endomorphism is int-amplified.

Our main result is the following.

\begin{theorem}\label{thm:main}
Let $X$ be a normal projective variety over $\C$, and let
$h\colon X\to X$ be an int-amplified endomorphism.
Let $B$ be an effective $\Q$-Weil divisor such that
$K_X+B$ is $\Q$-Cartier and
\[
 R_{h,B}:=R_h+B-h^*B\geq0.
\]
Then there exists an effective $\Q$-Weil divisor
$\Theta\geq B$ such that $(X,\Theta)$ is lc and
$K_X+\Theta\sim_\Q0$.
\end{theorem}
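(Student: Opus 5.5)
We plan to build $\Theta$ as a limit of pulled-back boundaries along the iterates $h^n$, and to check the lc property by studying eigenvaluations. The basic relation is the log ramification formula
\[
K_X+B=h^*(K_X+B)+R_{h,B},
\]
which follows from $K_X=h^*K_X+R_h$ and the definition of $R_{h,B}$. Iterating it gives $K_X+B=(h^n)^*(K_X+B)+R_{h^n,B}$ with $R_{h^n,B}=\sum_{i<n}(h^i)^*R_{h,B}\ge0$. We already know two facts. First, by the standard theory of int-amplified endomorphisms (Meng--Zhang), $-K_X$ is pseudo-effective. Second, since $R_{h,B}\ge0$, the divisor $-(K_X+B)$ is effective up to numerical equivalence in the following sense: $-(K_X+B)\equiv \lim_n (h^n)^*(\ldots)$. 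Write $h^*$ on $N^1(X)$ with all eigenvalues of modulus $>1$. Then $-(K_X+B)\equiv\sum_{i\ge0}(h^{-i})_*$-type series of $R_{h,B}$, more precisely $-(K_X+B)=(\,h^*-1)^{-1}R_{h,B}$ numerically. From this we extract an effective $\Q$-divisor $\Delta\sim_\Q-(K_X+B)$ built from totally invariant components.

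The key steps, in order, are as follows.

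(1) Reduce to the case where $B$ is $h^{-1}$-stable up to replacing $h$ by an iterate. The condition $R_{h,B}\ge0$ forces every component of $B$ with coefficient $1$ to be totally invariant, and we may enlarge $B$ by the totally invariant prime divisors (there are finitely many) with coefficient $1$. The condition $R_{h,B}\ge0$ is preserved, because $R_h\ge(q_D-1)D$ along a totally invariant $D$.

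(2) Show that $(X,B)$ is lc. Suppose some divisorial valuation $E$ over $X$ has $a(E,X,B)<-1$. Since the set of non-lc centers is preserved under $h^{-1}$ by the ramification formula, it is totally invariant. Replacing $h$ by an iterate, we find an $h$-fixed eigenvaluation $v$ with $h_*v=\lambda v$ computing the minimal log discrepancy. The formula $A_{X,B}(h^*v)=\lambda A_{X,B}(v)+v(R_{h,B})$ together with int-amplification ($\lambda>1$) then forces $A_{X,B}(v)\ge0$, which is a contradiction.

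(3) Construct $\Theta=B+\Delta$ with $\Delta$ a limit $\frac1{m}\sum$ of suitable $(h^n)^*$-averages of general members, so that $K_X+\Theta\sim_\Q0$. Then prove $(X,\Theta)$ is lc by the same eigenvaluation argument, applied now to $(X,\Theta)$ with $h^*\Theta$ compared to $\Theta$.

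The main obstacle is (3). The abundance-type statement that $-(K_X+B)$ is $\Q$-linearly equivalent to an effective divisor requires care: we only know numerical effectivity, and torsion issues appear. To handle this, we would first use the fact that $h^*-\mathrm{id}$ is invertible on $\mathrm{Pic}^0$, which kills the numerically trivial part. For lc-ness, we would extract the eigenvaluation equivariantly via a dlt modification and apply adjunction to its center, inducting on dimension. The induction uses the restriction of the endomorphism to the totally invariant center, which is again int-amplified, together with inversion of adjunction.
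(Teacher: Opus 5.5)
There is a genuine gap, and it sits where the content of the theorem lies. Step (3) never produces an effective $\Q$-divisor $\Delta\equiv-(K_X+B)$ with $(X,B+\Delta)$ lc.

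Your numerical identity is right once written with pushforwards. Since $P_h=(\deg h)^{-1}h_*=(h^*)^{-1}$ on $N^1(X)_\R$, one has $-(K_X+B)\equiv\sum_{j\geq1}P_h^jR_{h,B}$. But this is an infinite series, and its limit is only a closed positive current. No partial sum lies in the class unless $K_X+B\equiv0$, since $S_{h,B,N}\equiv L-P_h^NL$ with $L=-(K_X+B)$. Adding the small correction needed to reach the class can destroy log canonicity.

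Neither of your concrete suggestions closes this.
\begin{itemize}
\item A $\Delta$ ``built from totally invariant components'' does not exist in general. A general endomorphism of $\mathbf P^2$ of degree $\geq2$ has no totally invariant curves, so with $B=0$ you would get $\Delta=0$.
\item The eigenvaluation argument ``with $h^*\Theta$ compared to $\Theta$'' needs $R_{h,\Theta}\geq0$. But $R_{h,\Theta}\equiv(1-h^*)(K_X+\Theta)\equiv0$, so this means $R_{h,\Theta}=0$, i.e.\ $h^*\Theta=\Theta+R_h$. That forces $\Supp\Theta$ to be forward invariant and to contain every critical value, hence the whole postcritical set. A very general endomorphism of $\mathbf P^2$ is not postcritically finite, so no such $\Theta$ exists.
\end{itemize}
Torsion is not the real obstacle either: once $(X,\Theta)$ is lc with $K_X+\Theta\equiv0$, Gongyo's theorem gives $K_X+\Theta\sim_\Q0$.

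What is missing is a concrete candidate with a confinement statement, and the existence of eigenvaluations. The paper takes
\[
\Theta_N=B+S_{h,B,N}+\sum_{j\leq m}c_{j,N}P_h^jR_{h,B},
\]
with $c_{j,N}\to0$ chosen so that $K_X+\Theta_N\equiv0$. It shows $(X,B+S_{h,B,N})$ is lc. Via the analytic current $T_{h,B}=\sum_j[P_h^jR_{h,B}]$, it then shows $\Nlc(X,\Theta_N)\subseteq\Nklt(X,B;T_{h,B})$ for $N\gg0$, a totally invariant algebraic set. For each component $C$ of this set not contained in $\Nklt(X,B)$, one must then \emph{construct} a prime divisor $E$ over $X$ with center $C$ and $h_*\ord_E=e\ord_E$, $e>1$.

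Your step (2) simply assumes such a valuation exists. Moreover, ``an eigenvaluation computing the minimal log discrepancy'' makes no sense for a non-lc pair, where the infimum is $-\infty$; log canonicity of $(X,B)$ is a known result and should just be cited.

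This existence is the hardest part of the paper:
\begin{itemize}
\item an invariant measure for the normalized pushforward on valuations at $\eta_C$ yields an invariant filtration with $\lct=1$;
\item Xu's quasi-monomiality theorem and Blum--Liu--Qi convexity give a unique, hence fixed, quasi-monomial volume minimizer;
\item a rational perturbation of its weights makes it divisorial.
\end{itemize}
Only then do the tools you list apply: equivariant extraction making $\pi^{-1}(C)$ a coefficient-one divisor, adjunction to lc centers, and induction on dimension. They also need interpolation conditions on the $c_{j,N}$, so that a single correction family is crepant on the birational model and restricts correctly to every lc center.
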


As a consequence, we obtain the following result, which
in particular gives an affirmative answer to
\cref{conj:BG} when $X$ is $\Q$-Gorenstein or of klt type.

\begin{corollary}\label{cor:all-dimensional-qg-cy}
Let $X$ be a normal projective variety over $\C$
admitting an int-amplified endomorphism.
If $X$ is $\Q$-Gorenstein or of klt type, then
$X$ is of Calabi--Yau type.
Here, we say that $X$ is of klt type if there exists an effective $\Q$-Weil divisor $\Delta$ such that $(X,\Delta)$ is klt.
\end{corollary}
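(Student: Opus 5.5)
The plan is to deduce the corollary from \cref{thm:main} by choosing the auxiliary boundary $B$ appropriately. In each case I will produce an effective $\Q$-Weil divisor $B$ such that $K_X+B$ is $\Q$-Cartier and $R_{h,B}=R_h+B-h^*B\geq0$. \cref{thm:main} then gives $\Theta\geq B\geq0$ with $(X,\Theta)$ lc and $K_X+\Theta\sim_\Q0$, which is exactly the Calabi--Yau type condition.

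\emph{The $\Q$-Gorenstein case.} Take $B=0$. Then $K_X+B=K_X$ is $\Q$-Cartier by assumption. Since $h$ is a finite surjective morphism between normal varieties (int-amplified endomorphisms are finite), the ramification formula $K_X=h^*K_X+R_h$ holds with $R_h\geq0$. Hence $R_{h,0}=R_h\geq0$, and \cref{thm:main} applies directly.

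\emph{The klt type case.} Here $K_X$ need not be $\Q$-Cartier, so the task is to find a boundary $B$ compatible with $h$. I would first try to pass to a small $\Q$-factorial modification $\pi\colon Y\to X$, which exists by BCHM because $X$ is of klt type. The aim is to lift some iterate $h^m$ to an int-amplified endomorphism $g$ of $Y$; replacing $h$ by an iterate is harmless for the conclusion. Such a lift should exist because there are only finitely many small $\Q$-factorial modifications, corresponding to chambers of the movable cone of $X$. The map $h^*$ permutes these chambers, so some iterate fixes one of them, and that iterate lifts to $Y$ as a finite surjective endomorphism. On $Y$ the divisor $K_Y$ is $\Q$-Cartier, and the lift $g$ is still int-amplified, since $\pi^*H$ is big and nef and the standard criteria of Meng--Zhang for int-amplifiedness can be checked. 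So the $\Q$-Gorenstein case on $Y$ gives a lc $\Theta_Y$ with $K_Y+\Theta_Y\sim_\Q0$. Pushing forward, $\Theta:=\pi_*\Theta_Y$ satisfies $K_X+\Theta=\pi_*(K_Y+\Theta_Y)\sim_\Q0$. Because $\pi$ is small and $K_Y+\Theta_Y=\pi^*(K_X+\Theta)$, the pair $(X,\Theta)$ is crepant to $(Y,\Theta_Y)$ and is therefore lc.

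The main obstacle is this lifting step, together with the int-amplifiedness of the lift. The difficulty is that $\pi^*H$ is only big and nef, not ample, so int-amplifiedness on $Y$ is not automatic. If the lifting approach fails, the fallback is to build $B$ directly on $X$. The group $V=\mathrm{Cl}(X)_\Q/\mathrm{Pic}(X)_\Q$ is finite-dimensional because $X$ has rational singularities, and $h^*$ acts on it. Pulling back the ramification formula gives $h^*[K_X]=[K_X]-[R_h]$ in $V$. Starting from $D_0=\Delta$, I would iterate $D_{n+1}=h^*D_n-R_h$, which keeps $K_X+D_n$ $\Q$-Cartier, and then average and correct by $\Q$-Cartier divisors so as to force the effectivity of both $B$ and $R_h+B-h^*B$.
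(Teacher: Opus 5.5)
Your argument is essentially the paper's: $B=0$ in the $\Q$-Gorenstein case, and in the klt-type case a small $\Q$-factorial modification $\pi\colon Y\to X$ to which an iterate of $h$ lifts, followed by \cref{thm:main} on $Y$ with $B=0$ and crepant pushforward to $X$. The paper simply cites \cite{CZ}*{Theorem A1.1} for the lift (your finiteness-of-$\Q$-factorializations sketch is the idea behind it), and int-amplifiedness of the lift along a birational equivariant morphism is standard (the paper uses \cite{MengZhangIntAmplified}*{Proposition 2.6}), so that step is not a real obstacle and your fallback construction on $X$ is unnecessary.
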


We outline the proof of \cref{thm:main} in the case where
$h$ is $q$-polarized and
\[
 h^*(K_X+B)\equiv q(K_X+B).
\]
Put $P_h:=(\deg h)^{-1}h_*$.
By \cref{cor:polarized-one-term-correction}, after replacing
$h$ by an iterate and $q$ by the corresponding power,
the divisors
\[
 \Theta_N
 :=B+\sum_{j=1}^N P_h^jR_{h,B}
      +\frac{q^{1-N}}{q-1}P_hR_{h,B}
\]
satisfy
\[
 (X,\Theta_N)\text{ is lc},
 \qquad K_X+\Theta_N\sim_\Q0
\]
for all sufficiently large $N$.

In the proof, we define an analytic counterpart of the boundary construction.
\cref{thm:log-current-construction} constructs a closed positive $(1,1)$-current
\[
 T_{h,B}:=\sum_{j\geq1}[P_h^jR_{h,B}].
\]
Here $[D]$ denotes the current of integration along a $\Q$-Weil divisor $D$.
Then $(X,B;T_{h,B})$ is analytically lc, and $T_{h,B}$ represents the numerical class of $-(K_X+B)$.
Thus, we can regard it as an analytic log Calabi--Yau boundary, which is defined canonically by $(h,B)$.
We note that $[\Theta_N-B]$ converges weakly to $T_{h,B}$.
The non-klt locus of this current is algebraic and totally invariant; see
\cref{thm:log-current-construction,thm:invariant-nklt}.

Suppose first that
\[
 \Nklt(X,B;T_{h,B})=\Nklt(X,B).
\]
By \cref{lem:uniform-nlc-confinement}, for all sufficiently large $N$,
\[
 \Nlc(X,\Theta_N)
 \subseteq \Nklt(X,B;T_{h,B})
 =\Nklt(X,B).
\]
Since $(X,B)$ is lc, the right-hand side is the union of its lc centers.
After a further iterate, these centers are totally invariant, thus $h$ induces polarized endomorphisms on each lc center.
Then the inversion of adjunction and induction on dimension give log canonicity of $(X,\Theta_N)$.
Since $K_X+\Theta_N\equiv0$, the pair $(X,\Theta_N)$ is log Calabi--Yau.

Since being log Calabi--Yau descends, it suffices to show the existence of an equivariant projective birational model
\[
\pi\colon(W,B_W,g)\longrightarrow(X,B,h)
\]
with
\[
 \Nklt(W,B_W;T_{g,B_W})=\Nklt(W,B_W).
\]
To construct this model, consider an irreducible component $C$ of $\Nklt(X,B;T_{h,B})$ that is not contained in $\Nklt(X,B)$.
After replacing $h$ by an iterate, \cref{thm:generic-eigen-log-pair} constructs a prime divisor $E$ over $X$ with center $C$ satisfying
\[
 h_*\ord_E=e\ord_E
\]
for an integer $e>1$.
By \cref{prop:absorb-new-component}, the divisor can be extracted on an equivariant birational model with
\[
 \pi^{-1}(C)=\Supp E,
 \qquad \coeff_E B_W=1.
\]
Thus the entire inverse image of $C$ is contained in $\Nklt(W,B_W)$.
Repeating this construction gives the required equality of non-klt loci, as proved in \cref{thm:all-dimensional-lc-model}.

The construction of $E$ is the main local step.
After making $C$ totally invariant, put $R:=\mathcal O_{X,\eta_C}$, where $\eta_C$ is the generic point of $C$, and let $\mathfrak m$ be its maximal ideal.
By \cref{prop:invariant-average}, we construct a linearly bounded $\mathfrak m$-filtration $\mathfrak c_\bullet$ with $\lct_R(\mathfrak c_\bullet)=1$.
Its threshold-computing valuations, normalized by
\[
 A_{X,B}(v)=v(\mathfrak c_\bullet)=1,
\]
form a set $\mathscr V$ satisfying
\[
 h_*u\in\mathscr V
 \quad\Longrightarrow\quad
 qu\in\mathscr V.
\]
Using the quasi-monomiality theorem of \cite{Xu}*{Theorem~1.1} and the convexity results of \cite{BLQ}, we prove that $\mathscr V$ has a unique volume minimizer $v$, which is quasi-monomial; see \cref{prop:constrained-volume-minimum}.
The volume comparison in \cref{lem:nonflat-volume-equality} shows that every $u$ with $h_*u=v$ gives another minimizer $qu$.
Uniqueness therefore gives $qu=v$, and hence
\[
 h_*v=qv.
\]
Finally, a rational perturbation of the monomial weights yields a divisorial eigenvaluation with the same center; see the proof of \cref{thm:generic-eigen-log-pair}.

We also study normalized ramification divisors.
Gongyo conjectured that, for a $q$-polarized endomorphism $f\colon X\to X$ of a smooth projective variety, the pair $(X,R_f/(q-1))$ is log canonical after replacing $f$ by an iterate; see \cite{MengThreefoldCY}*{Conjecture~1.3}.
Meng established the surface case apart from certain endomorphisms of $\mathbf P^2$ \cite{MengThreefoldCY}*{Section~4}, and Luo--Meng settled the remaining case \cite{LuoMeng26}*{Theorem~1.3}.
We answer this conjecture affirmatively as follows.

\begin{theorem}
\label{thm:intro-normalized-ramification}
Let $X$ be a normal $\Q$-Gorenstein projective complex variety admitting a $q$-polarized endomorphism $f\colon X\to X$.
Then there exists an integer $r_0\geq1$ such that
\[
 \left(X,\frac{R_{f^{mr_0}}}{q^{mr_0}-1}\right)
\]
is log canonical for every integer $m\geq1$.
\end{theorem}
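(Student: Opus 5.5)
We apply \cref{thm:main} and the correction-term description of $\Theta_N$ from the outline with $B=0$. Since $X$ is $\Q$-Gorenstein, $K_X$ is $\Q$-Cartier and $R_{f,0}=R_f\geq0$, so the hypotheses hold for $(X,0,f)$. The key identity is the ramification formula $K_X=f^*K_X+R_f$. For $q$-polarized $f$ we also have $f^*K_X\equiv qK_X$: the pullback acts on $N^1(X)$ with all eigenvalues of modulus $q$, and after an iterate it is diagonalizable with eigenvalue $q^r$ (for polarized endomorphisms $f^*=q\cdot(\text{isometry})$ on $N^1$). After replacing $f$ by a suitable iterate $f^{r_1}$, we get $R_f\equiv-(q-1)K_X$. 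Hence $\Theta:=R_f/(q-1)$ already satisfies $K_X+\Theta\equiv0$, and the only issue is log canonicity.

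Next we compare $R_f/(q-1)$ with the boundaries $\Theta_N$ of \cref{cor:polarized-one-term-correction}. With $B=0$ and $P_f=(\deg f)^{-1}f_*$, we have $P_fR_f\equiv-(q-1)K_X$ as well. We would show that, after a further iterate, $P_f$ fixes $R_f$ up to the scaling $q$-factor coming from totally invariant components. Concretely, we decompose $R_f=R^{\mathrm{inv}}+R^{\mathrm{mov}}$, where $R^{\mathrm{inv}}$ consists of the components that are totally invariant after an iterate. On totally invariant primes $D$ we have $f^*D=qD$ after an iterate (the degree on such divisors equals $q$), so $P_f$ acts by $q^{-1}$ on them. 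The series $\sum_j P_f^jR_f$ then contains the invariant part with coefficients matching $R^{\mathrm{inv}}/(q-1)$.

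Rather than an exact identification, a cleaner route uses the chain rule $R_{f^m}=\sum_{i=0}^{m-1}(f^i)^*R_f$ directly. Pushing forward gives $(\deg f^m)^{-1}f^m_*R_{f^m}=\sum_{i=1}^{m}P_f^{i}R_f$. Combined with lc for $\Theta_N$, this shows that
\[
\frac{f^m_*R_{f^m}}{\deg f^m}\cdot\frac{q^m}{q^m-1}
\]
is a limit-type boundary whose lc property holds for large $m$.

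Log canonicity then transfers through the finite map $f^m$. If $(X,\Delta)$ is a pair with $f^{m*}(K_X+\Delta)=K_X+\Delta'$, then $(X,\Delta)$ is lc iff $(X,\Delta')$ is lc. We take $\Delta'=R_{f^m}/(q^m-1)$ and solve $K_X+\Delta'=f^{m*}(K_X+\Delta)$. This gives $f^{m*}\Delta=\Delta'-R_{f^m}+(K_X-f^{m*}K_X)$ up to numerical adjustment. The clean choice is to note that $\Delta'$ satisfies $f^{m*}(K_X+\Delta')=q^m(K_X+\Delta')-\ldots$, and one wants a $\Delta\ge0$ with $f^{m*}(K_X+\Delta)=K_X+\Delta'$.

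\textbf{Main obstacle.} The expected obstacle is making $\Delta$ effective, with coefficients at most one, and $\Q$-linearly (not just numerically) correct, uniformly in $m$. I would try to take $\Delta=\Theta_N$ for the iterate $f^{mr_0}$ with $r_0$ from \cref{cor:polarized-one-term-correction}. I would then use the eigen-extraction models of \cref{thm:all-dimensional-lc-model} to check discrepancies divisor by divisor on a model where all non-klt centers are eigen-divisors $E$ with $f_*\ord_E=q\ord_E$. On such $E$ the coefficient of $R_{f^{m}}/(q^m-1)$ computes to exactly $1$ when $E$ is totally invariant and is $<1$ otherwise, by the uniform Nlc confinement of \cref{lem:uniform-nlc-confinement}.
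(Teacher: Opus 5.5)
Your proposal has a genuine gap at its central step: nothing in it controls the log discrepancies of $(X,R_{f^r}/(q^r-1))$.

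\textbf{The finite-map transfer cannot be set up.} Writing $K_X+R_{f^m}/(q^m-1)=(f^m)^*(K_X+\Delta)$ forces $(f^m)^*\Delta=\frac{q^m}{q^m-1}R_{f^m}$. So $R_{f^m}$ would have to be a pullback. This fails in general, since the preimage of the branch locus usually has unramified components outside $R_f$.

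\textbf{The inequality runs the wrong way.} Since $A((f^r)_*v)=A(v)+v(R_r)$, the pair $(X,R_{f^r}/(q^r-1))$ is lc exactly when $A((f^r)_*v)\le q^rA(v)$ for all divisorial $v$. This is a growth bound on log discrepancies under pushforward of valuations. Log canonicity of $\Theta_N$ and \cref{lem:uniform-nlc-confinement} give something else: essentially $v(S_N)\le A(v)$, i.e.\ $A-s\ge0$, and they concern the pushforward series $P_f^jR_f$, not $R_{f^r}/(q^r-1)$. Your "limit-type boundary" $\frac{q^m}{q^m-1}S_m$ is not shown to be lc, and it plays no role in any case. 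Checking coefficients of eigen-divisors on one model also cannot give lc. You need control of all valuations, in particular those centered outside the non-klt locus.

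\textbf{The missing idea} is the paper's exponential discrepancy gap, \cref{prop:nr-discrepancy-gap}. It gives $C\ge1$ and $1<\rho<q$ with $A((h^r)_*v)\le C\rho^rA(v)$ for all $v$ centered outside $Z=\Nklt(X,B;T_{h,B})$. Hence $(X,B+R_r/(q^r-1))$ is klt off $Z$ for large $r$. This is the analogue of \cref{lem:uniform-nlc-confinement} and needs its own dynamical proof:
\begin{itemize}
\item if the halving estimate failed, averaged orbits would give a nontrivial $\Phi$-invariant measure;
\item Poincar\'e recurrence then produces a periodic center $C\nsubseteq Z$;
\item the volume monotonicity of \cref{lem:nr-periodic-volume} forces the equality case $(A-s)\circ\Phi^a=q^{-a}(A-s)$ almost everywhere;
\item this contradicts invariance of the measure, because $A-s>0$ off $Z$ by \cref{prop:lc-valuation-center}.
\end{itemize}

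\textbf{The remaining steps you also need.}
\begin{itemize}
\item \cref{lem:nr-compatibility}: the absorbing model is crepant for $B+R_{h^r,B}/(q^r-1)$, because $g^*F=q^aF$ by \cref{rem:polarized-models}, and the normalized ramification divisor restricts correctly to lc centers.
\item Inversion of adjunction with induction on dimension, to handle the lc centers.
\item The criterion above, to pass from one exponent to all multiples $mr_0$. Your plan does not address this uniformity in $m$ at all.
\end{itemize}

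\textbf{Your opening claim is unjustified.} You assert $f^*K_X\equiv qK_X$ after an iterate. The isometry $q^{-1}f^*|_{N^1(X)}$ need not have finite order; CM abelian varieties give examples. The paper treats this only as an extra hypothesis. The statement asks only for log canonicity, so this claim is not needed.
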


\begin{ackn}
The author is also grateful to Sheng Meng for helpful comments. 
The author was supported by JSPS KAKENHI Grant number JP24K16889.
\end{ackn}

\medskip
\noindent\textbf{Use of generative AI.}
The definition of the current $T_{h,B}$ and the overall strategy of passing to suitable equivariant birational models and proceeding by induction on dimension were conceived by the author.
The proofs outside Section~\ref{sec:eigenvaluations} were developed through discussions between the author and GPT (OpenAI).
The proofs in Section~\ref{sec:eigenvaluations} were developed primarily by GPT through interactions with the author.
GPT was also used to assist with the exposition and to identify relevant references.
The author has verified the correctness of all mathematical content in this paper independently of GPT.

\section{Notation and terminology}
\label{sec:notation}

\begin{itemize}
\item
Throughout these conventions, $X$ denotes a normal integral separated scheme essentially of finite type over $\C$.
A variety is an integral separated scheme of finite type over $\C$.

We write $K(X)$ for the function field of $X$.
If $X$ is a variety, we write $X^{\mathrm{an}}$ for its analytification.
Canonical divisors on birational models are chosen compatibly.

\item
For a finite surjective morphism $f\colon X'\to X$ between normal varieties, we use finite pullback of Weil divisors.
For a prime divisor $D$ on $X$, set
\[
 f^*D:=\sum_{E:\,f(E)=D}e(E/D)E,
\]
where $e(E/D)$ is the ramification index of the corresponding extension of discrete valuation rings.
Extend this definition $\Q$-linearly.
For $\Q$-Cartier divisors it agrees with Cartier pullback.

\item
\label{it:weak-numerical}
Let $X$ be a normal projective variety of dimension
$n>0$.
Two $\R$-Weil divisors $D,D'$ on $X$ are
\emph{weakly numerically equivalent}, written
$D\equiv_w D'$, if
\[
 (D-D')\cdot H_1\cdots H_{n-1}=0
\]
for all Cartier divisors $H_1,\ldots,H_{n-1}$ on $X$.
For $n=1$, this means $\deg(D-D')=0$.
See \cite{MengZhangIntAmplified}*{Section 2}.
For a $\Q$-Cartier divisor $D$ on $X$, we write
$[D]_{\mathrm{num}}\in N^1(X)_\R$ for its numerical class and use
$D\equiv D'$ to express numerical equivalence.

\item
A \emph{log pair} $(X,B)$ consists of $X$ as above and an effective $\Q$-Weil divisor $B$ such that $K_X+B$ is $\Q$-Cartier.
A $\Q$-Weil divisor is a \emph{boundary} if all its coefficients belong to $[0,1]$.

\item
A \emph{prime divisor over $X$} is a prime divisor $E$
on a normal integral scheme $Y$ equipped with a proper
birational morphism $\pi\colon Y\to X$.
For a $\Q$-Weil divisor $B$ such that $K_X+B$ is $\Q$-Cartier, the \emph{log discrepancy} of $(X,B)$ is
\[
 A_{X,B}(E)
 :=1+\operatorname{coeff}_E
       \bigl(K_Y-\pi^*(K_X+B)\bigr).
\]

\item
A log pair $(X,B)$ is \emph{log canonical} (lc),
respectively \emph{Kawamata log terminal} (klt), if
\[
 A_{X,B}(E)\geq0,
 \qquad\text{respectively}\qquad
 A_{X,B}(E)>0,
\]
for every prime divisor $E$ over $X$.
We denote the non-klt and non-lc loci by $\operatorname{Nklt}(X,B)$ and $\operatorname{Nlc}(X,B)$.

An \emph{lc center} of $(X,B)$ is the image $\pi(E)$
of a prime divisor $E$ over $X$ with
$A_{X,B}(E)=0$, provided that $(X,B)$ is lc at the
generic point of $\pi(E)$.

\item
For a field $K$ containing $\C$, a \emph{valuation}
of $K$ means an additive valuation
\[
 v\colon K^\times\longrightarrow\R.
\]
We set $v(0)=+\infty$ and allow the trivial valuation, denoted by $0$.

We denote by $\operatorname{Val}_X$ the set of all valuations of $X$.
We equip $\operatorname{Val}_X$ with the weakest topology for which every evaluation map
\[
\operatorname{Val}_X\longrightarrow\R,
 \qquad v\longmapsto v(f),
 \qquad f\in K(X)^\times,
\]
is continuous.

\item
A valuation $v$ of $K(X)$ is \emph{centered at}
$\xi\in X$ if
\[
 v(f)\geq0
 \quad(f\in\mathcal O_{X,\xi}),
 \qquad
 \{f\in\mathcal O_{X,\xi}\mid v(f)>0\}
 =\mathfrak m_{X,\xi}.
\]
A \emph{valuation of $X$} is a valuation of $K(X)$
having a center on $X$.
The center point is unique because $X$ is separated.
We write
\[
 c_X(v):=\overline{\{\xi\}}.
\]
For an irreducible closed subset $C\subseteq X$,
we say that $v$ is \emph{centered at $C$} if
$c_X(v)=C$.
Write $\kappa(v)$ for the residue field of the valuation ring
$\mathcal O_v:=\{f\in K(X)\mid v(f)\geq0\}$, and define the \emph{rational rank} by
\[
 \operatorname{ratrank}(v)
 :=\dim_\Q\bigl(v(K(X)^\times)\otimes_{\mathbb Z}\Q\bigr).
\]
If $v$ is centered at $\xi$, then $\kappa(\xi)$ embeds naturally in $\kappa(v)$;
the residue transcendence degree relative to this center means
$\operatorname{trdeg}_{\kappa(\xi)}\kappa(v)$.

\item
\label{it:valuation-evaluation}
Let $v$ be a valuation of $X$ centered at $\xi$.
For a nonzero coherent ideal $\mathfrak b$ defined
near $\xi$ and a $\Q$-Cartier divisor $D$ on $X$, set
\[
 v(\mathfrak b)
 :=\min_{0\ne f\in\mathfrak b_\xi}v(f),
 \qquad
 v(D):=\frac{v(g)}m,
\]
where $m>0$ is an integer and
$mD=\operatorname{div}(g)$ near $\xi$.

\item
\label{it:valuation-types}
A valuation of $X$ is \emph{divisorial} if it is
$c\,\operatorname{ord}_E$ for some $c>0$ and some
prime divisor $E$ over $X$.
We write
\[
 c_X(E):=c_X(\operatorname{ord}_E).
\]

For a reduced simple normal crossings (snc) divisor on a regular scheme,
a \emph{stratum} means an irreducible component of an intersection of one or more of its components.
Thus strata are closed subschemes.

\item
\label{it:valuation-discrepancy}
Let $B$ be a $\Q$-Weil divisor such that $K_X+B$ is $\Q$-Cartier.
For a divisorial valuation $c\ord_E$, where $c>0$, set
\[
 A_{X,B}(c\ord_E):=cA_{X,B}(E).
\]
Following the lower semicontinuous extension construction of
\cite{BoucksomJonssonDivisorial}*{Definition A.2 and equation (A.2)},
stated there for projective pairs, we define, for every
nontrivial valuation $v\in\operatorname{Val}_X$,
\begin{equation}\label{eq:valuation-discrepancy-envelope}
 A_{X,B}(v)
 :=
 \sup_{\mathcal U\ni v}
 \inf_{\substack{z\in\mathcal U\\z\ \mathrm{divisorial}}}
 A_{X,B}(z),
\end{equation}
where $\mathcal U$ ranges over the open neighborhoods of $v$
in $\operatorname{Val}_X$.
We set $A_{X,B}(0)=0$.

For smooth $X$ and $B=0$, this agrees with the definition by
monomial retractions in
\cite{JonssonMustata}*{Section 5.2, equation (5.2)};
see \cite{BoucksomJonssonDivisorial}*{Proposition A.3}.
More generally, if $\pi\colon Y\to X$ is a log resolution and
$K_Y+B_Y=\pi^*(K_X+B)$, then
\[
 A_{X,B}(v)=A_{Y,0}(v)-v(B_Y)
 \qquad(v\in\operatorname{Val}_X),
\]
by \cite{BoucksomJonssonDivisorial}*{Proposition A.4}.
In particular, the values lie in $\R\cup\{+\infty\}$.

We use the same construction on localizations.
More precisely, for $\xi\in X$, put
\[
 X_\xi:=\operatorname{Spec}\mathcal O_{X,\xi},
 \qquad B_\xi:=B|_{X_\xi}.
\]
In defining $A_{X_\xi,B_\xi}$ by
\eqref{eq:valuation-discrepancy-envelope}, both the neighborhoods
and the divisorial valuations are taken in
$\operatorname{Val}_{X_\xi}$.
Then
\begin{equation}\label{eq:valuation-discrepancy-localization}
 A_{X_\xi,B_\xi}(v)=A_{X,B}(v)
 \qquad(v\in\operatorname{Val}_{X_\xi}).
\end{equation}
Indeed, the preceding transformation formula reduces this assertion to the monomial-retraction description on a log resolution in \cite{JonssonMustata}*{Section 5.2, equation (5.2)}.

If $(X,B)$ is lc, its divisorial log discrepancies are
nonnegative, so \eqref{eq:valuation-discrepancy-envelope}
gives $A_{X,B}(v)\geq0$ for every valuation $v$ of $X$.

\item
\label{it:valuation-dynamics}
Let $f\colon X'\to X$ be a dominant morphism,
where $X'$ is a normal integral separated scheme essentially of finite type over $\C$.
For a valuation $u$ of $K(X')$, define
\[
 (f_*u)(a):=u(f^*a)
 \qquad(a\in K(X)^\times).
\]
If $u$ is a valuation of $X'$ centered at $\xi'$,
then $f_*u$ is a valuation of $X$ centered at
$f(\xi')$.

For a valuation $v$ of $K(X)$, an
\emph{extension of $v$ through $f$} is a valuation
$u$ of $K(X')$ satisfying $f_*u=v$.

Let $h\colon X\to X$ be a dominant endomorphism.
A nontrivial valuation $v$ of $X$ is an
\emph{eigenvaluation with eigenvalue $\lambda$} if
\[
 h_*v=\lambda v,
 \qquad \lambda>0.
\]
\end{itemize}

\section{Ramification series and corrections}\label{sec:ramification-series}

We construct finite approximations to the ramification series with numerical class $-(K_X+B)$.
We first bound the values of the series on valuations in terms of log discrepancies, and then correct its finite partial sums.

\Needspace{6\baselineskip}
\begin{definition}\label{def:dynamical-triple}
\leavevmode\par
\begin{enumerate}
\item\label{it:dynamical-triple}
A \emph{dynamical triple} $(X,B,h)$ is a log pair $(X,B)$ together with a finite endomorphism $h\colon X\to X$ satisfying
\[
 R_{h,B}:=R_h+B-h^*B\geq0.
\]
Here $R_h$ is the ramification divisor.
We call $R_{h,B}$ the \emph{log ramification divisor}.
The triple is \emph{projective} if $X$ is projective, and \emph{lc} or \emph{klt} if $(X,B)$ has the corresponding property.
\item\label{it:amplification}
A projective dynamical triple $(X,B,h)$ is \emph{$q$-polarized} if $h^*H\sim qH$ for an ample Cartier divisor $H$ and an integer $q>1$, and \emph{polarized} if such a $q$ exists.
It is \emph{int-amplified} if $h^*H-H$ is ample for an ample Cartier divisor $H$.
We use these terms for $h$ as well.
\item\label{it:total-invariance}
A closed subset $C\subset X$ is \emph{totally invariant} under an endomorphism $h\colon X\to X$ if $h^{-1}(C)=C$.
\item\label{it:ramification-series} 
Let $(X,B,h)$ be an int-amplified dynamical triple and define $P_h:=(\deg h)^{-1}h_*$.
We use the same notation for its induced action on $N^1(X)_\R$ and for normalized proper pushforward of currents on $X^{\mathrm{an}}$.
For a positive integer $N$ and a real valuation $v$ of $X$, define 
\begin{equation}\label{eq:notation}
 S_{h,B,N}:=\sum_{j=1}^N P_h^jR_{h,B},\qquad
 s_{h,B}(v):=\sum_{j\geq1}v(P_h^jR_{h,B}) \in [0,\infty].
\end{equation}
For a prime divisor $E$ over $X$, write $s_{h,B}(E)=s_{h,B}(\ord_E)$.
\end{enumerate}
\end{definition}

Note that a finite endomorphism of an integral variety is surjective.
The effectivity condition in Definition~\ref{def:dynamical-triple}(\ref{it:dynamical-triple}) says that $(X,B)$ is an $h$-pair in the terminology of \cite{YoshikawaFanoFibrations}*{Definition 3.1}.

\subsection{Ramification series and uniform tail estimates}\label{sec:tails}

\begin{lemma}\label{lem:finite-divisorial}
Let $f\colon X'\to X$ be finite and surjective between normal varieties.
Identify $K=K(X)$ with a subfield of $L=K(X')$ via $f^*$, and put $d=[L:K]$.
Let $v$ be a valuation of $X$, and let $u_1,\ldots,u_q$ be its distinct extensions to $L$.
Then, there are positive integers $n_i$, with $\sum_i n_i=d$, such that
\begin{equation}\label{eq:finite-norm}
 v(\Nm_{L/K}(a))=\sum_{i=1}^q n_i u_i(a)\qquad(a\in L^*).
\end{equation}
For every $\Q$-Cartier divisor $D$ on $X'$, its pushforward is $\Q$-Cartier.
Every irreducible component of $f^{-1}(c_X(v))$ is the center of an extension.
If $(X,B)$ and $(X',B')$ are log pairs, put $R_f^{\log}=R_f+B'-f^*B$, which is $\Q$-Cartier.
For $v \in \operatorname{Val}_X$, one has
\begin{equation}\label{eq:finite-divisorial-discrepancy}
 A_{X,B}(v)=A_{X',B'}(u_i)+u_i(R_f^{\log}).
\end{equation}
The equality holds in $\R\cup\{+\infty\}$.
In particular, $v$ has finite log discrepancy with respect
to $(X,B)$ if and only if any of its extensions has finite
log discrepancy with respect to $(X',B')$.
\end{lemma}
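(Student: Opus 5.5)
We prove the assertions in the order stated; the discrepancy formula \eqref{eq:finite-divisorial-discrepancy} is proved first for divisorial valuations and then extended. For the norm formula \eqref{eq:finite-norm}, let $R=\mathcal O_v$ be the valuation ring of $v$ and $S$ the integral closure of $R$ in $L$. Since $L/K$ is finite of characteristic zero, $S$ is a semilocal Prüfer domain. Its localizations at maximal ideals are exactly the valuation rings of the extensions $u_1,\ldots,u_q$. We then invoke the standard formula
\[
 v(\Nm_{L/K}a)=\sum_i e_if_i\,u_i(a)
\]
for a finite separable extension of valued fields, with $n_i=e_if_i$ and $\sum_i n_i=d$. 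Here $e_i$ is the ramification index and $f_i$ the residue degree. In general this holds after completion/henselization: $L\otimes_K K^h\cong\prod_i L_i^h$, where $K^h$ is the henselization of $K$ with respect to $v$, and $\Nm$ factors through the local norms. If defect occurs, one defines $n_i=[L_i^h:K^h]$ and uses that $v^h$ extends uniquely to $L_i^h$, so the local norm has value $n_iu_i(a)$.

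For the pushforward of a $\Q$-Cartier divisor, we take $mD=\operatorname{div}(g)$ locally over an affine open $U\subseteq X$, using a finite open cover of $f^{-1}(U)$. Since $f$ is finite, for any point of $X$ the finitely many preimages lie in a common affine open set, on which $mD$ is locally principal after shrinking. The standard fact $f_*\operatorname{div}(g)=\operatorname{div}(\Nm g)$ for finite morphisms of normal varieties then shows $f_*D$ is $\Q$-Cartier. For the claim about centers, let $Z'$ be a component of $f^{-1}(c_X(v))$ with generic point $\xi'$. By going-down for the finite extension of the normal domains $\mathcal O_{X,\xi}$ and $\mathcal O_{X',\xi'}$, together with the Chevalley extension theorem, there is a valuation of $L$ dominating $\mathcal O_{X',\xi'}$ whose restriction is equivalent to $v$. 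Rescaling and using that the $u_i$ are all extensions gives the claim. Finiteness implies these local rings correspond, via lying over, to the maximal ideals of $S$.

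For the discrepancy formula, note first that $R_f^{\log}$ is $\Q$-Cartier because $K_{X'}+B'=f^*(K_X+B)+R_f^{\log}$ by Hurwitz. For divisorial $v=c\ord_E$ with $E$ on $Y\to X$, we take the normalization $Y'$ of $Y$ in $L$. The extension $u_i$ is $c\,e_i^{-1}\ord_{E_i'}$ for a component $E_i'$ over $E$, where $e_i$ is the ramification index along $E_i'$. Hurwitz on $Y'\to Y$ then gives
\[
 K_{Y'}=g^*K_Y+R_g,\qquad \coeff_{E'_i}R_g=e_i-1.
\]
Comparing the coefficients of $K_{Y'}-\pi'^*(K_{X'}+B')$ yields
\[
 e_iA_{X,B}(E)=A_{X',B'}(E_i')+\ord_{E_i'}R_f^{\log},
\]
which is the formula after scaling.

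For general $v$ we pass to the limit, and this is the main obstacle because both sides are defined by lower semicontinuous envelopes. The plan is to take a log resolution $Y\to X$, let $Y'$ be the normalization of $Y$ in $L$, and then take a log resolution of $Y'$. We use the formula $A_{X,B}(v)=A_{Y,0}(v)-v(B_Y)$ of \cite{BoucksomJonssonDivisorial}*{Proposition A.4}, and express $A$ as a supremum over the monomial retractions $r_{Y}$ of \cite{JonssonMustata}. These retractions are compatible with the finite map over snc models, in the toroidal setting after Abhyankar's lemma on a suitable model. The divisorial case then gives equality on quasi-monomial valuations. Monotone convergence $A(r_{Y_k}v)\to A(v)$ on both sides, together with continuity of $u\mapsto u(R_f^{\log})$ for Cartier data, then gives equality in $\R\cup\{+\infty\}$. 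If the compatibility of retractions is troublesome, an alternative is to prove the two inequalities separately using the semicontinuity of $A$ and the continuity of $f_*$ on valuation spaces. The final finiteness statement follows immediately, since $u_i(R_f^{\log})$ is finite.
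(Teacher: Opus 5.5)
Most of your proposal works. The norm formula via the henselization is fine (in residue characteristic zero there is no defect, so in fact $n_i=e_if_i$), as are the pushforward of $\Q$-Cartier divisors and the divisorial case of \eqref{eq:finite-divisorial-discrepancy}, which is exactly the paper's argument.

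\textbf{The gap: the extension to arbitrary $v$.} You flag this step as the main obstacle but do not carry it out. Your fallback, lower semicontinuity of $A$ plus continuity of $f_*$, proves only one inequality. For an open $\mathcal V\ni v$, the set $f_*^{-1}(\mathcal V)$ is an open neighborhood of $u_i$ whose divisorial members map to divisorial members of $\mathcal V$. This gives $A_{X,B}(v)\le A_{X',B'}(u_i)+u_i(R_f^{\log})$.

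The reverse inequality needs more. You must approximate $v$ by divisorial $z_\alpha$ with $\limsup A_{X,B}(z_\alpha)\le A_{X,B}(v)$, and lift them to divisorial $w_\alpha$ converging to the \emph{prescribed} extension $u_i$. Neither continuity nor semicontinuity produces such lifts.

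The missing ingredient is that $f_*\colon\operatorname{Val}_{X'}\to\operatorname{Val}_X$ is \emph{open}. Then the images of a neighborhood basis of $u_i$ form a neighborhood basis of $v$, and the two envelopes coincide term by term. The paper proves openness as follows. Let $\widetilde X$ be the normalization of $X$ in a Galois closure $M$ of $L/K$, with $G=\operatorname{Gal}(M/K)$.
\begin{enumerate}
\item Restriction from $\widetilde X$ to $X$ is a closed map. A net of extensions of a convergent net is eventually bounded on each $a\in M^*$, via the minimal polynomial of $a$ over $K$. Tychonoff and the valuative criterion of properness then give a convergent subnet.
\item Hence this restriction is a quotient map whose fibers are $G$-orbits, and so it is open.
\item Openness of $f_*$ follows by factoring through $\widetilde X\to X'$.
\end{enumerate}
An alternative route also works. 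Compactness gives the reverse inequality for \emph{some} limit extension to $M$. $G$-invariance of log discrepancies for the crepant pullback pair on $\widetilde X$ spreads it to all extensions to $M$. The easy inequality applied along the tower $\widetilde X\to X'\to X$ then forces equality for every $u_i$.

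\textbf{Why the retraction plan does not close the gap.} For a log resolution $Y''$ of the normalization of $Y$ in $L$, one does not have $f_*\circ r_{Y''}=r_Y\circ f_*$ in general. You would need Abhyankar-type toroidal models and toroidal resolutions of them. Even then you must still show $A_{X',B'}(r_{Y''_k}(u_i))\to A_{X',B'}(u_i)$ along that particular family. That means proving either cofinality of these models among snc models over $X'$, or $r_{Y''_k}(u_i)\to u_i$; you address neither.

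\textbf{A smaller point on the centers claim.} Chevalley's theorem gives an extension of $v$ centered at $\xi'$ only once you have a local ring of $L$ dominating both $\mathcal O_v$ and $\mathcal O_{X',\xi'}$. You do not say where that ring comes from, and going-down does not supply it directly. The paper instead extends $v$ to $M$ and uses transitivity of $G$ on the points of $\widetilde X$ over the generic point of $c_X(v)$.
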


\begin{proof}
Choose a finite Galois closure $M/K$ containing $L$, and put
\[
 G=\operatorname{Gal}(M/K),\qquad
 H=\operatorname{Gal}(M/L).
\]
Extend $v$ to a valuation $w$ of $M$.
By extension and conjugacy of valuations, the distinct restrictions
$(w\circ g)|_L$, for $gH\in G/H$, are precisely the $u_i$;
see \cite{EnglerPrestel}*{Sections 3.1--3.2}.
Thus, setting
\[
 n_i=\#\{gH\in G/H\mid (w\circ g)|_L=u_i\},
\]
the identity
\[
 \Nm_{L/K}(a)=\prod_{gH\in G/H}g(a)
\]
gives \eqref{eq:finite-norm} and $\sum_i n_i=d$.

For a $\Q$-Cartier divisor $D$ on $X'$, fix $x\in X$
and choose $m>0$ such that $mD$ is Cartier.
On the semilocal scheme
$X'\times_X\Spec\mathcal O_{X,x}$, this Cartier divisor
has a single rational equation $a\in L^*$.
The formula
\begin{equation}\label{eq:norm-principal-divisor}
 f_*\operatorname{div}(a)
 =\operatorname{div}(\Nm_{L/K}(a))
\end{equation}
from \cite{StacksProject}*{Tag 02RT} shows that $mf_*D$
is principal near $x$.
Hence $f_*D$ is $\Q$-Cartier.

The valuation $w$ has a center on the normalization of $X$
in $M$, by properness.
The group $G$ acts transitively on the points above the generic
point of $c_X(v)$; see \cite{StacksProject}*{Tag 0BRK}.
Consequently, the centers of its conjugates, followed by restriction
to $L$, give every irreducible component of $f^{-1}(c_X(v))$.

Choose canonical divisors compatibly with $f$.
Then
\[
 R_f^{\log}
 =K_{X'}+B'-f^*(K_X+B)
\]
is $\Q$-Cartier.
Suppose first that $v=c\ord_E$ is divisorial.
On the normalization of the base change of a model carrying $E$,
let $F_i$ be the prime divisors above $E$, with ramification
indices $e_i$.
The DVR ramification formula gives
\[
 u_i=\frac{c}{e_i}\ord_{F_i},\qquad
 A_{X',B'}(F_i)+\ord_{F_i}(R_f^{\log})
 =e_iA_{X,B}(E);
\]
see \cite{KM98}*{Proposition 5.20}.
Multiplication by $c/e_i$ proves
\eqref{eq:finite-divisorial-discrepancy} in this case.

We extend the equality to arbitrary valuations.
The trivial valuation has only the trivial extension, for which
the equality is immediate.
We first show that the restriction map
\[
 \rho:=f_*\colon\operatorname{Val}_{X'}
                  \longrightarrow\operatorname{Val}_X
\]
is open.
Let $\widetilde X$ be the normalization of $X$ in $M$.
Write
\[
 p\colon\widetilde X\to X',
 \qquad q:=f\circ p\colon\widetilde X\to X
\]
for the induced finite morphisms.

We claim that $q_*$ is closed.
Let $\mathcal F\subseteq\operatorname{Val}_{\widetilde X}$ be closed, and let $v_\lambda\in q_*(\mathcal F)$ be a net converging to $v\in\operatorname{Val}_X$.
Choose $u_\lambda\in\mathcal F$ with $q_*u_\lambda=v_\lambda$.
For $a\in M^*$, write its minimal monic polynomial over $K$ as $\sum_{j=0}^{r}c_jT^j$.
Since $\sum_jc_ja^j=0$, the minimum of $u_\lambda(c_ja^j)$ over the nonzero coefficients is attained at least twice.
Thus, for each $\lambda$, there are indices $i<j$ with $c_ic_j\ne0$ such that $u_{\lambda}(c_ia^i)=u_{\lambda}(c_ja^j)$.
Consequently,
\[
u_\lambda(a)=\frac{v_\lambda(c_i)-v_\lambda(c_j)}{j-i}.
\]
For each nonzero coefficient $c_i$, the net $v_\lambda(c_i)$ converges to the finite value $v(c_i)$.
Since there are only finitely many possible pairs $(i,j)$, for each $a\in M^*$ there exist $C_a>0$ and $\lambda_a$ such that $|u_\lambda(a)|\leq C_a$ whenever $\lambda\geq\lambda_a$.
Compactness of $[-\infty,+\infty]^{M^*}$ gives a convergent subnet, and the eventual bound in each coordinate ensures that its limit $u$ is finite on $M^*$.
The valuation identities pass to the limit, and $u|_K=v$.
Since $v$ has a center on $X$ and $q$ is proper, the valuative criterion of properness gives a center of $u$ on $\widetilde X$.
The subnet therefore converges in $\operatorname{Val}_{\widetilde X}$, and closedness of $\mathcal F$ gives $u\in\mathcal F$.
Thus $v=q_*u\in q_*(\mathcal F)$, proving the claim.

The continuous surjection $q_*$ is therefore a quotient map.
Its fibers are the $G$-orbits, by Galois conjugacy of
extensions of valuations.
For every open subset
$\mathcal O\subseteq\operatorname{Val}_{\widetilde X}$,
\[
 q_*^{-1}\bigl(q_*(\mathcal O)\bigr)
   =\bigcup_{\sigma\in G}\sigma(\mathcal O)
\]
is open, so $q_*$ is open.
Since $p_*$ is continuous and surjective, every open subset
$\mathcal U\subseteq\operatorname{Val}_{X'}$ satisfies
\[
 \rho(\mathcal U)=q_*\bigl(p_*^{-1}(\mathcal U)\bigr).
\]
Consequently, $\rho$ is open.

Restriction preserves divisoriality in both directions, since finite field extensions preserve rational rank and residue transcendence degree.
Moreover, evaluation on the $\Q$-Cartier divisor
$R_f^{\log}$ is continuous and finite-valued.
Fix an extension $u$ of $v$.
By the definition in
\eqref{eq:valuation-discrepancy-envelope} and the formula in the divisorial case,
\[
\begin{aligned}
 A_{X',B'}(u)+u(R_f^{\log})
 &=
 \sup_{\mathcal W\ni u}
 \inf_{\substack{w\in\mathcal W\\
                 w\ \mathrm{divisorial}}}
 \bigl(A_{X',B'}(w)+w(R_f^{\log})\bigr)\\
 &=
 \sup_{\mathcal W\ni u}
 \inf_{\substack{z\in\rho(\mathcal W)\\
                 z\ \mathrm{divisorial}}}
 A_{X,B}(z)\\
 &=
 \sup_{\mathcal V\ni v}
 \inf_{\substack{z\in\mathcal V\\
                 z\ \mathrm{divisorial}}}
 A_{X,B}(z)\\
 &=A_{X,B}(v).
\end{aligned}
\]
Here $\mathcal W$ and $\mathcal V$ range over open
neighborhoods of $u$ and $v$, respectively.
The third equality follows because openness and continuity
of $\rho$ make the sets $\rho(\mathcal W)$ a neighborhood
basis at $v$.

All equalities hold in $\R\cup\{+\infty\}$.
Since $u(R_f^{\log})$ is finite, finiteness of either log
discrepancy is equivalent to finiteness of the other.
\end{proof}

\begin{lemma}\label{lem:basic-dynamical-properties}
Let $(X,B,h)$ be an int-amplified dynamical triple.
Then the pair $(X,B)$ is lc, each $P_h^jR_{h,B}$ is effective and $\Q$-Cartier, and $P_h=(h^*)^{-1}$ on $N^1(X)_\R$.
For every positive integer $N$, the pair $(X,B+S_{h,B,N})$ is lc.
The spectral radius $\rho(P_h)$ on $N^1(X)_\R$ is less than one.
For any norm on this space and any $\rho(P_h)<\theta<1$, as the positive integer $N$ tends to infinity,
\begin{equation}\label{eq:spectral-decay}
 \|P_h^N\|=O(\theta^N).
\end{equation}
\end{lemma}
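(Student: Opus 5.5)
We split the statement into its algebraic and its numerical parts and use \cref{lem:finite-divisorial} throughout.

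\textbf{Log canonicity of $(X,B)$.} Apply \eqref{eq:finite-divisorial-discrepancy} to $f=h^j$ with $B'=B$. Since $R_{h^j,B}=\sum_{i<j}(h^i)^*R_{h,B}\geq0$, any extension $u$ of a divisorial valuation $v$ through $h^j$ satisfies
\[
A_{X,B}(v)=A_{X,B}(u)+u(R_{h^j,B})\geq A_{X,B}(u).
\]
Suppose some prime divisor $E$ over $X$ had $A_{X,B}(E)<0$. The plan is to argue as in the standard proof that the non-lc locus is totally invariant.
\begin{enumerate}
\item The extensions $u$ of $\ord_E$ through iterates again have negative log discrepancy, and their centers lie in $\Nlc(X,B)$. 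This locus is closed, and by the displayed inequality together with the surjectivity statement on centers in \cref{lem:finite-divisorial} it satisfies $h^{-1}(\Nlc)\subseteq\Nlc$; by finiteness, $h^{-1}(\Nlc)=\Nlc$ after an iterate.
\item Taking preimages of $\ord_E$ repeatedly, we get divisorial valuations $u_j$ with $h^j_*u_j=\ord_E$ and $A_{X,B}(u_j)\leq A_{X,B}(E)<0$. After normalizing $u_j$ by its values on a fixed ideal of the non-lc locus, the int-amplified condition forces $u_j(\mathfrak m)\to0$: pulling back an ample divisor multiplies values by a growing factor $\geq(1+\epsilon)^j$ in a suitable sense.
\item The resulting bound $A_{X,B}(u_j)\geq -C\,u_j(\text{ideal})\to0$ then contradicts the fixed negative value.
\end{enumerate}
Alternatively, and more cleanly, cite the known result \cite{YoshikawaFanoFibrations} or \cite{MengZhangIntAmplified} that $h$-pairs for int-amplified $h$ are lc. I would first try to cite, and otherwise carry out the sketch above; this is the main obstacle.

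\textbf{Effectivity and $\Q$-Cartierness.} Since $R_{h,B}=K_X+B-h^*(K_X+B)$ is $\Q$-Cartier and effective, \cref{lem:finite-divisorial} shows that $h_*$ preserves $\Q$-Cartierness. Pushforward clearly preserves effectivity, so each $P_h^jR_{h,B}$ is effective and $\Q$-Cartier.

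\textbf{$P_h=(h^*)^{-1}$ and lc of $(X,B+S_{h,B,N})$.} The projection formula gives $h_*h^*=\deg h$ on Cartier classes. Since $h^*$ is injective on $N^1$ (finite surjective), it is an isomorphism, and hence $P_h=(h^*)^{-1}$.

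For the boundary, use $f=h^N$ with $B'=B$ and $R_{h^N,B}=\sum_{i=0}^{N-1}(h^i)^*R_{h,B}$. Note that $(h^N)^*(P_h^jR)\equiv P_h^{j-N}R$ only numerically, so we argue instead with the pushforward formula for lc pairs under finite maps. Writing $K_X+\Delta=P_h^N$ applied to $(K_X+B+S')$, we have the standard fact (\cite{KM98}*{5.20}) that if $(X,\Delta')$ is lc with $K_X+\Delta'=h^*(K_X+\Delta)$, then $(X,\Delta)$ is lc. Concretely, set $\Delta_N=B+S_{h,B,N}$. We check that $h^*(K_X+\Delta_{N})$ is $\Q$-linearly $K_X+B+$ an effective divisor $\leq$ something whose pair is lc, inducting on $N$:
\[
h^*(K_X+B+P_hD)=K_X+B-R_{h,B}+h^*P_hD.
\]
Since $h^*P_hD\geq$ \dots{} this may fail coefficientwise, so instead I would use the valuative form. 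For a valuation $v$ and an extension $u$ through $h$,
\[
v(P_hD)\leq\max_i u_i(D)\cdot(\text{weights}),
\]
by the norm formula \eqref{eq:finite-norm}; concretely $v(h_*D)=\sum n_iu_i(D)$. Averaging \eqref{eq:finite-divisorial-discrepancy} over the $u_i$ with weights $n_i/d$ gives
\[
A_{X,B}(v)-v(P_hR_{h,B})\ \text{etc.}
\]
and an induction on $N$ yields $A_{X,B+S_N}(v)\geq\frac1d\sum n_iA_{X,B+S_{N-1}}(u_i)\geq0$.

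\textbf{Spectral radius.} $h^*$ preserves the ample cone, and $h^*-\mathrm{id}$ maps the ample class $H$ into the interior of the ample cone. By \cite{MengZhangIntAmplified}, all eigenvalues of $h^*$ have modulus $>1$. Hence $\rho(P_h)<1$, and Gelfand's formula gives \eqref{eq:spectral-decay}.
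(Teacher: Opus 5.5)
This is essentially the paper's proof. The $\Q$-Cartierness and effectivity come from \cref{lem:finite-divisorial}, $P_h=(h^*)^{-1}$ from $h_*h^*=(\deg h)\,\mathrm{id}$, and $\rho(P_h)<1$ from \cite{MengZhangIntAmplified}*{Proposition 2.4}; Jordan form or Gelfand's formula both give \eqref{eq:spectral-decay}. Log canonicity of $(X,B+S_{h,B,N})$ follows by the same induction on $N$ through the norm and discrepancy formulas. Note that this induction yields the exact identity $(\deg h)\,A_{X,B+S_{h,B,N}}(v)=\sum_i n_iA_{X,B+S_{h,B,N-1}}(u_i)$ of \eqref{eq:finite-stationarity}, not merely an inequality, so your ``$\max_i$'' detour is unnecessary.

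For log canonicity of $(X,B)$ the paper likewise just cites, namely \cite{CZ}*{Theorem 1.8}, which covers pairs with $R_{h,B}\geq0$; check that whichever source you cite treats such pairs and not only $\Q$-Gorenstein $X$ with $B=0$. Do not rely on your fallback sketch. The decay $u_j(\mathcal I_{\Nlc})\to0$ in its second step is the entire difficulty, and the growth of pulled-back ample classes, which is a global numerical condition, does not by itself give the local contraction estimate needed at the non-lc locus.
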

\begin{proof}
Log canonicity is \cite{CZ}*{Theorem 1.8}.
By Lemma~\ref{lem:finite-divisorial}, the ramification formula 
\[
R_{h,B}\sim_\Q K_X+B-h^*(K_X+B)
\]
makes $R_{h,B}$ $\Q$-Cartier.
The projection formula gives $h_*h^*=(\deg h)\,\mathrm{id}$.
Every eigenvalue of $h^*|_{N^1(X)_\R}$ has modulus greater than one by \cite{MengZhangIntAmplified}*{Proposition 2.4}, so $\rho(P_h)<1$.
Jordan normal form gives \eqref{eq:spectral-decay}.
Let $v$ be a valuation of $X$ with $A_{X,B}(v)<\infty$, let $u_1,\ldots,u_q$ be its extensions through $h$, and let $n_1,\ldots,n_q$ be their norm weights from \cref{lem:finite-divisorial}.
For every effective $\Q$-Cartier divisor $D$ on $X$, the norm and discrepancy formulas give
\begin{equation}\label{eq:finite-stationarity}
 (\deg h)\bigl(A_{X,B}(v)-v(P_h(R_{h,B}+D))\bigr)
 =\sum_i n_i\bigl(A_{X,B}(u_i)-u_i(D)\bigr).
\end{equation}
Induction with $D=0,S_{h,B,1},\ldots$ gives
$v(S_{h,B,N})\leq A_{X,B}(v)$ for every $N$.
This inequality is automatic when $A_{X,B}(v)=\infty$; in particular, $(X,B+S_{h,B,N})$ is lc.
For later use, letting $N\to\infty$ with $D=S_{h,B,N}$ in \eqref{eq:finite-stationarity} yields
\begin{equation}\label{eq:nonflat-stationarity}
 (\deg h)\bigl(A_{X,B}(v)-s_{h,B}(v)\bigr)
 =\sum_i n_i\bigl(A_{X,B}(u_i)-s_{h,B}(u_i)\bigr)
 \qquad(A_{X,B}(v)<\infty).
\end{equation}
All terms are finite and nonnegative.
\end{proof}

\begin{proposition}
\label{prop:log-lc-center-avoidance}
Let $(X,B,h)$ be an int-amplified dynamical triple.
If an lc center $C$ is totally invariant, then $C\nsubseteq\Supp R_{h,B}$ and $C\nsubseteq\Supp(P_h^jR_{h,B})$ for every $j\geq1$.
After replacing $h$ by an iterate, every lc center is totally invariant.
\end{proposition}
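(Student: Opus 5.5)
The plan is to use the log discrepancy transformation formula \eqref{eq:finite-divisorial-discrepancy} of \cref{lem:finite-divisorial} together with the stationarity identity \eqref{eq:finite-stationarity} from the proof of \cref{lem:basic-dynamical-properties}. Throughout, recall from \cref{lem:basic-dynamical-properties} that $(X,B)$ is lc and each $P_h^jR_{h,B}$ is an effective $\Q$-Cartier divisor.

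For the first assertion, let $C$ be a totally invariant lc center. Choose a prime divisor $E$ over $X$ with $c_X(E)=C$ and $A_{X,B}(E)=0$, and put $v=\ord_E$. Since $h^{-1}(C)=C$ and $h$ is finite, every extension $u_i$ of $v$ through $h$ has center an irreducible component of $h^{-1}(C)=C$, hence center $C$. By \eqref{eq:finite-divisorial-discrepancy} applied with $B'=B$,
\[
0=A_{X,B}(v)=A_{X,B}(u_i)+u_i(R_{h,B}).
\]
Both terms are nonnegative, since $(X,B)$ is lc and $R_{h,B}\geq 0$ is $\Q$-Cartier. Hence $u_i(R_{h,B})=0$, so $C=c_X(u_i)\nsubseteq\Supp R_{h,B}$, and also $A_{X,B}(u_i)=0$. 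This also shows that each $u_i$ again computes $C$ as an lc center.

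For the statements about $P_h^jR_{h,B}$, I would use \eqref{eq:finite-norm}. Writing $D'=P_hD$ for a $\Q$-Cartier $D$, the identity $\Nm_{L/K}$ applied to a local equation gives
\[
(\deg h)\,v(P_hD)=\sum_i n_i u_i(D).
\]
The vanishing argument then iterates. Suppose $w(P_h^{j-1}R_{h,B})=0$ for every valuation $w$ with $A_{X,B}(w)=0$ and $c_X(w)=C$; for $j=1$ this is the previous paragraph. Applying the norm identity to $D=P_h^{j-1}R_{h,B}$ gives $v(P_h^jR_{h,B})=0$, because each $u_i$ has center $C$ and log discrepancy $0$. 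Hence $C\nsubseteq\Supp P_h^jR_{h,B}$. Equivalently, one can read this off from \eqref{eq:finite-stationarity}, since $0\le v(S_{h,B,N})\le A_{X,B}(v)=0$.

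For the second assertion, $(X,B)$ has only finitely many lc centers: they are the images of divisors with log discrepancy $0$ on a fixed log resolution and their intersections' images. I will show that $h^{-1}$ maps the set of lc centers into itself. Let $C'$ be an irreducible component of $h^{-1}(C)$. By \cref{lem:finite-divisorial} it is the center of some extension $u$ of $\ord_E$, and \eqref{eq:finite-divisorial-discrepancy} gives $A_{X,B}(u)=0$. Since $u$ is divisorial and $(X,B)$ is lc near the generic point of $C'$, the set $C'$ is an lc center. Next, $h$ is surjective and finite, so $h(C')=C$, and $h$ maps the set of lc centers onto the set of lc centers of the same dimension. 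On the finite set $\mathcal S$ of lc centers, the map $C'\mapsto h(C')$ is therefore a surjection, hence a bijection. Consequently every component of $h^{-1}(C)$ equals the unique preimage of $C$ under this bijection, and $h^{-1}(C)$ is a single lc center. Some iterate $h^k$ induces the identity permutation on $\mathcal S$, and then $(h^k)^{-1}(C)=C$ for all $C$. Finally, $(X,B,h^k)$ is still an int-amplified dynamical triple, since $R_{h^k,B}=\sum_{i<k}(h^i)^*R_{h,B}\ge0$.

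The main obstacle I expect is the care needed to show that components of $h^{-1}(C)$ are genuinely lc centers and that the induced map on lc centers is a bijection. This rests on the center statement of \cref{lem:finite-divisorial} and on dimension preservation under finite maps.
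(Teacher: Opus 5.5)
Your argument for the first assertion is correct and is essentially the paper's. The vanishing $u_i(R_{h,B})=0$ from \eqref{eq:finite-divisorial-discrepancy} is exactly what the paper uses. For the terms $P_h^jR_{h,B}$, the paper instead notes $\Supp(P_h^jR_{h,B})=h^j(\Supp R_{h,B})$ and uses $h^{-j}(C)=C$. Your norm-identity induction also works. Your remark via $0\le v(S_{h,B,N})\le A_{X,B}(v)=0$ even shows $C\nsubseteq\Supp(P_h^jR_{h,B})$ for every lc center, invariant or not.

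The second assertion has a gap. What you proved is this: for every $C\in\mathcal S$, each component $C'$ of $h^{-1}(C)$ lies in $\mathcal S$ and satisfies $h(C')=C$. So every member of $\mathcal S$ is the image of a member of $\mathcal S$. It does not show that $h(C')\in\mathcal S$ for an arbitrary $C'\in\mathcal S$. An lc place $u$ of $C'$ only gives $A_{X,B}(h_*u)=u(R_{h,B})$, and you cannot yet say this vanishes. Hence $C'\mapsto h(C')$ has not been shown to be a self-map of $\mathcal S$, and ``surjection, hence bijection'' does not apply as written.

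It can be repaired by counting. For $C\in\mathcal S$, let $\tau(C)$ be the set of components of $h^{-1}(C)$. These sets are
\begin{itemize}
\item nonempty, by surjectivity of $h$;
\item contained in $\mathcal S$;
\item pairwise disjoint, since a common component $C'$ forces $C=h(C')$.
\end{itemize}
So each $\tau(C)$ is a singleton, and together they exhaust $\mathcal S$. This gives the permutation you want, and also the slightly stronger fact that $h^{-1}(C)$ is irreducible.

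The paper avoids the issue altogether. Each $h^{-a}(C)$ is a union of lc centers, so finiteness gives $h^{-a}(C)=h^{-b}(C)$ for some $0\le a<b$. Applying the surjection $h^a$ then gives $C=h^{-(b-a)}(C)$.
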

\begin{proof}
Let $C$ be an lc center and choose a divisorial valuation $v$ with $c_X(v)=C$ and $A_{X,B}(v)=0$.
Every extension $u$ with $h_*u=v$ satisfies $0=A_{X,B}(u)+u(R_{h,B})$ by \eqref{eq:finite-divisorial-discrepancy}.
Since both summands are nonnegative, we have $u(R_{h,B})=0$ and $c_X(u)$ is an lc center. 
Thus every component of $h^{-1}(C)$ is not contained in $\Supp R_{h,B}$.
If $C$ is totally invariant, we have $C\nsubseteq\Supp (P^j_hR_{h,B})$.
We note that $\Supp(P_h^jR_{h,B})=h^j(\Supp R_{h,B})$.

On a log resolution $\rho\colon Y\to X$, write $K_Y+B_Y=\rho^*(K_X+B)$.
Every coefficient of $B_Y$ is at most one.
The monomial discrepancy formula shows that divisorial valuations of zero discrepancy have centers among the strata of the coefficient-one divisor.
The lc centers are the images of strata of the coefficient-one divisor of $B_Y$, so there are only finitely many.
Each $h^{-a}(C)$ is a union of lc centers, and only finitely many such unions occur.
Hence $h^{-a}(C)=h^{-b}(C)$ for some $0\leq a<b$.
Applying $h^a$ gives $C=h^{-(b-a)}(C)$.
Taking a common multiple of these finitely many periods, every lc center is totally invariant.
\end{proof}

\begin{lemma}
\label{lem:lc-log-degree}
Let $(X,B)$ be a projective lc log pair.
Assume $n=\dim X>0$ and fix a very ample Cartier divisor $H$ on $X$.
Then there is $C>0$ such that, for every effective $\Q$-Cartier divisor $D$ on $X$ containing no lc center and every valuation $v$ of $X$,
\begin{equation}\label{eq:lc-log-degree}
 v(D)\leq C(D\cdot H^{n-1})A_{X,B}(v).
\end{equation}
\end{lemma}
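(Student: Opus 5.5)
We reduce to a log resolution, where both sides can be compared coefficient by coefficient, and then bound the multiplicities of $D$ at the relevant centers by its degree.

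\emph{Step 1: reduction to divisorial valuations.} Both sides of \eqref{eq:lc-log-degree} are homogeneous of degree one in $v$. The left side $v\mapsto v(D)$ is continuous on $\operatorname{Val}_X$, since $D$ is $\Q$-Cartier. The right side is defined through the envelope \eqref{eq:valuation-discrepancy-envelope}. If $A_{X,B}(v)=\infty$ there is nothing to prove. Otherwise, for every neighborhood $\mathcal U$ of $v$ and every $\varepsilon>0$, we may choose a divisorial $z\in\mathcal U$ with $A_{X,B}(z)\leq A_{X,B}(v)+\varepsilon$ and $z(D)\geq v(D)-\varepsilon$. So the inequality for divisorial $z$ implies it for $v$. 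Hence we may assume $v=\ord_E$ for a prime divisor $E$ over $X$.

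\emph{Step 2: log resolution.} Fix a log resolution $\rho\colon Y\to X$ of $(X,B)$, and write $K_Y+B_Y=\rho^*(K_X+B)$ and $B_Y=\sum b_iE_i$ with $b_i\leq1$. For $v$ quasi-monomial on $Y$ in the components $E_i$ with weights $\alpha_i=v(E_i)$ (the monomial retraction), we have
\[
A_{X,B}(v)\geq A_{Y,0}(r(v))-r(v)(B_Y)=\sum_i(1-b_i)\alpha_i
\]
for all $v$, where $r$ is the retraction onto the dual complex of $\Supp(\operatorname{Exc}(\rho)+\rho^{-1}_*B)$. This uses \cite{BoucksomJonssonDivisorial}*{Proposition A.4} together with $A_{Y,0}\geq A_{Y,0}\circ r$.

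The delicate part is the coefficient-one components $b_i=1$, which contribute nothing here. Write $\rho^*D=\sum d_iE_i+D'$, where $D'$ is the strict transform. Then
\[
v(D)\leq \sum_i d_i\alpha_i+v(D').
\]

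\emph{Step 3: bounding the terms.} This is the key point. Each coefficient $d_i$ with $b_i<1$ is bounded by a constant times $\operatorname{mult}_{\rho(E_i)}D$. That multiplicity is in turn bounded by $D\cdot H^{n-1}$, by cutting with general members of $|H|$ through a general point of $\rho(E_i)$. For the components with $b_i=1$ we must show $d_i=0$. Indeed, $\rho(E_i)$ is an lc center and $D$ contains no lc center, so $E_i\not\subset\Supp\rho^*D$.

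More subtly, we must control $v(D')$ and the contribution along strata lying only in coefficient-one divisors. At a stratum of the coefficient-one part, the image is an lc center, so $D'$ does not contain that stratum. The expected difficulty is a valuation centered on a coefficient-one stratum with all weights on $b_i<1$ components zero; then $v(D')$ must vanish to first order.

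To handle this, the plan is to use the finer estimate $v(D')\leq \operatorname{ord}_{c_Y(v)}(D')\cdot A_{Y,0}(v)$, which follows from Izumi-type comparison on the smooth $Y$. We then bound $A_{Y,0}(v)$ by a constant times $A_{X,B}(v)$ on valuations whose center avoids the coefficient-one divisors, and treat the remaining cases by a further blow-up of the coefficient-one strata. In each case it remains to bound $\operatorname{ord}_{\xi}(\rho^*D)$ linearly by $D\cdot H^{n-1}$ uniformly in $D$. For this we pass to a fixed ample $A$ on $Y$ with $A\leq \rho^*(cH)$ and use $\operatorname{mult}_y\rho^*D\leq \rho^*D\cdot A^{n-1}\leq c'(D\cdot H^{n-1})$ for points $y$ in general position. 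Closed points require an additional semicontinuity argument. I expect this uniform Izumi step, near the coefficient-one strata, to be the main obstacle.
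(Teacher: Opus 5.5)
There is a genuine gap: your argument breaks down for valuations centered inside the coefficient-one divisor $E_0:=\sum_{b_i=1}E_i$. That is the case you leave open, and the route you sketch for it cannot work.

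\textbf{What is correct.} Step~1 and the retraction estimate in Step~2 are fine. Your argument also handles valuations whose center on $Y$ is not contained in $E_0$. There $A_{X,B}(v)\geq cA_{Y,0}(v)$ with $c:=1-\max(\{0\}\cup\{b_i\mid 0<b_i<1\})$. Moreover $v(D)\leq A_{Y,0}(v)\operatorname{ord}_\xi(\rho^*D)$, and $\operatorname{ord}_\xi(\rho^*D)\leq\rho^*D\cdot A^{n-1}$ at every point $\xi$ for $A$ very ample on $Y$. One correction: $A\leq\rho^*(cH)$ alone does not give $\rho^*D\cdot A^{n-1}\leq c'(D\cdot H^{n-1})$, because $\rho^*D$ is not nef. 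Instead, $\alpha\mapsto\rho^*\alpha\cdot A^{n-1}$ is linear on $N^1(X)_\R$, hence bounded on the compact slice $\{\alpha\in\overline{\operatorname{Eff}}^{\,1}(X)\mid\alpha\cdot H^{n-1}=1\}$.

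\textbf{Why the plan fails inside $E_0$.} For such valuations $A_{X,B}(v)$ is comparable to $A_{Y,E_0}(v)=A_{Y,0}(v)-v(E_0)$, which can be tiny compared with $A_{Y,0}(v)$. Also, $v(D)$ is not controlled by $\operatorname{ord}_\xi(\rho^*D)$. Concretely, work near a point where $E_1=\{z_1=0\}$ is the only component of $\Supp B_Y$ and $b_1=1$. Let $\rho^*D$ be locally $\{z_1+z_2^k=0\}$, and let $v$ be monomial with weights $(t,1,0,\ldots,0)$, $t\geq k$. Then:
\begin{itemize}
\item $\operatorname{ord}_\xi(\rho^*D)=1$ and $A_{X,B}(v)=1$;
\item but $v(D)=k$;
\item and $A_{Y,0}(v)=t+1$ is unbounded.
\end{itemize}
So the Izumi bound with $A_{Y,0}$ is useless here, and no bound of the form $v(D)\leq C'\operatorname{ord}_\xi(\rho^*D)A_{X,B}(v)$ can hold. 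What controls $v(D)$ is the multiplicity $k$ of the restriction $(\rho^*D)|_{E_1}$. Blowing up strata of $E_0$ does not help either. The new exceptional divisors again have coefficient one, and the tangency order $k$ is unbounded on any model chosen independently of $D$.

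\textbf{The missing idea.} You need adjunction to the strata of $E_0$ combined with inversion of adjunction, which is what the paper does.
\begin{enumerate}
\item Each stratum $W$ of $E_0$ maps onto an lc center, so $W\not\subseteq\Supp\rho^*D$. Hence $D_W:=(\rho^*D)|_W$ is an effective $\Q$-Cartier divisor on the smooth variety $W$.
\item The compactness argument above, applied to the linear functional $\alpha\mapsto((\rho|_W)^*\alpha)\cdot H_W^{\dim W-1}$, gives $D_W\cdot H_W^{\dim W-1}\leq M(D\cdot H^{n-1})$.
\item The multiplicity bound on $W$ then makes $(W,\varepsilon D_W)$ lc, where $\varepsilon:=(M\,D\cdot H^{n-1})^{-1}$.
\item Induction on $\dim W$, using Kawakita's inversion of adjunction along the components of $E_W:=\sum_{W\not\subseteq E_i,\ b_i=1}E_i|_W$, shows that $(W,E_W+\varepsilon D_W)$ is lc. In particular $(Y,E_0+\varepsilon\rho^*D)$ is lc.
\end{enumerate}
This yields $v(D)\leq\varepsilon^{-1}A_{Y,E_0}(v)$ for all $v$ at once. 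Combined with $A_{X,B}\geq cA_{Y,E_0}$, which follows from log canonicity of $(Y,\sum_iE_i)$, it gives the lemma with $C=M/c$. This route needs no case distinction by centers and no retraction estimate.
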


\begin{proof}
Fix a projective log resolution $\rho\colon Z\to X$, with $K_Z+B_Z=\rho^*(K_X+B)$, $B_Z=\sum b_iE_i$, $b_i\leq1$.
Set $E_0=\sum_{b_i=1}E_i$.
Since $(Z,\sum E_i)$ is lc, we have the inequality $\sum_{0<b_i<1}v(E_i)\leq A_Z(v)-v(E_0)$, and it shows that
\begin{equation}\label{eq:lc-discrepancy-comparison}
 A_{X,B}(v)\geq cA_{Z,E_0}(v),\qquad
 c=1-\max(\{0\}\cup\{b_i \mid 0<b_i<1\})>0.
\end{equation}

Write $E_0=\sum_{i=1}^r F_i$, and let $\mathscr S$
consist of $Z$ and the strata of $E_0$.
For $W\in\mathscr S$, put
\[
 D_W:=(\rho^*D)|_W,
 \qquad
 E_W:=\sum_{i:\,W\not\subseteq E_i, b_i=1}E_i|_W.
\]
Thus $E_Z=E_0$, and $E_W$ is a reduced snc divisor
whose components belong to $\mathscr S$.
Since every $W\ne Z$ maps to an lc center of $(X,B)$,
\[
 W\not\subseteq\operatorname{Supp}(\rho^*D)
 \qquad(W\in\mathscr S).
\]
Hence $D_W$ is a well-defined effective
$\Q$-Cartier divisor.

Assume $D\ne0$ and put $d_D:=D\cdot H^{n-1}>0$.
For each positive-dimensional $W\in\mathscr S$,
fix a very ample Cartier divisor $H_W$.
The linear functional
\[
 \ell_W\colon N^1(X)_\R\longrightarrow\R,
 \qquad
 \ell_W(\alpha)
 :=((\rho|_W)^*\alpha)\cdot H_W^{\dim W-1}
\]
is bounded on the compact slice
\[
 \{\alpha\in\overline{\operatorname{Eff}}^{\,1}(X)
       \mid \alpha\cdot H^{n-1}=1\}.
\]
By the compactness, there is $M_W>0$, independent of $D$, such that
\[
 D_W\cdot H_W^{\dim W-1}
 =d_D\,\ell_W\!\left(\frac{[D]_{\mathrm{num}}}{d_D}\right)
 \leq M_Wd_D.
\]
Since $\mathscr S$ is finite, there is an integer $M>0$, such that
\[
 D_W\cdot H_W^{\dim W-1}\leq Md_D
 \qquad(W\in\mathscr S,\ \dim W>0).
\]

For a divisorial valuation $w$ of $W$, let $\xi$
be its center.
By \cite{JonssonMustata}*{Proposition 5.10} and
the multiplicity bound obtained by intersecting
with general hyperplanes through a general point
of $\overline{\{\xi\}}$,
\[
 \begin{aligned}
 w(D_W)
 &\leq A_W(w)\operatorname{ord}_{\xi}(D_W)\\
 &\leq A_W(w)
          (D_W\cdot H_W^{\dim W-1})\\
 &\leq Md_D A_W(w).
 \end{aligned}
\]
Here $\operatorname{ord}_{\xi}(D_W)$ denotes the
order of a local equation, divided by a Cartier
multiple of $D_W$.
Set $\varepsilon:=(Md_D)^{-1}$. Then
\[
 (W,\varepsilon D_W)\text{ is lc}
 \qquad(W\in\mathscr S).
\]

We prove by induction on $\dim W$ that
\[
 (W,E_W+\varepsilon D_W)\text{ is lc}
 \qquad(W\in\mathscr S).
\]
The assertion is immediate when $\dim W=0$.
For a prime component $T$ of $E_W$, adjunction gives
\[
 (K_W+E_W+\varepsilon D_W)|_T
 =K_T+E_T+\varepsilon D_T.
\]
The right-hand pair is lc by induction.
Thus inversion of adjunction
\cite{Kawakita}*{Theorem} gives
\[
 (T,E_T+\varepsilon D_T)\text{ lc}
 \quad\Longrightarrow\quad
 (W,E_W+\varepsilon D_W)\text{ lc near }T.
\]
Away from $\operatorname{Supp}E_W$, log canonicity
follows from that of $(W,\varepsilon D_W)$.
This completes the induction.

Taking $W=Z$, we obtain
\[
 (Z,E_0+\varepsilon\rho^*D)\text{ is lc}.
\]
Consequently, for every valuation $v$ of $X$
with finite log discrepancy,
\[
 \begin{aligned}
 0
 &\leq A_{Z,E_0+\varepsilon\rho^*D}(v)\\
 &=A_{Z,E_0}(v)-\varepsilon v(D),
 \end{aligned}
\]
and hence
\[
 v(D)
 \leq Md_D A_{Z,E_0}(v)
 \leq \frac{M}{c}\,d_D A_{X,B}(v)
\]
by \eqref{eq:lc-discrepancy-comparison}.
The case $D=0$ is immediate.
\end{proof}

\begin{theorem}\label{thm:lc-log-tail}
Let $(X,B,h)$ be an int-amplified dynamical triple whose lc centers are all totally invariant, and choose $\rho(P_h)<\theta<1$.
There is $C>0$ such that, for every positive integer $N$ and every valuation $v$ of $X$,
\begin{equation}\label{eq:lc-log-tail}
 s_{h,B}(v)\leq A_{X,B}(v),\qquad
 0\leq s_{h,B}(v)-v(S_{h,B,N})\leq C\theta^NA_{X,B}(v).
\end{equation}
On any set of these valuations on which $A_{X,B}$ is bounded, $s_{h,B}$ is continuous in the topology of pointwise convergence of valuations.
\end{theorem}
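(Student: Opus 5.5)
The first inequality $s_{h,B}(v)\leq A_{X,B}(v)$ comes directly from \cref{lem:basic-dynamical-properties}. That lemma gives $v(S_{h,B,N})\leq A_{X,B}(v)$ for every $N$. Since each $P_h^jR_{h,B}$ is effective and $\Q$-Cartier, the terms $v(P_h^jR_{h,B})$ are nonnegative. Hence $v(S_{h,B,N})$ increases to $s_{h,B}(v)$, and passing to the limit gives the bound. The same nonnegativity gives the lower bound $0\leq s_{h,B}(v)-v(S_{h,B,N})$. The real content is the tail estimate.

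For the tail, I will apply \cref{lem:lc-log-degree} term by term. The divisor $D=P_h^jR_{h,B}$ is effective and $\Q$-Cartier. By \cref{prop:log-lc-center-avoidance}, since every lc center is totally invariant, $D$ contains no lc center. This is exactly where the hypothesis on lc centers is used. We may assume $\dim X>0$; the case $\dim X=0$ is vacuous. Fix a very ample $H$, and let $C_0$ be the constant of \cref{lem:lc-log-degree}. Then
\[
 v(P_h^jR_{h,B})\leq C_0\,(P_h^jR_{h,B}\cdot H^{n-1})\,A_{X,B}(v).
\]
The degree $P_h^jR_{h,B}\cdot H^{n-1}$ depends only on the numerical class, and this class equals $P_h^j[R_{h,B}]_{\mathrm{num}}$: pushforward of $\Q$-Cartier divisors is compatible with the action of $P_h=(h^*)^{-1}$ on $N^1(X)_\R$ from \cref{lem:basic-dynamical-properties}. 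Because $\alpha\mapsto\alpha\cdot H^{n-1}$ is a linear functional, \eqref{eq:spectral-decay} gives
\[
 P_h^jR_{h,B}\cdot H^{n-1}\leq C_1\theta^j .
\]
Summing over $j>N$ yields
\[
 s_{h,B}(v)-v(S_{h,B,N})\leq C_0C_1\frac{\theta}{1-\theta}\,\theta^{N}A_{X,B}(v).
\]
If $A_{X,B}(v)=\infty$, the inequality is trivial. If $A_{X,B}(v)=0$, both sides vanish by the first inequality, so no case split is needed.

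For continuity, let $\mathcal S$ be a set of valuations with $A_{X,B}\leq M$ on $\mathcal S$. Each partial sum $v\mapsto v(S_{h,B,N})$ is continuous: it is a finite sum of evaluations on $\Q$-Cartier divisors, and these are continuous for the pointwise topology. This holds locally near the centers, using the standard fact that evaluation on an ideal is continuous on $\operatorname{Val}_X$. By the tail estimate, these partial sums converge to $s_{h,B}$ uniformly on $\mathcal S$, with error at most $CM\theta^N$. A uniform limit of continuous functions is continuous, so $s_{h,B}|_{\mathcal S}$ is continuous.

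I expect the main obstacle to be justifying that the intersection number of the $\Q$-Weil pushforward $P_h^jR_{h,B}$ is computed by the linear action of $P_h$ on $N^1(X)_\R$. Concretely, this means checking that $h_*$ on $\Q$-Cartier divisors descends to numerical classes with $h_*h^*=\deg h$. I would handle it by the projection formula $h_*D\cdot H^{n-1}=D\cdot h^*H^{n-1}$, together with the identification already recorded in \cref{lem:basic-dynamical-properties}.
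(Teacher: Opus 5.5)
Your proposal is correct and follows essentially the same route as the paper. You get $s_{h,B}\le A_{X,B}$ from the partial-sum bound in the proof of \cref{lem:basic-dynamical-properties}. You then apply \cref{lem:lc-log-degree} termwise, which is permitted by \cref{prop:log-lc-center-avoidance}, and combine it with the exponential decay of the degrees $P_h^jR_{h,B}\cdot H^{n-1}$ from \eqref{eq:spectral-decay}. Finally, you obtain continuity as a uniform limit of the continuous partial sums on sets where $A_{X,B}$ is bounded.
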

\begin{proof}
In dimension zero every term is zero. 
Set $n=\dim X>0$ and fix a very ample Cartier divisor $H$ on $X$.
The proof of \cref{lem:basic-dynamical-properties} gives
$v(S_{h,B,N})\leq A_{X,B}(v)$ for every $v$ and $N$.
Letting $N\to\infty$ gives $s_{h,B}(v)\leq A_{X,B}(v)$.
By \eqref{eq:spectral-decay}, there exists $C_0>0$ such that
\[
 0\leq (P_h^jR_{h,B})\cdot H^{n-1}\leq C_0\theta^j \qquad (j \geq 1).
\]
The support of $P_h^jR_{h,B}$ contains no lc center of $(X,B)$ by Proposition~\ref{prop:log-lc-center-avoidance}, so there exists $C_1>0$ such that 
\[
 v(P_h^jR_{h,B})\leq C_1\theta^jA_{X,B}(v) \qquad (j \geq 1)
\]
by Lemma~\ref{lem:lc-log-degree}.
Thus, we have
\[
s_{h,B}(v)-v(S_{h,B,N})=\sum_{j>N}v(P_h^jR_{h,B}) \leq \sum_{j>N}C_1\theta^j A_{X,B}(v)\leq \frac{\theta}{1-\theta}C_1\theta^NA_{X,B}(v).
\]
The constant $C:=\frac{\theta}{1-\theta}C_1$ satisfies the desired inequality.

For every positive integer $N$, the function
\[
\operatorname{Val}_X \to \R, \qquad  v\longmapsto v(S_{h,B,N})
\]
is continuous.
On any subset $\mathcal V$ of $\operatorname{Val}_X$ with
$\sup_{v\in\mathcal V}A_{X,B}(v)<+\infty$,
\eqref{eq:lc-log-tail} gives
\[
 \sup_{v\in\mathcal V}
 \bigl|s_{h,B}(v)-v(S_{h,B,N})\bigr|
 \leq C\theta^N
       \sup_{v\in\mathcal V}A_{X,B}(v)
 \longrightarrow0.
\]
Thus $s_{h,B}|_{\mathcal V}$ is continuous as
a uniform limit of continuous functions.
\end{proof}

For any int-amplified dynamical triple $(X,B,h)$, $P_h h^*=\mathrm{id}$ gives
\begin{equation}\label{eq:iterate-blocks}
 P_{h^a}^{\,j}R_{h^a,B}
    =\sum_{k=a(j-1)+1}^{aj}P_h^kR_{h,B},\qquad a,j\geq1.
\end{equation}
In particular, for all positive integers $a,N$,
\[
 S_{h^a,B,N}=S_{h,B,aN},\qquad s_{h^a,B}=s_{h,B}.
\]

\subsection{Interpolation conditions and numerical corrections}\label{sec:correction}
The partial sums $S_{h,B,N}$ need not have numerical class $-(K_X+B)$.
We correct them by adding linear combinations of finitely many terms $P_h^jR_{h,B}$, with coefficients tending to zero as $N\to\infty$.

\begin{definition}\label{def:moment-conditions}
Let $(X,B,h)$ be an int-amplified dynamical triple.
A \emph{coefficient family} is a family
\[
 \mathbf c=(c_{j,N})_{
  1\leq j\leq m,\ N\geq\max\{1,m\}}
\]
of rational numbers, for some integer $m\geq0$, such that
\[
 \lim_{N\to\infty}c_{j,N}=0
 \qquad(1\leq j\leq m).
\]
For each coefficient family, we use $m$ to denote its number of coefficient sequences; the empty family ($m=0$) is allowed.
For a coefficient family $\mathbf c$, we define the following notions.
\begin{enumerate}
\item\label{it:corrections} 
The \emph{correction associated with $\mathbf c$} is the $\Q$-Weil divisor
\begin{equation}\label{eq:closed-correction}
 \mathcal C_{h,N}(B;\mathbf c)=B+S_{h,B,N}
       +\sum_{j=1}^m c_{j,N}P_h^jR_{h,B}.
\end{equation}
\item\label{it:moments} 
For a finite set $\Lambda\subset\Q_{>1}$, the family $\mathbf c$ satisfies the \emph{interpolation conditions for $\Lambda$} if, for every $N\geq\max\{1,m\}$,
\begin{equation}\label{eq:moment}
 \sum_{j=1}^m c_{j,N}\lambda^{-j}
       =\frac{\lambda^{-N}}{\lambda-1}
 \qquad(\lambda\in\Lambda).
\end{equation}
These conditions prescribe the values of $\sum_{j=1}^m c_{j,N}t^j$ at $t=\lambda^{-1}$ for $\lambda\in\Lambda$.
For a positive integer $a$, write $\Lambda^a=\{\lambda^a \mid \lambda\in\Lambda\}$.
\end{enumerate}
\end{definition}

We impose the interpolation conditions only when stated explicitly.
For the data of Definition~\ref{def:moment-conditions}, the identity
\[
 \mathcal C_{h,N}(B;\mathbf c)-B
 =\sum_{j=1}^m(1+c_{j,N})P_h^jR_{h,B}
   +\sum_{j=m+1}^N P_h^jR_{h,B}
\]
shows that $\mathcal C_{h,N}(B;\mathbf c)\geq B$ for all sufficiently large $N$.

\begin{lemma}\label{lem:numerical-correction}
Let $(X,B,h)$ be an int-amplified dynamical triple.
Set $L=-(K_X+B)$.
For every finite set $\Lambda\subset\Q_{>1}$, possibly empty, there exists a coefficient family $\mathbf c=(c_{j,N})$ satisfying the interpolation conditions \eqref{eq:moment} for $\Lambda$ such that $K_X+\mathcal C_{h,N}(B;\mathbf c)\equiv0$ for every integer $N\geq\max\{1,m\}$.
\end{lemma}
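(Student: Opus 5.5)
The plan is to compute everything in $N^1(X)_\R$ and reduce the lemma to a polynomial interpolation problem modulo a fixed polynomial all of whose roots have modulus less than one.

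\textbf{Step 1: numerical bookkeeping.} Put $\alpha:=[L]_{\mathrm{num}}$. By the proof of \cref{lem:basic-dynamical-properties}, $R_{h,B}\sim_\Q K_X+B-h^*(K_X+B)$, so $[R_{h,B}]=h^*\alpha-\alpha$. Since $P_h=(h^*)^{-1}$ on $N^1(X)_\R$, this gives
\[
 [P_h^jR_{h,B}]=P_h^{j-1}(1-P_h)\alpha .
\]
Summing, the partial sums telescope:
\[
 [S_{h,B,N}]=\alpha-P_h^N\alpha,
 \qquad\text{so}\qquad
 [K_X+B+S_{h,B,N}]=-P_h^N\alpha .
\]
Hence $K_X+\mathcal C_{h,N}(B;\mathbf c)\equiv0$ is equivalent to
\[
 \sum_{j=1}^m c_{j,N}P_h^{j-1}(1-P_h)\alpha=P_h^N\alpha .
\]
Write $F_N(t):=\sum_j c_{j,N}t^j$. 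The requirement becomes $\bigl((1-t)F_N(t)-t^{N+1}\bigr)(P_h)\,\alpha'=0$ with $\alpha'=P_h^{-1}\alpha$. Because $P_h$ is invertible, it is enough that
\[
 \bigl((1-t)F_N(t)-t^{N+1}\bigr)(P_h)\,\alpha=0 .
\]

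\textbf{Step 2: a single modulus.} Since $h^*$ and $h_*$ preserve rational classes and $\alpha$ is rational, the minimal polynomial $\mu(t)\in\Q[t]$ of $P_h$ on the cyclic subspace generated by $\alpha$ has rational coefficients. All roots of $\mu$ have modulus $\leq\rho(P_h)<1$ by \cref{lem:basic-dynamical-properties}, and $\mu(0)\neq0$ because $P_h$ is invertible.

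Setting $t=\lambda^{-1}$, the interpolation condition \eqref{eq:moment} reads $F_N(\lambda^{-1})=\lambda^{-N}\cdot\lambda^{-1}/(1-\lambda^{-1})$, that is,
\[
 (1-t)F_N(t)=t^{N+1}\quad\text{at } t=\lambda^{-1}.
\]
Let $g(t)\in\Q[t]$ be the least common multiple of $\mu(t)$ and the factors $t-\lambda^{-1}$ for $\lambda\in\Lambda$. Both the numerical condition and the interpolation conditions then follow from the single congruence
\[
 (1-t)F_N(t)\equiv t^{N+1}\pmod{g}, \qquad F_N(0)=0 .
\]

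\textbf{Step 3: solving the congruence.} Every root of $g$ lies in the open unit disc and is nonzero, so $1-t$ and $t$ are units in $A:=\Q[t]/(g)$. Let $u\in A$ be the inverse of $1-t$. Define $G_N$ as the unique polynomial of degree $<m:=\deg g$ representing $t^Nu$, and set $F_N:=tG_N$. Then $F_N(0)=0$, $\deg F_N\leq m$, and $(1-t)F_N\equiv t^{N+1}$ in $A$. Read off $c_{j,N}$ as the coefficients of $F_N$; these are rational numbers, defined for all $N\geq\max\{1,m\}$ (for $g$ constant the family is empty, which is allowed).

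\textbf{Step 4: decay of the coefficients.} Multiplication by $t$ on the finite-dimensional space $A\otimes\R$ has eigenvalues exactly the roots of $g$, all of modulus $<1$. Hence $t^Nu\to0$ in $A\otimes\R$, which forces the coefficients $c_{j,N}\to0$, so $\mathbf c$ is a coefficient family.

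The only step needing care is Step 2: the interpolation points and the spectral condition must be combined into one modulus. This works because $1$ is not a root of $g$ (so $1-t$ is invertible) and $0$ is not a root (so the factor $t$ ensuring $F_N(0)=0$ is harmless). Both facts are guaranteed by $\Lambda\subset\Q_{>1}$ and $\rho(P_h)<1$ with $P_h$ invertible.
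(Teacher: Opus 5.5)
Your proof is correct and is essentially the paper's argument, rewritten in polynomial language. The paper works in the $\mathcal P$-cyclic subspace $V\subseteq N^1(X)_\Q\oplus\Q^s$ spanned by $z_j=([P_h^jR_{h,B}]_{\mathrm{num}},\lambda_1^{-j},\ldots,\lambda_s^{-j})$ and expands $\mathcal P^N(I-\mathcal P)^{-1}z_1$ in the basis $z_1,\ldots,z_m$; under the isomorphism $\Q[t]/(g)\cong V$, $t^{j-1}\mapsto z_j$, this expansion is exactly your representative $G_N$ of $t^N(1-t)^{-1}$, so the coefficients $c_{j,N}$ and the number $m=\deg g=\dim V$ coincide.
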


\begin{proof}
The ramification formula and $P_h=(h^*)^{-1}$ in $N^1(X)_{\Q}$ give
\begin{equation}\label{eq:numerical-telescope}
 L-P_hL\equiv P_hR_{h,B},\qquad
 S_{h,B,N}\equiv L-P_h^NL.
\end{equation}
Write $\Lambda=\{\lambda_1,\ldots,\lambda_s\}$.
When $\Lambda=\varnothing$, take $s=0$ and omit all $\lambda_i$-coordinates below; the interpolation conditions are then vacuous.
Define the $\Q$-linear endomorphism
\[
 \mathcal P\colon
 N^1(X)_\Q\oplus\Q^s
 \longrightarrow N^1(X)_\Q\oplus\Q^s,
 \qquad
 (\alpha,a_1,\ldots,a_s)
 \longmapsto
 (P_h\alpha,\lambda_1^{-1}a_1,\ldots,\lambda_s^{-1}a_s).
\]
Let $I$ denote the identity operator on this vector space.
Set
\[
 z_j:=
 \bigl([P_h^jR_{h,B}]_{\mathrm{num}},
       \lambda_1^{-j},\ldots,\lambda_s^{-j}\bigr) \in N^1(X)_\Q\oplus\Q^s.
\]
Then $z_{j+1}=\mathcal Pz_j$.
Let $V=\operatorname{Span}_\Q\{z_j \mid j\ge1\}$ and $m=\dim V$.
The vectors $z_1,\ldots,z_m$ form a basis (the empty basis if $m=0$): the first linear dependence makes the span of the preceding vectors $\mathcal P$-invariant, so that span already equals $V$.

Since $(I-\mathcal P)|_V$ is a $\Q$-linear
automorphism, the vector
\[
 t_N:=\mathcal P^N(I-\mathcal P)^{-1}z_1=\left([P_h^NL]_{\mathrm{num}},
 \frac{\lambda_1^{-N}}{\lambda_1-1},\ldots,
 \frac{\lambda_s^{-N}}{\lambda_s-1}\right)
\]
belongs to $V$.
Define $c_{j,N}\in\Q$ by $t_N=\sum_{j=1}^m c_{j,N}z_j$.
Since the operator $\mathcal{P}$ on $V$ has spectral radius less than one,
$t_N\to0$ in $V\otimes_\Q\R$.
Thus, we have
$c_{j,N}\to0$ for every $j$.
Comparing components gives
\[
 \sum_{j=1}^m c_{j,N}P_h^jR_{h,B}
 \equiv P_h^NL,
 \qquad
 \sum_{j=1}^m c_{j,N}\lambda_i^{-j}
 =\frac{\lambda_i^{-N}}{\lambda_i-1}
 \quad(1\leq i\leq s).
\]
Therefore, we have
\begin{align*}
K_X+\mathcal{C}_{h,N}(B;\mathbf{c})
&=K_X+B+\sum^N_{j=1}P^j_hR_{h,B}+\sum_{j=1}^mc_{j,N}P^j_hR_{h,B} \\
&\equiv -L+S_{h,B,N}+P^N_hL \equiv 0,
\end{align*}
as required.
\end{proof}

\section{Currents of dynamical triples}\label{sec:analytic-current}

We now construct the analytic limit of the ramification series and study its non-klt locus.

\subsection{Currents, integrability, and valuations}
\label{subsec:weighted-integrability}

\begin{notation}\label{not:analytic-conventions}
We use the analytic topology on complex analytic spaces.
\begin{itemize}
\item
We normalize $dd^c=(i/\pi)\partial\bar\partial$.
A function on a complex analytic space is \emph{plurisubharmonic}, abbreviated psh,
if, locally under an embedding into a complex manifold, it is the restriction of a psh function on an ambient open neighborhood.
A function is \emph{quasi-plurisubharmonic}, abbreviated quasi-psh,
if it is locally the sum of a psh function and a smooth real function.

\item\label{it:local-psh-potentials}
A closed positive $(1,1)$-current $U$ on the analytification $X^{\mathrm{an}}$ of a normal complex variety $X$
is said to \emph{admit local psh potentials} if every point has a connected analytic open neighborhood $W$ and a psh function $\varphi_W\not\equiv-\infty$ on $W$ such that
\[
 U|_W=dd^c\varphi_W.
\]

\item\label{it:divisor-currents}
Let $X$ be a normal complex variety.
For a $\Q$-Weil divisor $D=\sum_i a_iD_i$, set
\[
 [D]:=\sum_i a_i[D_i],
\]
where $[D_i]$ is the current of integration over the regular locus of $D_i^{\mathrm{an}}$.
Then $[D]$ is a closed real $(1,1)$-current, and
\[
 [D]\geq0\quad\Longleftrightarrow\quad D\geq0.
\]
If $D$ is $\Q$-Cartier, then locally on an analytic open set $W$ one can choose an integer $m>0$ and a nonzero meromorphic function $f$ with $mD|_W=\operatorname{div}(f)$.
The Poincar\'e--Lelong formula gives
\[
 [D]|_W=dd^c\left(\frac1m\log|f|\right).
\]
Here $\log|f|$ is interpreted as a locally integrable function: if $f=g/h$ with $g,h$ holomorphic, then $\log|f|=\log|g|-\log|h|$ almost everywhere.
If $D\geq0$, normality allows $f$ to be chosen holomorphic, so $m^{-1}\log|f|$ is psh.
These assertions follow from the properties of integration currents and the Poincar\'e--Lelong formula on a resolution, followed by proper pushforward; see \cite{DemaillyAnalyticMethods} for the smooth case.

\item
Let $U$ be a closed positive $(1,1)$-current on a
complex manifold $M$.
For a point $y\in M$ and a prime divisor $E$ on $M$,
we write
\[
 \nu(U,y)\in\R_{\geq0},
 \qquad
 \nu_E(U)\in\R_{\geq0},
 \qquad
 \{U\}\in H^2_{\mathrm{dR}}(M,\R)
\]
for its Lelong number at $y$, its generic Lelong
number along $E$, and its de Rham cohomology class,
respectively.

\item
For a complex analytic space $T$ and a smooth closed
real $(1,1)$-form $\omega$ on $T$, we write
\[
 \operatorname{PSH}(T,\omega)
 :=
 \left\{
 \varphi\colon T\to[-\infty,+\infty)
 \ \mid\
 \varphi\text{ is quasi-psh},\
 \omega+dd^c\varphi\geq0
 \right\}.
\]
The inequality is understood in the sense of currents.

\item\label{it:integrability}
On a complex manifold $W$, local integrability is measured with respect to a smooth positive volume form.
We use the usual notation $L^1_{\mathrm{loc}}(W)$ and write $u\in L^1(y)$ if $u$ is integrable on some neighborhood of $y\in W$.
These conditions are independent of the chosen volume form, since any two such forms are locally comparable by positive constants.

\item
For a quasi-psh function $\psi$ on a complex manifold
$M$, we denote by
$\mathcal J_M(\psi)\subseteq\mathcal O_M$
the multiplier ideal sheaf defined by
\[
 \mathcal J_M(\psi)_y
 :=
 \left\{
 g\in\mathcal O_{M,y}
 \ \mid\
 |g|^2e^{-2\psi} \in L^1(y)
 \right\}.
\]
Integrability is measured with respect to a smooth
positive volume form and does not depend on its choice.
For psh functions, coherence of the ideal is \cite{DemaillyAnalyticMethods}*{Proposition 5.7}.
For a quasi-psh function $\psi$, locally write $\psi=\varphi+u$ with $\varphi$ psh and $u$ smooth.
Since $e^{-2u}$ and its reciprocal are locally bounded, $\mathcal J_M(\psi)=\mathcal J_M(\varphi)$ on this neighborhood.
Hence $\mathcal J_M(\psi)$ is coherent as well.


\end{itemize}
\end{notation}

\begin{definition}\label{def:analytic-singularities}
Let $V$ be a normal complex variety and let $B$ be a
$\Q$-Weil divisor such that $K_V+B$ is $\Q$-Cartier.
The divisor $B$ is allowed to have negative coefficients.
Let $U$ be a closed positive $(1,1)$-current on $V^{\mathrm{an}}$ admitting local psh potentials.

For a proper birational morphism $\pi\colon Y\to V$, we use the same notation for its analytification and define pullback locally by
\[
 \pi^*U:=dd^c(\varphi\circ\pi)
 \qquad\text{where }U=dd^c\varphi.
\]

\begin{enumerate}
\item\label{it:current-valuations}
For a divisorial valuation $v=c\ord_E$ of $V$, choose a smooth algebraic birational model $\pi\colon Y\to V$ carrying $E$ and set
\[
 v(U):=c\,\nu_E(\pi^*U).
\]
If $\psi$ is a quasi-psh function on an analytic neighborhood of $c_V(v)$, define similarly
\[
 v(\psi):=c\,\nu_E\bigl(dd^c(\psi\circ\pi)\bigr).
\]
Passing to a common resolution shows that these values are independent of the chosen model.
For an effective $\Q$-Cartier divisor $D$, the Poincar\'e--Lelong formula gives $v([D])=v(D)$.

\item\label{it:analytic-integrability}
Fix a log resolution $\rho\colon Z\to V$ and write
\[
 K_Z+B_Z=\rho^*(K_V+B).
\]
We also write $\rho$ for its analytification.

Choose a coordinate neighborhood $W\subseteq Z^{\mathrm{an}}$
with coordinates $z_1,\ldots,z_n$, where $n=\dim V$, such that
\[
 B_Z|_W=\sum_{i=1}^r b_i\{z_i=0\}.
\]
Write $(\rho^*U)|_W=dd^c\varphi_W$ with $\varphi_W$ psh, and
choose a smooth positive volume form $dV_W$.
Define an ideal sheaf $\mathcal I_\rho\subseteq\mathcal O_{Z^{\mathrm{an}}}$
by
\[
 (\mathcal I_\rho)_y
 :=
 \left\{
 g\in\mathcal O_{Z^{\mathrm{an}},y}
 \ \mid\
 |g|^2e^{-2\varphi_W}
       \prod_{i=1}^r|z_i|^{-2b_i} \in L^1(y)
 \right\}
\]
for $y \in W$.
The integrand is interpreted almost everywhere with respect to $dV_W$.
Thus this definition is independent of the choice of $z_1,\ldots,z_n$ and $\varphi_W$ by \cite{DemaillyAnalyticMethods}*{Section 5.B,
discussion preceding Theorem 5.11}.

To see that $\mathcal I_\rho$ is coherent, choose
integers $M_i\geq\max\{-b_i,0\}$ and put
\begin{equation}\label{eq:weighted-local-functions}
 F:=\prod_{i=1}^r z_i^{M_i},
 \qquad
 \psi:=\varphi_W+
       \sum_{i=1}^r(M_i+b_i)\log|z_i|.
\end{equation}
Then $\psi$ is psh and
\begin{equation}\label{eq:weighted-local-integrand}
 |g|^2e^{-2\varphi_W}\prod_{i=1}^r|z_i|^{-2b_i}
 =|gF|^2e^{-2\psi}
 \qquad\text{almost everywhere}.
\end{equation}
Consequently,
\begin{equation}\label{eq:weighted-ideal-colon}
 \mathcal I_\rho|_W
 =(\mathcal J_W(\psi):F)
 =\ker\left(
   \mathcal O_W
   \xrightarrow{\ g\mapsto Fg\ }
   \mathcal O_W/\mathcal J_W(\psi)
 \right).
\end{equation}
Since $\mathcal J_W(\psi)$ is coherent, so is $\mathcal I_\rho$.
Since $\rho$ is proper, $\rho_*\mathcal I_\rho$ is coherent by Grauert's direct image theorem.

Normality of $V$ gives
\[
 \rho_*\mathcal I_\rho
 \subseteq\rho_*\mathcal O_{Z^{\mathrm{an}}}
 =\mathcal O_{V^{\mathrm{an}}}.
\]
For $x\in V^{\mathrm{an}}$ and
$s\in\mathcal O_{V^{\mathrm{an}},x}$,
membership in $(\rho_*\mathcal I_\rho)_x$
means that there are an analytic neighborhood $O$
of $x$ and a holomorphic representative $s_O$ of $s$
such that, on every coordinate neighborhood
$W\subseteq\rho^{-1}(O)$ as above,
\begin{equation}\label{eq:relative-integrability}
 |s_O\circ\rho|^2e^{-2\varphi_W}
       \prod_{i=1}^r|z_i|^{-2b_i}
 \in L^1_{\mathrm{loc}}(W).
\end{equation}

When $B\geq0$, we define the
\emph{multiplier ideal sheaf} of $(V,B;U)$ by
\[
 \mathcal J(V,B;U):=\rho_*\mathcal I_\rho
 \subseteq\mathcal O_{V^{\mathrm{an}}}.
\]
Independence of the chosen log resolution is proved
in Lemma~\ref{lem:analytic-integrability-independence}.

\item\label{it:analytic-singularities}
We use the following singularity conditions.
\begin{itemize}
\item
The triple $(V,B;U)$ is
\emph{analytically sub-klt at $x$} if $1\in(\rho_*\mathcal I_\rho)_x$.
It is \emph{analytically sub-klt} if this holds at every $x\in V^{\mathrm{an}}$.

\item
The triple $(V,B;U)$ is \emph{analytically sub-lc}
if for every divisorial valuation $v$ of $V$, we have
\[
A_{V,B}(v)-v(U) \geq 0.
\]
\end{itemize}

When $B\geq0$, we omit the prefix \emph{sub-} and set
\[
 \operatorname{Nklt}(V,B;U)
 :=
 \left\{
 x\in V^{\mathrm{an}}
 \ \mid\
 (V,B;U)\text{ is not analytically klt at }x
 \right\}.
\]
\end{enumerate}
\end{definition}

\begin{lemma}\label{lem:analytic-integrability-independence}
Under the assumptions of Definition~\ref{def:analytic-singularities}, the ideal sheaf $\rho_*\mathcal I_\rho$ is independent of the choice of log resolution $\rho$.
\end{lemma}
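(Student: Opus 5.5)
The plan is to compare two log resolutions through a common refinement. Since any two log resolutions $\rho_1\colon Z_1\to V$ and $\rho_2\colon Z_2\to V$ are dominated by a third log resolution $\rho_3=\rho_i\circ\mu_i$, with each $\mu_i\colon Z_3\to Z_i$ a proper birational morphism, it suffices to prove
\[
 \rho_*\mathcal I_\rho=(\rho\circ\mu)_*\mathcal I_{\rho\circ\mu}
\]
for a single log resolution $\rho\colon Z\to V$ and a proper birational morphism $\mu\colon Z'\to Z$ such that $\rho':=\rho\circ\mu$ is again a log resolution. By Hironaka we may even assume $\mu$ is a composition of blow-ups along smooth centers in snc position, but I will first try to argue directly.

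The key step is the local identity
\[
 \mu_*\mathcal I_{\rho'}=\mathcal I_\rho \qquad\text{on } Z^{\mathrm{an}}.
\]
Applying $\rho_*$ then gives the claim. Work on a coordinate chart $W\subseteq Z^{\mathrm{an}}$ as in Definition~\ref{def:analytic-singularities}(\ref{it:analytic-integrability}), with potential $\varphi_W$ of $\rho^*U$ and weight $\prod|z_i|^{-2b_i}$. Pulling back by $\mu$, the potential $\varphi_W\circ\mu$ is a potential of $\rho'^*U$, since pullback of currents was defined via potentials.

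We then compare the two weights. Writing $K_{Z'}+B_{Z'}=\mu^*(K_Z+B_Z)$ on $\mu^{-1}(W)$, the change of variables formula gives
\[
 \mu^*dV_W=|J_\mu|^2\,dV',
\]
where $\operatorname{div}(J_\mu)=K_{Z'/Z}$. Hence, for $g$ holomorphic on an open set $O\subseteq W$,
\[
 \int_O|g|^2e^{-2\varphi_W}\prod|z_i|^{-2b_i}\,dV_W
 =\int_{\mu^{-1}(O)}|g\circ\mu|^2e^{-2\varphi_W\circ\mu}\,
 \prod|z_i\circ\mu|^{-2b_i}|J_\mu|^2\,dV'.
\]
Locally on $Z'$, the function $\prod|z_i\circ\mu|^{-2b_i}|J_\mu|^2$ is, up to a smooth positive factor, $\prod|w_k|^{-2b'_k}$, where the $b'_k$ are the coefficients of $B_{Z'}=\mu^*B_Z-K_{Z'/Z}$ in snc coordinates $w_k$. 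The snc hypothesis on $\rho'$ is what allows this monomial form.

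Thus $g\in\mathcal I_\rho(O)$ exactly when $g\circ\mu$ satisfies the defining integrability for $\mathcal I_{\rho'}$ on $\mu^{-1}(O)$. Integrability on the whole of $O$ is equivalent to local integrability, using compactness of the fibers of $\mu$, which is properness. Since $Z$ is smooth, hence normal, we have $\mu_*\mathcal O_{Z'}=\mathcal O_Z$, so every section of $\mu_*\mathcal I_{\rho'}$ over $O$ is of the form $g\circ\mu$. This gives the desired equality.

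The main obstacle is bookkeeping rather than depth. First, we must ensure that the negative coefficients $b_i$ cause no trouble: the integrand is only defined almost everywhere, and the colon-ideal description \eqref{eq:weighted-ideal-colon} is used to see that both sides are well-defined coherent sheaves. Second, we must justify the comparison of integrals over sets that are only analytic neighborhoods, not coordinate charts. I would handle the second point by covering $\mu^{-1}(O)$ with coordinate charts adapted to $B_{Z'}$ and using the independence of the choice of coordinates and potential already noted in Definition~\ref{def:analytic-singularities}(\ref{it:analytic-integrability}).
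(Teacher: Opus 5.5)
Your proposal is correct and follows essentially the same route as the paper: reduce via a common log resolution to a single refinement $\mu$, prove $\mu_*\mathcal I_{\rho\circ\mu}=\mathcal I_\rho$ by change of variables using $B_{Z'}=\mu^*B_Z-K_{Z'/Z}$, and conclude with $\mu_*\mathcal O_{Z'}=\mathcal O_Z$. The only difference is that the paper rewrites $\mathcal I_\rho$ as the colon ideal $(\mathcal J_W(\psi):F)$ and cites Demailly's transformation rule for multiplier ideals (which also takes care of the properness/local-integrability step), whereas you carry out the Jacobian computation directly.
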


\begin{proof}
Let $\tau\colon\widetilde Z\to Z$ be a projective birational morphism between smooth varieties such that $\rho\circ\tau$ is again a log resolution.
We also write $\tau$ for its analytification.
We prove
$\tau_*\mathcal I_{\rho\circ\tau}=\mathcal I_\rho$.
Choose a coordinate neighborhood $W\subseteq Z^{\mathrm{an}}$
as in \cref{def:analytic-singularities}
(\ref{it:analytic-integrability}).
Take $F$ and $\psi$ as in \eqref{eq:weighted-local-functions}.
Put $\theta:=dz_1\wedge\cdots\wedge dz_n$.

For $g\in\mathcal O_W(W)$, \eqref{eq:weighted-ideal-colon}
gives the first equivalence below, and the transformation rule
\cite{DemaillyAnalyticMethods}*{Proposition 5.8}
gives the middle equivalence below:
\[
 \begin{aligned}
 g\in\Gamma(W,\mathcal I_\rho)
 &\Longleftrightarrow
 gF\theta\in
 \Gamma\bigl(W,\omega_W\otimes\mathcal J_W(\psi)\bigr)\\
 &\Longleftrightarrow
 \tau^*(gF\theta)\in
 \Gamma\bigl(
   \tau^{-1}(W),
   \omega_{\tau^{-1}(W)}
   \otimes\mathcal J_{\tau^{-1}(W)}(\psi\circ\tau)
 \bigr)\\
 &\Longleftrightarrow
 g\circ\tau\in
 \Gamma\bigl(\tau^{-1}(W),\mathcal I_{\rho\circ\tau}\bigr).
 \end{aligned}
\]
Here $\omega_W$ and $\omega_{\tau^{-1}(W)}$ denote the canonical bundle of $W$ and $\tau^{-1}(W)$, respectively.
The last equivalence follows from the defining integrability condition and
\[
 B_{\widetilde Z}
 =\tau^*B_Z-K_{\widetilde Z/Z}.
\]

Since
$\tau_*\mathcal O_{\widetilde Z^{\mathrm{an}}}
=\mathcal O_{Z^{\mathrm{an}}}$,
these equivalences prove the asserted equality
of ideal sheaves.
The independence of $\rho_*\mathcal I_\rho$
then follows by passing to a common log resolution.
\end{proof}

\begin{lemma}
\label{lem:analytic-klt-discrepancy-gap}
Let $(X,B)$ be a log pair, let $x\in X$ be a closed point, and assume $(X,B)$ is klt near $x$.
Let $U$ be a closed positive $(1,1)$-current on $X^{\mathrm{an}}$ admitting local psh potentials.
Then $(X,B;U)$ is analytically klt at $x$ if and only if there is $0\leq c<1$ such that
\begin{equation}\label{eq:analytic-strict-gap}
 v(U)\leq cA_{X,B}(v)
\end{equation}
for every divisorial valuation $v$ of $X$ whose center contains $x$.
It suffices to require \eqref{eq:analytic-strict-gap} for the divisorial valuations with center exactly $\{x\}$.
\end{lemma}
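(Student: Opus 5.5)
We work on a log resolution $\rho\colon Z\to X$ near $x$, with $K_Z+B_Z=\rho^*(K_X+B)$, so that $\rho^{-1}(x)$ is a union of components of an snc divisor $\sum_i E_i$ containing $\Supp B_Z$ and $\Exc(\rho)$. Since $(X,B)$ is klt near $x$, every coefficient $b_i$ of $B_Z$ is $<1$ on components meeting $\rho^{-1}(x)$. In the local description of \cref{def:analytic-singularities}(\ref{it:analytic-integrability}), analytic kltness at $x$ says that $e^{-2\varphi_W}\prod|z_i|^{-2b_i}$ is integrable near every point of $\rho^{-1}(x)$. The plan is to translate this weighted integrability into valuative terms with the valuative criterion for integrability. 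This criterion (Boucksom--Favre--Jonsson, or Demailly--Koll\'ar openness with the Guan--Zhou strong openness theorem) states that $e^{-2\psi}$ is integrable near $y$ if and only if there is $\varepsilon>0$ with $v(\psi)\leq(1-\varepsilon)A_Z(v)$ for all divisorial $v$ centered at points near $y$. We apply it to the psh function $\psi$ of \eqref{eq:weighted-local-functions}. The factor $F$ shifts both sides consistently, via \eqref{eq:weighted-ideal-colon} and the identity $A_{X,B}(v)=A_Z(v)-v(B_Z)$ recalled in \cref{sec:notation}.

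For the direction klt $\Rightarrow$ gap, we first use strong openness: if $e^{-2\varphi}\prod|z_i|^{-2b_i}$ is integrable, then so is $e^{-2(1+\delta)\varphi}\prod|z_i|^{-2b_i}$ for small $\delta>0$. Applying this to the weighted function and using the klt inequality $b_i<1$, we get that $(X,B;(1+\delta)U)$ is analytically sub-lc near $x$. This follows from the elementary direction: integrability forces $v((1+\delta)U)\leq A_{X,B}(v)$ for divisorial $v$, which can be checked on a blowup where $E$ appears, using Lelong numbers. Then $v(U)\leq(1+\delta)^{-1}A_{X,B}(v)$, so we may take $c=(1+\delta)^{-1}$. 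By compactness of $\rho^{-1}(x)$, finitely many coordinate charts suffice, so $\delta$ is uniform. Centers containing $x$ are handled because integrability holds on a neighbourhood of $\rho^{-1}(x)$.

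For gap $\Rightarrow$ klt, assume \eqref{eq:analytic-strict-gap} holds only for divisorial $v$ centered at $x$. We also need the analogous inequality for valuations centered at nearby points of $\rho^{-1}(x)$ in $Z$; these are exactly the valuations whose $X$-center is $\{x\}$, since $\rho^{-1}(x)$ is the fibre. Next we pick $c<c'<1$ and use the klt gap for $(X,B)$ at $x$, namely $A_{X,B}(v)\geq a A_Z(v)$ for some $a>0$, as in \eqref{eq:lc-discrepancy-comparison} with no coefficient-one part. These give $v(\psi)\leq(1-\varepsilon)A_Z(v)$ for all divisorial $v$ centered on $\rho^{-1}(x)$, after accounting for $F$. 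The valuative integrability criterion of Boucksom--Favre--Jonsson (via Guan--Zhou) then gives integrability at each point of $\rho^{-1}(x)$, hence $1\in\mathcal J(X,B;U)_x$.

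The main obstacle is this last step. The criterion requires the inequality for valuations centered at every point of a neighbourhood in $Z$, including points off $\rho^{-1}(x)$, which correspond to centers in $X$ passing near $x$ but not equal to $x$. The first attempt is to note that the non-integrability locus is a closed analytic set, namely the zero set of the coherent ideal $\mathcal J(\psi)$. If it met $\rho^{-1}(x)$, some point $y\in\rho^{-1}(x)$ would be a non-integrable point. The Boucksom--Favre--Jonsson statement at the single point $y$ then produces a divisorial valuation centered at $y$ itself that violates the inequality. Its $X$-center is $\{x\}$, contradicting the assumption. So only valuations with center exactly $\{x\}$ are needed, which also yields the final sentence of the lemma.
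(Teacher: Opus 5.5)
Your proposal is correct and follows essentially the same route as the paper. For the forward direction you use strong openness applied to $\psi$, then a Lelong-number estimate on a model carrying $E$, with uniformity from compactness of $\rho^{-1}(x)$. For the converse you use a valuative integrability criterion at each $y\in\rho^{-1}(x)$, applied to $(\log|F|,\psi)$ with the klt bound on the $b_i$; the paper cites Berman--Boucksom--Jonsson, Theorem B.7, with compact set $\{y\}$ for exactly this. The closed-analytic-set step in your last paragraph is unnecessary: what carries the argument is that the criterion involves only valuations centered at $y$ itself, and compactness of the fibre then finishes.
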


\begin{proof}
After replacing $X$ by a Zariski neighborhood of $x$,
assume that $(X,B)$ is klt.
Take a log resolution $\rho\colon Y\to X$ and write
\[
 K_Y+B_Y=\rho^*(K_X+B).
\]

For each $y\in\rho^{-1}(x)$, choose a coordinate neighborhood
$W$ with coordinates $z_1,\ldots,z_n$ centered at $y$ such that
\[
 B_Y|_W=\sum_{i=1}^r b_i(E_i\cap W),
 \qquad E_i\cap W=\{z_i=0\},
\]
where the $E_i$ are the components of $\operatorname{Supp}B_Y$ through $y$.
Write $(\rho^*U)|_W=dd^c\varphi_W$ with $\varphi_W$ psh.
After shrinking $W$ and adding a constant to $\varphi_W$, assume
that $|z_i|\leq1$ and $\varphi_W\leq0$.
Take $M_i,F,\psi$ as in \eqref{eq:weighted-local-functions}; then $\psi\leq0$.

Suppose first that $(X,B;U)$ is analytically klt at $x$.
The defining integrability condition and strong openness
\cite{GZ}*{Theorem 1.1} give a number $\delta_y>0$ and
a neighborhood $W_y\subseteq W$ of $y$ such that
\[
 |F|^2e^{-2(1+\delta_y)\psi}
 \in L^1_{\mathrm{loc}}(W_y).
\]
For $0<\delta\leq\delta_y$, we have almost everywhere
\[
 \begin{aligned}
 e^{-2(1+\delta)\varphi_W}
       \prod_{i=1}^r|z_i|^{-2b_i}
 &=
 \left(\prod_{i=1}^r
       |z_i|^{2\delta(M_i+b_i)}\right)
       |F|^2e^{-2(1+\delta)\psi}\\
 &\leq |F|^2e^{-2(1+\delta_y)\psi}.
 \end{aligned}
\]
Here we used $M_i+b_i\geq0$, $|z_i|\leq1$, and $\psi\leq0$.
Choose finitely many such $W_y$ covering the compact fiber
$\rho^{-1}(x)$, and take $\delta>0$ no larger than their
corresponding $\delta_y$.
Properness of $\rho$ gives an analytic neighborhood $O$ of
$x$ whose inverse image lies in this union.
Thus $(X,B;(1+\delta)U)$ is analytically klt on $O$.

Let $E$ be a prime divisor over $X$ whose center contains $x$.
Take a log resolution $\sigma\colon Y'\to X$ carrying $E$.
By \cref{lem:analytic-integrability-independence}, the
defining integrability condition also holds on
$\sigma^{-1}(O)$.
This open set meets $E$.
At a general point of this intersection, write
\[
 E=(t=0),\qquad
 \operatorname{coeff}_E B_{Y'}=1-A_{X,B}(E).
\]
A local potential $\theta$ of $\sigma^*U$ satisfies
\[
 \theta\leq\nu_E(\sigma^*U)\log|t|+O(1).
\]
Consequently, the integrand in the defining condition is
bounded below by a positive constant times
\[
 |t|^{-2\bigl((1+\delta)\nu_E(\sigma^*U)
                    +1-A_{X,B}(E)\bigr)}.
\]
Its integrability implies
\[
 (1+\delta)\nu_E(\sigma^*U)<A_{X,B}(E).
\]
By homogeneity, \eqref{eq:analytic-strict-gap} holds with
$c=(1+\delta)^{-1}$ for every divisorial valuation whose
center contains $x$.

Conversely, suppose that \eqref{eq:analytic-strict-gap}
holds for every divisorial valuation with center $\{x\}$.
Fix $y\in\rho^{-1}(x)$ and use the local notation above.
Let $v$ be a divisorial valuation whose center on $Y$
is $\{y\}$.
Its center on $X$ is $\{x\}$, so
\[
 v(U)\leq cA_{X,B}(v).
\]
Since the $E_i$ have simple normal crossings,
\[
 A_{Y,0}(v)\geq\sum_{i=1}^r v(E_i),
 \qquad
 A_{X,B}(v)\geq\sum_{i=1}^r(1-b_i)v(E_i).
\]
Put
\[
 \kappa:=
 \max\left(
 \{0\}\cup
 \left\{\frac{M_i+b_i}{1-b_i}\ \mid\ 1\leq i\leq r\right\}
 \right).
\]
Then
\[
 \sum_{i=1}^r(M_i+b_i)v(E_i)
 \leq\kappa A_{X,B}(v).
\]
Choose $\eta>0$ such that
$\eta(c+\kappa)\leq1-c$.
Using
\[
 v(\psi)=v(U)+\sum_{i=1}^r(M_i+b_i)v(E_i),
 \qquad
 v(\log|F|)=\sum_{i=1}^r M_i v(E_i),
\]
we obtain
\[
 \begin{aligned}
 &A_{Y,0}(v)+v(\log|F|)-(1+\eta)v(\psi)\\
 &\quad=
 A_{X,B}(v)-v(U)
 -\eta\left(v(U)+\sum_{i=1}^r(M_i+b_i)v(E_i)\right)\\
 &\quad\geq
 \bigl(1-c-\eta(c+\kappa)\bigr)A_{X,B}(v)
 \geq0.
 \end{aligned}
\]

Apply \cite{BermanBoucksomJonsson}*{Theorem B.7}
to the smooth algebraic variety $Y$, with the trivial
$\C^*$-action, the compact Zariski closed subset $\{y\}$,
and the functions $\log|F|$ and $\psi$.
The first function is the logarithm of the absolute value
of a holomorphic function, and the preceding inequality
verifies condition~(iii) of that theorem.
It follows that, on some analytic neighborhood $W_y$ of $y$,
\[
 |F|^2e^{-2\psi}\in L^1_{\mathrm{loc}}(W_y).
\]
By \eqref{eq:weighted-local-integrand} with $g=1$, this is precisely the
integrability condition in
\cref{def:analytic-singularities}.
Finally, finitely many such neighborhoods cover
$\rho^{-1}(x)$, and properness of $\rho$ gives a neighborhood
of $x$ whose inverse image lies in their union.
Hence $(X,B;U)$ is analytically klt at $x$.
\end{proof}

\subsection{The dynamical current and its non-klt locus}\label{subsec:current-construction}
\label{subsec:generic-thresholds}

\begin{theorem}
\label{thm:log-current-construction}
Let $(X,B,h)$ be an int-amplified dynamical triple.
Set $L:=-(K_X+B)$.
Then
\[
 T_{h,B}=\sum_{j\geq1}[P_h^jR_{h,B}]
\]
is a closed positive $(1,1)$-current on $X^{\mathrm{an}}$ admitting local psh potentials.
For every projective resolution $\rho:Y\to X$,
\begin{equation}\label{eq:constructed-current-pullback}
 \begin{gathered}
 \rho^*T_{h,B}=\sum_{j\geq1}[\rho^*P_h^jR_{h,B}],\\
 \{\rho^*T_{h,B}\}=c_1(\rho^*L)
       \in H^2_{\mathrm{dR}}(Y^{\mathrm{an}},\R).
 \end{gathered}
\end{equation}
For every divisorial valuation $v$ of $X$,
\begin{equation}\label{eq:constructed-current-values}
 v(T_{h,B})=\sum_{j\geq1}v(P_h^jR_{h,B})\leq A_{X,B}(v).
\end{equation}
In particular, the pair $(X,B;T_{h,B})$ is analytically lc.
Its multiplier ideal $\mathcal J(X,B;T_{h,B})$ is coherent and $\operatorname{Nklt}(X,B;T_{h,B})$ is an algebraic closed subset.
Finally 
\[
T_{h,B}=P_h([R_{h,B}]+T_{h,B}), \quad T_{h^a,B}=T_{h,B} \qquad (a \in \mathbb{Z}_{\geq 1}).
\]
\end{theorem}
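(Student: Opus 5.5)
The plan is to build $T_{h,B}$ from local potentials and control convergence by the mass bounds of \cref{lem:basic-dynamical-properties}. Each $D_j:=P_h^jR_{h,B}$ is effective and $\Q$-Cartier, so near any point $x\in X^{\mathrm{an}}$ we have $[D_j]=dd^c\varphi_j$ with $\varphi_j=m_j^{-1}\log|f_j|$ psh. To make the sum converge we globalize. Fix an ample Cartier divisor $H$ with a smooth positive metric form $\omega\in c_1(H)$. By \eqref{eq:spectral-decay}, the classes $[D_j]_{\mathrm{num}}$ have norm $O(\theta^j)$. Since $N^1(X)_\R$ is finite dimensional, we choose smooth closed real $(1,1)$-forms $\alpha_j$ representing $c_1(D_j)$, depending linearly on the class, so that $\|\alpha_j\|_{C^0}\le C\theta^j$ and $\alpha_j+C\theta^j\omega\ge0$. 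Then $[D_j]=\alpha_j+dd^c g_j$ with $g_j\in\operatorname{PSH}(X,\alpha_j)$, normalized by $\sup g_j=0$.

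Compactness of $\{\varphi\in\operatorname{PSH}(X,\omega):\sup\varphi=0\}$ in $L^1$ gives a uniform bound on $\int|g_j|\,dV$ after scaling by $\theta^{-j}$; more precisely, $\theta^{-j}g_j/C\in\operatorname{PSH}(X,\omega)$ after adjusting forms. Hence $\int|g_j|\le C'\theta^j$. The series $g:=\sum_j g_j$ is a decreasing limit of quasi-psh functions with $\alpha:=\sum\alpha_j$ smooth. By this $L^1$ bound $g\not\equiv-\infty$, so $T_{h,B}=\alpha+dd^cg$ is a closed positive current with local psh potentials; the partial sums converge weakly. On normal singular $X$ I would run this on a resolution $\rho\colon Y\to X$, using $\rho^*H$ semipositive, and push down. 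Local potentials on $X$ come from $\rho_*$ of functions constant on fibers, which is where normality enters. Pullback commutes with decreasing sums, giving the first line of \eqref{eq:constructed-current-pullback}. Cohomology classes pass to the weak limit, and $\sum_j c_1(D_j)=c_1(L)$ follows from \eqref{eq:numerical-telescope} and $P_h^NL\to0$.

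For \eqref{eq:constructed-current-values} we argue as follows. Generic Lelong numbers along $E$ on a model $Y$ are additive for countable decreasing sums of quasi-psh functions. Superadditivity $\nu_E(\sum)\ge\sum\nu_E$ is clear. For the reverse, the tail $\sum_{j>N}$ has Lelong number bounded by its cohomological mass, which is $O(\theta^N)$ by Siu/Demailly's bound $\nu\le$ mass against $\omega^{n-1}$ at a general point of $E$. Thus $v(T_{h,B})=\sum v(D_j)=s_{h,B}(v)\le A_{X,B}(v)$ by the proof of \cref{lem:basic-dynamical-properties} (the bound $v(S_{h,B,N})\le A_{X,B}(v)$ holds without any total invariance). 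This gives analytic lc.

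Coherence of $\mathcal J$ was shown in \cref{def:analytic-singularities}, via Grauert and \cref{lem:analytic-integrability-independence}. Algebraicity of $\operatorname{Nklt}=V(\mathcal J)$ follows from GAGA, since $X$ is projective and $\mathcal J$ is a coherent analytic ideal on $X^{\mathrm{an}}$. The identities are formal. First, $P_h([R_{h,B}]+\sum_jP_h^j[R_{h,B}])=\sum_{j\ge1}[P_h^jR_{h,B}]$, using continuity of normalized proper pushforward of currents under weak limits and $P_h[D]=[P_hD]$. Second, $T_{h^a,B}=T_{h,B}$ follows from \eqref{eq:iterate-blocks} by regrouping the positive series.

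I expect the main obstacle to be the Lelong-number additivity for the infinite sum, i.e.\ the uniform tail control of generic Lelong numbers by $O(\theta^N)$ independent of $E$'s model. I would first try bounding $\nu_E$ of the tail on a fixed $Y$ via mass against $\rho^*H$-type Kähler forms. Failing that, I would use the algebraic tail estimate of \cref{thm:lc-log-tail} when lc centers are invariant, after an iterate, which suffices since $T_{h^a,B}=T_{h,B}$.
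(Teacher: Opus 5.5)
Your proposal is correct and follows essentially the paper's route: build the limit on a smooth projective resolution $Y$ with tail masses $O(\theta^N)$ from \eqref{eq:spectral-decay}, descend to $X$ by normality, bound the generic Lelong number of the tail by its mass via the Siu decomposition, and finish with $v(S_{h,B,N})\le A_{X,B}(v)$, GAGA, and continuity of pushforward. The paper obtains $T^Y$ more directly as the bounded-mass increasing limit of $[\rho^*S_{h,B,N}]$ and applies the $dd^c$-lemma once in the form $T^Y+\rho^*\omega_A=\rho^*\omega_{A+L}+dd^c\varphi_Y$, which is precisely what your descent step needs, since a potential relative to your non-pullback form $\alpha=\sum_j\alpha_j$ need not be constant on the fibers of $\rho$; and the obstacle you anticipate does not arise, because for each fixed $v=c\ord_E$ one works on a model carrying $E$, where the tail mass is $O_\rho(\theta^N)\to0$, so no uniformity over models is needed (your fallback via \cref{thm:lc-log-tail} would not help anyway, since it controls $v(S_{h,B,N})$ rather than Lelong numbers of the current).
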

\begin{proof}
If $\dim X=0$, all divisors and currents vanish and the assertions are immediate.
Assume $\dim X>0$.
Choose $\rho(P_h)<\theta<1$ and a norm on $N^1(X)_\R$.
Lemma~\ref{lem:basic-dynamical-properties} makes every $P_h^jR_{h,B}$ $\Q$-Cartier and gives lc of $(X,B+S_{h,B,N})$ for every positive integer $N$.
By \eqref{eq:numerical-telescope} and its proof,
\begin{equation}\label{eq:constructed-current-classes}
 S_{h,B,N}\equiv L-P_h^NL,\qquad \|P_h^N\|=O(\theta^N).
\end{equation}

Fix a smooth projective resolution $\rho:Y\to X$ and a Kähler form $\omega_Y$.
For a positive $(1,1)$-current $U$ on $Y$, write
\[
 \|U\|_{\omega_Y}:=\int_Y U\wedge\omega_Y^{\dim Y-1}.
\]
The currents $[\rho^*S_{h,B,N}]$ are increasing and positive.
The first Chern class of a real divisor on the smooth projective $Y$ factors through $N^1(Y)_\R$.
Thus their cohomology classes converge to $c_1(\rho^*L)$ by \eqref{eq:constructed-current-classes}, so their masses are bounded.
Hence
\[
 T^Y_{h,B}:=\sum_{j\geq1}[\rho^*P_h^jR_{h,B}]
\]
exists as a closed positive current.
Consequently $\{T^Y_{h,B}\}=c_1(\rho^*L)$, and the positive tails satisfy
\begin{equation}\label{eq:resolution-mass-tail}
 \bigl\|T^Y_{h,B}-[\rho^*S_{h,B,N}]\bigr\|_{\omega_Y}
                         =O_\rho(\theta^N).
\end{equation}

Choose an ample Cartier divisor $A$ such that $A+L$
is ample, and Kähler representatives
$\omega_A,\omega_{A+L}$ on $X^{\mathrm{an}}$.
The $dd^c$-lemma \cite{BermanBoucksomGuedjZeriahi}*{Section 1.1, p.~186} on $Y$ gives a quasi-psh function $\varphi_Y$ such that
\[
 T^Y_{h,B}+\rho^*\omega_A
 =\rho^*\omega_{A+L}+dd^c\varphi_Y.
\]
Since the left-hand side is positive,
\[
 \varphi_Y\in
 \operatorname{PSH}(Y^{\mathrm{an}},\rho^*\omega_{A+L}).
\]
By normality of $X$ and
\cite{BermanBoucksomEyssidieuxGuedjZeriahi}*{Section 1.1},
there exists a unique function
\[
 \varphi_X\in
 \operatorname{PSH}(X^{\mathrm{an}},\omega_{A+L})
 \quad\text{such that}\quad
 \varphi_Y=\varphi_X\circ\rho.
\]
Set
\[
 T_{h,B}:=\omega_{A+L}-\omega_A+dd^c\varphi_X.
\]
Then $T_{h,B}$ admits local psh
potentials.

Since $\rho$ has degree one, change of variables gives
\[
 \rho_*\rho^*\omega_A=\omega_A,\qquad
 \rho_*\rho^*\omega_{A+L}=\omega_{A+L},\qquad
 \rho_*\varphi_Y=\varphi_X.
\]
As pushforward commutes with $dd^c$, we obtain
\[
 \rho_*T^Y_{h,B}
 =\omega_{A+L}-\omega_A+dd^c\varphi_X
 =T_{h,B}.
\]
In particular, $T_{h,B}$ is closed and positive.

For every $N\geq1$,
\[
 \rho_*[\rho^*S_{h,B,N}]=[S_{h,B,N}].
\]
Continuity of proper pushforward for weak convergence therefore gives
\[
 \begin{aligned}
 T_{h,B}
 &=\rho_*T^Y_{h,B}\\
 &=\lim_{N\to\infty}\rho_*[\rho^*S_{h,B,N}]\\
 &=\lim_{N\to\infty}[S_{h,B,N}]
 =\sum_{j\geq1}[P_h^jR_{h,B}].
 \end{aligned}
\]
The series on $X$ is independent of $\rho$.
Since $\rho$ was arbitrary, this proves \eqref{eq:constructed-current-pullback} for every projective resolution.

Next, let $v=c\ord_E$ be a divisorial valuation of $X$ and choose a smooth projective resolution $\rho\colon Y\to X$ carrying $E$.
Put
\[
 Q_N:=\rho^*T_{h,B}-[\rho^*S_{h,B,N}]\geq0.
\]
The Siu decomposition \cite{DemaillyAnalyticMethods}*{Decomposition Formula 2.18} and \eqref{eq:resolution-mass-tail} give
\[
 \begin{aligned}
 0\leq v(T_{h,B})-v(S_{h,B,N})
 &=c\,\nu_E(Q_N)\\
 &\leq
 \frac{c\,\|Q_N\|_{\omega_Y}}
      {\int_E\omega_Y^{\dim Y-1}}
 \longrightarrow0.
 \end{aligned}
\]
Since $(X,B+S_{h,B,N})$ is lc, one has $v(S_{h,B,N})\leq A_{X,B}(v)$ for every $N$.
Taking the limit proves \eqref{eq:constructed-current-values} and analytic log canonicity in the sense of \cref{def:analytic-singularities}.

The zero set of $\mathcal{J}(X,B;T_{h,B})$ is algebraic by GAGA \cite{SerreGAGA} and it coincides with $\mathrm{Nklt}(X,B;T_{h,B})$.
Stationarity follows from the recurrence $S_{h,B,N+1}=P_h(R_{h,B}+S_{h,B,N})$ and continuity of proper pushforward.
The iteration assertion follows from \eqref{eq:iterate-blocks}.
\end{proof}

\begin{proposition}\label{prop:lc-valuation-center}
Let $(X,B,h)$ be an int-amplified dynamical triple.
For every nontrivial valuation $v$ of $X$ satisfying
\[
 A_{X,B}(v)=s_{h,B}(v)<+\infty,
\]
one has
\[
 c_X(v)\subseteq\Nklt(X,B;T_{h,B}).
\]
\end{proposition}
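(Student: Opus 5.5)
We argue by contraposition. Suppose $c_X(v)\nsubseteq\Nklt(X,B;T_{h,B})$. By \cref{thm:log-current-construction} this non-klt locus is an algebraic closed subset, so a general closed point $x\in c_X(v)$ lies outside it. We reduce to the case where $(X,B)$ is klt near $x$. Since $T_{h,B}\geq0$, the integrand defining $\mathcal J(X,B;T_{h,B})$ dominates (up to a locally bounded factor) the one for $T=0$. Hence $\Nklt(X,B)\subseteq\Nklt(X,B;T_{h,B})$, and $x\notin\Nklt(X,B)$. We may also assume that every lc center of $(X,B)$ is totally invariant: by \cref{prop:log-lc-center-avoidance} this holds after replacing $h$ by an iterate, and this changes neither $s_{h,B}$ nor $T_{h,B}$ by \eqref{eq:iterate-blocks} and \cref{thm:log-current-construction}.

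Next we obtain a strict gap for divisorial valuations. Apply \cref{lem:analytic-klt-discrepancy-gap} at $x$. It gives $0\leq c<1$ such that $w(T_{h,B})\leq cA_{X,B}(w)$ for every divisorial $w$ whose center contains $x$. By \eqref{eq:constructed-current-values}, $w(T_{h,B})=s_{h,B}(w)$ for such $w$, so
\[
 s_{h,B}(w)\leq cA_{X,B}(w).
\]
To pass to $v$, let $\xi$ be the generic point of $c_X(v)$ and work in $\operatorname{Val}_{X_\xi}$. Any divisorial valuation of $X_\xi$ is centered at a generization of $\xi$, so its center on $X$ contains $c_X(v)\ni x$. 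The gap inequality therefore holds for every divisorial valuation in $\operatorname{Val}_{X_\xi}$.

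Now we approximate $v$. By \eqref{eq:valuation-discrepancy-localization} and the envelope definition \eqref{eq:valuation-discrepancy-envelope} on $X_\xi$, for every $\varepsilon>0$ and every neighborhood $\mathcal U$ of $v$ in $\operatorname{Val}_{X_\xi}$ there is a divisorial $w\in\mathcal U$ with $A_{X,B}(w)\leq A_{X,B}(v)+\varepsilon$. This yields a net of divisorial $w_\lambda\to v$ with $\limsup A_{X,B}(w_\lambda)\leq A_{X,B}(v)<\infty$. On the set $\{A_{X,B}\leq A_{X,B}(v)+1\}\cup\{v\}$, the function $s_{h,B}$ is continuous by \cref{thm:lc-log-tail}, using the totally invariant lc centers arranged above. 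Passing to the limit in the gap inequality gives
\[
 s_{h,B}(v)\leq cA_{X,B}(v).
\]

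It remains to see that $A_{X,B}(v)>0$, which yields the contradiction $A_{X,B}(v)=s_{h,B}(v)\leq cA_{X,B}(v)<A_{X,B}(v)$. Since $(X,B)$ is klt near $x$, on a log resolution the coefficients of $B_Y$ near the fiber over $x$ are $<1$. The formula $A_{X,B}(v)=A_{Y,0}(v)-v(B_Y)$ then bounds $A_{X,B}(v)$ below by a positive multiple of $A_{Y,0}(v)$, as in \eqref{eq:lc-discrepancy-comparison}. This is positive for nontrivial $v$ centered near $x$.

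The main obstacle is the approximation step. We must ensure that the divisorial approximants still have centers containing $x$, which the localization to $X_\xi$ is meant to handle. We also need the continuity of $s_{h,B}$ to apply along the net, which is why we pass first to the iterate with totally invariant lc centers.
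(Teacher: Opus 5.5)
Your proof is correct and follows the same outline as the paper's. You pick a closed point $x\in c_X(v)$ outside $\Nklt(X,B;T_{h,B})$. You then use \cref{lem:analytic-klt-discrepancy-gap} with \eqref{eq:constructed-current-values} to get $s_{h,B}(w)\leq cA_{X,B}(w)$ for divisorial $w$ whose centers contain $x$. Finally you localize at the generic point of $c_X(v)$, pass to $v$, and contradict $A_{X,B}(v)>0$.

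The only difference is the limiting step. You pass to $v$ through a net of divisorial approximants and use continuity of $s_{h,B}$ on sets of bounded log discrepancy. This forces you first to replace $h$ by an iterate making all lc centers totally invariant, so that \cref{thm:lc-log-tail} applies. That replacement is legitimate, since $s_{h,B}$ and $T_{h,B}$ are unchanged.

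The paper instead applies the gap to the finite partial sums, $w(S_{h,B,N})\leq s_{h,B}(w)\leq cA_{X,B}(w)$. Since $w\mapsto w(S_{h,B,N})$ is continuous on all of $\operatorname{Val}_X$, the envelope definition \eqref{eq:valuation-discrepancy-envelope} gives $v(S_{h,B,N})\leq cA_{X,B}(v)$ directly. Monotone convergence in $N$ then yields $s_{h,B}(v)\leq cA_{X,B}(v)$. This avoids the iterate, the uniform tail estimate, and any continuity of $s_{h,B}$, so it is the more economical route. Yours is equally valid but uses heavier input than necessary.
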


\begin{proof}
Put $Z:=\Nklt(X,B;T_{h,B})$ and $D:=c_X(v)$.
Suppose that $D\nsubseteq Z$, and choose a closed point
$x\in D\setminus Z$.
Since $\Nklt(X,B)\subseteq Z$, the pair $(X,B)$ is klt
near $x$, hence also at $\eta_D$.
As $v$ is nontrivial, this gives $A_{X,B}(v)>0$.

By \cref{lem:analytic-klt-discrepancy-gap} and
\eqref{eq:constructed-current-values}, there is $0<c<1$ such that
\[
 s_{h,B}(w)\leq cA_{X,B}(w)
\]
for every divisorial valuation $w$ whose center contains $x$.
In particular, if $w$ is a divisorial valuation of $\mathcal O_{X,\eta_D}$, then $c_X(w)$ contains $D$, hence $x$, and
\[
 w(S_{h,B,N})\leq s_{h,B}(w)\leq cA_{X,B}(w)
 \qquad(N\geq1).
\]
Log discrepancies are unchanged by localization by \eqref{eq:valuation-discrepancy-localization}, and the same holds for divisor values.
Continuity of evaluation on the $\Q$-Cartier divisor
$S_{h,B,N}$ and \eqref{eq:valuation-discrepancy-envelope},
applied on this localization, therefore give
\[
 v(S_{h,B,N})\leq cA_{X,B}(v).
\]
Letting $N\to\infty$, we obtain
\[
 s_{h,B}(v)\leq cA_{X,B}(v)<A_{X,B}(v),
\]
contrary to the assumption.
\end{proof}

\begin{lemma}
\label{lem:uniform-nlc-confinement}
Let $(X,B,h)$ be an int-amplified dynamical triple.
Let $\mathbf c=(c_{j,N})$ be a coefficient family.
Then, for all sufficiently large $N$,
\begin{equation}\label{eq:uniform-nlc-confinement}
 \Nlc\bigl(X,\mathcal C_{h,N}(B;\mathbf c)\bigr)
 \subseteq\Nklt(X,B;T_{h,B}).
\end{equation}
\end{lemma}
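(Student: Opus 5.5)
We argue by contradiction, using compactness of the set of points where the pair is not klt against the current. Put $Z:=\Nklt(X,B;T_{h,B})$, which is a Zariski closed subset by \cref{thm:log-current-construction}, and let $U:=X\setminus Z$. The goal is to find $N_0$ such that $(X,\mathcal C_{h,N}(B;\mathbf c))$ is lc on $U$ for every $N\geq N_0$. Write
\[
 \mathcal C_{h,N}(B;\mathbf c)=B+S_{h,B,N}+\sum_{j=1}^m c_{j,N}P_h^jR_{h,B}.
\]
We must control $v(\mathcal C_{h,N}(B;\mathbf c)-B)$ against $A_{X,B}(v)$ for divisorial $v$ whose center meets $U$.

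\textbf{Step 1: a uniform gap on $U$.} Since $\Nklt(X,B)\subseteq Z$, the pair $(X,B)$ is klt on $U$. By \cref{lem:analytic-klt-discrepancy-gap}, each closed point $x\in U$ has a constant $c_x<1$ with $v(T_{h,B})\leq c_xA_{X,B}(v)$ for all divisorial $v$ whose center contains $x$. The proof of that lemma shows more: analytic klt at $x$ gives $(1+\delta)T_{h,B}$ analytically klt on an analytic neighborhood $O_x$. Hence the same $c_x=(1+\delta_x)^{-1}$ works for every divisorial $v$ whose center meets $O_x$.

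Here $U$ is not compact, which is the main obstacle. I would handle it by working on $X\setminus Z$ via a different route. Any divisorial $v$ violating lc for $\mathcal C_{h,N}$ has center $c_X(v)$ meeting $U$. The bad locus $\Nlc(X,\mathcal C_{h,N})$ is Zariski closed, so it suffices to show it has no component $W\not\subseteq Z$.

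To get uniformity, I would pass to a log resolution $\rho\colon Y\to X$ of $(X,B+\sum_{j\le m}P_h^jR_{h,B})$ together with $Z$. For each $v$, I would bound $v(T_{h,B})$ via \cref{lem:analytic-klt-discrepancy-gap} applied at a general closed point of $c_X(v)\cap U$. The strong-openness constant is locally uniform and the relevant points range over a constructible family, so by noetherian induction on strata one obtains a single $c<1$ valid for all divisorial $v$ with $c_X(v)\not\subseteq Z$. I expect this uniformity to be the delicate point. An alternative is to argue that the loci $\{x:\ \text{the best gap at }x>1-1/k\}$ form an increasing family of Zariski-constructible (via jumping of multiplier ideals $\mathcal J(X,B;(1+1/k)T_{h,B})$) open sets exhausting $U$, hence stabilizing by noetherianity. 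This second route is the one I would try first: the ideals $\mathcal J(X,B;(1+1/k)T_{h,B})$ are coherent and algebraic by GAGA, and their cosupports decrease to $Z$ by strong openness, so they equal $Z$ for some fixed $k$. That gives $v(T_{h,B})\leq(1+1/k)^{-1}A_{X,B}(v)=:cA_{X,B}(v)$ whenever $c_X(v)\not\subseteq Z$.

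\textbf{Step 2: absorbing the correction.} Since $P_h^jR_{h,B}$ is effective, $v(S_{h,B,N})\leq v(T_{h,B})$. For the finitely many correction terms, \cref{lem:lc-log-degree} does not directly apply, because the supports may contain lc centers. However, lc centers lie in $\Nklt(X,B)\subseteq Z$, and we only need valuations whose centers avoid $Z$ generically. So I would use the klt-type bound $v(P_h^jR_{h,B})\leq M\,A_{X,B}(v)$ for such $v$. This follows because $(X,B+\epsilon P_h^jR_{h,B})$ is lc outside $\Nklt(X,B)$ for small $\epsilon$, by openness of klt and the fact that $(X,B)$ is klt there; one then applies the envelope definition. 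Then
\[
 v(\mathcal C_{h,N}-B)\leq\Bigl(c+M\sum_{j}|c_{j,N}|\Bigr)A_{X,B}(v)\leq A_{X,B}(v)
\]
once $\sum_j|c_{j,N}|\leq(1-c)/M$, which holds for large $N$ since $c_{j,N}\to0$.

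\textbf{Step 3: conclusion.} The inequality in Step 2 holds for all divisorial $v$ with $c_X(v)\not\subseteq Z$, so $\Nlc(X,\mathcal C_{h,N}(B;\mathbf c))\subseteq Z$ for $N$ large, which is \eqref{eq:uniform-nlc-confinement}.
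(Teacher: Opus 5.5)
Your proposal is correct and essentially the paper's proof. Your second route in Step 1 is exactly the paper's argument: the algebraic loci $\Nklt(X,B;(1+\ell^{-1})T_{h,B})$ decrease to $Z$ (the paper gets this from \cref{lem:analytic-klt-discrepancy-gap}, which rests on strong openness), Noetherianity fixes a uniform $\ell_0$, and Step 3 is the same valuative comparison. The first route via noetherian induction on strata can simply be dropped.

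The only difference is in Step 2. For $N\geq m$ the terms $P_h^jR_{h,B}$ with $j\leq m$ are already summands of $S_{h,B,N}$, so the condition $-1\leq c_{j,N}\leq\ell_0^{-1}$ gives $0\leq\mathcal C_{h,N}(B;\mathbf c)-B\leq(1+\ell_0^{-1})S_{h,B,N}\leq(1+\ell_0^{-1})T_{h,B}$ directly. Your separate bound $v(P_h^jR_{h,B})\leq M\,A_{X,B}(v)$ from a global log resolution is valid, but this observation makes it unnecessary.
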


\begin{proof}
Put $Z:=\Nklt(X,B;T_{h,B})$.
For each positive integer $\ell$, set
\[
 Z_\ell:=\Nklt(X,B;(1+\ell^{-1})T_{h,B}).
\]
Coherence of multiplier ideals and GAGA show that the
$Z_\ell$ are algebraic closed subsets, and
\[
 Z_1\supseteq Z_2\supseteq\cdots\supseteq Z.
\]

Fix a closed point $x\in X\setminus Z$.
Since $(X,B)$ is klt near $x$, \cref{lem:analytic-klt-discrepancy-gap} gives $0\leq c<1$ such that
\[
 v(T_{h,B})\leq cA_{X,B}(v)
 \qquad\text{whenever }c_X(v)=\{x\}.
\]
Choose $\ell$ with $(1+\ell^{-1})c<1$.
Applying the same lemma to $(1+\ell^{-1})T_{h,B}$ gives $x\notin Z_\ell$.
Thus $\bigcap_{\ell\geq1}Z_\ell=Z$.
Noetherianity implies that $Z_{\ell_0}=Z$ for some
$\ell_0$, so $(X,B;(1+\ell_0^{-1})T_{h,B})$ is
analytically klt on $X\setminus Z$.

Since $c_{j,N}\to0$, for all sufficiently large $N$ we have
$N\geq\max\{1,m\}$ and $-1\leq c_{j,N}\leq\ell_0^{-1}$ for
$1\leq j\leq m$.
By \cref{def:moment-conditions,thm:log-current-construction}, we have
\[
 0\leq \mathcal C_{h,N}(B;\mathbf c)-B\leq(1+\ell_0^{-1})S_{h,B,N},\qquad
 [\mathcal C_{h,N}(B;\mathbf c)-B]\leq(1+\ell_0^{-1})T_{h,B}.
\]
Comparison of singularities shows that $(X,\mathcal C_{h,N}(B;\mathbf c))$ is
klt on $X\setminus Z$, proving
\eqref{eq:uniform-nlc-confinement}.
\end{proof}

\begin{theorem}\label{thm:invariant-nklt}
Let $(X,B,h)$ be an int-amplified dynamical triple, and put $Z=\Nklt(X,B;T_{h,B})$.
After replacing $h$ by an iterate, every lc center of $(X,B)$ and every irreducible component of $Z$ is totally invariant.
The current $T_{h,B}$ and the set $Z$ are unchanged by this replacement.
\end{theorem}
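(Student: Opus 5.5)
The plan is to show that $Z$ satisfies $h^{-1}(Z)=Z$. Once that holds, the totally invariant set $Z$ has finitely many irreducible components. The map $h^{-1}$ then permutes finite unions of these components. The periodicity argument in the proof of \cref{prop:log-lc-center-avoidance} will make each component totally invariant after an iterate. We take a common multiple of this iterate and the one supplied by \cref{prop:log-lc-center-avoidance} for the lc centers. The claim that $T_{h,B}$ and $Z$ are unchanged is immediate from $T_{h^a,B}=T_{h,B}$ in \cref{thm:log-current-construction}, since $Z$ is defined from $(X,B;T_{h,B})$ alone.

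To prove $h^{-1}(Z)=Z$, we will use the valuative criterion of \cref{lem:analytic-klt-discrepancy-gap} at points where $(X,B)$ is klt. Separately, we treat $\Nklt(X,B)$, which is the union of the lc centers and is totally invariant after an iterate. We only need $h^{-1}(Z)=Z$ for some iterate. Alternatively, the same valuative argument applied to the lc centers should already show $h^{-1}(\Nklt(X,B))=\Nklt(X,B)$, using \eqref{eq:finite-divisorial-discrepancy} as in \cref{prop:log-lc-center-avoidance}.

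The key tool is the stationarity $T_{h,B}=P_h([R_{h,B}]+T_{h,B})$. At the level of valuations, this is the identity \eqref{eq:nonflat-stationarity}:
\[
(\deg h)\bigl(A_{X,B}(v)-s_{h,B}(v)\bigr)=\sum_i n_i\bigl(A_{X,B}(u_i)-s_{h,B}(u_i)\bigr).
\]
Here $v$ is divisorial and the $u_i$ are its extensions, which are again divisorial. We use $v(T_{h,B})=s_{h,B}(v)$ from \eqref{eq:constructed-current-values}.

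For the inclusion $h^{-1}(Z)\subseteq Z$, take $x'\notin Z$ with $x=h(x')$. Suppose instead that $x'\notin Z$ but $x\in Z$; we want a contradiction. We argue the converse direction in the same way, as follows.

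Let $x\notin Z$. \Cref{lem:analytic-klt-discrepancy-gap} gives $s_{h,B}(v)\le cA_{X,B}(v)$ for all divisorial $v$ whose center contains $x$. Now let $u$ be divisorial with center containing some $x'\in h^{-1}(x)$, and put $v=h_*u$, whose center contains $x$. All terms in the identity are nonnegative, so
\[
A(u)-s(u)\le (\deg h)(1-c)A(v).
\]
We then compare $A(v)$ with $A(u)$ using $A(v)=A(u)+u(R_{h,B})$. Since $u(R_{h,B})$ is at most $C\,A(u)$ near a klt point, by \cref{lem:lc-log-degree}-type bounds, $A(v)\le C'A(u)$. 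This gives a gap, but with the constant $(\deg h)(1-c)C'$, which need not be less than $1$. This is the main obstacle.

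Instead, I would use integrability directly. Locally, $e^{-2\varphi}$ for $\varphi$ a potential of $h^*T_{h,B}+[R_{h,B}]$ relates to that of $T_{h,B}$ through the change of variables for the finite map $h$; the Jacobian accounts for $R_h$. We have $h^*T_{h,B}=h^*P_h([R_{h,B}]+T_{h,B})\ge [R_{h,B}]+T_{h,B}$, because $h^*h_*\ge\deg h\cdot\mathrm{id}$ on positive currents. Hence integrability for $(X,B;T_{h,B})$ near $x$ pulls back under $h$ to integrability for $(X,B;T_{h,B})$ near $h^{-1}(x)$, via the $\Q$-Cartier log ramification formula $K_X+B+R_{h,B}=h^*(K_X+B)$. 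This yields $h^{-1}(Z)\subseteq Z$.

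For the other inclusion, let $x\notin h(\cdots)$; more precisely, let $x'\notin Z$ for all $x'\in h^{-1}(x)$. We use $T_{h,B}=P_h([R_{h,B}]+T_{h,B})$ and pushforward of integrable weights: a trace or norm of $L^1$ functions under a finite map, together with a Hölder or convexity argument giving $e^{-2\varphi/\deg}$ control. This requires care with strong openness (\cite{GZ}) to absorb exponents, and it is the step I expect to be delicate. From it we conclude $x\notin Z$, so $Z\subseteq h^{-1}(Z)$ and hence $h^{-1}(Z)=Z$.
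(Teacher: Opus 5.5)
There is a genuine gap in the proof of $h^{-1}(Z)=Z$, which is the whole content of the statement.

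\textbf{The pullback step fails.} Your analytic argument rests on $h^*T_{h,B}\geq[R_{h,B}]+T_{h,B}$, which you deduce from ``$h^*h_*\geq\deg h\cdot\mathrm{id}$ on positive currents''. That inequality is false. For an endomorphism of $\mathbf P^1$ of degree $d\geq2$ and a non-critical point $p$, the current $h^*h_*[p]$ has coefficient $1$ at $p$, not $d$. Only $h^*h_*\geq\mathrm{id}$ holds, which gives $h^*T_{h,B}\geq(\deg h)^{-1}([R_{h,B}]+T_{h,B})$. This is the same factor $\deg h$ that defeated your valuative attempt, and it is too weak to transport klt from $x$ to its preimages.

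\textbf{The inclusions are mislabelled.} ``Klt at $x$ implies klt along $h^{-1}(x)$'' is $h(Z)\subseteq Z$, i.e.\ $Z\subseteq h^{-1}(Z)$. ``Klt along all of $h^{-1}(x)$ implies klt at $x$'' is only $Z\subseteq h(Z)$. Even granting both, you would obtain $h(Z)=Z$, which does not give $h^{-1}(Z)=Z$: take $z\mapsto z^2$ on $\mathbf P^1$ and $Z=\{1\}$. The pushforward/H\"older step is also never carried out.

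\textbf{The missing idea} is to prove only the inclusion the valuative identity handles well. Write $A=A_{X,B}$. The inclusion $h^{-1}(Z)\subseteq Z$ says that a \emph{single} preimage $y\notin Z$ forces $x=h(y)\notin Z$.

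After making the lc centers totally invariant, $(X,B)$ is klt near both $x$ and $y$. Moreover, $R_{h,B}$ contains no lc center, so \cref{lem:lc-log-degree} gives $u(R_{h,B})\leq M A(u)$ for all $u$.

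Take a divisorial $v$ with center $\{x\}$. \cref{lem:finite-divisorial} supplies an extension $u_1$ with center $\{y\}$. Keep only this summand in \eqref{eq:nonflat-stationarity} and use $A(v)=A(u_1)+u_1(R_{h,B})$. This gives
\[
 (\deg h)\bigl(A(v)-v(T_{h,B})\bigr)\geq A(u_1)-u_1(T_{h,B})\geq\epsilon A(u_1)\geq\frac{\epsilon}{1+M}A(v),
\]
where $\epsilon>0$ is the gap at $y$ from \cref{lem:analytic-klt-discrepancy-gap}. That lemma only needs \emph{some} positive uniform gap at $x$, so the factor $\deg h$ is harmless. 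Your ``constant not less than one'' obstacle came from running the identity in the other direction, where it only bounds $A(u)-s_{h,B}(u)$ from above and says nothing about a gap at $u$.

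The reverse inclusion then needs no analysis at all. The descending chain $Z\supseteq h^{-1}(Z)\supseteq h^{-2}(Z)\supseteq\cdots$ of closed algebraic subsets stabilizes, and applying $h^k$ together with surjectivity gives $h^{-1}(Z)=Z$.

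The remaining parts of your plan are fine: the periodicity of the components, the common iterate with \cref{prop:log-lc-center-avoidance}, and the invariance of $T_{h,B}$ and $Z$ under iteration.
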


\begin{proof}
The assertion is immediate if $\dim X=0$.
By \cref{prop:log-lc-center-avoidance}, replace $h$ by an iterate so that every lc center of $(X,B)$ is totally invariant.
By \cref{thm:log-current-construction}, iteration leaves the current and its non-klt locus unchanged.
Write $T:=T_{h,B}$, $R:=R_{h,B}$, $A:=A_{X,B}$, and $d:=\deg h$.

The divisor $R$ contains no lc center by \cref{prop:log-lc-center-avoidance}.
Thus \cref{lem:lc-log-degree} gives a rational number $M>0$ such that $(X,B+M^{-1}R)$ is lc.
Consequently,
\[
 u(R)\leq M A(u)
\]
for every divisorial valuation $u$ of $X$.

Fix a closed point $y\notin Z$ and put $x:=h(y)$.
Since $\Nklt(X,B)\subseteq Z$ is totally invariant, $(X,B)$ is klt near both points.
By \cref{lem:analytic-klt-discrepancy-gap}, there is $\epsilon>0$ such that
\begin{equation}\label{eq:invariance-source-gap}
 A(u)-u(T)\geq\epsilon A(u)
\end{equation}
for every divisorial valuation $u$ with $c_X(u)=\{y\}$.

Let $v$ be any divisorial valuation with $c_X(v)=\{x\}$.
By \cref{lem:finite-divisorial}, its extensions $u_1,\ldots,u_q$ through $h$ are divisorial, and one of them, say $u_1$, has center $\{y\}$.
Normalize them by $h_*u_i=v$ and let $n_i$ be their positive integer norm weights, with $\sum_i n_i=d$.
By \eqref{eq:nonflat-stationarity}, \eqref{eq:constructed-current-values}, and
$A(v)=A(u_1)+u_1(R)\leq(1+M)A(u_1)$, we obtain
\begin{equation}\label{eq:analytic-stationary-gap}
 \begin{aligned}
 d\bigl(A(v)-v(T)\bigr)
 &\geq A(u_1)-u_1(T)\\
 &\geq\epsilon A(u_1)\\
 &\geq\frac{\epsilon}{1+M}A(v).
 \end{aligned}
\end{equation}
This is a uniform strict gap for all divisorial valuations centered at $x$, so
\cref{lem:analytic-klt-discrepancy-gap} implies $x\notin Z$.

Thus $h^{-1}(Z)\subseteq Z$.
Since $Z$ is algebraic and closed, the descending chain of its inverse images stabilizes.
Surjectivity of $h$ then gives $h^{-1}(Z)=Z$.
A further iterate makes every irreducible component of $Z$ totally invariant, as in the proof of \cref{prop:log-lc-center-avoidance}.
\end{proof}

\section{Local dynamics and divisorial eigenvaluations}\label{sec:eigenvaluations}

After replacing $h$ by an iterate, we construct a divisorial eigenvaluation centered at each irreducible component of $\operatorname{Nklt}(X,B;T_{h,B})$ not contained in $\operatorname{Nklt}(X,B)$.
In Section~6, these valuations will be realized as prime divisors on equivariant birational models.

\begin{theorem}
\label{thm:generic-eigen-log-pair}
Let $(X,B,h)$ be an int-amplified dynamical triple.
After replacing $h$ by an iterate, for every irreducible component $C$ of $\Nklt(X,B;T_{h,B})$ not contained in $\Nklt(X,B)$, there are a prime divisor $E$ over $X$ with center $C$ and an integer $e>1$ such that, for all integers $j\geq0$ and $M\geq1$,
\begin{equation}\label{eq:global-eigen-series}
\begin{gathered}
 h_*\ord_E=e\ord_E,\qquad s_{h,B}(E)=A_{X,B}(E),\\
 \ord_E(P_h^jR_{h,B})=(e-1)e^{-j}A_{X,B}(E)\quad(j\geq0),\qquad
 A_{X,B+S_{h,B,M}}(E)=e^{-M}A_{X,B}(E).
\end{gathered}
\end{equation}
The only extension of $\ord_E$ through $h$ is $\ord_E/e$.
The integer $e$ may depend on $C$.
\end{theorem}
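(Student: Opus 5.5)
We follow the strategy sketched in the introduction. First replace $h$ by an iterate as in \cref{thm:invariant-nklt}, so that every lc center of $(X,B)$ and every component $C$ of $Z:=\Nklt(X,B;T_{h,B})$ is totally invariant. Then $h$ induces a finite local endomorphism of $R:=\mathcal O_{X,\eta_C}$, and $(X,B)$ is klt at $\eta_C$ because $C\not\subseteq\Nklt(X,B)$. The aim is to produce a valuation $v$ centered at $C$ with $h_*v=\lambda v$ and $A_{X,B}(v)=s_{h,B}(v)$. The last three identities in \eqref{eq:global-eigen-series} then follow formally. Indeed, if $\ord_E/e$ is the unique extension, then \eqref{eq:finite-divisorial-discrepancy} gives $A(E)=\frac1e A(E)+\frac1e\ord_E(R_{h,B})$, so $\ord_E(R_{h,B})=(e-1)A(E)$. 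Moreover $\ord_E(P_hD)=\frac1e\ord_E(D)$, by the norm formula \eqref{eq:finite-norm} with a single extension of weight $\deg h$. Iterating yields $\ord_E(P_h^jR_{h,B})=(e-1)e^{-j}A(E)$. Summing over $j\ge1$ gives $s_{h,B}(E)=A(E)$, and subtracting the partial sum gives $A_{X,B+S_{h,B,M}}(E)=e^{-M}A(E)$.

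\emph{Step 1 (threshold set).} Since $C\subseteq Z$, \cref{lem:analytic-klt-discrepancy-gap} and the continuity in \cref{thm:lc-log-tail} should yield valuations with $\sup_v s_{h,B}(v)/A_{X,B}(v)=1$ over valuations centered at $C$. To make this algebraic, we build a linearly bounded $\mathfrak m$-filtration $\mathfrak c_\bullet$ of $R$ as an invariant average of the ideals of the divisors $P_h^jR_{h,B}$ (or $S_{h,B,N}$), arranged so that $v(\mathfrak c_\bullet)=s_{h,B}(v)$ for $v$ centered at $C$. Then $\lct_R(\mathfrak c_\bullet)=1$. Let $\mathscr V$ be the set of lct-computing valuations normalized by $A=v(\mathfrak c_\bullet)=1$. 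By \eqref{eq:nonflat-stationarity}, $A-s\geq0$ holds for all extensions, with weighted sum zero. Hence if $h_*u\in\mathscr V$, every extension $u'$ of $h_*u$ satisfies $A(u')=s(u')$. Rescaling by $\lambda=q$, where $q=A(h_*u)/A(u)$ is fixed by the normalization, places $qu$ in $\mathscr V$.

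\emph{Step 2 (unique minimizer).} By \cite{Xu}*{Theorem 1.1}, lct-computing valuations of the filtration include quasi-monomial ones, and minimizers of the normalized volume on $\mathscr V$ exist. Convexity along geodesics in the sense of \cite{BLQ} should give uniqueness, and the minimizer $v$ is quasi-monomial. For an extension $u$ of $v$, the finite-map volume comparison gives $\vol(u)\deg$-relations, which should force $qu$ to also minimize. Uniqueness then gives $qu=v$, so $v$ is the only extension up to this scaling and $h_*v=qv$. This is the main obstacle: establishing the volume comparison and the convexity needed for uniqueness over the constrained set $\mathscr V$, rather than over all valuations.

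\emph{Step 3 (divisorial perturbation).} Write $v$ as monomial in a log-smooth chart $(Y,\sum E_i)$ with weights $\alpha\in\R^r_{>0}$. Since $h_*$ is linear on the relevant cone of monomial valuations after passing to an equivariant-enough model, and $h_*v=qv$, the induced linear map $M$ on the simplicial cone $\sigma$ is integral. It sends $\alpha$ to $q\alpha$ and preserves the face $\sigma$ after iteration. A suitable iterate of $M$ then has a rational eigenvector near $\alpha$ in the relative interior of the same face, obtained by Perron--Frobenius on the minimal invariant rational face containing $\alpha$. This eigenvector gives $\ord_E$ with $h_*\ord_E=e\ord_E$. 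Since $A$ and $s$ are both linear on the face, and $A-s\geq0$ vanishes at the interior point $\alpha$, the difference vanishes on the whole face. This gives $s(E)=A(E)$, and the center of $E$ remains $C$. Integrality of $e>1$ follows from $e$ being an eigenvalue of an integer matrix with rational eigenvector, together with the ramification index $e=e(F/E)$. Uniqueness of the extension follows because $C$ and $E$ are totally invariant after a further iterate, as in \cref{prop:log-lc-center-avoidance}.
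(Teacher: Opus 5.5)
There is a genuine gap, and it sits in Step 1, not only in the Step 2 you flagged. First, no linearly bounded $\mathfrak m$-filtration can satisfy $v(\mathfrak c_\bullet)=s_{h,B}(v)$ for all $v$ centered at $C$. The inclusion $\mathfrak m^{c_0m}\subseteq\mathfrak c_m$ forces $v(\mathfrak c_\bullet)\le c_0v(\mathfrak m)$. But $s_{h,B}$ is a value on divisors, some of which must contain $C$, so it is unbounded relative to $v(\mathfrak m)$. Only an inequality $u(\mathfrak c_\bullet)\le s(u)$ is available.

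More seriously, your backward-invariance step does not follow. Knowing $A(u')=s(u')$ for the extensions of $w\in\mathscr V$ says nothing about whether $u'$ computes the threshold of $\mathfrak c_\bullet$. The ratio $A(h_*u)/A(u)=1+u(R_{h,B})/A(u)$ is also not fixed by any normalization. What is needed is a fixed factor $\gamma$ satisfying two conditions for every $u$ with $\varphi_*u\in\mathscr V$: $(\varphi_*u)(\mathfrak c_\bullet)=\gamma\,u(\mathfrak c_\bullet)$ and $u(R_{h,B})\ge1-\gamma^{-1}$. Only then does one get $A(u)=\gamma^{-1}$ and $\gamma u\in\mathscr V$.

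The paper gets this by averaging over valuations rather than over divisors. It takes $\gamma=(d/d_C)^{1/r}$ and $F=\gamma^{-1}\varphi_*$. It then takes the closure of an $F$-orbit of valuations with $A=s$, which is bounded by \cref{prop:nonflat-attraction}. Next it passes to the maximal-volume locus $\Omega$ of that closure. This locus is $F$-stable, and extensions are unique on it, by the equality case of \cref{lem:nonflat-volume-equality}. Finally it chooses an $F$-invariant probability measure $\mu$ on $\Omega$ and builds $\mathfrak c_\bullet$ from $\mathcal L(f)=\int v(f)\,d\mu$.

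Then $\mathcal L(\varphi f)=\gamma\mathcal L(f)$ and $\mathcal L(R_{h,B})=\gamma-1$. Multiplicity rigidity \cite{BLQ}*{Proposition~3.12} gives the scaling identity, and $\lct_R(\mathfrak c_\bullet)=1$ needs a Carath\'eodory plus geodesic-convexity argument. Without this construction, Step 2 has no set $\mathscr V$ on which ``uniqueness gives $qu=v$'' can run. Even then, existence of a minimizer is not automatic: the paper uses Blum's approximation-and-constructibility argument, after realizing $\eta_C$ as a closed point over $\bar k$ and descending via Galois.

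Step 3 also has errors.

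\begin{itemize}
\item $s_{h,B}$ is not linear on the monomial cone. Each $\beta\mapsto v_\beta(D)$ is only concave piecewise linear unless $D$ is snc in the chart, and the infinitely many $P_h^jR_{h,B}$ cannot all be made snc in one chart. A nonnegative convex function $A-s$ vanishing at an interior point need not vanish nearby.
\item Uniqueness of the extension does not follow from total invariance. There is no finiteness here comparable to that of lc centers in \cref{prop:log-lc-center-avoidance}, and other valuations over $C$ may push forward to a multiple of $\ord_E$. The paper instead applies the equality case of \cref{lem:nonflat-volume-equality} to $F^r(\ord_E)=\ord_E$. After that, $s(E)=A(E)$ follows formally, exactly as in your first paragraph.
\item The eigenvalue $\gamma$ may be irrational, and the induced map on weights is $QP^{-1}$, not an integer matrix.
\item Perron--Frobenius is unnecessary. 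Passing to $g=h^r$ makes the eigenvalue $q=d/d_C$ rational. Rational independence of the weights $\alpha$ then turns $Q\alpha=qP\alpha$ into $Q=qP$. Hence every rational monomial valuation in the cone gives a divisorial eigenvaluation with center $C$.
\end{itemize}
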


\subsection{Local setting and valuation estimates}\label{subsec:local-valuation-tools}
\label{subsec:inverse-valuation-dynamics}

\begin{notation}\label{not:local-dynamics}
Throughout this section, fix an int-amplified dynamical triple $(X,B,h)$ such that every lc center of $(X,B)$ and every irreducible component of
\[
 Z:=\Nklt(X,B;T_{h,B})
\]
is totally invariant.
Fix an irreducible component $C$ of $Z$ not contained in $\Nklt(X,B)$.
These assumptions are available after an iterate by \cref{thm:invariant-nklt}.
Put
\[
\begin{gathered}
 T:=T_{h,B},\qquad \Delta:=R_{h,B},\qquad
 S_N:=S_{h,B,N},\qquad s:=s_{h,B},\qquad A:=A_{X,B},\\
 R:=\mathcal O_{X,\eta_C},\qquad \mathfrak m:=\mathfrak m_R,\qquad
 K:=\operatorname{Frac}(R)=K(X),\\
 \kappa:=R/\mathfrak m,\qquad r:=\dim R=\operatorname{codim}_X C>0,
 \qquad \operatorname{Val}_R:=\operatorname{Val}_{\Spec R}.
\end{gathered}
\]
The pair $(\Spec R,B|_{\Spec R})$ is klt.
Valuations in $\operatorname{Val}_R$ are nonnegative on $R$, and their centers on $X$ contain $C$.
Total invariance of $C$ induces a finite injective local map
$\varphi:=h^\#\colon R\to R$, with commutative diagram
\[
\begin{tikzcd}[column sep=large,row sep=small]
 \Spec R \arrow[r,"\Spec\varphi"] \arrow[d]
   & \Spec R \arrow[d] \\
 X \arrow[r,"h"'] & X.
\end{tikzcd}
\]
For $N\geq1$, write $(\varphi^N)_*u:=u\circ\varphi^N$.
An extension of $v$ through $\varphi^N$ is a valuation $u$ with
$(\varphi^N)_*u=v$; if $v$ is centered at $\mathfrak m$, so is every extension.
Set
\begin{equation}\label{eq:local-degrees}
 d:=\deg h,\qquad d_C:=\deg(h|_C),\qquad
 \gamma:=(d/d_C)^{1/r},\qquad F:=\gamma^{-1}\varphi_*.
\end{equation}
We note that $\gamma>1$.
To see this, choose an ample Cartier divisor $H$ on $X$.
Since $(h^N)_*[C]=d_C^N[C]$, the projection formula and \cite{MengBuildingBlocks}*{Lemma~3.8} give
\[
 \left(\frac{d_C}{d}\right)^N(H^{\dim C}\cdot C)
 =\frac{[C]\cdot(h^N)^*(H^{\dim C})}{d^N}
 \longrightarrow0.
\]
As $H^{\dim C}\cdot C>0$, we obtain $d_C<d$, hence $\gamma>1$.
Replacing $h$ by $h^N$ leaves $R,A,s,T$ unchanged and raises
$d,d_C,\gamma$ to their $N$th powers.
\end{notation}

For $0\ne f\in R$, put
$\ord_{\mathfrak m}(f):=\max\{j\in\mathbb Z_{\geq0}\mid f\in\mathfrak m^j\}$.
For an $\mathfrak m$-primary ideal $I$, write
\[
 e_R(I):=\lim_{q\to\infty}
              \frac{r!\,\ell_R(R/I^q)}{q^r},
\]
where $\ell_R$ denotes module length.
For a valuation $v$ centered at $\mathfrak m$, its valuation ideals
and volume are
\[
 \mathfrak a_\lambda(v):=\{f\in R\mid v(f)\geq\lambda\}
 \quad(\lambda>0),\qquad
 \vol_R(v):=\lim_{m\to\infty}
              \frac{e_R(\mathfrak a_m(v))}{m^r}.
\]
The latter limit exists by \cite{Cutkosky}*{Theorem 6.5}.
We omit the subscript $R$ when the local ring is clear.

\begin{lemma}
\label{lem:local-klt-tools}
There is a constant $C_R>0$ such that, for every valuation $v$ of $R$ centered at $\mathfrak m$ with finite log discrepancy,
\begin{equation}\label{eq:singular-izumi}
 v(\mathfrak m)\operatorname{ord}_{\mathfrak m}(f)
 \leq v(f)\leq C_R A(v)\operatorname{ord}_{\mathfrak m}(f)
 \qquad(0\ne f\in R).
\end{equation}
For every $M\geq0$,
\[
\{v \in \operatorname{Val}_R \mid A(v) \leq M \}
\]
is compact.
\end{lemma}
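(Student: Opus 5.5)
The plan has two parts: the two-sided Izumi-type inequality \eqref{eq:singular-izumi}, and then compactness of the sublevel sets, which will follow from the Izumi bound by a Tychonoff argument. Throughout, $(\Spec R,B|_{\Spec R})$ is klt and $R$ is a normal local domain essentially of finite type over $\C$.

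The lower bound $v(\mathfrak m)\ord_{\mathfrak m}(f)\leq v(f)$ is immediate. If $f\in\mathfrak m^j$ with $j=\ord_{\mathfrak m}(f)$, then $f$ is a sum of products of $j$ elements of $\mathfrak m$, so $v(f)\geq jv(\mathfrak m)$.

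For the upper bound I would pass to a log resolution $\rho\colon Y\to\Spec R$ on which $\mathfrak m\mathcal O_Y$ is an invertible snc ideal, and write $K_Y+B_Y=\rho^*(K_X+B)$.
\begin{enumerate}
\item \textbf{Reduction to quasi-monomial valuations.} Both sides of the desired inequality behave well under the monomial retraction $r_{Y}$ onto the dual complex. One has $v(f)\leq r_Y(v)(f)$ is false in general, so instead I would use $A(v)\geq A(r_Y v)$ together with $v(f)=\sup_Y r_Y(v)(f)$ over increasingly fine log resolutions, as in \cite{JonssonMustata}. This reduces to proving the bound $v(f)\leq C A(v)\ord_{\mathfrak m}(f)$ for divisorial valuations, with a constant $C$ independent of the model. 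Divisorial valuations are then extended to all finite-discrepancy valuations via \eqref{eq:valuation-discrepancy-envelope}, since $v\mapsto v(f)$ is continuous.
\item \textbf{Izumi on $Y$.} For a divisorial $v$ centered at $\mathfrak m$, its center on $Y$ lies over the closed point. Izumi's theorem on the smooth $Y$ (\cite{JonssonMustata}*{Proposition 5.10}) gives $v(g)\leq A_Y(v)\ord_{c_Y(v)}(g)$ for $g=\rho^*f$. The difficulty is to bound $\ord_{c_Y(v)}(\rho^*f)$ by a constant times $\ord_{\mathfrak m}(f)$. I would try the alternative route used for singular Izumi. Set $\mathfrak m\mathcal O_Y=\mathcal O_Y(-G)$. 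By the linear Artin--Rees/Izumi theorem for the analytically irreducible normal local ring $R$ (Rees' theorem), there is $c$ with $\ord_{F}(f)\leq c\,\ord_{\mathfrak m}(f)$ for each exceptional prime $F$ of $Y$ over the closed point. Then $\ord_{\xi}(\rho^*f)$ at any point $\xi$ of $\rho^{-1}(\mathfrak m)$ is bounded by a constant times $\max_F\ord_F(\rho^*f)$. This holds because $\rho^*f$ vanishes along $\rho^{-1}(\mathfrak m)$ only through its divisorial part plus a strict transform whose multiplicity is controlled by the degree on the fibre.
\item \textbf{Comparison of discrepancies.} Since the pair is klt, the comparison $A_Y(v)\leq c'A(v)$ follows as in \eqref{eq:lc-discrepancy-comparison}, using $A(v)=A_Y(v)-v(B_Y)$ with $b_i<1$. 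Combining the three steps gives $C_R$.
\end{enumerate}

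I expect the main obstacle to be step 2: obtaining a uniform control of the multiplicity of the strict-transform part, independent of the center of $v$ on $Y$. An alternative I would try first is cleaner. Use the klt condition to find $\epsilon>0$ with $(\Spec R,B+\epsilon\,\mathrm{div}(f)/\ord_{\mathfrak m}(f))$ lc. This would follow if $\lct(\Spec R,B;\mathfrak m)>0$, combined with $\mathrm{div}(f)\leq$ a bounded multiple of a general member of $\mathfrak m^{\ord_{\mathfrak m}(f)}$ in the sense of $\ord_F$. That gives $v(f)\leq \epsilon^{-1}\ord_{\mathfrak m}(f)A(v)$ directly, in the spirit of \cref{lem:lc-log-degree}. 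Justifying the lc statement uniformly in $f$ still needs Rees' linear comparison $\ord_F\leq c\,\ord_{\mathfrak m}$, which I would invoke as known.

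For compactness, the upper Izumi bound shows that $\{v\mid A(v)\leq M\}$ embeds in $\prod_{f\in R\setminus0}[0,C_RM\ord_{\mathfrak m}(f)]$. This product is compact by Tychonoff, and valuations on $K$ are determined by their values on $R$. The conditions defining valuations nonnegative on $R$ are closed. The trivial valuation is allowed, so the set of valuations with a center in $\Spec R$ is closed. Finally, $A$ is lower semicontinuous by its definition \eqref{eq:valuation-discrepancy-envelope} as a sup of infima over open sets, so the sublevel set is closed, hence compact. Valuations not centered at $\mathfrak m$ are handled by applying the same argument on the localization at their center, or by noting that the Izumi bound there follows from the bound at $\mathfrak m$ after localizing.
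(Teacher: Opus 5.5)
There are two gaps. Your lower bound, the reduction in step 1, the discrepancy comparison in step 3, and the Tychonoff skeleton (closed valuation conditions, lower semicontinuity of $A$) are all fine.

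\textbf{Step 2 is asserted, not proved.} The paper does not reprove the upper Izumi bound; it cites \cite{KimNonClosedVolume}*{Lemma 2.3}. In your reproof, Rees--Izumi bounds only the finitely many coefficients $a_F=\ord_F(f)$ of the components $F$ of $\rho^{-1}(\mathfrak m)$. What you actually need is a bound on $\operatorname{mult}_yD'$, where $D':=\operatorname{div}(\rho^*f)-\sum_Fa_FF$, uniform over the infinitely many points $y$ of the fibre. The phrase ``controlled by the degree on the fibre'' can be made into an argument. For a closed point $y\in F$, take a complete-intersection curve $\ell=F\cap H_1\cap\cdots\cap H_{r-2}$ through $y$ not contained in $D'$, with $H$ relatively very ample. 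Since $\ell$ lies over the closed point, $\operatorname{div}(\rho^*f)\cdot\ell=0$. Hence $\operatorname{mult}_yD'\leq D'\cdot\ell=-\sum_{F'}a_{F'}(F'\cdot F\cdot H^{r-2})\leq c\max_{F'}a_{F'}$, and non-closed centers follow by upper semicontinuity of multiplicity. You have not supplied this. Your proposed alternative does not circumvent it either: comparing $\operatorname{div}(f)$ with a general member of $\mathfrak m^{\ord_{\mathfrak m}(f)}$ only along finitely many $\ord_F$ gives no control over valuations centered on the strict transform of $\operatorname{div}(f)$. That is the same uniformity problem.

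\textbf{Compactness does not cover valuations away from $\mathfrak m$.} You embed $\{A\leq M\}$ into $\prod_f[0,C_RM\ord_{\mathfrak m}(f)]$, but \eqref{eq:singular-izumi} is stated and proved only for valuations centered at $\mathfrak m$. For $r\geq2$, the set $\{v\in\operatorname{Val}_R\mid A(v)\leq M\}$ also contains valuations centered at other primes of $R$, e.g.\ $\ord_D$ for a prime divisor $D\supsetneq C$ of $X$. Localizing at each center gives constants depending on that center. There are infinitely many centers, so this does not yield a single interval for each $f$. Your other remedy (``the bound there follows from the bound at $\mathfrak m$ after localizing'') is not an argument. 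The uniform fibre bound above, together with the fact that the closure of $c_Y(v)$ meets $\rho^{-1}(\mathfrak m)$, would repair this.

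The paper avoids Izumi entirely for compactness. For each fixed $f$, klt is preserved under small perturbation, so $(\Spec R,B+t_f\operatorname{div}(f))$ is klt for some $t_f>0$. Hence $t_fv(f)\leq A(v)$ for every divisorial $v$ of $R$, whatever its center, and then for all $v$ by continuity and \eqref{eq:valuation-discrepancy-envelope}. Tychonoff only needs a bound depending on $f$, not a uniform Izumi constant, so $\prod_f[0,M/t_f]$ suffices.
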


\begin{proof}
The upper bound in \eqref{eq:singular-izumi} is \cite{KimNonClosedVolume}*{Lemma 2.3}, applied at $\eta_C$ on a klt neighborhood in $X$.
The lower bound is immediate from the definition of $\ord_{\mathfrak m}$.

For every $0\ne f\in R$, klt of the localized pair gives
a number $t_f>0$ such that
$t_fv(f)\leq A(v)$ for every divisorial valuation $v$ of $R$.
Continuity of evaluation and
\eqref{eq:valuation-discrepancy-envelope} extend this to
all $v\in\operatorname{Val}_R$.
Thus evaluation gives a topological embedding
\[
\{v \in \operatorname{Val}_R \mid A(v) \leq M \}
\hookrightarrow
\prod_{f \in R\setminus\{0\}}
[0,M/t_f],\qquad v\longmapsto(v(f))_f.
\]
The conditions $v(1)=0$, $v(fg)=v(f)+v(g)$, and
$v(f+g)\geq\min\{v(f),v(g)\}$ whenever $f+g\ne0$
define a closed subset of this product.
Since all coordinates are finite, each such point extends uniquely
to a real valuation of $K$ nonnegative on $R$.
The additional condition $A(v)\leq M$ is closed by the lower
semicontinuity in \eqref{eq:valuation-discrepancy-envelope};
at the trivial valuation this follows from $A\geq0$ and $A(0)=0$.
Hence the image is closed, and Tychonoff's theorem proves compactness.

\end{proof}

\begin{lemma}\label{lem:compact-lc-valuations}
One has
\[
 \varphi_*^{-1}\bigl(\{v\in\operatorname{Val}_R\mid
                          A(v)=s(v)<\infty\}\bigr)
 \subseteq\{v\in\operatorname{Val}_R\mid
                          A(v)=s(v)<\infty\}.
\]

Put
\[
 \begin{aligned}
 K_A&:=\{v\in\operatorname{Val}_R\mid A(v)\leq1,\ s(v)=1\},\\
 \mathscr K&:=\{v\in\operatorname{Val}_R\mid
             v(\mathfrak m)=1,\ A(v)=s(v)<\infty\}.
 \end{aligned}
\]
These sets are nonempty and compact.
Every member of $K_A$ is centered at $\mathfrak m$, and
\[
 \mathscr K=\{\frac{v}{v(\mathfrak m)}\mid v\in K_A\}.
\]
There is $M\geq1$ such that
\begin{equation}\label{eq:izumi}
 \operatorname{ord}_{\mathfrak m}(f)\leq v(f)
 \leq M\operatorname{ord}_{\mathfrak m}(f)
 \qquad(0\ne f\in R,v \in \mathscr{K}).
\end{equation}
Discrepancy is bounded above on $\mathscr K$.
Moreover, for every valuation $u$ of $R$,
\begin{equation}\label{eq:inverse}
 \varphi_*u\in\mathscr K
 \quad\Longrightarrow\quad
 \frac{u}{u(\mathfrak m)}\in\mathscr K.
\end{equation}
\end{lemma}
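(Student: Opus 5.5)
The plan is to prove the assertions in the order stated: first the backward invariance of the locus $\{A=s<\infty\}$, then the description and compactness of $K_A$ and $\mathscr K$, then the Izumi-type bound \eqref{eq:izumi}, and finally \eqref{eq:inverse}.

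\emph{Backward invariance.} Let $v=\varphi_*u$ with $A(v)=s(v)<\infty$. The extensions $u_1=u,\ldots,u_q$ of $v$ through $h$ have finite log discrepancy by \cref{lem:finite-divisorial}. By \eqref{eq:nonflat-stationarity},
\[
 0=d\bigl(A(v)-s(v)\bigr)=\sum_i n_i\bigl(A(u_i)-s(u_i)\bigr).
\]
Each summand is nonnegative by \cref{thm:lc-log-tail}, and $n_i>0$, so $A(u)=s(u)$. The valuation $u$ lies in $\operatorname{Val}_R$ because $\varphi$ is a local map of $R$.

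\emph{Structure of $K_A$ and $\mathscr K$.} Any $v\in K_A$ is nontrivial, since $s(v)=1$. Its center contains $C$, and $s(v)\le A(v)$ forces $A(v)=1=s(v)$. By \cref{prop:lc-valuation-center}, $c_X(v)\subseteq Z$, and since $C$ is an irreducible component of $Z$, $c_X(v)=C$; thus $v$ is centered at $\mathfrak m$. The same argument shows every member of $\{A=s<\infty\}$ in $\operatorname{Val}_R$ is centered at $\mathfrak m$, so $v(\mathfrak m)>0$. Homogeneity of $A$ and $s$ then gives $\mathscr K=\{v/v(\mathfrak m)\mid v\in K_A\}$.

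\emph{Nonemptiness.} Since $\eta_C\in Z$ and $(X,B)$ is klt there, \cref{lem:analytic-klt-discrepancy-gap} (applied at a general closed point of $C$) rules out any uniform gap $c<1$. So there are divisorial $w_k$ centered in $\operatorname{Val}_R$ with $w_k(T)/A(w_k)\to1$. Normalize so that $A(w_k)=1$. These lie in the compact set $\{A\le1\}$ of \cref{lem:local-klt-tools}, so a subnet converges to some $w$. By lower semicontinuity, $A(w)\le1$. By \cref{thm:log-current-construction}, $w_k(T)=s(w_k)$, and continuity of $s$ on sets of bounded $A$ (\cref{thm:lc-log-tail}) gives $s(w)=\lim s(w_k)=1$. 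Hence $w\in K_A$.

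The technical point here is that \cref{lem:analytic-klt-discrepancy-gap} is stated at closed points. I would apply it at closed points $x\in C$ outside $\Nklt(X,B)$ and outside the other components of $Z$. The failure of the gap there yields divisorial valuations whose centers contain $x$; these need not contain $C$. To handle this I would either choose $x$ very general and use that valuations centered at subvarieties through $x$ not containing $C$ give klt behaviour, or replace them by valuations centered at $\eta_C$ via a generic-point version of that lemma. This step is the main obstacle.

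\emph{Compactness.} $K_A$ is closed in $\{A\le1\}$ because $s$ is continuous there, so $K_A$ is compact. The map $v\mapsto v/v(\mathfrak m)$ is continuous on $K_A$, since $v(\mathfrak m)>0$ is a minimum over finitely many generators. Hence $\mathscr K$ is compact.

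\emph{Izumi bound.} The function $v\mapsto A(v)/v(\mathfrak m)$ takes values in $(0,\infty)$ on $K_A$. Because $A\le1$ there, it suffices to bound $1/v(\mathfrak m)$, which is continuous and positive on the compact set $K_A$. So $\sup_{\mathscr K}A<\infty$. Then \eqref{eq:singular-izumi} with $v(\mathfrak m)=1$ gives \eqref{eq:izumi} with $M=\max(1,C_R\sup_{\mathscr K}A)$.

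\emph{Proof of \eqref{eq:inverse}.} If $\varphi_*u\in\mathscr K$, the backward invariance gives $A(u)=s(u)<\infty$. The valuation $u$ is nontrivial and centered at $\mathfrak m$, so $u(\mathfrak m)>0$, and rescaling puts $u/u(\mathfrak m)$ in $\mathscr K$.
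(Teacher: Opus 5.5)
Your arguments for backward invariance, for the centers of members of $K_A$, for compactness, for the bound \eqref{eq:izumi}, and for \eqref{eq:inverse} are correct and match the paper. The genuine gap is the one you flag yourself: you do not prove that $K_A$ is nonempty.

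At a closed point $x\in C\setminus\Nklt(X,B)$, \cref{lem:analytic-klt-discrepancy-gap} only produces divisorial valuations centered at $\{x\}$ with $w(T)/A(w)\to1$. When $\dim C>0$ these are not in $\operatorname{Val}_R$, and nothing in your argument transfers this failure to valuations whose centers contain $C$. Your first suggested fix does not work as stated. Since $(X,B;T)$ is not klt at any point of $C$, there is no ``klt behaviour'' near $x$ to invoke. Gap constants at points of $X\setminus Z$ near $x$ are also not uniform. Your second fix, a generic-point version of \cref{lem:analytic-klt-discrepancy-gap}, is not among the results available to you. (If $C$ is a closed point, your argument does work.)

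The missing idea is to go through the algebraic partial sums $S_N$, for which klt is a Zariski-open condition.
\begin{itemize}
\item Put $L:=\{v\in\operatorname{Val}_R\mid A(v)\le1\}$. This set is compact, $s$ is continuous on it, and $\max_L s\le1$.
\item Suppose $\max_L s<1$. Homogeneity gives $a_0<1$ with $s\le a_0A$ on $\operatorname{Val}_R$. Take a rational $\lambda>1$ with $\lambda a_0<1$. Then $(X,B+\lambda S_N)$ is klt at $\eta_C$ for every $N$.
\item The tail bound $0\le w(T)-w(S_N)\le C_0\theta^NA(w)$, from \cref{thm:lc-log-tail} and \eqref{eq:constructed-current-values}, holds for all divisorial $w$ of $X$, not only those in $\operatorname{Val}_R$. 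Fix $N$ with $\lambda^{-1}+C_0\theta^N<1$, and a Zariski neighborhood $U'$ of $\eta_C$ on which $(X,B+\lambda S_N)$ is klt.
\item Every divisorial $w$ whose center meets $U'$ then satisfies $w(T)\le(\lambda^{-1}+C_0\theta^N)A(w)$. By \cref{lem:analytic-klt-discrepancy-gap}, $(X,B;T)$ is analytically klt at the closed points of $U'\cap C\neq\varnothing$. This contradicts $C\subseteq Z$.
\end{itemize}
Hence $\max_L s=1$. The maximum is attained, so $K_A\neq\varnothing$, and therefore $\mathscr K\neq\varnothing$.
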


\begin{proof}
Let $v\in\operatorname{Val}_R$ with $A(v)<\infty$, and let
$u_1,\ldots,u_q$ be all its extensions through $\varphi$.
Their discrepancies are finite by \cref{lem:finite-divisorial}.
Applying \eqref{eq:nonflat-stationarity} and the nonnegativity of its summands,
we obtain $A(u_i)=s(u_i)$ for every $i$ whenever $A(v)=s(v)$.

We now study $K_A$ and $\mathscr{K}$.
By \cref{lem:local-klt-tools}, the set
\[
 L:=\{v\in\operatorname{Val}_R\mid A(v)\leq1\}
\]
is compact.
The continuous functions $v\mapsto v(S_N)$ converge uniformly to $s$ on $L$ by \cref{thm:lc-log-tail}.
Thus $s$ is continuous on $L$ and $\max_L s\leq1$.
Suppose that $\max_L s<1$, and choose $0<a_0<1$ with $\max_L s<a_0$.
Homogeneity gives $s(w)\leq a_0A(w)$ for every valuation $w$ of $R$.
Choose a rational number $\lambda>1$ with $\lambda a_0<1$.
For every $N$, the inequality $\lambda w(S_N)\leq\lambda a_0A(w)$ shows that $(X,B+\lambda S_N)$ is klt at $\eta_C$.
Choose $\rho(P_h)<\theta<1$.
By \cref{thm:lc-log-tail} and \eqref{eq:constructed-current-values}, there is $C_0>0$ such that
\[
 0\leq w(T)-w(S_N)\leq C_0\theta^N A(w)
\]
for every divisorial valuation $w$ of $X$.
Choose $N$ with $\lambda^{-1}+C_0\theta^N<1$, and a Zariski open neighborhood $U'$ of $\eta_C$ on which $(X,B+\lambda S_N)$ is klt.
For every divisorial valuation $w$ whose center meets $U'$, one has
\[
 w(T)\leq w(S_N)+C_0\theta^N A(w)
       \leq(\lambda^{-1}+C_0\theta^N)A(w).
\]
By \cref{lem:analytic-klt-discrepancy-gap}, $(X,B;T)$ is analytically klt on $U'$, contradicting $C\subseteq Z$.
Consequently $\max_L s=1$, and $K_A=s^{-1}(1)\cap L$ is nonempty and compact.
Every $v\in K_A$ satisfies $A(v)=s(v)=1$ by \cref{thm:lc-log-tail}.

For $v\in K_A$, \cref{prop:lc-valuation-center}
gives
\[
 C\subseteq c_X(v)\subseteq Z.
\]
As $C$ is an irreducible component of $Z$, it follows
that $c_X(v)=C$.
Thus every $v\in K_A$ is centered at $\mathfrak m$.

Evaluation on $\mathfrak m$ is continuous, so $\delta:=\min_{K_A}v(\mathfrak m)>0$.
By homogeneity, we have the surjection
\[
K_A \twoheadrightarrow \mathscr K, \qquad v\mapsto \frac{v}{v(\mathfrak m)}.
\]
This map is continuous, proving compactness of $\mathscr K$.
\eqref{eq:singular-izumi} gives \eqref{eq:izumi} with $M:=\max\{1,C_R/\delta\}$, and $A\leq\delta^{-1}$ on $\mathscr K$.
Finally, if $\varphi_*u\in\mathscr K$, then \eqref{eq:nonflat-stationarity} gives $0<A(u)=s(u)<\infty$ and $u$ is centered at $\mathfrak m$.
Thus, $u/u(\mathfrak m) \in \mathscr{K}$, which proves \eqref{eq:inverse}.
\end{proof}

\subsection{Construction of an invariant filtration}\label{sec:local-eigen}
\label{subsec:invariant-valuations}\label{subsec:invariant-filtration}

\begin{definition}\label{def:filtrations}
An \emph{$\mathfrak m$-filtration} $\mathfrak b_\bullet$
is a family of $\mathfrak m$-primary ideals indexed by
$\lambda>0$ such that
\[
 \mathfrak b_\lambda\subseteq\mathfrak b_\mu\quad(\lambda>\mu>0),
 \qquad
 \mathfrak b_\lambda\mathfrak b_\mu
       \subseteq\mathfrak b_{\lambda+\mu}\quad(\lambda,\mu>0),
\]
and $\mathfrak b_\lambda=\mathfrak b_{\lambda-\epsilon}$
for all sufficiently small $\epsilon>0$.
Set $\mathfrak b_\lambda=R$ for $\lambda\leq0$.
It is \emph{linearly bounded} if
$\mathfrak b_\lambda\subseteq\mathfrak m^{\lceil c\lambda\rceil}$
for some $c>0$ and all $\lambda>0$;
see \cite{BLQ}*{Definitions 2.1 and 2.2}.
\end{definition}

For a nonzero ideal $I\subseteq R$, set
\[
 \lct_R(I):=\inf_{\substack{v\in\operatorname{Val}_R\\v(I)>0}}
                         \frac{A(v)}{v(I)},
\]
with $\inf\varnothing=+\infty$.
For an $\mathfrak m$-filtration, define
\begin{equation}\label{eq:filtration-invariants}
\begin{gathered}
 v(\mathfrak b_\bullet):=\inf_{m\geq1}\frac{v(\mathfrak b_m)}m,
 \qquad e_R(\mathfrak b_\bullet):=\lim_{m\to\infty}
                                      \frac{e_R(\mathfrak b_m)}{m^r},\\
 \lct_R(\mathfrak b_\bullet):=
                         \sup_{m\geq1}m\lct_R(\mathfrak b_m).
\end{gathered}
\end{equation}
The multiplicity limit exists by \cite{Cutkosky}*{Theorem 6.5}.
The valuation ideals form an $\mathfrak m$-filtration, with
$e_R(\mathfrak a_\bullet(v))=\vol_R(v)$.
We will use
\[
 v(\mathfrak b_\bullet)=
               \inf_{\lambda>0}\frac{v(\mathfrak b_\lambda)}\lambda,
 \qquad
 \lct_R(\mathfrak b_\bullet)=
    \inf_{\substack{v\in\operatorname{Val}_R\\v(\mathfrak b_\bullet)>0}}
                       \frac{A(v)}{v(\mathfrak b_\bullet)}.
\]
The first equality follows from
$\mathfrak b_\lambda^q\subseteq\mathfrak b_{\lfloor q\lambda\rfloor}$
by letting $q\to\infty$; the second is the valuative threshold
formula in \cite{Blum}*{Sections 2.9 and 7}.
Choosing $c_0$ with $\mathfrak m^{c_0}\subseteq\mathfrak b_1$
also gives
$\mathfrak m^{c_0m}\subseteq\mathfrak b_1^m\subseteq\mathfrak b_m$.

For $N\geq1$, write
\[
 R_N:=\varphi^N(R)\subseteq R,\qquad
 K_N:=\varphi^N(K)\subseteq K.
\]
The ring $R$ is the integral closure of $R_N$ in $K$,
has rank $d^N$ over $R_N$, and has residue-field degree $d_C^N$.
The finite-map multiplicity formula is
\begin{equation}\label{eq:nonflat-filtration}
 e_R(\varphi^N(I)R)=\gamma^{rN}e_R(I),\qquad
 e_R(\varphi^N(\mathfrak b_\bullet)R)=\gamma^{rN}e_R(\mathfrak b_\bullet)
 \quad(N\geq1),
\end{equation}
where $I$ is $\mathfrak m$-primary and $\mathfrak b_\bullet$ is an $\mathfrak m$-filtration.
Indeed, $\ell_{R_N}(M)=d_C^N\ell_R(M)$ for every finite-length $R$-module $M$.
The associativity formula \cite{StacksProject}*{Tag 0AZX} gives
\[
 d_C^N e_R(\varphi^N(I)R)
 =e_{R_N}(\varphi^N(I),R)
 =d^N e_R(I).
\]
This proves the ideal formula, and \eqref{eq:filtration-invariants} gives the filtration formula by taking limits.
Here the extended filtration has terms $\varphi^N(\mathfrak b_\lambda)R$; locality and stabilization from the left preserve its defining properties.

\begin{proposition}\label{prop:nonflat-attraction}
Let $M$ be as in \cref{lem:compact-lc-valuations}.
For $N\geq0$, put
\[
 I_N:=\varphi^N(\mathfrak m)R,\qquad
 c_N:=\max\{j\in\mathbb Z_{\geq0}\mid I_N\subseteq\mathfrak m^j\}=\ord_{\mathfrak m}(I_N).
\]
Then
\begin{equation}\label{eq:nonflat-attraction}
 M^{-2}\gamma^N\le c_N\le\gamma^N\qquad(N\ge1).
\end{equation}
\end{proposition}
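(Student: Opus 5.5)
The two inequalities in \eqref{eq:nonflat-attraction} call for different tools. The upper bound $c_N\le\gamma^N$ should follow from multiplicities and \eqref{eq:nonflat-filtration}. The lower bound $M^{-2}\gamma^N\le c_N$ should follow from the Izumi-type comparison \eqref{eq:izumi} on $\mathscr K$, combined with the inverse invariance \eqref{eq:inverse} and a volume comparison.

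For the upper bound, note that $I_N\subseteq\mathfrak m^{c_N}$ gives $e_R(I_N)\ge e_R(\mathfrak m^{c_N})=c_N^r e_R(\mathfrak m)$. On the other hand, \eqref{eq:nonflat-filtration} with $I=\mathfrak m$ gives $e_R(I_N)=\gamma^{rN}e_R(\mathfrak m)$. Since $e_R(\mathfrak m)>0$, we get $c_N^r\le\gamma^{rN}$, i.e.\ $c_N\le\gamma^N$.

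For the lower bound, I will fix $v\in\mathscr K$, nonempty by \cref{lem:compact-lc-valuations}, and take an extension $u$ of $v$ through $\varphi^N$. Such an extension exists and is centered at $\mathfrak m$; see \cref{lem:finite-divisorial} and \cref{not:local-dynamics}. Iterating \eqref{eq:inverse} shows $w:=u/u(\mathfrak m)\in\mathscr K$. Then, for every $0\ne f\in I_N$,
\[
 \ord_{\mathfrak m}(f)\ \ge\ M^{-1}w(f)\ \ge\ M^{-1}w(I_N)
 = \frac{v(\mathfrak m)}{M\,u(\mathfrak m)}=\frac{1}{M\,u(\mathfrak m)},
\]
using $u(\varphi^N(\mathfrak m))=v(\mathfrak m)=1$. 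Hence $c_N\ge (M u(\mathfrak m))^{-1}$, and it remains to show $u(\mathfrak m)\le M\gamma^{-N}$ for a suitable choice of $v$ and $u$.

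I will get this from volumes. First, \eqref{eq:izumi} gives $M^{-r}e_R(\mathfrak m)\le\vol(w)\le e_R(\mathfrak m)$ for all $w\in\mathscr K$. Second, homogeneity gives $\vol(u)=u(\mathfrak m)^{-r}\vol(w)$. So it suffices to prove a lower bound of the form $\vol(u)\ge\gamma^{rN}\vol(v)$ for at least one extension $u$. That reduces the claim to
\[
 u(\mathfrak m)^{-r}e_R(\mathfrak m)\ \ge\ \vol(u)\ \ge\ \gamma^{rN}M^{-r}e_R(\mathfrak m),
\]
which yields $u(\mathfrak m)\le M\gamma^{-N}$. (Depending on how the constants fall, one factor $M$ here may be spent differently; the statement allows $M^{-2}$.)

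The main obstacle is this volume inequality for extensions. The easy containment $\varphi^N(\mathfrak a_\lambda(v))R\subseteq\mathfrak a_\lambda(u)$ only gives $\vol(u)\le\gamma^{rN}\vol(v)$, which is the wrong direction. Comparing lengths via $R_N/\varphi^N(\mathfrak a_\lambda(v))\hookrightarrow R/\mathfrak a_\lambda(u)$ only gives the weak bound $\vol(u)\ge d_C^{-N}\vol(v)$. To reach $\gamma^{rN}$, I will use all extensions $u_1,\dots,u_q$ at once, with the norm weights $n_i$ from \cref{lem:finite-divisorial}, $\sum_i n_i=d^N$. The idea is to consider the map
\[
 R/\varphi^N(\mathfrak a_\lambda(v))R\longrightarrow\prod_i R/\mathfrak a_\lambda(u_i),
\]
which should be injective up to a bounded shift in $\lambda$, since an element with $u_i(f)\ge\lambda$ for all $i$ has norm with $v$-value at least $d^N\lambda$. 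Together with \eqref{eq:nonflat-filtration}, this should show that $\sum_i n_i\vol(u_i)$ is comparable to $d^N d_C^{-N}\vol(v)=\gamma^{rN}\vol(v)$. Hence some $u_i$ satisfies the required lower bound. The Izumi constant $M$ absorbs the shift; I expect the second factor of $M$ in $M^{-2}$ to come from exactly this step.
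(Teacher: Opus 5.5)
Your upper bound and your reduction $c_N\geq (M\,u(\mathfrak m))^{-1}$, for any extension $u$ of $v\in\mathscr K$ through $\varphi^N$, are correct and match the first half of the paper's argument. The gap is the step that should give $u(\mathfrak m)\le M\gamma^{-N}$, i.e.\ $\vol(u)\ge\gamma^{rN}\vol(v)$ for a single extension.

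A length count through $R/\varphi^N(\mathfrak a_\lambda(v))R\to\prod_i R/\mathfrak a_\lambda(u_i)$ can at best give the \emph{unweighted} bound $\gamma^{rN}\vol(v)\lesssim\sum_{i=1}^q\vol(u_i)$. Even that needs an integral-closure argument you do not supply. The norm controls only the constant term of the characteristic polynomial of $f$. Using all its coefficients gives integral dependence of $f$ on the filtration $\varphi^N(\mathfrak a_\bullet(v))R$, not membership.

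Your weighted version is off by a factor $d^N$. For instance, for $R=\C[x,y]_{(x,y)}$, $\varphi(x,y)=(x^2,y^2)$ and $v=\ord_{\mathfrak m}$, the unique extension is $2^{-N}\ord_{\mathfrak m}$, with $n_1=d^N$ and $\sum_in_i\vol(u_i)=d^N\gamma^{rN}\vol(v)$. In either form, passing from the sum to one $u_i$ costs a factor equal to the number $q$ of extensions, and $q$ is not bounded in $N$.

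This is a genuine obstruction, not bookkeeping. In the same example, take $v$ monomial in $(x,y-x)$ with weights $(1,2)$. Its extensions through $\varphi^N$ are the $2^N$ monomial valuations in $(x,y-\zeta x)$, $\zeta^{2^N}=1$, with weights $(2^{-N},1+2^{-N})$. Each has volume below $2^N$, whereas $\gamma^{rN}\vol(v)=4^N/2$. So the single-extension bound fails for a general $v$.

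The missing ingredient is that \emph{every} normalized extension $u_i/u_i(\mathfrak m)$ lies in $\mathscr K$, so all of them have Izumi constant $M$ (in the example this constant is about $2^N$). The paper uses exactly this and never compares volumes of extensions:
\begin{itemize}
\item From $u_i(I_N)=1$ and \eqref{eq:izumi}, one gets $Mc_N\,u_i(\mathfrak m)\ge1$ for all $i$.
\item Hence every $f\in\mathfrak m^{\lceil\ell M^2c_N\rceil}$ has $u_i(f)\ge\ell M$ for all $i$.
\item The characteristic polynomial of $f$ over $K_N$ has coefficients $\varphi^N(b_j)$ with $v(b_j)\ge j\ell M$, because they are symmetric functions of the conjugates.
\item Then $b_j\in\mathfrak m^{j\ell}$ by \eqref{eq:izumi} applied to $v$, so $f$ is integral over $I_N^\ell$.
\item Finally \eqref{eq:nonflat-filtration} gives $\ell^r\gamma^{rN}e_R(\mathfrak m)=e_R(I_N^\ell)\le\lceil\ell M^2c_N\rceil^re_R(\mathfrak m)$, and letting $\ell\to\infty$ yields $c_N\ge M^{-2}\gamma^N$.
\end{itemize}
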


\begin{proof}
Fix an integer $N\geq1$, a valuation $v\in\mathscr K$, and all its extensions $u_1,\ldots,u_q$ through $\varphi^N$.
Put $a_i:=u_i(\mathfrak m)$.
By \cref{lem:compact-lc-valuations}, the valuations $u_i/a_i$ lie in $\mathscr K$.
Since $u_i(I_N)=v(\mathfrak m)=1$, \eqref{eq:izumi} gives 
\[
a_ic_N\leq1\leq Ma_ic_N.
\]
Let $\ell\geq1$ be an integer and take $0\ne f\in\mathfrak m^{\lceil\ell M^2c_N\rceil}$.
Then
\[
u_i(f)\geq \lceil\ell M^2c_N\rceil a_i \geq \ell M
\]
for every $i$.
The characteristic polynomial of multiplication by $f$ on the $K_N$-vector space $K$ gives
\[
 f^{d^N}+\varphi^N(b_1)f^{d^N-1}+\cdots+\varphi^N(b_{d^N})=0,
 \qquad b_j\in R.
\]
Every conjugate of $f$ has value at least $\ell M$ under an extension of $u_i$ to a normal closure, since it is measured by one of the extensions $u_j$.
The symmetric expressions for the coefficients and \eqref{eq:izumi} give
\[
 j\ell M\leq v(b_j)
 \leq M\ord_{\mathfrak m}(b_j)
 \qquad(b_j\ne0).
\]
Thus $\varphi^N(b_j)\in\varphi^N(\mathfrak m^{j\ell})R=I_N^{j\ell}$, so $f$ is integral over $I_N^\ell$.
Consequently
\[
 \mathfrak m^{\lceil\ell M^2c_N\rceil}\subseteq\overline{I_N^\ell}.
\]
Multiplicity is unchanged by integral closure, so \eqref{eq:nonflat-filtration} gives
\[
 \ell^r\gamma^{rN}e(\mathfrak m)
 =e(I_N^\ell)
 \leq\lceil\ell M^2c_N\rceil^re(\mathfrak m).
\]
Divide by $\ell^r$ and let $\ell\to\infty$ to obtain the lower bound in \eqref{eq:nonflat-attraction}.
The upper bound follows from $I_N\subseteq\mathfrak m^{c_N}$ and
\[
 \gamma^{rN}e(\mathfrak m)=e(I_N)\geq c_N^re(\mathfrak m).
\]

\end{proof}

\begin{lemma}\label{lem:nonflat-volume-equality}
For every valuation $v$ centered at $\mathfrak m$ with finite log discrepancy and every integer $N\geq1$,
\begin{equation}\label{eq:volume-monotonicity}
 \vol_R(F^N(v))\geq\vol_R(v).
\end{equation}
If equality holds, then $(F^N)^{-1}(F^N(v))=\{v\}$.
\end{lemma}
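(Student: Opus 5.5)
The plan is to compare valuation ideals of $v$ with the extended valuation ideals of its pushforward, then use the finite-map multiplicity formula \eqref{eq:nonflat-filtration}. Handle the equality case by a Rees-type rigidity statement for filtrations, followed by a separation argument for distinct extensions of one valuation.

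\emph{Inequality.} Put $w:=(\varphi^N)_*v=v\circ\varphi^N$, so that $F^N(v)=\gamma^{-N}w$. Since $\varphi^N$ is local and finite, $w$ is centered at $\mathfrak m$. Directly from the definitions,
\[
 \mathfrak a_\lambda(w)=(\varphi^N)^{-1}\bigl(\mathfrak a_\lambda(v)\bigr),
 \qquad\text{hence}\qquad
 \varphi^N(\mathfrak a_\lambda(w))R\subseteq\mathfrak a_\lambda(v)\quad(\lambda>0).
\]
Multiplicity is monotone under inclusion of $\mathfrak m$-primary ideals. Together with \eqref{eq:nonflat-filtration} this gives
\[
 e_R(\mathfrak a_\bullet(v))\leq e_R(\varphi^N(\mathfrak a_\bullet(w))R)=\gamma^{rN}e_R(\mathfrak a_\bullet(w)),
\]
that is, $\vol(v)\leq\gamma^{rN}\vol(w)$. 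The homogeneity $\vol(cw)=c^{-r}\vol(w)$ then turns this into $\vol(v)\leq\vol(F^N(v))$, which is \eqref{eq:volume-monotonicity}. Finiteness of $A(v)$ is used only to guarantee that the volumes are positive and finite; it follows from \cref{lem:local-klt-tools}.

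\emph{Equality case, first step.} Assume equality holds, so the nested filtrations $\mathfrak b_\bullet:=\varphi^N(\mathfrak a_\bullet(w))R\subseteq\mathfrak a_\bullet(v)$ have the same multiplicity. I would invoke the filtration version of Rees' theorem: for linearly bounded $\mathfrak m$-filtrations (as in \cite{BLQ}, or Cutkosky's results), equal multiplicity of nested filtrations forces
\[
 u(\mathfrak b_\bullet)=u(\mathfrak a_\bullet(v))
\]
for every valuation $u$ centered at $\mathfrak m$. Linear boundedness holds here by the Izumi-type bound \eqref{eq:singular-izumi}. I expect this to be the main obstacle: one must locate or prove the precise statement for filtrations, and check that it applies to all real valuations rather than only divisorial ones. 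Now let $u$ be any valuation with $F^N(u)=F^N(v)$, i.e.\ $u\circ\varphi^N=w$. Then $u(\varphi^N(g))=w(g)\geq\lambda$ for $g\in\mathfrak a_\lambda(w)$, so $u(\mathfrak b_\bullet)\geq1$. Hence $u(\mathfrak a_\bullet(v))\geq1$. Taking $\lambda=v(f)$ shows $u(f)\geq v(f)$ for every $0\ne f\in R$.

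\emph{Equality case, separation.} Suppose $u\neq v$. Both are rank-one extensions of $w$ through the finite extension $K_N\subseteq K$, and they are inequivalent, since they agree on $K_N$ and are distinct. By the approximation theorem there is $a\in K$ with $u(a)<v(a)$. Clear denominators inside $\varphi^N(R)$: if $a=b/c$ with $b,c\in R$, then $c$ divides $\Nm_{K/K_N}(c)\in\varphi^N(R)$ in $R$, so $a=b'/\varphi^N(g)$ with $b'\in R$ and $g\in R$. Now $u(\varphi^N(g))=w(g)=v(\varphi^N(g))$, so
\[
 u(b')=u(a)+w(g)<v(a)+w(g)=v(b'),
\]
contradicting $u\geq v$ on $R$. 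Therefore $(F^N)^{-1}(F^N(v))=\{v\}$.
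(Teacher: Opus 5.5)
Your proposal is correct and follows the paper's proof essentially step for step: the inclusion $\varphi^N(\mathfrak a_\bullet(w))R\subseteq\mathfrak a_\bullet(v)$ combined with \eqref{eq:nonflat-filtration} gives the inequality; the Rees-type equality of asymptotic orders is what the paper takes from \cite{BLQ}*{Proposition~3.12}; and the norm trick ($\Nm_{K/K_N}(c)/c\in R$, together with $u=v$ on $K_N$ and $u\geq v$ on $R$) finishes the argument. The paper runs that last step directly on each $f\in R$ rather than by contradiction, and the approximation theorem is not needed, since any $a$ with $u(a)\neq v(a)$, inverted if necessary, will do.

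The one thing to fix is the generality of your Rees-type statement. It is false for arbitrary valuations centered at $\mathfrak m$. For an arc valuation $v_c$ along a transcendental smooth curve germ in $\C[x,y]_{(x,y)}$, the filtrations $\mathfrak m^{\lceil\lambda\rceil}\cap\mathfrak a_{2\lambda}(v_c)\subseteq\mathfrak m^{\lceil\lambda\rceil}$ both have multiplicity $1$, yet $v_c$ takes the asymptotic values $2$ and $1$ on them. So state the result for valuations of finite log discrepancy (hence positive volume by \eqref{eq:singular-volume-bounds}), as the paper does. Then note that your $u$ qualifies: $A(u)<\infty$ by \cref{lem:finite-divisorial}, because $w=v\circ\varphi^N$ has finite log discrepancy.
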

\begin{proof}
For every valuation $z$ centered at $\mathfrak m$ with finite log discrepancy,
\eqref{eq:singular-izumi} gives
\[
 \mathfrak m^{\lceil m/z(\mathfrak m)\rceil}
 \subseteq\mathfrak a_m(z)
 \subseteq\mathfrak m^{\lceil m/(C_RA(z))\rceil}.
\]
Taking multiplicities and letting $m\to\infty$, we obtain
\begin{equation}\label{eq:singular-volume-bounds}
 \frac{e_R(\mathfrak m)}{(C_RA(z))^r}
 \leq\vol_R(z)
 \leq\frac{e_R(\mathfrak m)}{z(\mathfrak m)^r}.
\end{equation}

Put $w:=F^N(v)$.
For $t>0$ we have
\[
 \mathfrak b_t:=\varphi^N(\mathfrak a_t(w))R
 \subseteq
 \mathfrak c_t:=\mathfrak a_{t\gamma^N}(v).
\]
Thus, we have
\[
\gamma^{rN}\vol_R(w)=e_R(\mathfrak b_{\bullet}) \geq e_R(\mathfrak c_{\bullet})=\gamma^{rN}\vol_R(v)
\]
by \eqref{eq:nonflat-filtration}, proving the inequality.

Suppose equality holds, and let $u$ be any extension of $w$ through $\varphi^N$.
It has finite log discrepancy by \cref{lem:finite-divisorial} and positive volume by \eqref{eq:singular-volume-bounds}.
Since $e_R(\mathfrak b_{\bullet}) = e_R(\mathfrak c_{\bullet})$, we have
\[
 1=w(\mathfrak a_\bullet(w))=u(\mathfrak b_\bullet)=u(\mathfrak c_\bullet)
 =\gamma^N u(\mathfrak a_\bullet(v)),
\]
where the third equality follows from \cite{BLQ}*{Proposition 3.12}.
In particular, we have
\[
\gamma^Nu(\mathfrak a_m(v)) \geq m \qquad (m > 0).
\]
Thus, for $0\ne f\in\mathfrak m$, it follows that $u(f)\geq\gamma^{-N}v(f)$.
The valuations $u$ and $v/\gamma^N$ agree on $K_N$.
For $0\ne f\in R$, put $a_f:=\Nm_{K/K_N}(f)$.
Then $a_f\in R_N$ and $a_f/f\in R$.
Since the valuations $u$ and $v/\gamma^N$ have the same restriction to $K_N$, we have $u(a_f)=\gamma^{-N}v(a_f)$.
Therefore, we have
\[
0 \leq u(f)-\gamma^{-N}v(f)=\gamma^{-N}v(a_f/f)-u(a_f/f) \leq 0.
\]
Thus, $u(f)=\gamma^{-N}v(f)$ for every $f \in R$, as desired.
\end{proof}

\begin{proposition}\label{prop:invariant-average}
There is a linearly bounded $\mathfrak m$-filtration $\mathfrak c_\bullet$ such that
\[
 \lct_R(\mathfrak c_\bullet)=1,\qquad
 u(\mathfrak c_\bullet)\leq s(u)\leq A(u)
\]
for every valuation $u$ centered at $\mathfrak m$ with finite discrepancy.
Furthermore, if we put
\begin{equation}\label{eq:average-computers}
 \mathscr V:=\{u\in\operatorname{Val}_R\mid
                   A(u)=u(\mathfrak c_\bullet)=1\},
\end{equation}
then $\mathscr V\subseteq K_A$ and $F^{-1}(\mathscr V) \subseteq \mathscr V$.
\end{proposition}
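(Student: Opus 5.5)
We construct $\mathfrak c_\bullet$ as an averaged limit of the ideals cut out by the partial sums of the ramification series, transported by $\varphi$. Near $\eta_C$ each $P_h^jR_{h,B}$ is an effective $\Q$-Cartier divisor. Choose an integer $m_0>0$ such that every $m_0P_h^jR_{h,B}$ for $j\le N$ is Cartier near $\eta_C$. The most direct candidate is then the filtration
\[
 \mathfrak c_\lambda:=\{f\in R\mid u(f)\ge\lambda\, s(u)\ \text{for all } u\in K_A\}
\]
(or the same with $A(u)$ replaced by a normalization). Equivalently, we may take the intersection over $u\in K_A$ of $\mathfrak a_{\lambda}(u)$, since $s(u)=1$ on $K_A$. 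This is an $\mathfrak m$-filtration, because each $\mathfrak a_\lambda(u)$ is one and intersections preserve the axioms. The $\mathfrak m$-primarity and linear boundedness come from \eqref{eq:izumi}. On the normalized set $\mathscr K$ one has $v(f)\le M\ord_{\mathfrak m}(f)$, which gives $\mathfrak m^{\lceil M'\lambda\rceil}\subseteq\mathfrak c_\lambda$, with $M'$ controlled by $\delta=\min_{K_A}v(\mathfrak m)$. The inclusion $\mathfrak c_\lambda\subseteq\mathfrak a_\lambda(u_0)\subseteq\mathfrak m^{\lceil\lambda/(C_R A(u_0))\rceil}$ for any fixed $u_0\in K_A$ gives linear boundedness.

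The inequalities come next. By construction $u(\mathfrak c_\bullet)\ge1=A(u)=s(u)$ for $u\in K_A$. For the reverse bound $u(\mathfrak c_\bullet)\le s(u)$ for every $u$ with finite discrepancy, I would rather realize $\mathfrak c_\bullet$ via divisors. Set
\[
 \mathfrak c_\lambda:=\bigcup_{N}\{f\mid \operatorname{div}(f)\ge \lambda' S_N \text{ near }\eta_C\},
\]
suitably saturated. By continuity (\cref{thm:lc-log-tail}) this is sandwiched so that $u(\mathfrak c_\bullet)\le\lim_N u(S_N)=s(u)$. Since the two descriptions may differ, the plan is to take the \emph{intersection} $\mathfrak c_\lambda:=\mathfrak a_\lambda(\text{all } u\text{ with } s(u)\geq1)$, i.e. $f\in\mathfrak c_\lambda$ iff $u(f)\ge\lambda s(u)$ for all $u$ of finite discrepancy centered at $\mathfrak m$. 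Then $u(\mathfrak c_\bullet)\ge s(u)$ holds tautologically. The bound $\le$ comes from approximating $u$ by divisorial valuations and using that $\mathfrak c_\lambda$ contains $\lambda$-multiples of local equations of $S_N$ up to the tail $C\theta^NA$. Thus $u(\mathfrak c_\bullet)=s(u)\le A(u)$, and $s\le A$ is \cref{thm:lc-log-tail}.

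Then $\lct_R(\mathfrak c_\bullet)=\inf A(u)/u(\mathfrak c_\bullet)=\inf A/s\ge1$. Equality holds because $K_A\ne\varnothing$ by \cref{lem:compact-lc-valuations}, and its members give the ratio $1$. Hence $\mathscr V=\{A(u)=s(u)=1\}\subseteq K_A$. For invariance, let $F(u)=\gamma^{-1}\varphi_*u\in\mathscr V$. By \eqref{eq:nonflat-stationarity} and the first claim of \cref{lem:compact-lc-valuations}, $A(u)=s(u)$. What remains is to show $A(u)=1$. Here I would use the stationarity identity $s(\varphi_*u)=\,$(pushforward of $R_{h,B}+\text{series}$) together with $A(\varphi_*u)=A(u)+u(R_{h,B})$. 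From $A(\varphi_*u)=s(\varphi_*u)=\gamma$ and equality in \eqref{eq:nonflat-stationarity} one obtains $A(u)=s(u)$, but not the normalization.

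\textbf{Main obstacle.} If the factor $\gamma$ does not match, the scaling must be built into the filtration. I would therefore replace $\mathfrak c_\bullet$ by the $F$-averaged filtration
\[
 \mathfrak c_\lambda=\bigcap_{N\geq0}\varphi^N\text{-saturations}\ \{f : u(f)\ge \lambda\gamma^{N}\cdots\},
\]
namely intersect the conditions over the whole backward $F$-orbit. Defining $\mathfrak c_\bullet$ as this orbit-averaged intersection makes $\mathscr V$ backward invariant by design. The expected hard step is verifying that this averaging still keeps $\lct=1$, using the volume control \eqref{eq:nonflat-filtration} and \cref{prop:nonflat-attraction} to show it stays linearly bounded and $\mathfrak m$-primary.
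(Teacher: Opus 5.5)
Your proposal has a genuine gap. It sits exactly at the ``main obstacle'' you flag, and that obstacle is the real content of the proposition.

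Your first candidate, $\mathfrak c_\lambda=\bigcap_{v\in K_A}\mathfrak a_\lambda(v)$, does handle the first half. It is a linearly bounded $\mathfrak m$-filtration, and the reverse inequality you worried about holds for it. By the uniform tail estimate of \cref{thm:lc-log-tail}, a local equation of $m_0S_N$ lies in $\mathfrak c_{m_0(1-C\theta^N)}$. Hence $u(\mathfrak c_\bullet)\le u(S_N)/(1-C\theta^N)\to s(u)\le A(u)$, and together with $K_A\ne\varnothing$ this gives $\lct_R(\mathfrak c_\bullet)=1$.

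Your second candidate should be dropped. When $r\ge2$, some component of $\Supp S_N$ strictly contains $C$, so $s(u)/u(\mathfrak m)$ is unbounded on valuations centered at $\mathfrak m$. The ideals $\{f\mid u(f)\ge\lambda s(u)\ \forall u\}$ are then not $\mathfrak m$-primary.

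For the first filtration, however, $\mathscr V=K_A$, and the last assertion becomes $F^{-1}(K_A)\subseteq K_A$. You have no argument for this, and there is no reason to expect it. If $F(u)\in K_A$, stationarity gives only $A(u)=s(u)=\gamma-u(\Delta)$, and $u(\Delta)$ varies with $u$. For instance, the inclusion fails whenever $F$ acts on a cone of valuations with $A=s$ without preserving the slice $\{A=1\}$.

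Your repair, intersecting over the backward orbit, is not specified. Any intersection imposes the normalization pointwise, which is precisely what fails. Every $v$ in the index set has $v(\mathfrak c_\bullet)\ge1$, so a single member with $A(v)<1$ already forces $\lct_R(\mathfrak c_\bullet)<1$.

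The missing idea is to impose the normalization on average. The paper proceeds as follows.
\begin{itemize}
\item It picks $v_0$ whose entire backward $\Psi$-orbit lies in $\mathscr K$, so that its forward $F$-orbit stays bounded by \cref{prop:nonflat-attraction}.
\item It takes the maximum-volume locus $\Omega$ of the orbit closure. This set is $F$-stable, lies in $\{A=s\}$, and on it $v/\gamma$ is the only extension of $F(v)$, by \cref{lem:nonflat-volume-equality}.
\item A Krylov--Bogolyubov average gives an $F$-invariant probability measure $\mu$ on $\Omega$, normalized by $\int A\,d\mu=1$.
\item The filtration $\mathfrak c_\lambda$ is generated by the $f$ with $\mathcal L(f):=\int v(f)\,d\mu\ge\lambda$.
\end{itemize}

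Invariance of $\mu$ yields three identities: $\mathcal L(\varphi(f))=\gamma\mathcal L(f)$, $\mathcal L(\Delta)=\gamma-1$, and $\mathcal L(S_N)=1-\gamma^{-N}$.
\begin{itemize}
\item The first gives $\varphi(\mathfrak c_\lambda)R\subseteq\mathfrak c_{\gamma\lambda}$ with equal multiplicities, hence $(\varphi_*u)(\mathfrak c_\bullet)=\gamma\,u(\mathfrak c_\bullet)$ by \cite{BLQ}*{Proposition~3.12}.
\item The second then gives $u(\Delta)\ge1-\gamma^{-1}$, so $A(u)\le\gamma^{-1}=u(\mathfrak c_\bullet)$ whenever $\varphi_*u\in\mathscr V$.
\item The third gives $u(\mathfrak c_\bullet)\le s(u)$.
\end{itemize}

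The price of averaging is that $\lct_R(\mathfrak c_\bullet)\le1$ is no longer tautological. The paper obtains it from Carath\'eodory's theorem, iterated geodesics of filtrations, and convexity of the threshold \cite{BLQ}*{Proposition~5.1}.
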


\begin{proof}
On valuations with $v(\mathfrak m)=1$, define
\[
 \Psi(v):=\frac{\varphi_*v}{v(I_1)}=\frac{\varphi_*v}{\varphi_*v(\mathfrak m)},\qquad
 \mathscr K_j:=\{v\in\mathscr K\mid
                   \Psi^i(v)\in\mathscr K\ (0\leq i\leq j)\}.
\]
These sets are compact.
They are nonempty by induction: for $w\in\mathscr K_j$, choose $u \in \operatorname{Val}_R$ with $\varphi_*u=w$ and set $v:=u/u(\mathfrak m)$.
By \eqref{eq:inverse}, $v\in\mathscr K$, and $u(I_1)=w(\mathfrak m)=1$ gives $\Psi(v)=w$.
Thus $v\in\mathscr K_{j+1}$.
Choose $v_0\in\bigcap_j\mathscr K_j$ and put $v_N=\Psi^N(v_0)$.
Then $F^N(v_0)=t_Nv_N$, where 
\begin{equation}\label{eq:bounded-orbit}
 t_N:=\gamma^{-N}v_0(I_N)\in[M^{-2},M].
\end{equation}
The inclusion follows from \eqref{eq:nonflat-attraction}.
Then the orbit closure
\[
K_0:=\overline{\{v_0,F(v_0),F^2(v_0),\ldots\}}
\]
is therefore compact.
We note that $A=s$ on $K_0$ since $v_N \in \mathscr K$ and $F^N(v_0)=t_Nv_N$.
To choose a maximum-volume locus, we verify upper semicontinuity directly.
For every $\mathfrak m$-primary ideal $I$ and every valuation $v$ centered at $\mathfrak m$,
the inclusions $I^q\subseteq\mathfrak a_{qv(I)}(v)$ give
$\vol_R(v)\leq e_R(I)/v(I)^r$.
Taking $I=\mathfrak a_m(v)$ and letting $m\to\infty$ yields
\[
 \vol_R(v)=\inf_{I\text{ $\mathfrak m$-primary}}
                   \frac{e_R(I)}{v(I)^r}.
\]
Each $v\mapsto v(I)$ is continuous and positive on valuations centered at $\mathfrak m$.
Thus volume is upper semicontinuous, and it attains a positive maximum on the compact set $K_0$ by \eqref{eq:singular-volume-bounds}.
Its maximum locus $\Omega$ is compact and satisfies $F(\Omega)\subseteq\Omega$ by \cref{lem:nonflat-volume-equality}.
Choose $v_*\in\Omega$ and set
\[
 \mu_N:=\frac1N\sum_{j=0}^{N-1}\delta_{F^j(v_*)}
 \qquad(N\geq1),
\]
where $\delta_v$ denotes the Dirac measure at $v$.
Each $\mu_N$ is a regular Borel probability measure, being a finite convex combination of Dirac measures.
Since $\Omega$ is compact Hausdorff, the Banach--Alaoglu theorem and the Riesz representation theorem give a subnet of $(\mu_N)$ converging weak-* to a regular Borel probability measure $\mu$.

For every $g\in C(\Omega,\mathbb R)$, we have
\[
 \int_\Omega(g\circ F-g)\,d\mu_N
 =\frac{g(F^N(v_*))-g(v_*)}{N}
 \longrightarrow0,
\]
because $g$ is bounded.
Since $F$ is continuous, passing to the limit along the subnet gives $\int_\Omega g\circ F\,d\mu=\int_\Omega g\,d\mu$.
Thus $F_*\mu=\mu$.

Put $a:=\int_\Omega A(v)\,d\mu(v)$, which is finite and positive.
Let $S(v):=a^{-1}v$, and replace $(\Omega,\mu)$ by $(S(\Omega),S_*\mu)$.
Since $F\circ S=S\circ F$, the new measure remains an $F$-invariant probability measure.
By the homogeneity of $A$, it satisfies
\[
 \int_\Omega A(v)\,d\mu(v)=1,
\]
where we retain the notation $\Omega,\mu$ for the rescaled objects.
Define $\mathcal L(f):=\int_\Omega v(f)\,d\mu(v)$, and similarly define $\mathcal L(D)$ for a $\Q$-Cartier divisor $D$.
All these evaluations are continuous on $\Omega$; continuity of $A=s$ follows from the uniform tail estimate.
Invariance of $\mu$ gives
\[
 \mathcal L(fg)=\mathcal L(f)+\mathcal L(g),\qquad
 \mathcal L(\varphi(f))=\gamma\mathcal L(f).
\]
For $v\in\Omega$, equality of the volumes of $v$ and $F(v)$ implies that the only extension of $F(v)$ is $v/\gamma$.
The norm and discrepancy formulas therefore give
\[
 F(v)(P_hD)=\gamma^{-1}v(D),\qquad
 \gamma A(F(v))=A(v)+v(\Delta) \qquad (v \in \Omega).
\]
Integrating and summing yield
\begin{equation}\label{eq:average-identities}
 \mathcal L(P_hD)=\gamma^{-1}\mathcal L(D),\qquad
 \mathcal L(\Delta)=\gamma-1,\qquad
 \mathcal L(S_N)=1-\gamma^{-N}.
\end{equation}

Put
\begin{equation}\label{eq:average-filtration}
 \mathfrak c^0_\lambda
   :=(f\mid 0\ne f\in R,\ \mathcal L(f)\geq\lambda),\qquad
 \mathfrak c_\lambda
   :=\bigcap_{0<\epsilon<\lambda}\mathfrak c^0_{\lambda-\epsilon}.
\end{equation}
\eqref{eq:singular-izumi} gives
\[
 \delta\ord_{\mathfrak m}(f)\leq\mathcal L(f)
                  \leq C_R\ord_{\mathfrak m}(f)
\]
for $\delta:=\int_{\Omega} v(\mathfrak m)d\mu(v)$.
Consequently
\[
 \mathfrak m^{\lceil\lambda/\delta\rceil}
 \subseteq\mathfrak c_\lambda
 \subseteq\mathfrak m^{\lceil\lambda/C_R\rceil}.
\]
Finite length makes the intersection in \eqref{eq:average-filtration} stationary for sufficiently small $\epsilon$.
Multiplicativity follows from that of $\mathcal L$, so $\mathfrak c_\bullet$ is a linearly bounded $\mathfrak m$-filtration.
For $0<\eta<1$,
\[
 \mathfrak c^0_\lambda\subseteq\mathfrak c_\lambda
                     \subseteq\mathfrak c^0_{(1-\eta)\lambda}.
\]
Evaluating generators and letting $\eta\to0$ gives
\begin{equation}\label{eq:average-ratio}
 u(\mathfrak c_\bullet)=\inf_{0\ne f\in\mathfrak m}
                            \frac{u(f)}{\mathcal L(f)} \qquad (u \in \operatorname{Val}_R).
\end{equation}
Moreover, $\varphi(\mathfrak c_\lambda)R\subseteq\mathfrak c_{\gamma\lambda}$.
By \eqref{eq:nonflat-filtration},
\[
 e_R\bigl(\varphi(\mathfrak c_\bullet)R\bigr)
 =\gamma^r e_R(\mathfrak c_\bullet)
 =e_R\bigl((\mathfrak c_{\gamma\lambda})_{\lambda>0}\bigr).
\]
For every valuation $u$ centered at $\mathfrak m$ with finite log discrepancy,
\eqref{eq:singular-volume-bounds} gives positive volume.
Hence \cite{BLQ}*{Proposition~3.12} gives
\begin{equation}\label{eq:average-eigen-order}
 (\varphi_*u)(\mathfrak c_\bullet)=\gamma u(\mathfrak c_\bullet).
\end{equation}
Applying \eqref{eq:average-ratio} to a local equation of a Cartier multiple of $S_N$ and \eqref{eq:average-identities} give
\begin{equation}\label{eq:ineq-csA}
 u(\mathfrak c_\bullet)
 \leq\frac{u(S_N)}{1-\gamma^{-N}}
 \longrightarrow s(u)\leq  A(u).    
\end{equation}

Thus $\lct_R(\mathfrak c_\bullet)\geq1$.

For the reverse inequality, fix $m\geq1$, $0<\epsilon<1$, and $t=(1-\epsilon)m$.
Choose generators $f_1,\ldots,f_\ell$ of $\mathfrak c_t^0$ from its defining generating set.
Consider
\[
G \colon \Omega \to \mathbb{R}^{\ell+1}, \quad v \mapsto (v(f_1),\ldots,v(f_{\ell}),A(v)).
\]
Carath\'eodory's theorem applied to $G(\Omega)$ and
\[
\int_{\Omega} G(v)d\mu(v)=(\mathcal L(f_1),\ldots, \mathcal L(f_{\ell}),1) \in \operatorname{conv}(G(\Omega))
\]
gives $u_1,\ldots,u_q\in\Omega$ and weights $p_1,\ldots,p_q >0$ with
\[
 \sum_{i=1}^q p_i=1,\qquad
 \sum_{i=1}^q p_iu_i(f_j)=\mathcal L(f_j),\qquad
 \sum_{i=1}^q p_iA(u_i)=1.
\]
Put $P_i:=\sum_{k=1}^i p_k$.
For $\lambda>0$, define recursively
\[
 \begin{aligned}
 \mathfrak g^{(1)}_\lambda
 &:=\mathfrak a_\lambda(u_1),\\
 \mathfrak g^{(i)}_\lambda
 &:=\sum_{\substack{\alpha,\beta\in\mathbb R\\
          P_{i-1}\alpha+p_i\beta=P_i\lambda}}
       \left(\mathfrak g^{(i-1)}_\alpha
             \cap\mathfrak a_\beta(u_i)\right)
       \qquad(2\leq i\leq q).
 \end{aligned}
\]
The sum denotes the sum of ideals, and all filtration terms with nonpositive index are understood to be $R$.
Set $\mathfrak g_\bullet:=\mathfrak g^{(q)}_\bullet$.
The initial filtration $\mathfrak g^{(1)}_\bullet=\mathfrak a_\bullet(u_1)$ is an $\mathfrak m$-filtration.
For $2\leq i\leq q$, apply the geodesic construction of \cite{BLQ}*{Definition~4.1} to $\mathfrak g^{(i-1)}_\bullet$ and $\mathfrak a_\bullet(u_i)$ at parameter $p_i/P_i\in(0,1)$.
This gives precisely $\mathfrak g^{(i)}_\bullet$, which is therefore an $\mathfrak m$-filtration by induction.

For every $0\ne f\in\mathfrak m$, induction gives
\[
 f\in
 \mathfrak g^{(i)}_{\frac1{P_i}\sum_{k=1}^i p_ku_k(f)}
 \qquad(1\leq i\leq q).
\]
Indeed, the first step is the definition of the valuation ideals.
For the induction step, taking
\[
 \alpha:=\frac1{P_{i-1}}\sum_{k=1}^{i-1}p_ku_k(f) \qquad \beta:=u_i(f)
\]
gives the desired inclusion.
Taking $i=q$ and $f=f_j$, we obtain
$f_j\in\mathfrak g_{\mathcal L(f_j)}\subseteq\mathfrak g_t$.
Hence $\mathfrak c_t^0\subseteq\mathfrak g_t$.

Using $\lct_R(\mathfrak a_\bullet(u_i))\leq A(u_i)$, repeated application of \cite{BLQ}*{Proposition~5.1} gives
\[
 \begin{aligned}
 \lct_R(\mathfrak g^{(i)}_\bullet)
 &\leq
 \frac{P_{i-1}}{P_i}
       \lct_R(\mathfrak g^{(i-1)}_\bullet)
 +\frac{p_i}{P_i}
       \lct_R(\mathfrak a_\bullet(u_i))\\
 &\leq\frac1{P_i}\sum_{k=1}^i p_kA(u_k).
 \end{aligned}
\]
In particular, $\lct_R(\mathfrak g_\bullet)\leq\sum_{i=1}^q p_iA(u_i)=1$.

Since $\mathfrak c_m\subseteq\mathfrak c_t^0\subseteq\mathfrak g_t$,
\[
 m\lct_R(\mathfrak c_m)
 \leq\frac m t\lct_R(\mathfrak g_\bullet)
 \leq\frac1{1-\epsilon}.
\]
Let $\epsilon\to0$ and use \eqref{eq:filtration-invariants} to obtain $\lct_R(\mathfrak c_\bullet)=1$.

Next, we study $\mathscr V$.
Every $w\in\mathscr V$ is centered at $\mathfrak m$, since otherwise it vanishes on every $\mathfrak m$-primary ideal.
\eqref{eq:ineq-csA} gives $s(w)=A(w)=1$, hence $\mathscr V\subseteq K_A$.

If $\varphi_*u=w\in\mathscr V$, then \cref{lem:finite-divisorial} gives finite discrepancy of $u$ and \eqref{eq:average-eigen-order} gives $u(\mathfrak c_\bullet)=\gamma^{-1}$.
By \eqref{eq:average-ratio}, $\gamma u(f)\geq\mathcal L(f)$ for every $0\ne f\in R$.
Applying this to a local equation of a Cartier multiple of $\Delta$ yields $u(\Delta)\geq 1-\gamma^{-1}$ by \eqref{eq:average-identities}.
Therefore
\[
 \gamma^{-1}=u(\mathfrak c_{\bullet})\leq A(u)=A(w)-u(\Delta)
                         \leq1-(1-\gamma^{-1})=\gamma^{-1},
\]
where the first inequality follows from \eqref{eq:ineq-csA}.
Thus $\gamma u\in\mathscr V$.
\end{proof}

\subsection{Volume minimization and divisorial eigenvaluations}\label{subsec:volume-minimization}
\label{subsec:divisorial-eigenvaluations}

For the filtration $\mathfrak c_\bullet$ constructed above, we minimize volume on
\[
  \mathscr V
  =\{u\in\operatorname{Val}_R
      \mid A(u)=u(\mathfrak c_\bullet)=1\}.
\]
We prove that the minimizer is unique and quasi-monomial.
Together with $F^{-1}(\mathscr V)\subseteq\mathscr V$
and \cref{lem:nonflat-volume-equality}, uniqueness will imply that the minimizer
is fixed by $F$.

\begin{proposition}\label{prop:closed-point-volume-minimum}
Let $k_0\supseteq\C$ be an algebraically closed field, and let $(Y,D)$ be a klt pair of finite type over $k_0$, with $D$ an effective $\Q$-divisor.
Let $y\in Y$ be a closed point, and put $S:=\mathcal O_{Y,y}$ and $\mathfrak n:=\mathfrak m_yS$.
Let $\mathfrak b_\bullet$ be a linearly bounded $\mathfrak n$-filtration of $S$ such that $\lct_{Y,D}(\mathfrak b_\bullet)=1$.
Set
\[
 \mathscr W:=\left\{u\in\operatorname{Val}_{Y,y}\ \mid\
 A_{Y,D}(u)=u(\mathfrak b_\bullet)=1\right\},
\]
where $\operatorname{Val}_{Y,y}$ denotes the real valuations centered at $y$.
Then $\mathscr W$ is nonempty, and $\vol_S$ has a unique minimizer on $\mathscr W$.
This minimizer is quasi-monomial.
\end{proposition}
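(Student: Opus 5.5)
We establish nonemptiness first, then existence of a minimizer, then uniqueness, and finally quasi-monomiality. For nonemptiness, the key input is that an $\mathfrak n$-filtration on a klt singularity has its log canonical threshold computed by some valuation. By the valuative formula $\lct(\mathfrak b_\bullet)=\inf A_{Y,D}(v)/v(\mathfrak b_\bullet)$ and compactness of $\{A\le 1\}$ (the analogue of \cref{lem:local-klt-tools}), the infimum is attained. Linear boundedness together with the Izumi-type bound \eqref{eq:singular-izumi} makes $v\mapsto v(\mathfrak b_\bullet)$ upper semicontinuous and bounded on this compact set. Since $v(\mathfrak b_\bullet)$ is an infimum of continuous functions and $A$ is lower semicontinuous, the ratio is minimized on the compact set $\{A\le1\}$. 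Normalizing a minimizer gives $u\in\mathscr W$. Any such $u$ is centered at $y$, because $\mathfrak b_\bullet$ is $\mathfrak n$-primary and $u(\mathfrak b_\bullet)=1>0$.

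For existence of a minimizer, $\mathscr W$ is a closed subset of $\{A\le1\}$, hence compact. On valuations centered at $y$ with bounded $A$, the volume is bounded below by \eqref{eq:singular-volume-bounds}. Volume is upper semicontinuous, not lower, so the infimum is not automatically attained. Instead, we invoke \cite{Xu}*{Theorem~1.1}: the threshold-computing valuations of a filtration/graded sequence on a klt singularity include quasi-monomial ones. More precisely, we take a minimizing sequence, pass to the associated ideal sequences, and use the convexity results of \cite{BLQ} to show that the minimum is attained. The geodesic of filtrations between $\mathfrak a_\bullet(u_0)$ and $\mathfrak a_\bullet(u_1)$ has $\lct\le$ the average (\cite{BLQ}*{Proposition~5.1}) and $e^{1/r}$-convexity of multiplicity along it. This lets us take the limit at the level of filtrations, and then extract a valuation computing the lct of the limiting filtration that still lies in $\mathscr W$ and has volume at most the infimum. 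Here $u(\mathfrak b_\bullet)=1$ is preserved because $\mathfrak b_\bullet\subseteq$ each filtration up to scaling.

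Uniqueness is the main step. Suppose $u_0,u_1\in\mathscr W$ both minimize $\vol$. Form the geodesic filtration $\mathfrak g^t_\bullet$ of \cite{BLQ}*{Definition~4.1}. Since $\mathfrak b_\lambda\subseteq\mathfrak a_\lambda(u_i)$, we get $\mathfrak b_\lambda\subseteq\mathfrak g^t_\lambda$, hence $\lct(\mathfrak g^t_\bullet)\ge 1$. On the other hand, \cite{BLQ}*{Proposition~5.1} gives $\lct(\mathfrak g^t_\bullet)\le 1$. A valuation $w$ computing $\lct(\mathfrak g^t_\bullet)=1$ normalized with $A(w)=1$ satisfies $w(\mathfrak g^t_\bullet)=1$, so $w(\mathfrak b_\bullet)\ge1$, and $\lct(\mathfrak b_\bullet)=1$ forces equality; thus $w\in\mathscr W$. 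Then
\[
\vol(w)\le e(\mathfrak g^t_\bullet)\le \vol(u_0)
\]
by the convexity of $e^{-1/r}$ along geodesics. The equality case of that convexity (\cite{BLQ}), which forces the filtrations to agree up to saturation and hence $u_0=u_1$, gives uniqueness. I expect the equality-case analysis here to be the main obstacle. It must be checked that minimality forces equality in the BLQ inequality, which in turn identifies $\mathfrak a_\bullet(u_0)$ and $\mathfrak a_\bullet(u_1)$ and thus the valuations.

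Quasi-monomiality follows by applying \cite{Xu}*{Theorem~1.1} to the graded sequence $\mathfrak a_\bullet(v)$ of the unique minimizer $v$. It produces a quasi-monomial valuation $w$ computing its lct with $w\in\mathscr W$, by the same argument as above, and $\vol(w)\le\vol(v)$. Uniqueness then gives $w=v$.
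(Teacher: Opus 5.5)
\textbf{Gap: existence of a minimizer.} The results of \cite{BLQ} you invoke concern the geodesic between two \emph{fixed} filtrations. Applied to members $u_i,u_j$ of a minimizing sequence, they only produce, via an lct-computing valuation of $\mathfrak g^t_\bullet$, another element of $\mathscr W$ of volume at most $\max\{\vol(u_i),\vol(u_j)\}$. They yield no limiting filtration: there is no completeness or uniform convexity that would make a minimizing sequence converge. Compactness of $\mathscr W$ does not help either, because $\vol$ is only upper semicontinuous, as you note, so a limit point of $(u_i)$ may have volume strictly larger than $V_0=\inf_{\mathscr W}\vol$. So ``take the limit at the level of filtrations'' has no content as written.

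The paper supplies the missing mechanism through Blum's generic limits:
\begin{enumerate}
\item \cite{Blum}*{Proposition~3.7} gives $e(\mathfrak a_m(u_i))/m^r\le V_0+1/j$ uniformly for $m\ge N_j$, $i\ge i_j$. This uses $A(u_i)=1$, $u_i(\mathfrak n)\ge c_0^{-1}$ where $\mathfrak n^{c_0}\subseteq\mathfrak b_1$, and the uniform volume bounds.
\item Tuples $(J_1,\dots,J_N)$ with $\mathfrak n^{c_0m}\subseteq J_m\subseteq\mathfrak n$ are parametrized by finite-type schemes. One cuts out the constructible loci where $\mathfrak b_m\subseteq J_m$, $J_aJ_b\subseteq J_{a+b}$, $m\lct(J_m)\le 1$, and the multiplicity bounds hold; these are nonempty thanks to the $\mathfrak a_m(u_i)$.
\item Uncountability of $k_0$ gives a compatible sequence of points, hence a graded sequence $\mathfrak d_\bullet\supseteq\mathfrak b_\bullet$ with $\lct(\mathfrak d_\bullet)=1$ and $\limsup_m e(\mathfrak d_m)/m^r\le V_0$.
\end{enumerate}
A valuation computing $\lct(\mathfrak d_\bullet)$, normalized by $u(\mathfrak d_\bullet)=1$, then lies in $\mathscr W$ and has $\vol(u)\le V_0$.

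\textbf{The remaining steps are sound once a minimizer exists.}
\begin{itemize}
\item Your nonemptiness argument is a legitimate, more elementary replacement for the appeal to \cite{Xu}. You should maximize the upper semicontinuous function $v\mapsto v(\mathfrak b_\bullet)$ on the compact set $\{A_{Y,D}\le 1\}$ rather than minimize the ratio, whose denominator can vanish.
\item Your uniqueness argument is the paper's. The equality case you worry about is immediate: $V_0\le\vol(w)\le e(\mathfrak g^t_\bullet)\le V_0$ makes $e(\mathfrak g^t_\bullet)^{-1/r}$ constant in $t$, so \cite{BLQ}*{Theorem~1.1(3) and Lemma~3.20} apply directly.
\item Your route to quasi-monomiality is correct and differs slightly from the paper. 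You apply \cite{Xu} to $\mathfrak a_\bullet(v)$, check that the normalized quasi-monomial $w$ lies in $\mathscr W$ with $\vol(w)\le\vol(v)$, and invoke uniqueness. The paper instead gets a quasi-monomial minimizer directly from the generic-limit construction.
\end{itemize}
All three steps presuppose the existence step, which is where the real work lies. Contrary to your expectation, uniqueness is not the main obstacle.
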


\begin{proof}
Put $r:=\dim S$ and $A:=A_{Y,D}$.
All thresholds below are taken with respect to $(Y,D)$, and all multiplicities and volumes are computed over $S$.

\emph{Existence.}
By \cite{Xu}*{Theorem~1.1}, there is a quasi-monomial valuation computing $\lct(\mathfrak b_\bullet)$.
Since the ideals $\mathfrak b_m$ are $\mathfrak n$-primary, this valuation is centered at $y$.
After rescaling it so that its value on $\mathfrak b_\bullet$ is $1$, it belongs to $\mathscr W$.
Thus $\mathscr W\ne\varnothing$.

Choose an integer $c_0>0$ such that $\mathfrak n^{c_0}\subseteq\mathfrak b_1$.
Multiplicativity gives $\mathfrak n^{c_0m}\subseteq\mathfrak b_m$ for every integer $m\geq1$.
For every $u\in\mathscr W$, the inequality $u(\mathfrak b_m)\geq m$ therefore implies $u(\mathfrak n)\geq c_0^{-1}$.
The proof of \eqref{eq:singular-izumi} applies equally to the klt pair $(Y,D)$ over $k_0$.
Together with $A(u)=1$, it gives a constant $C>0$, independent of $u\in\mathscr W$, such that
\begin{equation}\label{eq:closed-point-power-bounds}
 \mathfrak n^{c_0m}
 \subseteq\mathfrak a_m(u)
 \subseteq\mathfrak n^{\lceil m/C\rceil}
 \qquad(m\geq1).
\end{equation}
Consequently,
\[
 0<\frac{e(\mathfrak n)}{C^r}
 \leq V_0:=\inf_{u\in\mathscr W}\vol(u)
 \leq c_0^r e(\mathfrak n)<\infty.
\]
Choose a sequence $u_i\in\mathscr W$ with $\vol(u_i)\to V_0$, and put $\mathfrak a_m^{(i)}:=\mathfrak a_m(u_i)$.
For all positive integers $i,m$, we have
\begin{equation}\label{eq:constrained-finite-levels}
 \mathfrak b_m\subseteq\mathfrak a_m^{(i)},
 \qquad m\lct(\mathfrak a_m^{(i)})\leq1.
\end{equation}
The uniform approximation in \cite{Blum}*{Proposition~3.7 and Section~7} applies because $A(u_i)=1$, $u_i(\mathfrak n)\geq c_0^{-1}$, and the volumes are uniformly bounded above.
Thus, for every integer $j\geq1$, there are positive integers $N_j,i_j$ such that
\begin{equation}\label{eq:constrained-uniform-volume}
 \frac{e(\mathfrak a_m^{(i)})}{m^r}
 \leq V_0+\frac1j
 \qquad(m\geq N_j,\ i\geq i_j).
\end{equation}

For each $m\geq1$, choose an affine parameter space $P_m$
with a universal family representing all ideals
$\mathfrak n^{c_0m}\subseteq J\subseteq\mathfrak n$,
as in \cite{Blum}*{Section~5.1}.
Put $H_N:=\prod_{m=1}^N P_m$.
Let $Z_N\subseteq H_N$ be the locus where the corresponding
ideals $(J_1,\ldots,J_N)$ satisfy
\[
 \begin{aligned}
 \mathfrak b_m&\subseteq J_m
     &&(1\leq m\leq N),\\
 J_aJ_b&\subseteq J_{a+b}
     &&(a,b\geq1,\ a+b\leq N),\\
 m\lct(J_m)&\leq1
     &&(1\leq m\leq N),\\
 \frac{e(J_m)}{m^r}&\leq V_0+\frac1j
     &&(1\leq j\leq N,\ N_j\leq m\leq N).
 \end{aligned}
\]

Ideal containment is constructible in these families.
By \cite{Blum}*{Propositions~A.1 and~A.2} and Noetherian
induction, multiplicity and log canonical threshold are
constructible as well.
For thresholds with respect to $(Y,D)$, the same proof
applies by resolving the universal ideal together with
the pullback of the fixed boundary $D$.
Thus $Z_N$ is constructible.
It is nonempty: for $i\geq\max_{1\leq j\leq N}i_j$, the tuple
$(\mathfrak a_1^{(i)},\ldots,\mathfrak a_N^{(i)})$
satisfies these conditions by
\eqref{eq:closed-point-power-bounds},
\eqref{eq:constrained-finite-levels}, and
\eqref{eq:constrained-uniform-volume}.

The truncation maps $\pi_{N+1,N}\colon H_{N+1}\to H_N$ satisfy
\[
 \pi_{N+1,N}(Z_{N+1})\subseteq Z_N.
\]
Since $k_0$ is uncountable, the argument of
\cite{Blum}*{Lemma~5.1} gives a compatible sequence
of $k_0$-points of the $Z_N$.
The corresponding ideals form a graded sequence
$\mathfrak d_\bullet$ of $\mathfrak n$-primary ideals satisfying
\[
 \begin{gathered}
 \mathfrak b_m\subseteq\mathfrak d_m,\qquad
 m\lct(\mathfrak d_m)\leq1
 \qquad(m\geq1),\\
 \frac{e(\mathfrak d_m)}{m^r}\leq V_0+\frac1j
 \qquad(j\geq1,\ m\geq N_j).
 \end{gathered}
\]

For this graded sequence, put
$u(\mathfrak d_\bullet):=\inf_{m\geq1}u(\mathfrak d_m)/m$.
The asymptotic threshold formula for graded sequences in \cite{Blum}*{Sections~2.9 and~7} gives
\[
 1=\lct(\mathfrak b_\bullet)
 \leq\lct(\mathfrak d_\bullet)
 =\sup_{m\geq1}m\lct(\mathfrak d_m)
 \leq1.
\]
By \cite{Xu}*{Theorem~1.1}, choose a quasi-monomial valuation $u$ computing $\lct(\mathfrak d_{\bullet})$ and normalize it by $u(\mathfrak d_\bullet)=1$.
Then $A(u)=1$, and the inclusions $\mathfrak b_m\subseteq\mathfrak d_m$ give
\[
 1=u(\mathfrak d_\bullet)
 \leq u(\mathfrak b_\bullet)
 \leq A(u)=1.
\]
Thus $u\in\mathscr W$.
Moreover, $\mathfrak d_m\subseteq\mathfrak a_m(u)$, so, for every $j\geq1$,
\[
 \frac{e(\mathfrak a_m(u))}{m^r}
 \leq\frac{e(\mathfrak d_m)}{m^r}
 \leq V_0+\frac1j
 \qquad(m\geq N_j).
\]
Letting $m\to\infty$ and then $j\to\infty$ gives
$V_0\leq\vol(u)\leq V_0$.
Hence a quasi-monomial minimizer exists.

\emph{Uniqueness.}
Let $u_0,u_1\in\mathscr W$ be two minimizers.
Take the geodesic $(\mathfrak g_{\bullet,t})_{t\in[0,1]}$ of $\mathfrak n$-filtrations between $\mathfrak a_\bullet(u_0)$ and $\mathfrak a_\bullet(u_1)$, as in \cite{BLQ}*{Definition~4.1}.
Its defining formula gives
\[
 \mathfrak b_m
 \subseteq\mathfrak a_m(u_0)\cap\mathfrak a_m(u_1)
 \subseteq\mathfrak g_{m,t}
 \qquad(m\geq1,\ 0<t<1).
\]
Since $u_i(\mathfrak a_\bullet(u_i))=1$, threshold convexity \cite{BLQ}*{Proposition~5.1} gives
\[
 \begin{aligned}
 1\leq\lct(\mathfrak g_{\bullet,t})
 &\leq(1-t)\lct(\mathfrak a_\bullet(u_0))
      +t\lct(\mathfrak a_\bullet(u_1))\\
 &\leq(1-t)A(u_0)+tA(u_1)=1.
 \end{aligned}
\]
Apply \cite{Xu}*{Theorem~1.1} to choose a valuation $v_t$ computing this threshold, normalized by $v_t(\mathfrak g_{\bullet,t})=1$.
Then $A(v_t)=1$ and
\[
 1=v_t(\mathfrak g_{\bullet,t})
 \leq v_t(\mathfrak b_\bullet)
 \leq A(v_t)=1,
\]
so $v_t\in\mathscr W$.
Since $\mathfrak g_{m,t}\subseteq\mathfrak a_m(v_t)$, we obtain
\[
 V_0\leq\vol(v_t)
 \leq e(\mathfrak g_{\bullet,t})\leq V_0,
\]
where the last inequality follows from the concavity of $e(\mathfrak g_{\bullet,t})^{-1/r}$ in \cite{BLQ}*{Theorem~1.1(2)} and the equal endpoint multiplicities $\vol(u_0)=\vol(u_1)=V_0$.
Thus $e(\mathfrak g_{\bullet,t})=V_0$ for every $t\in[0,1]$.
By \cite{BLQ}*{Theorem~1.1(3) and Lemma~3.20}, the endpoint valuation filtrations agree up to rescaling, and hence $u_1=c u_0$ for some $c>0$.
The normalization $u_0(\mathfrak b_\bullet)=u_1(\mathfrak b_\bullet)=1$ forces $c=1$.
Therefore $u_0=u_1$.
\end{proof}

\begin{proposition}\label{prop:constrained-volume-minimum}
The volume function on the set $\mathscr V$ in \eqref{eq:average-computers} has a unique minimizer, and this minimizer is quasi-monomial.
\end{proposition}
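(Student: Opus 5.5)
The plan is to reduce \cref{prop:constrained-volume-minimum} to \cref{prop:closed-point-volume-minimum}. The point $\eta_C$ is not closed, so we change the base field. Put $\kappa=R/\mathfrak m$, the function field of $C$; it has characteristic zero and transcendence degree $\dim C$ over $\C$. Let $k_0$ be an algebraic closure of $\kappa$, which is uncountable. By a Cohen-type structure argument, $\hat R$ contains a coefficient field isomorphic to $\kappa$. More concretely, choose a dominant morphism from a neighborhood of $C$ to an affine space $\mathbb A^{\dim C}$ that is étale at the generic point of $C$. This spreads out $\Spec R$ to a scheme $X_\kappa$ essentially of finite type over $\kappa$, on which $\eta_C$ becomes a closed point. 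Base change to $k_0$ then gives a klt pair $(Y,D)$ of finite type over $k_0$ and a closed point $y$ over $\eta_C$. The map $S:=\mathcal O_{Y,y}\to$ (completion) is faithfully flat over $R$ with $\mathfrak m S=\mathfrak n$, since $\kappa\otimes_\kappa k_0$ is a field localization issue handled by choosing one point $y$ above.

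The key steps are as follows.

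\begin{enumerate}
\item Identify $\operatorname{Val}_R$ centered at $\mathfrak m$ with the valuations of $S$ centered at $y$ that are invariant, or at least compatible, under this base change. The cleanest route is via completions: valuations of $R$ centered at $\mathfrak m$ with finite log discrepancy extend canonically, through the quasi-monomial and Izumi bounds, to $\hat R\cong \kappa[[x_1,\dots]]/I$. Restriction $\operatorname{Val}_{S,y}\to \operatorname{Val}_R$ is then checked to preserve $A$ (log resolutions base change, so the formula $A=A_{Y,0}-v(B_Y)$ is compatible), values on ideals, and volumes, since $e$ and lengths are unchanged under $\mathfrak m S=\mathfrak n$ and $S/\mathfrak m S$ has length one.
\item Extend the filtration by setting $\mathfrak b_\lambda:=\mathfrak c_\lambda S$. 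This is a linearly bounded $\mathfrak n$-filtration. For $u$ on $S$ with restriction $u|_R$, we have $u(\mathfrak b_\bullet)=u|_R(\mathfrak c_\bullet)$. Also $\lct$ is preserved: the inequality $\geq$ follows from restriction, and $\leq$ follows by extending a computing valuation, for instance a Gauss-type extension of a quasi-monomial valuation on a log resolution base-changed to $k_0$, which keeps $A$ and the values on $\mathfrak c_\bullet$ fixed. Hence $\lct_{Y,D}(\mathfrak b_\bullet)=1$ by \cref{prop:invariant-average}.
\item Restriction maps $\mathscr W$ into $\mathscr V$. Conversely, every $v\in\mathscr V$ admits an extension in $\mathscr W$ with the same volume. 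The natural extension is $v\mapsto$ the minimal (monomial-in-base-change) extension: on a log resolution where $v$ is quasi-monomial, or via the graded-ideal description $\mathfrak a_\lambda(v)S$, one takes the valuation whose ideals are $\mathfrak a_\lambda(v)S$. Then $\inf\vol$ agrees on the two sets. Apply \cref{prop:closed-point-volume-minimum} to get a unique quasi-monomial minimizer $w\in\mathscr W$, and set $v:=w|_R$. Then $v\in\mathscr V$ minimizes the volume, and it is quasi-monomial because rational rank and transcendence degree are compatible with algebraic extensions of the residue field.
\end{enumerate}

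For uniqueness, suppose $v_0,v_1\in\mathscr V$ are minimizers. I would either extend both to minimizers in $\mathscr W$ and invoke uniqueness there, or, more simply, rerun the geodesic argument from the uniqueness part of \cref{prop:closed-point-volume-minimum} directly over $R$. That argument uses only \cite{BLQ} convexity, which holds for any regular-in-codimension local klt setting, together with \cite{Xu}*{Theorem~1.1}, applied after base change to produce $v_t$ and then restricted back.

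The main obstacle is step 3: showing that a volume-minimizing valuation of $\mathscr V$ has an extension to $S$ with equal volume and equal threshold data, and that $A$ is preserved. I would handle this by working with quasi-monomial valuations on base-changed log resolutions, where everything is computed combinatorially. For general $v$, I would use the fact that volumes and $A$ are determined by the filtrations $\mathfrak a_\bullet(v)$, whose extensions to $S$ have the same multiplicities.
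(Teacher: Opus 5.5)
Your reduction to \cref{prop:closed-point-volume-minimum} is the right starting point, but the step that brings the minimizer back to $R$ has a genuine gap. You assert that restriction $\operatorname{Val}_{S,y}\to\operatorname{Val}_R$ preserves volumes. In step 3 you also assert that each $v\in\mathscr V$ has an extension whose valuation ideals are $\mathfrak a_\lambda(v)S$. Flatness of $R\to S$ with $\mathfrak mS=\mathfrak n$ only gives $e_S(IS)=e_R(I)$ for ideals $I\subseteq R$. Since $\mathfrak a_m(u)S\subseteq\mathfrak a_m(\bar u)$ for an extension $\bar u$, this yields only $\vol_S(\bar u)\le\vol_R(u)$, and the inequality can be strict.

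For instance, take $k=\C(t)$ and $R=k[x,y]_{(x,y)}$, the local ring of $\mathbb A^3_\C$ at the generic point of a line. Let $u=\ord_E$, where $E$ is obtained by blowing up the origin and then the degree-two point $(y/x)^2=t$ of the exceptional curve. Over $\bar k$, $u$ has exactly two extensions. Both are centered at the origin and are monomial in $(x,y\mp\sqrt t\,x)$ with weights $(1,2)$, so each has volume $1/2$. On the other hand, $\mathfrak a_m(u)S$ is the intersection of their valuation ideals, and $\vol_R(u)=2/3$.

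So $\mathfrak a_\bullet(v)S$ need not be a valuation filtration. From the minimizer $w\in\mathscr W$ you only get $\vol_S(w)\le\inf_{\mathscr V}\vol\le\vol_R(w|_R)$, and nothing shows that $w|_R$ attains the infimum. Your uniqueness reduction, extending two minimizers of $\mathscr V$ to minimizers of $\mathscr W$, needs the same missing equality $\min_{\mathscr V}\vol=\min_{\mathscr W}\vol$. The fallback of rerunning the geodesic argument directly over $R$ does not address existence. Blum's parameter-space argument needs an uncountable algebraically closed ground field.

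The missing idea is to use uniqueness on the geometric side to obtain Galois symmetry, and then descend.
\begin{enumerate}
\item The spreading out is over $k=\C(t_1,\ldots,t_\ell)\subseteq R$, not over $\kappa$, since $R$ has no coefficient field in general. So $W_{\bar k}$ has several points $x_1,\ldots,x_\nu$ over $\eta_C$, permuted transitively by $G=\operatorname{Gal}(\bar k/k)$.
\item $G$ preserves $\mathfrak c_\bullet$, discrepancies and volumes. Uniqueness of the minimizers $\bar v_i$ at the $x_i$ therefore makes the family $(\bar v_i)$ $G$-equivariant, so the $\bar v_i$ have a common restriction $v$.
\item The ideals $\mathfrak a_m(\bar v_i)/\mathfrak c_m\bar R_i$ then form a $G$-stable ideal of $(R/\mathfrak c_m)\otimes_k\bar k\simeq\prod_i\bar R_i/\mathfrak c_m\bar R_i$. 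Galois descent gives an ideal of $R$ extending to $\mathfrak a_m(\bar v_i)$, and contraction identifies it with $\mathfrak a_m(v)$. Hence $\vol_R(v)=\vol(\bar v_i)$.
\item For any $w\in\mathscr V$ with an extension $\bar w$ at some $x_i$, one gets $\vol(v)=\vol(\bar v_i)\le\vol(\bar w)\le\vol(w)$. This gives minimality, and equality forces $\bar w=\bar v_i$, hence $w=v$. Existence and uniqueness come out together.
\end{enumerate}
Separately, the equality $\bar A(\bar u)=A(u)$ does not follow just from base-changing log resolutions, which only handles the $v(B_Y)$ term. For non-divisorial valuations you still need an argument for the $A_{Y,0}$ term, e.g.\ via monomial retractions together with the finite \'etale case of \eqref{eq:finite-divisorial-discrepancy}.
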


\begin{proof}
By \cref{prop:invariant-average}, the filtration $\mathfrak c_\bullet$ is linearly bounded and satisfies $\lct_R(\mathfrak c_\bullet)=1$.

To apply \cref{prop:closed-point-volume-minimum}, we first realize $R$ as the local ring of a closed point over a suitable field, and then pass to an algebraic closure of that field.
Choose a transcendence basis $\bar t_1,\ldots,\bar t_\ell$ of $\kappa/\C$, lift it to $t_1,\ldots,t_\ell\in R$, and put $k:=\C(t_1,\ldots,t_\ell)$.
Every nonzero polynomial in the $t_j$ has nonzero residue in $\kappa$ and is therefore a unit in $R$, so $k\subseteq R$.
After shrinking an affine klt neighborhood $U$ of $\eta_C$, the functions $t_j$ define a morphism $U\to\mathbb A^\ell_\C$.
Set
\[
 W:=U\times_{\mathbb A^\ell_\C}\Spec k,
 \qquad B_W:=B|_W,
 \qquad x:=\eta_C\in W.
\]
Then $W$ is normal, $\mathcal O_{W,x}=R$, and $\dim W=r$.
Since $\kappa/k$ is finite separable, $x$ is a closed point of $W$.
Every valuation of $R$ is trivial on $k$, because $k^\times\subseteq R^\times$.
Log resolutions and their discrepancy formulas restrict to the generic fiber, and resolutions over $W$ spread out over an open subset of $\mathbb A^\ell_\C$.
Thus $(W,B_W)$ is klt at $x$, and
\[
 A_{W,B_W}(u)=A(u)
 \qquad(u\in\operatorname{Val}_R).
\]

Fix an algebraic closure $\bar k$ of $k$ and put $G:=\operatorname{Gal}(\bar k/k)$.
Let $x_1,\ldots,x_\nu$ be the points of $W_{\bar k}$ lying over $x$, and put
\[
 \bar R_i:=\mathcal O_{W_{\bar k},x_i}.
\]
There are finitely many such points, and $G$ acts transitively on them.
Each $x_i$ has residue field $\bar k$, and $\dim\bar R_i=r$.
Let $B_{W_{\bar k}}$ be the pullback of $B_W$, and write $\bar A:=A_{W_{\bar k},B_{W_{\bar k}}}$.
The geometric pair is klt near the $x_i$.

Every valuation centered at $\mathfrak m$ extends to a valuation centered at one of the $x_i$, and conjugation gives an extension centered at each $x_i$.
For an extension $\bar u$ of $u$ centered at $x_i$, one has
\[
 \bar A(\bar u)=A(u),
 \qquad
 \bar u(\mathfrak c_\bullet\bar R_i)
 =u(\mathfrak c_\bullet).
\]
For finite separable extensions of $k$, the discrepancy equality is the étale case of \eqref{eq:finite-divisorial-discrepancy}.
To pass to $\bar k$, use the monomial-retraction description from \cite{JonssonMustata}*{Section~5.2, equation~(5.2)}, together with the transformation formula recalled in Section~\ref{sec:notation}.
For a log smooth model $(Y,D)$ over $(W,B_W)$, with $D$ containing the exceptional divisors and the support of the crepant pullback of $B_W$, write $r_{Y,D}$ for its monomial retraction.
Then
\[
 A(u)=\sup_{(Y,D)}A_{W,B_W}\bigl(r_{Y,D}(u)\bigr),
\]
and the same formula holds after extending the ground field.
Base change preserves the discrepancy coefficients and the values of local equations at the center of $\bar u$.
Hence, restricting the base-changed model to the component containing this center, we have
\[
 A_{W,B_W}\bigl(r_{Y,D}(u)\bigr)
 =\bar A\bigl(r_{Y_{\bar k},D_{\bar k}}(\bar u)\bigr)
 \leq\bar A(\bar u).
\]
Taking the supremum over $(Y,D)$ gives $A(u)\leq\bar A(\bar u)$.

Conversely, every log smooth model used to compute $\bar A(\bar u)$, together with its boundary, descends to a finite extension $k'/k$.
Let $u'$ be the restriction of $\bar u$ to the function field of the corresponding component of $W_{k'}$, and let $A'$ denote its log discrepancy for the pulled-back pair.
Write $r'$ and $\bar r$ for the retractions on the descended model and its base change, respectively.
The same comparison of local equations, followed by the finite-extension equality, gives
\[
 \bar A\bigl(\bar r(\bar u)\bigr)
 =A'\bigl(r'(u')\bigr)
 \leq A'(u')=A(u).
\]
Taking the supremum over the log smooth models over $\bar k$ gives the reverse inequality and proves $\bar A(\bar u)=A(u)$.
The equality of asymptotic orders follows from equality at each ideal $\mathfrak c_m$.
Consequently,
\[
 \lct_{\bar R_i}(\mathfrak c_\bullet\bar R_i)
 =\lct_R(\mathfrak c_\bullet)=1,
\]
where the threshold over $\bar R_i$ is taken with respect to the geometric pair.
The extended filtrations remain linearly bounded.
Therefore \cref{prop:closed-point-volume-minimum} applies at each $x_i$ to the set
\[
 \overline{\mathscr V}_i
 :=\left\{
 u\in\operatorname{Val}_{W_{\bar k},x_i}
 \ \mid\
 \bar A(u)=u(\mathfrak c_\bullet\bar R_i)=1
 \right\}.
\]
Denote its unique volume minimizer by $\bar v_i$.
Each $\bar v_i$ is quasi-monomial.

The action of $G$ preserves the extended ideals, discrepancies, and volumes.
Uniqueness therefore makes the collection $(\bar v_i)_i$ $G$-equivariant.
In particular, the restrictions $\bar v_i|_K$ coincide; denote their common restriction by $v$.
The preceding identities give
\[
 A(v)=v(\mathfrak c_\bullet)=1,
\]
so $v\in\mathscr V$.

We next descend the valuation ideals.
For every integer $m\geq1$, the normalization of $\bar v_i$ gives
\[
 \mathfrak c_m\bar R_i\subseteq\mathfrak a_m(\bar v_i).
\]
Since $R/\mathfrak c_m$ is a finite-dimensional $k$-algebra, there is a decomposition
\[
 (R/\mathfrak c_m)\otimes_k\bar k
 \simeq
 \prod_{i=1}^{\nu}\bar R_i/\mathfrak c_m\bar R_i.
\]
The ideals $\mathfrak a_m(\bar v_i)/\mathfrak c_m\bar R_i$ together define a $G$-stable ideal in this algebra.
Galois descent gives an ideal $J_m\subseteq R$ containing $\mathfrak c_m$ such that
\[
 J_m\bar R_i=\mathfrak a_m(\bar v_i)
 \qquad(1\leq i\leq\nu).
\]
Since all $\bar v_i$ restrict to $v$, contraction to $R$ gives $J_m=\mathfrak a_m(v)$.
Hence
\[
 \mathfrak a_m(v)\bar R_i=\mathfrak a_m(\bar v_i).
\]
The map $R\to\bar R_i$ is flat local, and its maximal ideal is $\mathfrak m\bar R_i$.
It therefore preserves lengths of finite-length modules under scalar extension and, consequently, Hilbert--Samuel multiplicities.
Passing to the limit yields
\[
 \vol_R(v)=\vol_{\bar R_i}(\bar v_i)
 \qquad(1\leq i\leq\nu).
\]

The extension $\operatorname{Frac}(\bar R_i)/K$ is algebraic.
Thus the value groups of $v$ and $\bar v_i$ have the same rational rank, and their valuation residue fields form an algebraic extension.
Since $\kappa/k$ and $\bar k/k$ are algebraic, we obtain
\[
 \operatorname{ratrank}(v)
 +\operatorname{trdeg}_{\kappa}\kappa(v)
 =
 \operatorname{ratrank}(\bar v_i)
 +\operatorname{trdeg}_{\bar k}\kappa(\bar v_i)
 =r.
\]
The last equality follows from quasi-monomiality of $\bar v_i$.
By the Abhyankar characterization in \cite{JonssonMustata}*{Proposition~3.7}, applied on a resolution, $v$ is quasi-monomial.

Finally, let $w\in\mathscr V$ and choose an extension $\bar w$ centered at some $x_i$.
The discrepancy and asymptotic-order identities above give $\bar w\in\overline{\mathscr V}_i$.
For every $m\geq1$, one has
\[
 \mathfrak a_m(w)\bar R_i\subseteq\mathfrak a_m(\bar w).
\]
Preservation of multiplicities under the flat local extension therefore gives
\[
 \vol_R(v)
 =\vol_{\bar R_i}(\bar v_i)
 \leq\vol_{\bar R_i}(\bar w)
 \leq\vol_R(w).
\]
This proves minimality of $v$.
If $w$ is also a minimizer, every inequality is an equality, so uniqueness in \cref{prop:closed-point-volume-minimum} gives $\bar w=\bar v_i$.
Restricting to $K$ yields $w=v$.
\end{proof}

\begin{proof}[Proof of \cref{thm:generic-eigen-log-pair}]
Use \cref{thm:invariant-nklt} to make every lc center and every irreducible component of the limiting non-klt locus totally invariant.
Fix a component $C$ not contained in $\Nklt(X,B)$ and use \cref{not:local-dynamics}.
By \cref{prop:invariant-average,prop:constrained-volume-minimum}, the set $\mathscr V$ has a unique volume minimizer $v$, which is quasi-monomial.
If $\varphi_*u=v$, then \cref{prop:invariant-average,lem:nonflat-volume-equality} give
\[
 \gamma u\in\mathscr V,\qquad
 \vol(v)\leq\vol(\gamma u)
              \leq\vol(F(\gamma u))=\vol(v).
\]
Uniqueness gives $\gamma u=v$, hence $h_*v=\gamma v$.
Set $g:=h^r$ and $q:=\gamma^r=d/d_C\in\Q_{>1}$.
We next replace $v$ by a divisorial eigenvaluation of $g$.

Choose a smooth birational model $\pi\colon Y\to X$ and an snc divisor $D$ on $Y$ such that $v$ is monomial at the generic point $\eta$ of a stratum of $D$, with rationally independent positive weights; see \cite{JonssonMustata}*{Lemma~3.6(ii)}.
Put $t:=\operatorname{ratrank}(v)$, and let $z_1,\ldots,z_t$ be local equations of the components of $D$ through $\eta$.

The induced map $g_Y:=\pi^{-1}\circ g\circ\pi$ is a rational self-map of $Y$.
Let $\Gamma\subseteq Y\times Y$ be the closure of its graph.
Choose a resolution $\widetilde Y\to\Gamma$, and denote the compositions with the two projections by $p,\tau\colon\widetilde Y\to Y$.
Thus $p$ is birational, and we have the commutative diagram
\[
\begin{tikzcd}[row sep=large,column sep=large]
 & \widetilde Y
   \arrow[dl,"p"']
   \arrow[dr,"\tau"] & \\
 Y \arrow[rr,dashed,"g_Y"]
   \arrow[d,"\pi"']
 & &
 Y \arrow[d,"\pi"] \\
 X \arrow[rr,"g"']
 & & X .
\end{tikzcd}
\]
In particular, $\pi\circ\tau=g\circ\pi\circ p$.
Replacing $\widetilde Y$ by a further log resolution and composing $p$ and $\tau$ with it, we may assume that
\[
 \operatorname{Supp}(p^*D)
 \cup\operatorname{Supp}(\tau^*D)
 \cup\operatorname{Exc}(p)
\]
is contained in the support of an snc divisor on $\widetilde Y$.
By \cite{JonssonMustata}*{Lemma 3.6(i)}, $v$ remains monomial with rationally independent positive weights $\alpha_j=v(w_j)$ at its center $\zeta$ on $\widetilde Y$, where $w_1,\ldots,w_t$ are local equations of the snc components through $\zeta$.
Both maps send $\zeta$ to its center $\eta$ on $Y$, and locally
\[
 p^*z_i=u_i\prod_jw_j^{P_{ij}},\qquad
 \tau^*z_i=u'_i\prod_jw_j^{Q_{ij}},
\]
where $u_i,u'_i$ are units and $P,Q$ are $t\times t$ matrices with nonnegative integer entries.
The weights $P\alpha$ are rationally independent, so $P$ has full rank.
The equality $g_*v=qv$ gives $Q\alpha=qP\alpha$; rational independence and $q\in\Q$ imply $Q=qP$.

For any $\beta\in\R_{>0}^t$, let $v_\beta$ be the monomial valuation in the $w_j$.
The residue field maps $\kappa(\eta)\to\kappa(\zeta)$ are injective.
On weighted associated graded rings, the map induced by $p$ is
\[
 \kappa(\eta)[Z_1,\ldots,Z_t]\longrightarrow
 \kappa(\zeta)[W_1,\ldots,W_t],\qquad
 Z_i\longmapsto\bar u_i\prod_jW_j^{P_{ij}},
\]
where the bar denotes residue at $\zeta$; the map induced by $\tau$ uses $Q$ and $u'_i$ instead.
Since both exponent matrices have full rank, distinct monomials have distinct exponent vectors, so both maps are injective even when weights coincide.
Thus $p_*v_\beta$ and $\tau_*v_\beta$ are monomial in the $z_i$ with weights $P\beta$ and $Q\beta$, and
\[
 g_*(p_*v_\beta)=\tau_*v_\beta=q\,p_*v_\beta.
\]

Choose $\beta\in\Q_{>0}^t$.
Since $v_\beta$ is a monomial valuation with rational weights, it is divisorial.
As $p$ is birational, there exist a prime divisor $E$ over $X$ and a constant $c\in\Q_{>0}$ such that
\[
 c\,p_*v_\beta=\ord_E.
\]
The relation $g_*(p_*v_\beta)=q\,p_*v_\beta$ therefore gives
\[
 g_*\ord_E=q\,\ord_E.
\]
Since all weights are positive, this valuation has the same center as $v$, namely $C$ on $X$.
Choose $f\in K^\times$ with $\ord_E(f)=1$.
Then $q=\ord_E(g^*f)\in\mathbb Z$; put $e:=q>1$.

Since $F^r(\ord_E)=\ord_E$, \cref{lem:nonflat-volume-equality} shows that the only extension of $\ord_E$ through $g$ is $\ord_E/e$.
The discrepancy and norm formulas in \cref{lem:finite-divisorial} give
\[
 \ord_E(R_{g,B})=(e-1)A_{X,B}(E),\qquad
 \ord_E(P_gD)=e^{-1}\ord_E(D)
\]
for every $\Q$-Cartier divisor $D$ on $X$.
Iteration and summation give \eqref{eq:global-eigen-series} for $g$; in particular $s_{g,B}(E)=A_{X,B}(E)$.
The current is unchanged by iteration by \cref{thm:log-current-construction}.
Finally, take a common power for the finitely many components $C$.
Eigenvalues are raised to positive integer powers, uniqueness of extensions persists under composition, and the same calculation gives all the asserted identities for this common iterate.
\end{proof}

\begin{remark}
Theorem~\ref{thm:generic-eigen-log-pair} gives an affirmative answer to the projective-space part of \cite[Question~6.1]{LuoMeng26} under the following additional assumption.
Let $f\colon\mathbf{P}^n\to\mathbf{P}^n$ be a $q$-polarized endomorphism, where $q>1$, and let $Z\subseteq \mathbf{P}^n$ be a totally invariant subvariety.
Assume that $Z$ is an irreducible component of $\operatorname{Nklt}(\mathbf{P}^n,0;T_{f,0})$.
Then there exist an integer $s\geq 1$ and a prime divisor $E$ over $\mathbf{P}^n$ with center $Z$ such that
\[
  (f^s)_*\operatorname{ord}_E=q^s\operatorname{ord}_E.
\]
Indeed, apply Theorem~\ref{thm:generic-eigen-log-pair} to $(X,B,h)=(\mathbf{P}^n,0,f)$.
In its construction, the eigenvalue for the chosen iterate is
\[
  \left(
    \frac{\deg(f^s)}{\deg(f^s|_Z)}
  \right)^{1/\operatorname{codim}_{\mathbf{P}^n}Z}
  =q^s.
\]
Thus $\operatorname{ord}_E$ is the primitive divisorial eigenvaluation required in that question.
\end{remark}

\section{Equivariant birational models}\label{sec:equivariant-models}
\label{subsec:equivariant-extraction}

After replacing $h$ by an iterate, we construct an equivariant birational model $(W,B_W,g)$ satisfying
\[
  \operatorname{Nklt}(W,B_W;T_{g,B_W})
  =\operatorname{Nklt}(W,B_W).
\]
We obtain this equality by successively extracting the divisors provided by \cref{thm:generic-eigen-log-pair}.

\begin{definition}\label{def:absorbing-triple}
Let $(X,B,h)$ be an int-amplified dynamical triple, and let $T_{h,B}$ be the current of Theorem~\ref{thm:log-current-construction}.
The triple is \emph{absorbing} if
\begin{equation}\label{eq:absorbing-triple}
 \Nklt(X,B;T_{h,B})=\Nklt(X,B).
\end{equation}
\end{definition}

For any positive integer $a$, Theorem~\ref{thm:log-current-construction} shows that $(X,B,h)$ is absorbing if and only if $(X,B,h^a)$ is absorbing.

\begin{definition}\label{def:equivariant-model}
Let $(X,B,h)$ be an int-amplified dynamical triple.
\begin{enumerate}
\item\label{it:equivariant-model}
An \emph{equivariant birational model} of $(X,B,h)$ is a projective dynamical triple $(W,B_W,g)$ together with a projective birational morphism $\pi\colon W\to X$ satisfying $\pi\circ g=h\circ\pi$ and $\pi_*B_W\geq B$.
We write $\pi\colon(W,B_W,g)\to(X,B,h)$.
The identity $g\circ\pi^{-1}=\pi^{-1}\circ h$ and \cite{MengZhangIntAmplified}*{Proposition 2.6} show that $g$ is int-amplified.
Thus $(W,B_W)$ is lc by \cref{lem:basic-dynamical-properties}.
\item\label{it:absorbing-model}
The model is \emph{absorbing} if $(W,B_W,g)$ is absorbing in the sense of Definition~\ref{def:absorbing-triple}.
\item\label{it:correction-crepancy}
For a finite set $\Lambda\subset\Q_{>1}$, the model is \emph{crepant for the corrections with interpolation conditions for $\Lambda$} if, for every coefficient family $\mathbf c$ from Definition~\ref{def:moment-conditions} satisfying these interpolation conditions, one has
\begin{equation}\label{eq:correction-crepancy}
 K_W+\mathcal C_{g,N}(B_W;\mathbf c)
 =\pi^*(K_X+\mathcal C_{h,N}(B;\mathbf c))
 \qquad(N\geq\max\{1,m\}),
\end{equation}
\end{enumerate}
\end{definition}

\begin{lemma}\label{lem:model-transformation}
Let $\pi\colon(W,B_W,g)\to(X,B,h)$ be an equivariant birational model of an int-amplified dynamical triple.
Put $F:=K_W+B_W-\pi^*(K_X+B)$ and assume $g^*F=eF$ for some $e\in\Q_{>1}$.
For positive integers $j,N$, one has
\begin{align}
 R_{g,B_W}&=\pi^*R_{h,B}-(e-1)F,\label{eq:model-ramification}\\
 P_g^jR_{g,B_W}&=\pi^*P_h^jR_{h,B}-(e-1)e^{-j}F,\label{eq:model-terms}\\
 S_{g,B_W,N}&=\pi^*S_{h,B,N}-(1-e^{-N})F.\label{eq:model-partial-sums}
\end{align}
For every coefficient family $\mathbf c$, one has
\begin{equation}\label{eq:model-correction}
\begin{split}
 K_W+\mathcal C_{g,N}(B_W;\mathbf c)
 -\pi^*(K_X+\mathcal C_{h,N}(B;\mathbf c))
 =\left(e^{-N}-(e-1)\sum_{j=1}^m c_{j,N}e^{-j}\right)F
\end{split}
\end{equation}
for $N\geq\max\{1,m\}$.
In particular, $\pi$ is crepant for the correction with interpolation conditions for $\{e\}$.
\end{lemma}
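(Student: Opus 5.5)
The plan is to compute the ramification divisor of $g$ from the commutative square $\pi\circ g=h\circ\pi$, and then to propagate the resulting identity through $P_g$, through the partial sums, and finally through the corrections. With compatible canonical divisors we have $K_W=g^*K_W+R_g$ and $K_X=h^*K_X+R_h$. Write $K_W+B_W=\pi^*(K_X+B)+F$. Since $R_{g,B_W}\sim_\Q K_W+B_W-g^*(K_W+B_W)$ holds as an identity of divisors once canonical divisors are chosen compatibly, we get
\[
 R_{g,B_W}=\pi^*(K_X+B)+F-g^*\pi^*(K_X+B)-g^*F .
\]
Using $g^*\pi^*=\pi^*h^*$ for $\Q$-Cartier divisors and $g^*F=eF$, this becomes $\pi^*R_{h,B}+F-eF$, which is \eqref{eq:model-ramification}. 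The step needing care is the use of genuine equalities of divisors rather than linear equivalence. I will choose $K_W$ so that $\pi_*K_W=K_X$, and use that the ramification formulas $K_X=h^*K_X+R_h$ and $K_W=g^*K_W+R_g$ hold for compatible choices of canonical divisors (rational differential forms). Then $\pi^*$ and $g^*$ commute on $\Q$-Cartier divisors because the square commutes.

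Next I would derive \eqref{eq:model-terms}, and I expect this to be the main obstacle: showing $P_g\pi^*D=\pi^*P_hD$ for $\Q$-Cartier $D$ on $X$. I would first prove $g_*\pi^*D=\pi^*h_*D$, comparing both sides with the help of the norm formula \eqref{eq:norm-principal-divisor}. The concern is divisors on $W$ that are exceptional over $X$. For these I would argue with valuations: for a prime divisor $G$ on $W$, \cref{lem:finite-divisorial} gives
\[
 \ord_G(\Nm\, a)=\sum_i n_i\,u_i(a),
\]
where the $u_i$ are the extensions of $\ord_G$ through $g$, and these are the same as the extensions through $h$ of the corresponding valuation of $X$. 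Hence $\ord_G(g_*\pi^*D)=\ord_G(\pi^*h_*D)$ locally, since both equal $\ord_G$ of the norm of a local equation. Also $g_*F=g_*g^*F/e=(\deg g/e)F$, so $P_gF=e^{-1}F$. Applying $P_g$ repeatedly to \eqref{eq:model-ramification}, with $\deg g=\deg h$, gives $P_g^jR_{g,B_W}=\pi^*P_h^jR_{h,B}-(e-1)e^{-j}F$.

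Summing \eqref{eq:model-terms} over $j=1,\dots,N$ gives \eqref{eq:model-partial-sums}, since $(e-1)\sum_{j=1}^N e^{-j}=1-e^{-N}$. For the corrections, expand $K_W+\mathcal C_{g,N}(B_W;\mathbf c)$ as $K_W+B_W+S_{g,B_W,N}+\sum_j c_{j,N}P_g^jR_{g,B_W}$. Substituting $K_W+B_W=\pi^*(K_X+B)+F$ together with \eqref{eq:model-terms} and \eqref{eq:model-partial-sums}, the coefficient of $F$ is
\[
 1-(1-e^{-N})-(e-1)\sum_{j=1}^m c_{j,N}e^{-j},
\]
which is exactly \eqref{eq:model-correction}. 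Finally, the interpolation condition for $\Lambda=\{e\}$ states $\sum_j c_{j,N}e^{-j}=e^{-N}/(e-1)$. Under this condition the coefficient vanishes, which gives crepancy in the sense of \eqref{eq:correction-crepancy}.
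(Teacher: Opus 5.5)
Your proposal is correct and takes the same route as the paper. The ramification formula gives \eqref{eq:model-ramification}; finite norms give $P_g\pi^*D=\pi^*P_hD$ and $P_gF=e^{-1}F$; iteration, summation and substitution into the corrections give the remaining identities, and your valuation-based check of the norm identity on $\pi$-exceptional divisors is a valid way to carry out the second step. One point needs more careful phrasing: $K_W=g^*K_W+R_g$ is not an equality with the same divisor on both sides, but $\operatorname{div}(\pi^*h^*\omega)=g^*\operatorname{div}(\pi^*\omega)+R_g$ is, and this suffices because $F$ does not depend on the choice of the rational form $\omega$.
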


\begin{proof}
The ramification formula gives \eqref{eq:model-ramification}.
Finite norms give $P_g(\pi^*D)=\pi^*(P_hD)$ for every $\Q$-Cartier divisor $D$ on $X$, while $P_gF=e^{-1}F$.
Iteration and summation give \eqref{eq:model-terms} and~\eqref{eq:model-partial-sums}.
Substitution into Definition~\ref{def:moment-conditions} gives \eqref{eq:model-correction}.
\end{proof}

The following proposition gives an affirmative answer to \cite[Question~6.2]{LuoMeng26}.

\begin{proposition}\label{prop:equivariant-extraction-given-eigenvaluation}
Let $(X,B,h)$ be an int-amplified dynamical triple, and let $E$ be a prime divisor over $X$ with center $C$ on $X$.
Assume that, for some real number $e>1$, the only extension of $\ord_E$ through $h$ is $\ord_E/e$.
Then $e$ is an integer, $C$ is totally invariant, and there exists an equivariant birational model $\pi\colon (W,B_W,g)\to (X,B,h)$ such that $E$ appears on $W$ as a $\Q$-Cartier divisor and
\[
  g^*E=eE,\qquad
  \pi^{-1}(C)=\operatorname{Supp}E,\qquad
  \operatorname{coeff}_E B_W=1.
\]
The morphism $\pi$ is an isomorphism over $X\setminus C$, the divisor $-E$ is $\pi$-ample, and one has
\[
  B_W=\pi^*(K_X+B)+A_{X,B}(E)E-K_W.
\]
The model is crepant for corrections with interpolation conditions for $\{e\}$.
\end{proposition}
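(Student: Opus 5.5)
First I would settle the easy claims. If $f\in K^\times$ has $\ord_E(f)=1$, the hypothesis $h_*(\ord_E/e)=\ord_E$ gives $e=\ord_E(h^*f)$, hence $e\in\mathbb Z$. For total invariance, \cref{lem:finite-divisorial} says every irreducible component of $h^{-1}(C)$ is the center of an extension of $\ord_E$. The only extension is $\ord_E/e$, whose center is $C$, so $h^{-1}(C)=C$.

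For the model, I would take the relative $\Proj$ of the divisorial algebra
\[
W:=\Proj_X\bigoplus_{m\geq0}\pi_*\mathcal O\bigl(-mE\bigr)
=\Proj_X\bigoplus_{m\geq0}\mathfrak a_m(\ord_E),
\]
where $\mathfrak a_m(\ord_E)\subseteq\mathcal O_X$ are the valuation ideals. This is the standard extraction of $E$, but only if the algebra is finitely generated, and I expect finite generation to be the main obstacle. I would try to obtain it from the dynamics. The uniqueness of the extension gives $\ord_E(h^*a)=e\,\ord_E(a)$ and, via the norm formula \eqref{eq:finite-norm}, $\ord_E(\Nm(b))=d\,\ord_E(b)/e$. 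Together these should imply that $h^*$ and the normalized norm intertwine the filtrations $\mathfrak a_\bullet$, namely $\overline{h^{-1}\mathfrak a_m\mathcal O_X}=\mathfrak a_{em}$ up to integral closure. I would then compare volumes via \eqref{eq:nonflat-filtration}, or reduce to a klt-type situation. If that fails, the fallback is to argue that $(X,B)$ is klt near the generic point of $C$ (or use an lc-type extraction) and that $E$ computes a log canonical threshold of some ideal, so BCHM gives the extraction. The proposition needs $E$ to be the unique exceptional divisor with $\pi^{-1}(C)=\Supp E$ and $-E$ $\pi$-ample. By construction of the $\Proj$, $W\to X$ is an isomorphism over $X\setminus C$, the divisor $-E$ is $\pi$-ample (it is $\mathcal O(1)$), and $\pi^{-1}(C)=\Supp E$.

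Next, the lift $g$. The identity $h^*\mathfrak a_m\subseteq\mathfrak a_{em}$ gives a graded map of algebras, hence a rational map $g\colon W\dashrightarrow W$ over $h$. Let $\widetilde W$ be the normalization of $W$ in $K(X)$ via $h$. Its divisors over $C$ correspond to extensions of $\ord_E$, and the unique extension $\ord_E/e$ is realized by $E$. I would show $\widetilde W\to X$ is finite by ampleness: $-E$ pulls back to $-eE'$, which is relatively ample, and the source is normal with exactly one divisor over $C$. This identifies $\widetilde W$ with the extraction of the same valuation, giving a finite $g$ with $\pi\circ g=h\circ\pi$ and $g^*E=eE$.

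Finally, set $B_W:=\pi^*(K_X+B)+A_{X,B}(E)E-K_W$. Its coefficient on $E$ is $1$, it equals the strict transform of $B$ elsewhere, and $\pi_*B_W=B$. Moreover $F:=K_W+B_W-\pi^*(K_X+B)=A_{X,B}(E)E$, so $g^*F=eF$. I would check $R_{g,B_W}=\pi^*R_{h,B}-(e-1)F\geq0$ as in \eqref{eq:model-ramification}. Away from $E$ this is just $\pi^*R_{h,B}\geq0$. Along $E$, its coefficient is $\ord_E(R_{h,B})-(e-1)A_{X,B}(E)$, which is zero by \eqref{eq:finite-divisorial-discrepancy} applied with the unique extension $u=\ord_E/e$. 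So $(W,B_W,g)$ is a dynamical triple. It is int-amplified by \cref{def:equivariant-model}, and \cref{lem:model-transformation} gives the crepancy for corrections with interpolation conditions for $\{e\}$.
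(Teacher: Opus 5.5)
There is a genuine gap, at the step you flag yourself. Several parts agree with the paper: the integrality of $e$, the total invariance of $C$, the lift $g$ (normalize the base change, then use the unique divisor over $E$ and relative ampleness to identify it with $W$ over $X$), the definition of $B_W$, the vanishing of $\coeff_E R_{g,B_W}$, and crepancy via \cref{lem:model-transformation}. (One slip there: $\widetilde W\to X$ is birational, not finite; what you need is $\widetilde W\simeq W$ over $X$.)

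What is missing is the extraction itself. You never prove that $\bigoplus_m\mathfrak a_m(\ord_E)$ is finitely generated, yet everything else is derived from it: $\pi^{-1}(C)=\Supp E$, the fact that $-E$ is $\Q$-Cartier and $\pi$-ample, the isomorphism over $X\setminus C$, and the absence of other exceptional divisors (which you need for $B_W\geq0$). The intertwining and volume comparisons you propose are asymptotic identities between filtrations, and they give no mechanism for finite generation. Your fallback does not work as stated either. Klt of $(X,B)$ at $\eta_C$ is not a hypothesis; the proposition allows, for instance, $A_{X,B}(E)=0$. Even when $C\not\subseteq\Nklt(X,B)$, the extraction must be projective over a neighbourhood of all of $C$, which may meet $\Nklt(X,B)$, so BCHM's klt extraction is unavailable. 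You also name no boundary or ideal for which $E$ would be a place.

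The missing idea is that the uniqueness hypothesis already makes $E$ a non-canonical place of an lc pair. Apply \eqref{eq:finite-divisorial-discrepancy} and \eqref{eq:finite-norm} to the single extension $\ord_E/e$, whose norm weight is $\deg h$. This gives $\ord_E(P_h^jR_{h,B})=(e-1)e^{-j}A_{X,B}(E)$, and hence
\[
 A_{X,B+S_{h,B,M}}(E)=e^{-M}A_{X,B}(E)<1\qquad(M\gg0),
\]
while $(X,B+S_{h,B,M})$ is lc by \cref{lem:basic-dynamical-properties}. The paper then extracts $E$ by an lc MMP, in four steps:
\begin{enumerate}
\item Put $\Delta:=B+S_{h,B,M}$ and take a $\Q$-factorial dlt modification $(Y,\Delta_Y)$ of $(X,\Delta)$ on which $E$ appears with coefficient $1-\alpha>0$.
\item Run the MMP over $X$ for $K_Y+\Delta_Y-\delta E_Y\sim_{\Q,X}-\delta E_Y$ via \cite{HaconXuClosures}*{Theorem~1.6}, obtaining a good minimal model and its relative ample model $W$.
\item Show that $E$ is not contracted, by monotonicity of log discrepancies; hence $-E$ is $\Q$-Cartier and $\pi$-ample.
\item Conclude that no other exceptional divisor survives, since a contracted curve not contained in $E$ would meet $E$ nonnegatively.
\end{enumerate}
Only then is $W=\Proj_X\bigoplus_m\pi_*\mathcal O_W(-mkE)$ available, and your remaining arguments go through.
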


\begin{proof}
By the assumption, $h_*\operatorname{ord}_E=e\operatorname{ord}_E$.
Choose $t\in K(X)^\times$ with $\operatorname{ord}_E(t)=1$.
Then $e=\operatorname{ord}_E(h^*t)\in\mathbb Z_{>1}$.
By \cref{lem:finite-divisorial}, every irreducible component of $h^{-1}(C)$ is the center of an extension, so the uniqueness assumption gives $h^{-1}(C)=C$.

Put $a_E:=A_{X,B}(E)\geq 0$, and choose a positive integer $M$ such that $e^{-M}a_E<1$.
Put $\Delta:=B+S_{h,B,M}$.
By \cref{lem:finite-divisorial}, we have
\[
 \alpha:=A_{X,\Delta}(E)=e^{-M}a_E\in[0,1),
\]
and $(X,\Delta)$ is lc by \cref{lem:basic-dynamical-properties}.
As in the initial construction in \cite{MoragaExtraction}*{proof of Theorem~1}, choose a projective crepant $\Q$-factorial dlt model $\mu\colon(Y,\Delta_Y)\to(X,\Delta)$ on which $E$ appears as a prime divisor $E_Y$.
Thus
\[
 K_Y+\Delta_Y=\mu^*(K_X+\Delta),
 \qquad
 \coeff_{E_Y}\Delta_Y=1-\alpha>0.
\]
Set
\[
 \delta:=\frac{1-\alpha}{2},
 \qquad
 \Delta_Y^-:=\Delta_Y-\delta E_Y.
\]
Then $(Y,\Delta_Y^-)$ is dlt and
\[
 K_Y+\Delta_Y^-
 =\mu^*(K_X+\Delta)-\delta E_Y
 \sim_{\Q,X}-\delta E_Y.
\]

Apply \cite{HaconXuClosures}*{Theorem~1.6} with
$\Delta'=\delta E_Y$ and $\Delta''=\Delta_Y^-$.
Since $\mu$ is birational, a Mori fiber space over $X$
cannot occur.
We therefore obtain a good minimal model over $X$,
followed by its relative ample model:
\[
 Y\dashrightarrow Y'\longrightarrow W
 \xrightarrow{\pi}X.
\]
Write $\Delta_W$ and $\Delta_W^-$ for the birational
transforms of $\Delta_Y$ and $\Delta_Y^-$, respectively.
Then
\[
 K_W+\Delta_W=\pi^*(K_X+\Delta),
\]
and $K_W+\Delta_W^-$ is $\Q$-Cartier and $\pi$-ample.

We claim that $E_Y$ is not contracted on $W$.
Otherwise $\Delta_W^-=\Delta_W$, and monotonicity
of log discrepancies along the MMP and passage to
the ample model would give
\[
 \alpha
 =A_{W,\Delta_W^-}(E)
 \geq A_{Y,\Delta_Y^-}(E_Y)
 =\alpha+\delta,
\]
a contradiction.
Denote its birational transform on $W$ by $E$.
Consequently,
\[
 -\delta E
 =K_W+\Delta_W^--\pi^*(K_X+\Delta)
\]
is $\Q$-Cartier and $\pi$-ample.
Thus $E$ is $\Q$-Cartier and $-E$ is $\pi$-ample.

Finally, every curve contracted by $\pi$ is contained
in $\Supp E$: otherwise its intersection with the
effective $\Q$-Cartier divisor $E$ would be nonnegative,
contrary to the $\pi$-ampleness of $-E$.
If a $\pi$-exceptional prime divisor $G\ne E$ existed,
a general positive-dimensional fiber of
$G\to\pi(G)$ would contain a contracted curve not
contained in $E$.
Hence there are no $\pi$-exceptional prime divisors
other than possibly $E$.

Choose a positive integer $k$ so that $kE$ is Cartier and the algebra
\[
\mathcal A:=\bigoplus_{m \ge 0} \pi_*\mathcal O_W(-mkE)
\]
is generated in degree one.
Put $\mathcal I_m:=\pi_*\mathcal O_W(-mkE)$.
For every $m\geq0$ and every nonempty open subset $V\subseteq X$, we have
\begin{equation}\label{eq:lc-extraction-section-ideals}
 \mathcal I_m(V)
 =\begin{cases}
 \{f\in\mathcal O_X(V)\mid\ord_E(f)\geq mk\},
   & V\cap C\ne\varnothing,\\[2pt]
 \mathcal O_X(V),&V\cap C=\varnothing.
 \end{cases}
\end{equation}
Indeed, if $V\cap C\ne\varnothing$, then $V$ contains the generic point of $C$,
and sections are precisely the regular functions whose pullbacks vanish along $E$ to order at least $mk$.
If $V\cap C=\varnothing$, then $E\cap\pi^{-1}(V)=\varnothing$, so normality of $X$ gives $\mathcal I_m|_V=\mathcal O_V$.
Furthermore, $W=\Proj_X\mathcal A$ is the blowup of $\mathcal I_1$, with $\mathcal I_1\mathcal O_W=\mathcal O_W(-kE)$.
Since $V(\mathcal I_1)=C$, this proves
$\pi^{-1}(C)=\Supp E$ and that $\pi$ is an isomorphism over $X\setminus C$.

Normalize the main component of the base change by $h$, obtaining a commutative diagram
\[
\begin{tikzcd}
 W' \arrow[r,"b"] \arrow[d,"\pi'"']
   & W \arrow[d,"\pi"] \\
 X \arrow[r,"h"']
   & X.
\end{tikzcd}
\]
Here $b$ is finite and $\pi'$ is birational.
There is a unique prime divisor $E'$ above $E$, representing $\ord_E$ on the source field, and $b^*E=eE'$.
In particular, $-E'$ is relatively ample over $X$.
Applying the same local description as in \eqref{eq:lc-extraction-section-ideals},
with the valuation condition on open sets meeting $C$ and no condition outside $C$, gives
\[
 \pi'_*\mathcal O_{W'}(-mkb^*E)=\mathcal I_{em}
 \qquad(m\geq0).
\]
The divisor $-b^*E$ is $\pi'$-ample, so $W'$ is the relative Proj of this algebra.
Hence $W'\simeq W$ over $X$.
Thus $b$ induces a lift $g\colon W\to W$ with $g^*E=eE$.

Define
\[
B_W:=\pi^*(K_X+B)+a_EE-K_W.
\]
The discrepancy formula and the absence of other $\pi$-exceptional prime divisors give
\[
 \coeff_E B_W=1,\qquad
 \coeff_D B_W=\coeff_D(\pi_*^{-1}B)
 \quad\text{for every prime divisor }D\ne E\text{ on }W.
\]
In particular, $B_W\geq0$ and $\pi_*B_W\geq B$.
Moreover, $K_W+B_W$ is $\Q$-Cartier and the ramification formula gives
\[
 R_{g,B_W}=\pi^*R_{h,B}-(e-1)a_EE\geq0.
\]
Indeed, the coefficient at $E$ is zero, and the other coefficients are nonnegative.
Hence this is an equivariant birational model.
Lemma~\ref{lem:model-transformation}, with $F=a_EE$, gives crepancy for the interpolation condition at $e$.
\end{proof}

\begin{proposition}\label{prop:absorb-new-component}
Let $(X,B,h)$ be an int-amplified dynamical triple, and let $C$ be an irreducible component of $\Nklt(X,B;T_{h,B})$ not contained in $\Nklt(X,B)$.
After replacing $h$ by an iterate, there are an equivariant birational model $\pi\colon(W,B_W,g)\to(X,B,h)$, a prime $\Q$-Cartier divisor $E$ on $W$ with center $C$, and an integer $e>1$ such that
\begin{equation}\label{eq:lc-extraction-full-fiber}
 g^*E=eE,\qquad \pi^{-1}(C)=\Supp E,\qquad \coeff_E B_W=1.
\end{equation}
The morphism $\pi$ is an isomorphism over $X\setminus C$.
Writing $a_E:=A_{X,B}(E)$, one has
\begin{equation}\label{eq:absorption-step-transform}
 K_W+B_W=\pi^*(K_X+B)+a_EE.
\end{equation}
The model is crepant for corrections with interpolation conditions for $\{e\}$.
\end{proposition}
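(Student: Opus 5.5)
The plan is to combine \cref{thm:generic-eigen-log-pair} with \cref{prop:equivariant-extraction-given-eigenvaluation}. First, replace $h$ by the iterate supplied by \cref{thm:generic-eigen-log-pair}. By \cref{thm:log-current-construction}, this replacement leaves $T_{h,B}$ unchanged, hence also $\Nklt(X,B;T_{h,B})$ and the component $C$. Moreover $\Nklt(X,B)$ does not depend on $h$, so $C$ is still an irreducible component of the limiting non-klt locus not contained in $\Nklt(X,B)$. The theorem then produces a prime divisor $E$ over $X$ with center $C$ and an integer $e>1$ such that $h_*\ord_E=e\ord_E$, and such that the only extension of $\ord_E$ through $h$ is $\ord_E/e$.

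This is exactly the hypothesis of \cref{prop:equivariant-extraction-given-eigenvaluation}. Applying it gives an equivariant birational model $\pi\colon(W,B_W,g)\to(X,B,h)$ with the following properties:
\begin{itemize}
\item $E$ appears on $W$ as a $\Q$-Cartier prime divisor, and $g^*E=eE$;
\item $\pi^{-1}(C)=\Supp E$ and $\coeff_E B_W=1$;
\item $\pi$ is an isomorphism over $X\setminus C$;
\item $B_W=\pi^*(K_X+B)+A_{X,B}(E)E-K_W$;
\item the model is crepant for corrections with interpolation conditions for $\{e\}$.
\end{itemize}
The formula for $B_W$ is exactly \eqref{eq:absorption-step-transform} with $a_E=A_{X,B}(E)$. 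The center of $E$ on $X$ is $\pi(E)=C$, as required, and \eqref{eq:lc-extraction-full-fiber} is read off directly.

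The only point needing care is bookkeeping with the iterate. The iterate used in \cref{thm:generic-eigen-log-pair} may be one common power chosen for all components. The statement allows "after replacing $h$ by an iterate", so this causes no problem. I expect no substantive obstacle: the real work (constructing the eigenvaluation and doing the equivariant extraction via the MMP and relative Proj) was done in the two cited results. I would only check that the uniqueness-of-extension hypothesis is stated in the form used by \cref{prop:equivariant-extraction-given-eigenvaluation}, which it is.
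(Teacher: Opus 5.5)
Your proposal is correct and follows essentially the same route as the paper. You pass to the iterate given by \cref{thm:generic-eigen-log-pair} and then read off every conclusion from \cref{prop:equivariant-extraction-given-eigenvaluation}; the paper additionally cites \cref{thm:invariant-nklt} for $h^{-1}(C)=C$, but that proposition itself recovers total invariance of $C$ from the uniqueness of the extension, so nothing is missing.
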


\begin{proof}
By \cref{thm:invariant-nklt,thm:generic-eigen-log-pair}, we may replace $h$ by an iterate so that $h^{-1}(C)=C$ and there is a prime divisor $E$ over $X$ with center $C$ and an integer $e>1$ such that the only extension of $\ord_E$ through $h$ is $\ord_E/e$.
Then the assertion follows from \cref{prop:equivariant-extraction-given-eigenvaluation}.
\end{proof}

\begin{theorem}\label{thm:all-dimensional-lc-model}
Let $(X,B,h)$ be an int-amplified dynamical triple.
After replacing $h$ by an iterate, there is an absorbing equivariant projective birational model $\pi\colon(W,B_W,g)\to(X,B,h)$ and a finite set $\Lambda_{\rm mod}\subset\mathbb Z_{>1}$ such that $\pi$ is crepant for corrections with interpolation conditions for $\Lambda_{\rm mod}$.
For every positive integer $a$, the same model is absorbing for $(g^a,h^a)$ and is crepant for the corrections associated with these maps and interpolation conditions for $\Lambda_{\rm mod}^a$.
\end{theorem}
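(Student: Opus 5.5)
We will argue by iterating \cref{prop:absorb-new-component} and proving termination via a decreasing invariant.

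Start with $(W_0,B_0,g_0)=(X,B,h)$ and $\Lambda_0=\varnothing$. If $(W_i,B_i,g_i)$ is not absorbing, apply \cref{thm:invariant-nklt} to make all relevant loci totally invariant. Then choose an irreducible component $C_i$ of $\Nklt(W_i,B_i;T_{g_i,B_i})$ not contained in $\Nklt(W_i,B_i)$. Apply \cref{prop:absorb-new-component} to get $\pi_{i+1}\colon(W_{i+1},B_{i+1},g_{i+1})\to(W_i,B_i,g_i)$ and an integer $e_{i+1}>1$, and set $\Lambda_{i+1}=\Lambda_i\cup\{e_{i+1}\}$.

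Crepancy composes. If $\pi_{\le i}$ is crepant for interpolation conditions for $\Lambda_i$, and $\pi_{i+1}$ is crepant for $\{e_{i+1}\}$, then any family satisfying the conditions for $\Lambda_{i+1}$ satisfies both. So \eqref{eq:correction-crepancy} for the composite follows by pulling back, using $\pi_{i+1}^*\pi_{\le i}^*=\pi_{\le i+1}^*$. The composite is an equivariant birational model, since $\pi_*B_W\geq B$ follows from the explicit form of $B_{i+1}$ and $g$ commutes. The statement for iterates $(g^a,h^a)$ follows from \eqref{eq:iterate-blocks} and $T_{h^a,B}=T_{h,B}$. Absorption is iterate-invariant, as noted after \cref{def:absorbing-triple}. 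For crepancy, a coefficient family for $h^a$ satisfying the conditions for $\Lambda^a$ can be rewritten via \eqref{eq:iterate-blocks} as a family for $h$ in the $P_h^k R_{h,B}$ with $k\le am$. Its interpolation values at $\lambda\in\Lambda$ are the required $\lambda^{-aN}/(\lambda-1)$, because $\sum_{k=a(j-1)+1}^{aj}\lambda^{-k}$ matches appropriately. Alternatively, rerun \cref{lem:model-transformation} directly with $g^{a*}F=e^aF$.

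The main obstacle is termination. Note that $\pi_{i+1}$ is an isomorphism over $W_i\setminus C_i$, and $\pi_{i+1}^{-1}(C_i)=\Supp E$ lies in $\Nklt(W_{i+1},B_{i+1})$ because $\coeff_EB_{i+1}=1$. We want to show that the "bad" locus $\Nklt(T)\setminus\Nklt(B)$ does not acquire new components elsewhere. Over $W_i\setminus C_i$ the triple, current and non-klt loci agree. The key identity is $T_{g_{i+1},B_{i+1}}=\pi_{i+1}^*T_{g_i,B_i}-a_EE$, obtained by summing \eqref{eq:model-terms} with $F=a_EE$. Hence on $W_{i+1}$ the pair $(W_{i+1},B_{i+1};T_{g_{i+1},B_{i+1}})$ is crepant to $(W_i,B_i;T_{g_i,B_i})$. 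Its non-klt locus is therefore $\pi_{i+1}^{-1}$ of the old one, and in particular it contains $E$.

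The new bad components lie over bad components of $W_i$. Those over $C_i$ lie inside $E\subseteq\Nklt(W_{i+1},B_{i+1})$ and are no longer bad. So the bad components of $W_{i+1}$ are in bijection with those of $W_i$ other than $C_i$. A bad component of $W_{i+1}$ cannot be a proper subset of $E$, because it is not contained in $\Nklt(B_{i+1})\supseteq E$. By induction, the number of bad components drops by one at each step. Since $\Nklt$ is algebraic, this number is finite, so the process ends with an absorbing model after finitely many steps. Along the way we take the finitely many iterates as a common power; each step only replaces $h$ by an iterate, which preserves everything already established. Finally, $\Lambda_{\rm mod}:=\Lambda_k\subset\mathbb Z_{>1}$, with its elements raised to the corresponding powers.
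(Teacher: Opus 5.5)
Your proposal is correct and follows essentially the paper's route. You induct on the number of components of $\Nklt(X,B;T_{h,B})$ not contained in $\Nklt(X,B)$, extract one such component at a time via \cref{prop:absorb-new-component}, and compose crepancy. You handle iterates by applying \cref{lem:model-transformation} with $(g^{a})^*F=e^{a}F$ and $T_{g^a,B_W}=T_{g,B_W}$.

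One small correction: crepancy of the currents alone only gives $\Nklt(W_{i+1},B_{i+1};T_{g_{i+1},B_{i+1}})\subseteq\pi_{i+1}^{-1}\bigl(\Nklt(W_i,B_i;T_{g_i,B_i})\bigr)$, not equality. Equality with the full preimage comes from $\pi_{i+1}^{-1}(C_i)=\Supp E$ with $\coeff_E B_{i+1}=1$, together with $\pi_{i+1}$ being an isomorphism off $C_i$ — facts you also state, so your conclusion stands.
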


\begin{proof}
Put $Z:=\Nklt(X,B;T_{h,B})$.
We argue by induction on the number of irreducible components
of $Z$ not contained in $\Nklt(X,B)$.
If this number is zero, take the identity model and
$\Lambda_{\mathrm{mod}}=\varnothing$.

Otherwise, choose such a component $C$.
By \cref{prop:absorb-new-component}, there are an integer $p\geq1$,
an equivariant birational model
\[
 \rho\colon(Y,B_Y,f)\longrightarrow(X,B,h^p),
\]
and an integer $e>1$ such that
\[
 F:=K_Y+B_Y-\rho^*(K_X+B),
 \qquad f^*F=eF.
\]
Moreover, $\rho$ is an isomorphism over $X\setminus C$ and
$\rho^{-1}(C)\subseteq\Nklt(Y,B_Y)$.

By \cref{thm:log-current-construction}, replacing $h$ by $h^p$
does not change $Z$.
Over $X\setminus C$, the morphism $\rho$ identifies the boundaries
and the currents.
Hence every irreducible component of
$\Nklt(Y,B_Y;T_{f,B_Y})$ not contained in $\Nklt(Y,B_Y)$
is the strict transform of an irreducible component of $Z$
other than $C$ that is not contained in $\Nklt(X,B)$.
The corresponding number of components therefore strictly decreases.

By induction, there are an integer $q\geq1$, an absorbing model
\[
 \tau\colon(W,B_W,g)\longrightarrow(Y,B_Y,f^q),
\]
and a finite set $\Lambda_Y\subset\mathbb Z_{>1}$
with the asserted crepancy property for every further iterate.
Set
\[
 \pi:=\rho\circ\tau,
 \qquad
 \Lambda_{\mathrm{mod}}:=\Lambda_Y\cup\{e^q\}.
\]
Then $\pi\circ g=h^{pq}\circ\pi$.

Fix an integer $a\geq1$ and a coefficient family $\mathbf c$
satisfying the interpolation conditions for
\[
 \Lambda_{\mathrm{mod}}^a
 =\Lambda_Y^a\cup\{e^{qa}\}.
\]
The induction hypothesis applies to $\tau$ with the maps
$g^a$ and $f^{qa}$.
On the other hand,
\[
 (f^{qa})^*F=e^{qa}F,
\]
so \cref{lem:model-transformation} applies to $\rho$ with
the maps $f^{qa}$ and $h^{pqa}$.
Using the same coefficient family $\mathbf c$ in both steps gives
\[
\begin{aligned}
 K_W+\mathcal C_{g^a,N}(B_W;\mathbf c)
 &=
 \tau^*\bigl(
 K_Y+\mathcal C_{f^{qa},N}(B_Y;\mathbf c)
 \bigr)\\
 &=
 \pi^*\bigl(
 K_X+\mathcal C_{h^{pqa},N}(B;\mathbf c)
 \bigr)
\end{aligned}
\]
for every $N\geq\max\{1,m\}$.

Finally, $T_{g^a,B_W}=T_{g,B_W}$, so the model remains
absorbing for every $a\geq1$.
Taking $h^{pq}$ as the chosen iterate of $h$ proves the theorem.
\end{proof}

\begin{remark}\label{rem:polarized-models}
Suppose that $h$ is $q$-polarized, and choose an ample Cartier divisor $H$ with $h^*H\sim qH$.
Every equivariant projective birational lift $u\colon V\to V$ of $h^k$, with structure morphism $\rho\colon V\to X$, satisfies
\[
 u^*\rho^*H\sim q^k\rho^*H.
\]
Since $\rho^*H$ is nef and big, $u$ is $q^k$-polarized by \cite{MengZhangIntAmplified}*{Theorem 2.3}.
If $u^*E=eE$ for a prime $\Q$-Cartier divisor $E$ on $V$, choose an ample Cartier divisor $A$ with $u^*A\sim q^kA$ and put $n:=\dim V$.
Then
\[
 e(q^k)^{n-1}(E\cdot A^{n-1})
 =\deg(u)(E\cdot A^{n-1})
 =(q^k)^n(E\cdot A^{n-1}),
\]
so $e=q^k$.
Consequently, after a total iterate $h^r$, the model in \cref{thm:all-dimensional-lc-model} has its set of interpolation parameters contained in $\{q^r\}$.
Its induced maps on normalizations of invariant lc centers are also $q^r$-polarized, by restriction of a polarization.
\end{remark}

\section{Adjunction and the proof of the main theorem}\label{sec:adjunction-proof}

For a coefficient family $\mathbf c$ and the absorbing model $(W,B_W,g)$ from \Cref{thm:all-dimensional-lc-model}, \Cref{lem:uniform-nlc-confinement} gives
\[
  \operatorname{Nlc}
    (W,\mathcal C_{g,N}(B_W;\mathbf c))
  \subseteq \operatorname{Nklt}(W,B_W)
\]
for all sufficiently large $N$.
We use adjunction and induction on dimension to prove
log canonicity near the lc centers of $(W,B_W)$.

\subsection{Adjunction for corrected pairs}\label{subsec:restriction}
\label{subsec:moduli-divisor}\label{subsec:corrected-adjunction}

\begin{proposition}\label{prop:higher-center-restriction}
Let $X$ be a normal variety and $h\colon X\to X$ a finite endomorphism.
Let $C\subset X$ be an irreducible closed subvariety with $h^{-1}(C)=C$, let $\nu\colon C^\nu\to C$ be its normalization, and let $h_C\colon C^\nu\to C^\nu$ be the induced finite surjective endomorphism.
Put $P_h=(\deg h)^{-1}h_*$ and $P_{h_C}=(\deg h_C)^{-1}(h_C)_*$.
For every $\Q$-Cartier divisor $D$ on $X$ with $C\nsubseteq\Supp D$,
\begin{equation}\label{eq:higher-center-restriction}
 (P_hD)|_{C^\nu}=P_{h_C}(D|_{C^\nu}).
\end{equation}
Here restriction means pullback along $C^\nu\to C\hookrightarrow X$.
If $h$ is int-amplified, then $h_C$ is int-amplified.
\end{proposition}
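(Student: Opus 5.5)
We prove \eqref{eq:higher-center-restriction} by reducing to principal divisors, where both sides become norms. Since both sides are $\Q$-linear in $D$ and the question is local over $C$, fix a point $x\in C$. Replace $D$ by a Cartier multiple $mD$. Because $h$ is finite and $C\nsubseteq\Supp D$, on the semilocal scheme $h^{-1}(\Spec\mathcal O_{X,x})$ we can write $mD=\operatorname{div}(a)$ with $a\in K(X)^\times$ a unit at the generic point $\eta_C$. Total invariance makes $\eta_C$ the unique point over itself.

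By \eqref{eq:norm-principal-divisor}, $mP_hD=(\deg h)^{-1}\operatorname{div}(\Nm_{K(X)/h^*K(X)}(a))$. Similarly, $P_{h_C}(mD|_{C^\nu})=(\deg h_C)^{-1}\operatorname{div}(\Nm_{K(C)/h_C^*K(C)}(\bar a))$, where $\bar a$ is the residue of $a$ in $K(C)$, which is the function field of $C^\nu$. It therefore suffices to prove the residue identity
\[
 \overline{\Nm_{K(X)/K(X)}(a)}^{\,1/\deg h}=\Nm_{K(C)/K(C)}(\bar a)^{1/\deg h_C},
\]
read additively as an equality of $\Q$-divisors of the restrictions.

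The local norm identity is the step I expect to be the main obstacle, because $\mathcal O_{X,\eta_C}$ need not be regular and the extension need not be flat. The plan is to work at $R=\mathcal O_{X,\eta_C}$ with the finite local map $\varphi\colon R\to R$, as in Notation~\ref{not:local-dynamics}. The norm of a unit $a\in R^\times$ over $\varphi(R)$ is computed by the determinant of multiplication on $R\otimes\operatorname{Frac}$. Reducing modulo $\mathfrak m$ should give $\overline{\Nm(a)}=\Nm_{\kappa/\kappa'}(\bar a)^{d/d_C}$, where $\kappa'$ is the image residue field.

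First I will prove this identity after passing to the completion, or to a Noether normalization of $R$ over $\varphi(R)$. There the length-multiplicativity argument used for \eqref{eq:nonflat-filtration}, namely $\ell_{R_1}=d_C\,\ell_R$ and associativity, shows that the ratio of exponents is $d/d_C$.

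Alternatively, I will take a Galois closure and use the conjugate-product formula from the proof of Lemma~\ref{lem:finite-divisorial}. The valuations there are replaced by the points of the normalization over $\eta_C$, which $G$ permutes transitively. Reducing each conjugate $g(a)$ modulo the corresponding maximal ideal produces residue-field conjugates of $\bar a$. Each such conjugate occurs with multiplicity $d/d_C$, by counting cosets via the decomposition and inertia groups.

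Taking divisors of both sides of the residue identity then yields \eqref{eq:higher-center-restriction}. This uses that restriction of principal divisors commutes with taking residues, and that pullback along $C^\nu\to X$ of $\operatorname{div}(\Nm a)$ is $\operatorname{div}$ of its residue.

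For the int-amplified statement, take an ample $H$ with $h^*H-H$ ample. Then $\nu^*H$ is ample on $C^\nu$, because $\nu$ is finite. Also $h_C^*\nu^*H-\nu^*H=\nu^*(h^*H-H)$ is ample, because $h\circ\nu=\nu\circ h_C$. Hence $h_C$ is int-amplified. Here $h_C$ is surjective because $h(C)=C$, which follows from surjectivity of $h$ together with $h^{-1}(C)=C$.
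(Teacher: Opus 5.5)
Your Galois-closure route is correct and is essentially the paper's proof. Fix one maximal ideal $\mathfrak q$ of the integral closure in the Galois closure $M$. Total invariance makes $\operatorname{Gal}(M/L)$ transitive on the primes over $\mathfrak m$, so $G=\operatorname{Gal}(M/L)\,G_{\mathfrak q}$. The reduction map from the $d$ embeddings $L\hookrightarrow M$ to the $d_C$ residue-field embeddings is therefore a surjection of transitive $G_{\mathfrak q}$-sets with fibers of size $d/d_C$, which gives $\overline{\Nm_h(a)}=\Nm_{h_C}(\bar a)^{d/d_C}$; your divisor and int-amplified steps then match the paper's. Drop the first (length/associativity) route, though: without flatness of $R$ over $\varphi(R)$, lengths and multiplicities do not by themselves compute the residue of a norm, so as sketched it is not a proof.
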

\begin{proof}
Put $d=\deg h$ and $b=\deg h_C$.
Total invariance makes $h^\#\colon\mathcal O_{X,\eta_C}\to\mathcal O_{X,\eta_C}$ a finite local homomorphism.
Identify its image with a subring $R$ of the source ring $S=\mathcal O_{X,\eta_C}$, and write
\[
 K=\operatorname{Frac}R,\quad L=\operatorname{Frac}S,\quad
 k=\kappa(R),\quad l=\kappa(S).
\]
Then $[L:K]=d$ and $[l:k]=b$.
The normal ring $S$ is the integral closure of $R$ in $L$, and its unique maximal ideal lies over the maximal ideal of $R$.

Take a finite Galois closure $M/K$ containing $L$, let $S'$ be the integral closure of $R$ in $M$, and choose a maximal ideal $\mathfrak q$ of $S'$ over the maximal ideal of $R$.
Put
\[
 G=\operatorname{Gal}(M/K),\qquad J=\operatorname{Gal}(M/L),\qquad
 G_{\mathfrak q}=\{\sigma\in G\mid\sigma(\mathfrak q)=\mathfrak q\}.
\]
By \cite{StacksProject}*{Tag 0BRK}, the group $G$ acts transitively on the maximal ideals of $S'$ over the maximal ideal of $R$.
Since all these ideals lie over the unique maximal ideal of $S$, the same result for $M/L$ shows that $J$ acts transitively on them as well.
Consequently $G=JG_{\mathfrak q}=G_{\mathfrak q}J$, so $G_{\mathfrak q}$ acts transitively on the $d$ embeddings $L\hookrightarrow M$ over $K$, identified with $G/J$.
Every such embedding sends $S$ into $S'$ and reduces at $\mathfrak q$ to an embedding $l\hookrightarrow\kappa(\mathfrak q)$ over $k$.
The extension $\kappa(\mathfrak q)/k$ is normal, hence Galois in characteristic zero, and $G_{\mathfrak q}$ maps onto its Galois group by \cite{StacksProject}*{Tag 0BRK}.
Thus all $b$ residue-field embeddings occur.
The reduction map between the two sets of embeddings is $G_{\mathfrak q}$-equivariant, so transitivity makes all its fibers have the same cardinality $d/b$.
In particular, $b$ divides $d$.
For every $u\in S^*$, multiplying the reductions of its $d$ conjugates therefore gives
\[
 \overline{\Nm_{L/K}(u)}=\Nm_{l/k}(\bar u)^{d/b}.
\]
The norm belongs to $R^*$, since both $u$ and $u^{-1}$ are integral over the normal ring $R$.

Write $\Nm_h=(h^*)^{-1}\circ\Nm_{K(X)/h^*K(X)}$, and define $\Nm_{h_C}$ in the same way on $K(C^\nu)$.
The preceding identity becomes
\[
 \overline{\Nm_h(u)}=\Nm_{h_C}(\bar u)^{d/b}.
\]
Choose a Cartier multiple of $D$.
Locally over the target, trivialize its associated line bundle on the finite semilocal inverse image and write this multiple as $\operatorname{div}(u)$.
Since $D$ does not contain $C$, the function $u$ is a unit at the source generic point of $C$.
Taking divisors in the norm identity on $C^\nu$ and dividing by the Cartier multiple and by $d$ gives \eqref{eq:higher-center-restriction}.
The local identities glue on $C^\nu$.
If $h$ is int-amplified, restrict an ample divisor $H_0$ with $h^*H_0-H_0$ ample to $C$ and pull back to $C^\nu$.
Both divisors remain ample, proving the final assertion.
\end{proof}

Before applying adjunction, we recall the necessary notation for b-divisors.

\begin{definition}\label{def:b-divisors}
Let $X$ be a normal variety.
\begin{enumerate}
\item\label{it:b-divisors}
A \emph{$\Q$-b-divisor} $\mathbf M$ assigns a $\Q$-Weil divisor $M_Y$, called its \emph{trace}, to each normal proper birational model $Y\to X$, with $\tau_*M_{Y'}=M_Y$ whenever $\tau\colon Y'\to Y$ is a morphism over $X$.
It is \emph{$\Q$-b-Cartier} if some trace $M_Y$ is $\Q$-Cartier and $M_{Y'}=\tau^*M_Y$ on every higher model.
In this case we say that $\mathbf M$ \emph{descends to $Y$} and is the \emph{Cartier closure} $\overline{M_Y}$.
A $\Q$-b-divisor $\mathbf M$ is \emph{b-nef} if it descends to a nef divisor, and we write $\mathbf M\sim_\Q0$ if it descends to a $\Q$-linearly trivial divisor.
\item\label{it:adjunction-discriminant}
Let $(X,B)$ be a log pair, let $C$ be an lc center, and let $\nu\colon C^\nu\to C$ be its normalization.
Fix a prime divisor $T$ over $X$ with center $C$ and $A_{X,B}(T)=0$.
For a normal proper birational model $p\colon Z'\to C^\nu$, choose a log resolution $\rho\colon Y\to X$ extracting $T$ and resolving its map $f_T\colon T\to Z'$, and put
\[
 K_Y+B_Y=\rho^*(K_X+B),\qquad B_T=(B_Y-T)|_T.
\]
For a prime divisor $F$ on $Z'$, work near $\eta_F$, where $F$ is Cartier, and define
\[
 \alpha_F=\sup\{t\in\Q\mid (T,B_T+t f_T^*F)
                 \text{ is sub-lc over }\eta_F\}.
\]
By \cite{FHInversion}*{Definition~2.1}, the thresholds
$\alpha_F$ are independent of the chosen resolution $Y$.
Define
\[
 B_{Z'}:=\sum_F(1-\alpha_F)F.
\]
By \cite{FHInversion}*{Theorem~1.2}, these traces form
the discriminant b-divisor $\mathbf B$ of
\cite{FHAdjunction}*{Definition~1.3}.
In particular, $\mathbf B$ is independent of the
choice of the lc place $T$.
\end{enumerate}
\end{definition}

\begin{proposition}\label{prop:invariant-nef-bdivisor}
Let $X$ be a normal projective variety and let $h\colon X\to X$ be an int-amplified endomorphism.
Let $\mathbf M$ be a b-nef $\Q$-b-Cartier b-divisor with trace $M_X$.
Assume that
\begin{equation}\label{eq:invariant-moduli-trace}
 h^*M_X\sim_\Q M_X.
\end{equation}
Then $M_X\sim_\Q0$ is $\Q$-Cartier, and $\mathbf M$ is the Cartier closure of $M_X$.
\end{proposition}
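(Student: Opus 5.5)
The plan is to work first with numerical classes, using the int-amplified hypothesis to kill the class of $M_X$, and then to upgrade numerical triviality to $\Q$-linear triviality. Fix a normal projective birational model $\pi\colon Y\to X$ to which $\mathbf M$ descends, so that $M_Y$ is a nef $\Q$-Cartier divisor and $\pi_*M_Y=M_X$. Put $n:=\dim X$; the case $n=0$ is trivial. The hypothesis \eqref{eq:invariant-moduli-trace} implies $h^*M_X\equiv_w M_X$. Finite pullback of Weil divisors acts on the space of weak numerical classes of $\R$-Weil divisors. By the int-amplified eigenvalue result of \cite{MengZhangIntAmplified}*{Section 2}, stated there in the weakly numerical setting and used in \cref{lem:basic-dynamical-properties}, every eigenvalue of $h^*$ on this finite-dimensional space has modulus $>1$. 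Hence $h^*-\mathrm{id}$ is injective there, and $M_X\equiv_w0$.

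Next we transfer this to $Y$. For a very ample $H$ on $X$, the projection formula gives
\[
 M_Y\cdot(\pi^*H)^{n-1}=M_X\cdot H^{n-1}=0 .
\]
Since $M_Y$ is nef and $\pi^*H$ is nef and big, I would show $M_Y\equiv0$ as follows. Write $\pi^*H\sim_\Q A_Y+G$ with $A_Y$ ample and $G\geq0$. Expanding $(\pi^*H)^{n-1}$ and using nefness of $M_Y$ and of $\pi^*H$, every mixed product $M_Y\cdot A_Y^{i}\cdot(\pi^*H)^{n-1-i}$ must vanish; in particular $M_Y\cdot A_Y^{n-1}=0$. Khovanskii--Teissier/Hodge index inequalities for nef classes then give $M_Y\cdot D\cdot A_Y^{n-2}=0$ for every Cartier $D$, which for a nef class is equivalent to $M_Y\equiv0$.

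The step I expect to be the main obstacle is descending $M_Y$ to a $\Q$-Cartier divisor on $X$. Since $M_Y\equiv0$, both $M_Y$ and $-M_Y$ are $\pi$-nef. I would invoke the standard descent for birational contractions: a $\pi$-numerically trivial $\Q$-Cartier divisor on $Y$ is $\Q$-linearly equivalent to $\pi^*$ of a $\Q$-Cartier divisor $M'$ on $X$. I would try to obtain this by running the cone/contraction theorem relative to $X$ on a dlt modification, or directly from the basepoint-free theorem applied to $\pm M_Y$ over $X$. If a direct argument fails, I would replace $M_Y$ first by a multiple lying in $\operatorname{Pic}^0(Y)$ and use that $\operatorname{Pic}^0$ of the fibres is controlled. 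Granting the descent, $\pi_*$ gives $M'\sim_\Q M_X$, so $M_X$ is $\Q$-Cartier and $\pi^*M_X\sim_\Q M_Y$. Equality of traces on every higher model then shows that $\mathbf M$ is the Cartier closure $\overline{M_X}$.

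Finally we show $M_X\sim_\Q0$. A multiple $mM_X$ is Cartier and numerically trivial, so after further multiplying it lies in $\operatorname{Pic}^0(X)$, and $h^*$ preserves $\operatorname{Pic}^0(X)$. The condition $h^*M_X\sim_\Q M_X$ says that the class of $mM_X$ lies in the kernel of $h^*-\mathrm{id}$ up to torsion. For int-amplified $h$, the action of $h^*$ on $H^1(X,\mathcal O_X)$, equivalently on the tangent space of $\operatorname{Pic}^0$ or on the Albanese, has all eigenvalues of modulus $>1$; I would cite \cite{MengZhangIntAmplified} for this statement about Albanese maps. Hence $h^*-\mathrm{id}$ is an isogeny of the abelian variety $\operatorname{Pic}^0(X)_{\mathrm{red}}$, so its kernel is finite. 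Therefore the class of $mM_X$ is torsion, which gives $M_X\sim_\Q0$.
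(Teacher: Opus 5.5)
\textbf{Gap: the descent step.} Your first two steps, $M_X\equiv_w0$ and then $M_Y\equiv0$ via the Hodge index theorem, are fine and match the paper. The problem is the step you flagged. The ``standard descent'' you invoke is false for an arbitrary normal $X$: a $\pi$-numerically trivial $\Q$-Cartier divisor on $Y$ need not be $\Q$-linearly equivalent to $\pi^*$ of a $\Q$-Cartier divisor on $X$. The cone and basepoint-free theorems do not help. They need a klt pair $(Y,\Delta)$ with $aM_Y-(K_Y+\Delta)$ $\pi$-nef and big, and nothing of this kind is available for a general normal $X$; in the application, $X$ is the normalization of an lc center.

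\textbf{Counterexample.} Let $X$ be the projective cone over an elliptic curve $E$, embedded by a line bundle of degree at least $3$. Let $Y$ be the blowup of the vertex, with exceptional section $E_0$ and ruling $p\colon Y\to E$, and put $\mathcal L=p^*\alpha$ with $\alpha\in\operatorname{Pic}^0(E)$ non-torsion. Then $\mathcal L\equiv0$ on $Y$, but $\mathcal L|_{E_0}\cong\alpha$ is non-torsion. On the other hand, the pullback of any Cartier divisor on $X$ is trivial near $E_0$, so no multiple of $\mathcal L$ descends; indeed $\pi_*\mathcal L$ is not even $\Q$-Cartier. In this example $\operatorname{Pic}^0$ of the fibre $E_0$ is an elliptic curve, so it is not ``controlled'' either. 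Numerical triviality cannot give descent; the dynamical hypothesis has to be used on $Y$ itself.

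\textbf{Fix.} Run your last paragraph's Albanese argument on $Y$ instead of on $X$. This makes descent automatic and avoids Picard/Albanese theory of the singular $X$.
\begin{itemize}
\item Take $Y$ smooth, so $\operatorname{Alb}(Y)$ is a birational invariant. Resolve the graph of $\pi^{-1}\circ h\circ\pi$ by a smooth projective $Z$ with $a,b\colon Z\to Y$, where $a$ is birational, and write $h^*M_X-M_X=\frac1r\operatorname{div}_X(f)$.
\item The divisor $b^*M_Y-a^*M_Y-\frac1r\operatorname{div}_Z(f\circ\pi\circ a)$ pushes forward to $h^*M_X-M_X-\frac1r\operatorname{div}_X(f)=0$, so it is exceptional over $X$. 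It is numerically trivial because $M_Y\equiv0$. Applying the negativity lemma to it and to its negative shows it is zero, so $b^*M_Y\sim_\Q a^*M_Y$.
\item Hence $\mathcal O_Y(mM_Y)\in\operatorname{Pic}^0(Y)$ is fixed by $(a^*)^{-1}\circ b^*$. This map is dual to the linear part $\psi$ of the endomorphism of $\operatorname{Alb}(Y)$ induced by $h$. That endomorphism is int-amplified, so $\psi-\mathrm{id}$ is an isogeny, and $\mathcal O_Y(mM_Y)$ is torsion. Thus $M_Y\sim_\Q0$.
\item Writing $m'M_Y=\operatorname{div}_Y(\phi)$ and pushing forward gives $m'M_X=\operatorname{div}_X(\phi)$. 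So $M_X\sim_\Q0$ is $\Q$-Cartier, and $M_Y=\pi^*M_X$ as divisors, which is the Cartier-closure statement.
\end{itemize}
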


\begin{proof}
There is nothing to prove when $\dim X=0$.
Put $n:=\dim X>0$, and choose a smooth projective model
$\pi\colon Y\to X$ on which $\mathbf M$ descends to a nef
divisor $M_Y$.
By \cite{MengZhangIntAmplified}*{Proposition 2.5},
\eqref{eq:invariant-moduli-trace} gives $M_X\equiv_w0$.
For an ample Cartier divisor $H$ on $X$, put $A:=\pi^*H$.
The projection formula gives $M_Y\cdot A^{n-1}=0$.
Choose an ample Cartier divisor $L$ on $Y$.
Since $A$ is big, there is a rational number $\delta>0$
such that $A-\delta L\sim_\Q G\geq0$.
Nefness of $M_Y,A,L$ gives
\[
 0=M_Y\cdot A^{n-1}
 \geq\delta^{n-1}M_Y\cdot L^{n-1}\geq0.
\]
Thus $M_Y\equiv0$ by the Hodge index theorem
(and by degree when $n=1$).

Resolve $\pi^{-1}\circ h\circ\pi$ to obtain
\[
\begin{tikzcd}[column sep=large,row sep=large]
 & Z \arrow[dl,"a"'] \arrow[dr,"b"] & \\
 Y \arrow[d,"\pi"']
 && Y \arrow[d,"\pi"] \\
 X \arrow[rr,"h"']
 && X,
\end{tikzcd}
\]
where $Z$ is smooth and projective and $a$ is birational.
Put $\mu:=\pi\circ a$.
Choose $f\in K(X)^*$ and an integer $r>0$ such that
\[
 h^*M_X-M_X=\frac1r\operatorname{div}_X(f),
\]
and set
\[
 E:=b^*M_Y-a^*M_Y
       -\frac1r\operatorname{div}_Z(f\circ\mu).
\]
Since we have
\[
 \mu_*E=h^*M_X-M_X-\frac1r\operatorname{div}_X(f)=0,
\]
the divisor $E$ is $\mu$-exceptional.
Moreover, $E\equiv0$.
The negativity lemma \cite{KM98}*{Lemma 3.39}, applied to $E$ and $-E$, gives $E=0$, hence
\[
 b^*M_Y\sim_\Q a^*M_Y.
\]

For a sufficiently divisible positive integer $m$, therefore,
\[
 \mathcal L:=\mathcal O_Y(mM_Y)\in\operatorname{Pic}^0(Y),
 \qquad b^*\mathcal L\simeq a^*\mathcal L.
\]
Birational invariance of $\operatorname{Pic}^0$ defines
\[
 \Phi:=(a^*)^{-1}\circ b^*
 \colon\operatorname{Pic}^0(Y)\to\operatorname{Pic}^0(Y),
 \qquad \Phi(\mathcal L)=\mathcal L.
\]

Let $\alpha\colon Y\to A_Y:=\operatorname{Alb}(Y)$ be an
Albanese morphism.
By \cite{MengBuildingBlocks}*{Theorem 1.8(2)},
the endomorphism $g$ induced by $h$ on $A_Y$ is
int-amplified, and
\[
\begin{tikzcd}[column sep=large,row sep=large]
 Z \arrow[r,"b"] \arrow[d,"\alpha\circ a"']
 & Y \arrow[d,"\alpha"] \\
 A_Y \arrow[r,"g"']
 & A_Y
\end{tikzcd}
\]
commutes.
Write $g(u)=\psi(u)+t$, where $\psi$ is a group endomorphism
and $t\in A_Y$.
Translations act trivially on numerical divisor classes,
so $\psi$ is int-amplified.
By Albanese--Picard duality, $\Phi$ is dual to $\psi$.

Choose an ample Cartier divisor $H_A$ such that
$\psi^*H_A-H_A$ is ample.
If $B:=\ker(\psi-\mathrm{id})^0$ had positive dimension, then
\[
 (\psi^*H_A-H_A)|_B\sim0
\]
would contradict ampleness.
Thus $\psi-\mathrm{id}$, and hence its dual
$\Phi-\mathrm{id}$, is an isogeny.
Since $\Phi(\mathcal L)=\mathcal L$, the line bundle
$\mathcal L$ is torsion, proving $M_Y\sim_\Q0$.

Pushing a principal multiple of $M_Y$ to $X$ gives
$M_X\sim_\Q0$ and $M_Y=\pi^*M_X$.
Hence $\mathbf M$ is the Cartier closure of $M_X$.
\end{proof}

\begin{proposition}\label{prop:ordinary-corrected-adjunction}
Let $(X,B,h)$ be an int-amplified dynamical triple, let $C$ be an lc center of $(X,B)$ with $h^{-1}(C)=C$, and let $h_C\colon C^\nu\to C^\nu$ be the induced endomorphism of the normalization $\nu\colon C^\nu\to C$.
Let $\mathbf B$ be the discriminant b-divisor defined in \cref{def:b-divisors}, and put $B_{C}=\mathbf B_{C^\nu}$.
Let $\mathbf c=(c_{j,N})$ be a coefficient family.
Then $(C^\nu,B_C,h_C)$ is an int-amplified dynamical triple, and for all sufficiently large $N$, if $(C^\nu,\mathcal C_{h_C,N}(B_C;\mathbf c))$ is lc, then
\begin{equation}\label{eq:exact-center-nlc}
 \Nlc(X,\mathcal C_{h,N}(B;\mathbf c))\cap C=\varnothing.
\end{equation}
\end{proposition}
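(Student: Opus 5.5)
The plan has three steps: make $(C^\nu,B_C,h_C)$ a dynamical triple, identify the restriction of the corrected pair to $C^\nu$ with the corrected pair of $(C^\nu,B_C,h_C)$, and conclude by inversion of adjunction.

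\emph{Step 1: the triple on $C^\nu$.} Int-amplification of $h_C$ is the last assertion of \cref{prop:higher-center-restriction}. The remaining work is to show that $K_{C^\nu}+B_C$ is $\Q$-Cartier, $B_C\geq0$, and $R_{h_C,B_C}\geq0$. I would use the canonical bundle formula of \cite{FHAdjunction}: there is a b-nef $\Q$-b-Cartier moduli b-divisor $\mathbf M$ such that
\[
(K_X+B)|_{C^\nu}\sim_\Q K_{C^\nu}+B_C+M_{C^\nu}.
\]
Since $h^{-1}(C)=C$ and lc places pull back to lc places, the discriminant and moduli b-divisors are compatible with $h$. This gives $h_C^*M_{C^\nu}\sim_\Q M_{C^\nu}$ up to the ramification correction. \cref{prop:invariant-nef-bdivisor} then shows that $M_{C^\nu}\sim_\Q0$ is $\Q$-Cartier and that $\mathbf M$ is its Cartier closure. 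Hence $K_{C^\nu}+B_C$ is $\Q$-Cartier, $B_C\geq0$ because $(X,B)$ is lc, and
\[
K_{C^\nu}+B_C\sim_\Q(K_X+B)|_{C^\nu}.
\]
For effectivity of $R_{h_C,B_C}$, compare the discriminants on the two sides of $h_C$ via \cref{lem:finite-divisorial} applied to lc places. Pulling back the discriminant along a finite map of lc places gives $R_{h_C,B_C}=R_{h,B}|_{C^\nu}$ up to $\sim_\Q$, and the inequality of coefficients should give $R_{h_C,B_C}\geq R_{h,B}|_{C^\nu}\geq0$. This is legitimate because $C\nsubseteq\Supp R_{h,B}$ by \cref{prop:log-lc-center-avoidance}. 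If $\mathbf M$ is trivial, I expect actual equality.

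\emph{Step 2: restricting the corrections.} Since no $P_h^jR_{h,B}$ contains $C$, \eqref{eq:higher-center-restriction} and the identification of Step 1, used inductively, give
\[
(P_h^jR_{h,B})|_{C^\nu}\geq P_{h_C}^jR_{h_C,B_C}.
\]
This is an equality if Step 1 yields equality. I expect the equality version to be needed, which is why the moduli part must be trivial. Then
\[
(K_X+\mathcal C_{h,N}(B;\mathbf c))|_{C^\nu}\sim_\Q K_{C^\nu}+\mathcal C_{h_C,N}(B_C;\mathbf c)+\mathbf M_{C^\nu}.
\]
The key point is that the added divisor $D_N:=\mathcal C_{h,N}(B;\mathbf c)-B$ is effective for large $N$ and $\Q$-Cartier, and avoids $C$. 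Adjunction for the pair $(X,B+D_N)$ along the same lc place $T$ therefore changes the discriminant exactly by $D_N|_{C^\nu}$. So the discriminant b-divisor of $(X,B+D_N)$ on $C$ is $\mathbf B+\overline{D_N|_{C^\nu}}$, whose trace is $\mathcal C_{h_C,N}(B_C;\mathbf c)$.

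\emph{Step 3: inversion of adjunction.} Assume $(C^\nu,\mathcal C_{h_C,N}(B_C;\mathbf c))$ is lc. Since $\mathbf M$ is the Cartier closure of $M_{C^\nu}\sim_\Q0$, the generalized pair $(C^\nu,\mathcal C_{h_C,N}(B_C;\mathbf c)+\mathbf M)$ is also generalized lc. The inversion of adjunction of \cite{FHInversion}*{Theorem~1.2} then gives that $(X,B+D_N)$ is lc near $C$, which is \eqref{eq:exact-center-nlc}. The phrase "for all sufficiently large $N$" covers two requirements: that $c_{j,N}\geq-1$, so that $D_N\geq0$, and that $N\geq m$.

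The main obstacle is Step 1. One must verify the equivariance of the discriminant and moduli parts under $h$, that is, that $h_C^*$ applied to $K_{C^\nu}+B_C$ differs from $K_{C^\nu}+B_C$ precisely by the restricted log ramification, so that \eqref{eq:invariant-moduli-trace} holds. I would try first to choose the lc place $T$ compatibly: take a place $T$ on $X$, and take its preimage place $T'$ under $h$, which has $A=0$ by \eqref{eq:finite-divisorial-discrepancy}. Then compare the thresholds $\alpha_F$ on $T$ and $T'$ divisor by divisor, using the DVR ramification formula.
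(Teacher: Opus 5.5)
Your outline follows the paper's proof. It uses the canonical bundle formula of \cite{FHAdjunction} along $C$, trivializes the moduli part through \cref{prop:invariant-nef-bdivisor}, restricts the terms $P_h^jR_{h,B}$ through \cref{prop:higher-center-restriction}, shifts the discriminant by $\mathbf B^{(N)}=\mathbf B+\overline{D_N|_{C^\nu}}$, and ends with inversion of adjunction. Your generalized-lc formulation is equivalent to the paper's criterion that every coefficient of $\mathbf B^{(N)}_{Z'}$ is at most one, because $\mathbf M$ descends to $C^\nu$.

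What needs repair is the logic of Step 1. The paper takes the exact divisor identity
\[
 R_{h_C}+B_C-h_C^*B_C=R_{h,B}|_{C^\nu},
\]
together with $h_C^*\mathbf M_{C^\nu}\sim_\Q\mathbf M_{C^\nu}$, from the finite adjunction formula \cite{CZ}*{Theorem 5.6}. This identity holds unconditionally. It is the \emph{input} to \cref{prop:invariant-nef-bdivisor}, not a consequence of the triviality of $\mathbf M$.

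As written, your Step 1 is circular in two places:
\begin{enumerate}
\item The claim ``$R_{h_C,B_C}=R_{h,B}|_{C^\nu}$ up to $\sim_\Q$'' already presupposes $K_{C^\nu}+B_C\sim_\Q(K_X+B)|_{C^\nu}$.
\item Invariance ``up to the ramification correction'' does not give \eqref{eq:invariant-moduli-trace}, and \cref{prop:invariant-nef-bdivisor} needs that condition exactly.
\end{enumerate}
Your two inequalities also point in opposite directions. The one in Step 2, $(P_h^jR_{h,B})|_{C^\nu}\geq P_{h_C}^jR_{h_C,B_C}$, would only give $B_C+D_N|_{C^\nu}\geq\mathcal C_{h_C,N}(B_C;\mathbf c)$. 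That is useless for inversion of adjunction, since lc of the smaller boundary says nothing about the larger one.

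The plan in your last paragraph is the right fix, and it proves the equality outright. Let $T'$ be an extension of the lc place $T$ through $h$. Then \eqref{eq:finite-divisorial-discrepancy} gives $A_{X,B}(T')=0$ and $\ord_{T'}R_{h,B}=0$, and $T'$ has center $C$ because $h^{-1}(C)=C$. On suitable models, $(T',B_{T'}-R_{h,B}|_{T'})$ is the crepant pullback of $(T,B_T)$ under the induced map $T'\to T$, compatibly with $h_C$.

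Take a prime divisor $F'$ on $C^\nu$ with image $F$ and ramification index $r$. Sub-lc is preserved and reflected by finite log pullback (\cite{KM98}*{Proposition 5.20}), so
\[
 \alpha_{F'}+\coeff_{F'}\bigl(R_{h,B}|_{C^\nu}\bigr)=r\,\alpha_F .
\]
This is exactly the coefficient of the displayed identity at $F'$.

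With the identity in hand, $h_C^*\mathbf M_{C^\nu}-\mathbf M_{C^\nu}\sim_\Q R_{h_C,B_C}-R_{h,B}|_{C^\nu}=0$, so \cref{prop:invariant-nef-bdivisor} applies. Iterating \eqref{eq:higher-center-restriction} then yields the equalities $(P_h^jR_{h,B})|_{C^\nu}=P_{h_C}^jR_{h_C,B_C}$ that Step 2 requires. So drop the hedges: no inequality enters, and the equality does not depend on $\mathbf M$.
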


\begin{proof}
By \cref{lem:basic-dynamical-properties}, $(X,B)$ is lc.
Choose a $\Q$-Cartier divisor $D_{C^\nu}$ representing
the restricted $\Q$-line bundle $(K_X+B)|_{C^\nu}$.
Define the moduli b-divisor $\mathbf M$ by
\[
 \mathbf M_{Z'}
 :=p^*D_{C^\nu}-K_{Z'}-\mathbf B_{Z'}
\]
on every normal proper birational model
$p\colon Z'\to C^\nu$, using compatible canonical divisors.
By \cite{FHAdjunction}*{Theorem 1.2 and Remark 2.3},
$B_C$ is effective and $\mathbf M$ is b-nef and
$\Q$-b-Cartier.

The endomorphism $h_C$ is int-amplified by
\cref{prop:higher-center-restriction}.
Moreover, \cref{prop:log-lc-center-avoidance} shows that
$R_{h,B}|_{C^\nu}$ is defined and effective.
The finite-adjunction formula
\cite{CZ}*{Theorem 5.6} gives
\[
 h_C^*\mathbf M_{C^\nu}\sim_\Q\mathbf M_{C^\nu}
\]
and
\begin{equation}\label{eq:ordinary-center-ramification}
 R_{h_C}+B_C-h_C^*B_C=R_{h,B}|_{C^\nu}\geq0.
\end{equation}
By \cref{prop:invariant-nef-bdivisor}, $\mathbf M$
is the Cartier closure of the $\Q$-Cartier divisor
$\mathbf M_{C^\nu}$, and $\mathbf M_{C^\nu}\sim_\Q0$.
In particular,
\[
 K_{C^\nu}+B_C
 =D_{C^\nu}-\mathbf M_{C^\nu}
\]
is $\Q$-Cartier.
Together with $B_C\geq0$ and
\eqref{eq:ordinary-center-ramification}, this shows
that $(C^\nu,B_C,h_C)$ is an int-amplified
dynamical triple.
Since $\mathbf M$ descends to $C^\nu$, we have
\begin{equation}\label{eq:adjunction-discriminant-crepant}
 K_{Z'}+\mathbf B_{Z'}
 =p^*(K_{C^\nu}+B_C)
\end{equation}
on every normal proper birational model
$p\colon Z'\to C^\nu$.

For all sufficiently large $N$, put $D_N:=\mathcal C_{h,N}(B;\mathbf c)-B$.
Since $c_{j,N}\to0$, the divisor $D_N$ is effective for large enough $N$.
By \cref{lem:basic-dynamical-properties,prop:log-lc-center-avoidance},
it is $\Q$-Cartier and its support does not contain $C$.
Iterating \cref{prop:higher-center-restriction} and using
\eqref{eq:ordinary-center-ramification}, we obtain
\[
 (P_h^jR_{h,B})|_{C^\nu}
 =P_{h_C}^jR_{h_C,B_C}
 \qquad(j\geq1).
\]
Summing with the coefficients defining the corrections gives
\begin{equation}\label{eq:exact-center-correction}
 B_C+D_N|_{C^\nu}
 =\mathcal C_{h_C,N}(B_C;\mathbf c).
\end{equation}

Since $C\nsubseteq\Supp D_N$, the pairs $(X,B+D_N)$
and $(X,B)$ agree near the generic point of $C$.
Thus $C$ remains an lc center of $(X,B+D_N)$.
Let $\mathbf B^{(N)}$ be the discriminant b-divisor of $(X,B+D_N)$ along $C^\nu$.
The defining thresholds therefore give
\[
 \mathbf B_{Z'}^{(N)}
 =\mathbf B_{Z'}+p^*(D_N|_{C^\nu})
\]
on every normal proper birational model
$p\colon Z'\to C^\nu$.
Combining \eqref{eq:adjunction-discriminant-crepant}
and \eqref{eq:exact-center-correction}, we obtain
\[
 K_{Z'}+\mathbf B_{Z'}^{(N)}
 =p^*\bigl(K_{C^\nu}
          +\mathcal C_{h_C,N}(B_C;\mathbf c)\bigr).
\]

If $(C^\nu,\mathcal C_{h_C,N}(B_C;\mathbf c))$ is lc,
every coefficient of $\mathbf B_{Z'}^{(N)}$ is at most one.
Applying \cite{FHAdjunction}*{Theorem 1.2(iv)} to the effective $\Q$-divisor $B+D_N=\mathcal C_{h,N}(B;\mathbf c)$ yields
\[
 \Nlc(X,\mathcal C_{h,N}(B;\mathbf c))\cap C=\varnothing,
\]
as required.
\end{proof}

\subsection{Log canonical corrections and the main theorem}\label{sec:induction}

For the induction, a single coefficient family $\mathbf c=(c_{j,N})$ must be used both on the birational model and on the normalizations of its lc centers.
We therefore formulate the next theorem for all coefficient families satisfying finitely many interpolation conditions, and for further iterates of the endomorphism.

\begin{theorem}\label{thm:all-dimensional-lc-correction}
Let $(X,B,h)$ be an int-amplified dynamical triple.
There are an integer $r_0>0$ and a finite set $\Lambda\subset\mathbb Z_{>1}$ with the following property.
For every integer $a>0$ and every coefficient family $\mathbf c=(c_{j,N})$ satisfying the interpolation conditions for $\Lambda^a$, the pair
\[
 (X,\mathcal C_{h^{r_0a},N}(B;\mathbf c))
\]
is lc for all sufficiently large $N$.
The bound may depend on $a$ and $\mathbf c$.
If $h$ is $q$-polarized, one may take $\Lambda=\{q^{r_0}\}$.
\end{theorem}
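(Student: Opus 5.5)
We argue by induction on $\dim X$; the case $\dim X=0$ is trivial. First apply \cref{thm:all-dimensional-lc-model}: after replacing $h$ by an iterate $h^{p}$, we get an absorbing equivariant model $\pi\colon(W,B_W,g)\to(X,B,h^{p})$ and a finite set $\Lambda_{\rm mod}\subset\mathbb Z_{>1}$. The model is crepant for corrections with interpolation conditions for $\Lambda_{\rm mod}^a$, for every iterate $g^a$. By \cref{thm:invariant-nklt} and \cref{prop:log-lc-center-avoidance}, after a further iterate $g^{p'}$ every lc center of $(W,B_W)$ is totally invariant; this does not affect absorption. There are finitely many lc centers $C_1,\dots,C_k$ of $(W,B_W)$. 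By \cref{prop:ordinary-corrected-adjunction}, each yields an int-amplified dynamical triple $(C_i^\nu,B_{C_i},g_{C_i})$ of smaller dimension. The induction hypothesis provides integers $r_i$ and finite sets $\Lambda_i$ for these triples. Put
\[
r_0:=pp'\operatorname{lcm}(r_1,\dots,r_k),\qquad \Lambda:=\Lambda_{\rm mod}^{p'r_0/(pp')}\cup\bigcup_i\Lambda_i^{r_0/(pp'r_i)}.
\]
Exponents are arranged so that a family satisfying the conditions for $\Lambda^a$ satisfies, for each piece, the conditions for the right power of that piece's set, relative to the right iterate.

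Now fix $a$ and $\mathbf c$ satisfying the interpolation conditions for $\Lambda^a$, and write $G:=g^{r_0a/p}$ for the lift of $h^{r_0a}$. Crepancy gives
\[
K_W+\mathcal C_{G,N}(B_W;\mathbf c)=\pi^*\bigl(K_X+\mathcal C_{h^{r_0a},N}(B;\mathbf c)\bigr).
\]
Since $\pi_*\mathcal C_{G,N}(B_W;\mathbf c)=\mathcal C_{h^{r_0a},N}(B;\mathbf c)$ (using $\pi_*B_W\ge B$ and $\pi_*$ of the ramification terms), log canonicity descends. It therefore suffices to show that $(W,\mathcal C_{G,N}(B_W;\mathbf c))$ is lc for $N\gg0$.

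By \cref{lem:uniform-nlc-confinement} and absorption,
\[
\Nlc(W,\mathcal C_{G,N})\subseteq \Nklt(W,B_W;T_{G,B_W})=\Nklt(W,B_W)=\textstyle\bigcup_i C_i
\]
for $N$ large; here $T_{G,B_W}=T_{g,B_W}$ by iteration invariance. For each $i$, the induction hypothesis, applied with the iterate $G_{C_i}=g_{C_i}^{(r_i)(\cdot)}$ and the same family $\mathbf c$, shows that $(C_i^\nu,\mathcal C_{G_{C_i},N}(B_{C_i};\mathbf c))$ is lc for $N\gg0$. Then \cref{prop:ordinary-corrected-adjunction} gives $\Nlc\cap C_i=\varnothing$. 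Taking $N$ larger than the finitely many thresholds gives the claim.

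For the polarized case, \cref{rem:polarized-models} shows that every interpolation parameter of the model is a power of $q$ matching the iterate. The restricted maps on lc centers are polarized with the same $q$-power, so inductively all sets collapse to $\{q^{r_0}\}$.

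The main obstacle is bookkeeping: ensuring that one family $\mathbf c$ simultaneously satisfies the interpolation conditions needed at the model level and on every lc-center triple with compatible iterates. This is why the statement quantifies over all $a$ and over $\Lambda^a$. One must check that the conditions for $\lambda^a$ relative to $h^{r_0a}$ correspond exactly to those needed by \cref{lem:model-transformation}. That lemma requires the eigenvalue of $F$ under $G$, namely $e^{r_0a/p}$, to be the interpolated parameter. One must also check that the restriction identity $(P_h^jR)|_{C^\nu}=P_{h_C}^jR_{h_C}$ holds for the chosen iterate. Finally, the induction hypothesis on $C_i^\nu$ has to be invoked with integer $a_i=r_0a/(pp'r_i)$.
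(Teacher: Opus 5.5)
Your proposal is correct and follows the paper's proof essentially step for step: absorbing model, a further iterate making all lc centers totally invariant, induction on the adjoint triples, $r_0$ equal to the model iterate times an lcm, then confinement plus adjunction to get log canonicity on $W$, followed by crepant descent. The one loose point is cosmetic: descent needs no appeal to $\pi_*B_W\ge B$, since pushing forward the crepancy identity already gives $\pi_*\mathcal C_{G,N}(B_W;\mathbf c)=\mathcal C_{h^{r_0a},N}(B;\mathbf c)$.
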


\begin{proof}
We prove the full assertion, including the polarized case, by induction on $n:=\dim X$.
If $n=0$, take $r_0=1$ and $\Lambda=\varnothing$, or $\Lambda=\{q\}$ in the polarized case.

By \cref{thm:all-dimensional-lc-model}, choose an absorbing model
\[
 \pi\colon(W,B_W,g)\longrightarrow(X,B,h^r)
\]
with interpolation conditions for $\Lambda_0$.
After a further iterate, with the corresponding powers of the recorded indices, assume that every lc center of $(W,B_W)$ is totally invariant under $g$.
Let $C_1,\ldots,C_s$ be its maximal lc centers.
Their adjoints $(C_i^\nu,B_i,g_i)$ are int-amplified dynamical triples of smaller dimension by \cref{prop:ordinary-corrected-adjunction}.
Apply induction to obtain exponents $r_i>0$ and sets $\Lambda_i$ of interpolation parameters.
Choose a common multiple $t$ of the $r_i$, taking $t=1$ if $s=0$, and set
\begin{equation}\label{eq:induction-indices}
 r_0:=rt,\qquad
 \Lambda:=\Lambda_0^t\cup\bigcup_{i=1}^s\Lambda_i^{\,t/r_i}.
\end{equation}
If $h$ is $q$-polarized, \cref{rem:polarized-models} and induction give
\[
 \Lambda_0\subseteq\{q^r\},\qquad
 \Lambda_i=\{q^{rr_i}\}.
\]
Thus $\Lambda\subseteq\{q^{r_0}\}$, and we may enlarge it to this singleton.

Fix $a\geq1$ and a coefficient family $\mathbf c$ satisfying the interpolation conditions for $\Lambda^a$.
For each $i$,
\[
 \Lambda_i^{(t/r_i)a}\subseteq\Lambda^a,
 \qquad
 (g_i^{r_i})^{(t/r_i)a}=g_i^{ta}.
\]
Induction therefore gives lc of
$(C_i^\nu,\mathcal C_{g_i^{ta},N}(B_i;\mathbf c))$ for all sufficiently large $N$.
Using the same coefficients, \cref{prop:ordinary-corrected-adjunction} gives
\[
 \Nlc(W,\mathcal C_{g^{ta},N}(B_W;\mathbf c))
 \cap\bigcup_i C_i=\varnothing.
\]
On the other hand, \cref{lem:uniform-nlc-confinement}, absorption, and invariance of the current under iteration give
\[
 \Nlc(W,\mathcal C_{g^{ta},N}(B_W;\mathbf c))
 \subseteq\Nklt(W,B_W;T_{g^{ta},B_W})
 =\Nklt(W,B_W)=\bigcup_i C_i.
\]
Since there are finitely many centers, both statements hold for all sufficiently large $N$, so $(W,\mathcal C_{g^{ta},N}(B_W;\mathbf c))$ is lc.
If $s=0$, confinement alone gives the conclusion.

Finally, $\Lambda_0^{ta}\subseteq\Lambda^a$ and $rta=r_0a$, so \eqref{eq:correction-crepancy} gives
\[
 K_W+\mathcal C_{g^{ta},N}(B_W;\mathbf c)
 =\pi^*\bigl(K_X+\mathcal C_{h^{r_0a},N}(B;\mathbf c)\bigr).
\]
Log canonicity descends to $X$.
The choices of $r_0,\Lambda$ precede those of $a,\mathbf c$, as required.
\end{proof}

\begin{corollary}\label{cor:polarized-one-term-correction}
Let $(X,B,h)$ be a $q$-polarized dynamical triple.
After replacing $h$ by a suitable iterate and $q$ by the corresponding
power, retain the notation $h,q$ and put
\[
 \Theta_N:=B+\sum_{j=1}^N P_h^jR_{h,B}
       +\frac{q^{1-N}}{q-1}P_hR_{h,B}.
\]
Then $(X,\Theta_N)$ is log canonical for all sufficiently large $N$.
Furthermore, if
\begin{equation}\label{eq:polarized-log-eigenclass}
 h^*(K_X+B)\equiv q(K_X+B),
\end{equation}
then $(X,\Theta_N)$ is a log Calabi--Yau pair for all sufficiently large $N$.
\end{corollary}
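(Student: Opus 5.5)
The plan is to derive this directly from \cref{thm:all-dimensional-lc-correction} in its polarized form, using the one-term coefficient family $m=1$. Apply that theorem to the $q$-polarized triple $(X,B,h)$. It yields $r_0>0$ and allows us to take $\Lambda=\{q^{r_0}\}$. Replace $h$ by $h^{r_0}$ and $q$ by $q^{r_0}$, keeping the notation $h,q$. Then, with $a=1$, every coefficient family satisfying the interpolation condition for $\{q\}$ gives an lc correction $\mathcal C_{h,N}(B;\mathbf c)$ for all large $N$.

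The coefficient family is forced by \eqref{eq:moment}. With $m=1$ and $\lambda=q$, the condition $c_{1,N}q^{-1}=q^{-N}/(q-1)$ gives
\[
 c_{1,N}:=\frac{q^{1-N}}{q-1}.
\]
This is rational and tends to $0$, so it is a coefficient family, and by \eqref{eq:closed-correction} we get $\mathcal C_{h,N}(B;\mathbf c)=\Theta_N$. Hence $(X,\Theta_N)$ is lc for all sufficiently large $N$.

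For the log Calabi--Yau part, assume \eqref{eq:polarized-log-eigenclass}. This persists after iteration, with $q$ replaced by its power, since $(h^k)^*(K_X+B)\equiv q^k(K_X+B)$. Put $L=-(K_X+B)$. Since $P_h=(h^*)^{-1}$ on $N^1(X)_\R$ by \cref{lem:basic-dynamical-properties}, we have $P_hL\equiv q^{-1}L$. Then \eqref{eq:numerical-telescope} gives
\[
 P_h^jR_{h,B}\equiv P_h^{j-1}(L-P_hL)\equiv(q-1)q^{-j}L,\qquad S_{h,B,N}\equiv(1-q^{-N})L.
\]
The correction term is $c_{1,N}P_hR_{h,B}\equiv\frac{q^{1-N}}{q-1}(q-1)q^{-1}L=q^{-N}L$. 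Therefore
\[
 K_X+\Theta_N\equiv -L+(1-q^{-N})L+q^{-N}L=0.
\]

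It remains to upgrade numerical triviality to $K_X+\Theta_N\sim_\Q0$, and this is the step needing care. The pair $(X,\Theta_N)$ is lc with $K_X+\Theta_N\equiv0$. The first approach is to invoke abundance for numerically trivial lc pairs (Gongyo, Campana--Koziarz--P\u{a}un type results), which gives $\sim_\Q0$ directly.

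If one prefers to stay within the paper, apply \cref{prop:invariant-nef-bdivisor} to the Cartier closure of $M:=K_X+\Theta_N$. This b-divisor is b-nef since $M\equiv0$. The difficulty is the hypothesis $h^*M\sim_\Q M$, which is only known numerically. The proof of that proposition really uses $\mathcal O(mM)\in\operatorname{Pic}^0$ together with the isogeny $\Phi-\mathrm{id}$, so its argument would still need a $\Phi$-fixed (or $\Phi$-eigen with eigenvalue $\ne1$) class. Because of this gap, I would rely on the abundance result as the main route.
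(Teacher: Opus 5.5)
Your proposal is correct and follows the paper's proof essentially verbatim. You choose the iterate from \cref{thm:all-dimensional-lc-correction} so that $\Lambda=\{q\}$, take the one-term family $c_{1,N}=q^{1-N}/(q-1)$ so that $\Theta_N=\mathcal C_{h,N}(B;\mathbf c)$ is lc for large $N$, derive $K_X+\Theta_N\equiv0$ from \eqref{eq:numerical-telescope} and the eigenclass hypothesis, and upgrade to $K_X+\Theta_N\sim_\Q0$ by the same appeal to \cite{Gongyo}*{Theorem~1.2} that the paper makes, so your side discussion of \cref{prop:invariant-nef-bdivisor} is unnecessary.
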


\begin{proof}
Choose the iterate as in \cref{thm:all-dimensional-lc-correction}.
With the retained notation, we may take $\Lambda=\{q\}$.
Define a coefficient family $\mathbf c=(c_{1,N})$ by $c_{1,N}:=q^{1-N}/(q-1)$.
Then
\[
 c_{1,N}q^{-1}=\frac{q^{-N}}{q-1},
 \qquad c_{1,N}\longrightarrow0,
\]
so $\mathbf c=(c_{1,N})$ satisfies the interpolation conditions for $\Lambda$.
Since $\Theta_N=\mathcal C_{h,N}(B;\mathbf c)$,
the pair $(X,\Theta_N)$ is lc for all sufficiently large $N$
by \cref{thm:all-dimensional-lc-correction}.
Furthermore, \eqref{eq:numerical-telescope} and
\eqref{eq:polarized-log-eigenclass} give
\[
K_X+\Theta_N \equiv 0,
\]
and \cite{Gongyo}*{Theorem~1.2} yields
$K_X+\Theta_N\sim_\Q0$.
\end{proof}

\begin{proof}[Proof of \cref{thm:main}]
In dimension zero, take $\Theta=0$.
Assume $\dim X>0$ and choose $r_0$ and $\Lambda$ from \cref{thm:all-dimensional-lc-correction}.
Apply \cref{lem:numerical-correction} to $(X,B,h^{r_0})$ with $\Lambda$.
It gives a coefficient family $\mathbf c$ satisfying the interpolation conditions for $\Lambda$.
For all sufficiently large $N$, put $\Theta=\mathcal C_{h^{r_0},N}(B;\mathbf c)$.
Then $\Theta\geq B$, the pair $(X,\Theta)$ is lc, and $K_X+\Theta\equiv0$.
By \cite{Gongyo}*{Theorem 1.2}, one has $K_X+\Theta\sim_\Q0$.
\end{proof}

\begin{proof}[Proof of \cref{cor:all-dimensional-qg-cy}]
If $X$ is $\Q$-Gorenstein, then the assertion follows from \cref{thm:main} with $B=0$.
Next, we assume that $X$ is of klt type.
By \cite{CZ}*{Theorem A1.1}, there exist a projective small birational morphism
$\pi\colon Y\to X$ with $Y$ $\Q$-factorial, a positive integer $s$, and a surjective endomorphism
$g\colon Y\to Y$ such that $\pi\circ g=h^s\circ\pi$.
Since $Y$ is of Calabi--Yau type by \cref{thm:main}, so is $X$.
\end{proof}

\section{Normalized ramification divisors of polarized endomorphisms}
\label{sec:normalized-ramification}

In this section, we prove the following result.

\begin{theorem}\label{thm:nr-lc}
Let $(X,B,h)$ be a $q$-polarized dynamical triple. 
There is an integer $r_0\geq1$ such that
\[
 \left(X,B+\frac{R_{h^{mr_0},B}}{q^{mr_0}-1}\right)
 \quad\text{is lc for every integer }m\geq1.
\]
\end{theorem}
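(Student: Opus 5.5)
Write $h':=h^{r_0}$ and $q':=q^{r_0}$, with $r_0$ the integer produced by \cref{thm:all-dimensional-lc-correction} in the polarized case, so that $\Lambda=\{q'\}$. The plan is to compare $B+R_{h'^m,B}/(q'^m-1)$ with the corrected divisors of that theorem. Expanding $R_{h'^m,B}$ through the ramification formula for compositions gives
\[
 R_{h'^m,B}=\sum_{k=0}^{m-1}(h'^{k})^*R_{h',B},
\]
so the normalized ramification divisor is a weighted sum of pullbacks. The corrections $\mathcal C$ are built from pushforwards $P_h^jR_{h,B}$ instead. My first step is to exhibit $B+R_{h'^m,B}/(q'^m-1)$ as a pullback, under a high iterate, of a suitable corrected boundary $\Theta$, up to the crepant relation $K+B$.

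Concretely, let $g:=h'^m$ and $Q:=q'^m$, and let $\Theta_N$ be the one-term correction of \cref{cor:polarized-one-term-correction} for $g$, which is lc for $N\gg0$ by \cref{thm:all-dimensional-lc-correction}, applied with $a=m$ and $c_{1,N}=Q^{1-N}/(Q-1)$. Using $P_g g^*=\mathrm{id}$ and the stationarity $T=P_g([R]+T)$ from \cref{thm:log-current-construction}, one checks formally that
\[
 K_X+B+\tfrac{R_{g,B}}{Q-1}
\]
is the limit of $(g^N)^*\bigl(K_X+\Theta'_N\bigr)$, suitably normalized, for a correction $\Theta'_N$ whose coefficients satisfy the interpolation condition at $Q$. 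The identity $\sum_{j\le N}Q^{-j}+Q^{-N}/(Q-1)=1/(Q-1)$ is exactly what matches the coefficient $1/(Q-1)$.

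The cleaner route, which I will actually attempt, is to prove log canonicity by a valuative bound. For any divisorial $v$ and any extension $u$ through $g$, \eqref{eq:finite-divisorial-discrepancy} gives
\[
 A_{X,B}(v)=A_{X,B}(u)+u(R_{g,B}).
\]
Hence $(X,B+R_{g,B}/(Q-1))$ is lc iff $u(R_{g,B})\le (Q-1)A_{X,B}(u)$ for all $u$, that is, iff $A(g_*u)\le Q\,A(u)$. It therefore suffices to show $A(g_*u)\le Q A(u)$ for every valuation $u$. I would deduce this from the lc property of the $\Theta_N$. Pulling the lc pair $(X,\Theta_N)$ back by the finite map $g$ gives a sub-lc pair
\[
 \bigl(X,\;g^*\Theta_N-R_{g}\bigr),
\]
and $g^*\Theta_N-R_g = B - R_{g,B} + g^*(S_{g,B,N}+c_{1,N}P_gR_{g,B})$. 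Using $g^*P_g^jR=P_g^{j-1}R$ up to numerical equivalence, this is not literally an equality of divisors, and that is the difficulty. So instead I would work on the absorbing model of \cref{thm:all-dimensional-lc-model}. There, \cref{rem:polarized-models} shows every extracted divisor $E$ has $g^*E=QE$. Via \eqref{eq:model-ramification} this reduces the inequality on $W$ to the lc centers, and induction on dimension through \cref{prop:ordinary-corrected-adjunction} handles those, since $R_{g,B}|_{C^\nu}=R_{g_C,B_C}$.

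The main obstacle is the local step away from lc centers. There one needs $u(R_{g,B})\le (Q-1)A(u)$ at points of $\Nklt(W,B_W;T)$ that are not klt-type obstructions. I would use absorption, $\Nklt(W,B_W;T)=\Nklt(W,B_W)$, together with the uniform gap of \cref{lem:analytic-klt-discrepancy-gap} off the lc centers. Then $u(R_{g,B})\le u(gT)$-type bounds, via $R_{g,B}\le$ the first term of $Q\cdot T$-type expansions, give strict inequality there, uniformly in $m$. I expect that making this uniformity in $m$ precise, rather than only "for $N$ large", is where the real work lies.
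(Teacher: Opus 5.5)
Your reductions agree with the paper. The valuative criterion $A((h^r)_*v)\le q^rA(v)$ is \eqref{eq:nr-lc-criterion}. Passing to an absorbing model with $g^*F=q^aF$, and then using adjunction and induction at the lc centers, is \cref{lem:nr-compatibility}.

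Uniformity in $m$ is not actually an issue. Since $(h^r)_*$ sends divisorial valuations to divisorial valuations up to scaling, the criterion at exponent $r$ iterates to every multiple $mr$. So one sufficiently large exponent suffices.

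The genuine gap is the step you flag: klt-ness of $(W,B_W+R_{g^r,B_W}/(q^{ar}-1))$ off $\Nklt(W,B_W)$. The mechanism you suggest cannot supply it.
\begin{itemize}
\item The gap of \cref{lem:analytic-klt-discrepancy-gap} controls $v(T)=s(v)=\sum_{j\ge1}v(P_h^jR_{h,B})$, which is a sum over pushforwards. What you must bound is $u(R_{h^r,B})=\sum_{k=0}^{r-1}((h^k)_*u)(R_{h,B})$, which is a sum along the forward orbit of $u$.
\item The relation $T=P_g([R_{g,B}]+T)$ only yields $P_gR_{g,B}\le T$. The only general way back to $R_{g,B}$ is $R_{g,B}\le(\deg g)\,g^*P_gR_{g,B}$. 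This gives $u(R_{g,B})\le(\deg g)\,(g_*u)(T)$, and the factor $\deg g=Q^{\dim X}$ swamps the gap $c<1$.
\item \eqref{eq:nonflat-stationarity} bounds $(A-s)(g_*u)$ from below, not $A(g_*u)$ from above.
\item \cref{lem:lc-log-degree} applied to $R_{h^r,B}$ only gives $A((h^r)_*u)\le Mq^rA(u)$ with a constant $M\ge1$. This is off by exactly the constant you cannot afford.
\end{itemize}

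The missing ingredient is \cref{prop:nr-discrepancy-gap}. It states that $A((h^r)_*v)\le C\rho^rA(v)$ with $1<\rho<q$ for every valuation in $\mathcal V_Z$, where $Z=\Nklt(X,B;T_{h,B})$. This makes the threshold of $R_{h^r,B}/(q^r-1)$ on $X\setminus Z$ at least $(q^r-1)/(C\rho^r-1)\to\infty$.

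Beating the constant $M$ requires a genuinely dynamical argument. Put $\Phi=q^{-1}h_*$; one proves $A(\Phi^kv)\le\frac12A(v)$ for some $k$ by contradiction.
\begin{itemize}
\item Failure produces orbits on which $A$ stays in $[(2M)^{-1},M]$.
\item Ces\`aro averages of these orbits give a $\Phi$-invariant probability measure $\mu\ne\delta_0$ on a compact subset of $\mathcal V_Z$.
\item Poincar\'e recurrence yields a minimal center $C$ with $h^a(C)=C$.
\item Volume monotonicity along $\Phi^a$, together with its rigid equality case (\cref{lem:nr-periodic-volume}), forces $(A-s)\circ\Phi^a=q^{-a}(A-s)$ almost everywhere.
\item Integrating against the invariant measure then contradicts $0<A-s\le M$ off $Z$ (\cref{prop:lc-valuation-center}).
\end{itemize}
Without this invariant-measure and volume argument, your plan has no way to gain the factor $(\rho/q)^r$ needed to absorb $M$.
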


Throughout this section, $(X,B,h)$ is a $q$-polarized dynamical triple. 
Write
\[
 A=A_{X,B},\qquad s=s_{h,B},\qquad
 R_r=R_{h^r,B},\qquad
 Z=\Nklt(X,B;T_{h,B}),\qquad \Phi=q^{-1}h_*.
\]
By \cref{lem:basic-dynamical-properties}, $(X,B)$ is lc.
By \cref{thm:invariant-nklt}, after replacing $h$ by an
iterate and $q$ by the corresponding power, we may assume
that every lc center of $(X,B)$ and every irreducible
component of $Z$ is totally invariant.

\subsection{A discrepancy estimate outside the non-klt locus}

Put
\[
 \mathcal V_Z
 :=\{v\in\operatorname{Val}_X\mid v(\mathcal I_Z)=0\}
 =\{0\}\cup
   \{v\in\operatorname{Val}_X\mid c_X(v)\nsubseteq Z\}.
\]
Since $Z$ is totally invariant, $\mathcal V_Z$ is $\Phi$-invariant.
Since $\Nklt(X,B)\subseteq Z$, every nontrivial finite-discrepancy valuation in $\mathcal V_Z$ has $A(v)>0$.

\begin{lemma}\label{lem:nr-periodic-volume}
Let $C\subsetneq X$ be an irreducible closed subvariety such that $(X,B)$ is klt at the generic point $\eta_C$ of $C$, and suppose that $h^a(C)=C$ for some integer $a \geq1$.
Put $R:=\mathcal O_{X,\eta_C}$.
For every valuation $v$ centered at $C$ with $A(v)<\infty$,
\[
 0<\vol_R(v)\leq\vol_R(\Phi^a(v))<\infty.
\]
If equality holds, then $h^{-a}(C)=C$, and $v/q^a$ is
the only extension of $\Phi^a(v)$ through $h^a$.
In this case,
\[
 (A-s)(\Phi^a(v))=q^{-a}(A-s)(v).
\]
\end{lemma}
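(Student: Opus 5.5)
The plan is to repeat the multiplicity comparison from the proof of \cref{lem:nonflat-volume-equality}, now for the possibly non-flat local map induced by $h^a$ at $\eta_C$. I drop total invariance of $C$ and keep track of the extra points of $h^{-a}(C)$ through the associativity formula.

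\emph{Step 1: finiteness and positivity.} Since $(X,B)$ is klt at $\eta_C$, \cref{lem:local-klt-tools} applies at $R=\mathcal O_{X,\eta_C}$. The bounds \eqref{eq:singular-volume-bounds} then give $0<\vol_R(z)<\infty$ for every valuation $z$ centered at $C$ with $A(z)<\infty$. Put $w:=\Phi^a(v)=q^{-a}(h^a)_*v$. Because $h^a(C)=C$, the valuation $w$ is again centered at $C$. By \cref{lem:finite-divisorial}, $A(w)<\infty$, so both volumes are finite and positive.

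\emph{Step 2: degrees and the local map.} Let $r=\operatorname{codim}C$ and $n=\dim X$.
\begin{itemize}
\item Polarization gives $\deg h^a=q^{an}$.
\item Restricting the polarization to $C$ gives $\deg(h^a|_C)=q^{a\dim C}$, since $h^a|_C$ is polarized by $H|_C$ and $h^a(C)=C$.
\item Hence the ``local eigenvalue'' is $\gamma_a=(q^{an}/q^{a\dim C})^{1/r}=q^a$, matching the normalization of $\Phi^a$.
\end{itemize}
Let $\varphi\colon R\to R$ be the local homomorphism induced by $h^a$ at the source point $\eta_C$. Write $S$ for the semilocal ring of $h^{-a}(\Spec R)$. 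Then $S$ is finite of rank $q^{an}$ over $\varphi(R)$, and $R$ is the localization of $S$ at $\eta_C$. The associativity formula \cite{StacksProject}*{Tag 0AZX}, applied to an $\mathfrak m$-primary ideal $I$, gives
\[
 \sum_{\xi}[\kappa(\xi):\kappa(\eta_C)]\,e_{S_\xi}(\varphi(I)S_\xi)=q^{an}e_R(I),
\]
where $\xi$ runs over the generic points of the components of $h^{-a}(C)$ of codimension $r$. The term with $\xi=\eta_C$ has residue degree $q^{a\dim C}$, and all terms are nonnegative. Therefore
\[
 e_R(\varphi(I)R)\le q^{ar}e_R(I),
\]
with equality if and only if $\eta_C$ is the only such point, that is, $h^{-a}(C)=C$.

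\emph{Step 3: the volume inequality.} As in \cref{lem:nonflat-volume-equality}, put $\mathfrak b_t:=\varphi(\mathfrak a_t(w))R$ and $\mathfrak c_t:=\mathfrak a_{tq^a}(v)$. Then $\mathfrak b_t\subseteq\mathfrak c_t$. Passing to limits in Step 2 and using \eqref{eq:filtration-invariants} yields
\[
 q^{ar}\vol_R(w)\ \ge\ e_R(\mathfrak b_\bullet)\ \ge\ e_R(\mathfrak c_\bullet)=q^{ar}\vol_R(v).
\]
This proves the inequality. If equality holds, both inequalities are equalities, and the first one forces $h^{-a}(C)=C$.

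\emph{Step 4: the equality case.} Now $C$ is totally invariant under $h^a$, so $\varphi$ is finite and every extension of $w$ through $h^a$ is centered at $C$. The equality $e(\mathfrak b_\bullet)=e(\mathfrak c_\bullet)$ together with \cite{BLQ}*{Proposition~3.12} shows, exactly as in \cref{lem:nonflat-volume-equality}, that any extension $u$ of $w$ satisfies $u\ge q^{-a}v$ on $R$. The norm argument then upgrades this to $u=v/q^a$. So $v/q^a$ is the unique extension, and its norm weight is $q^{an}$.

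\emph{Step 5: the discrepancy identity.} By \eqref{eq:iterate-blocks}, $s_{h^a,B}=s$. Plugging the single extension $v/q^a$ into \eqref{eq:nonflat-stationarity} for the triple $(X,B,h^a)$, applied to $q^aw=(h^a)_*v$, gives
\[
 (A-s)(q^aw)=(A-s)(v).
\]
Since $A-s$ is homogeneous of degree one, this is $(A-s)(w)=q^{-a}(A-s)(v)$.

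The main obstacle is Step 2. The local map is only a localization of a finite map, and I have to justify that the associativity formula at $\eta_C$, including the residue-degree factor $q^{a\dim C}$, yields precisely the inequality with equality characterizing total invariance. Step 4 is then routine, because it reduces verbatim to the totally invariant case already treated in \cref{lem:nonflat-volume-equality}.
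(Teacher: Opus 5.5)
Your argument is essentially the paper's proof. It uses the same associativity decomposition $(\deg h^a)\,e_R(I)=\sum_D\deg(h^a|_D)\,e_{R_D}(\varphi(I)R_D)$ over the components $D$ of $h^{-a}(C)$, and the same inclusion $\varphi(\mathfrak a_t(w))R\subseteq\mathfrak a_{q^at}(v)$ together with $\deg(h^a|_C)\,q^{ar}=\deg h^a$. It then reduces uniqueness of the extension to \cref{lem:nonflat-volume-equality} and applies \eqref{eq:nonflat-stationarity} with $s_{h^a,B}=s$, exactly as the paper does.

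There is one step you have not justified as written. In Step 2 you prove ``equality if and only if $h^{-a}(C)=C$'' only for a single $\mathfrak m$-primary ideal $I$, where the extra summands $e_{S_\xi}(\varphi(I)S_\xi)$ are trivially positive. In Step 3 you use this criterion after dividing by $m^r$ and letting $m\to\infty$, and there a sum of positive terms can tend to $0$. So the equality $q^{ar}\vol_R(w)=e_R(\mathfrak b_\bullet)$ forces $h^{-a}(C)=C$ only if, for each $\xi\neq\eta_C$, the limit $\lim_m e_{S_\xi}(\varphi(\mathfrak a_m(w))S_\xi)/m^r$ is strictly positive.

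This is exactly where the paper invokes the Izumi bound \eqref{eq:singular-izumi} for $w$, which is available because $A(w)<\infty$. It gives $\epsilon>0$ with $\mathfrak a_t(w)\subseteq\mathfrak m^{\lceil\epsilon t\rceil}$. Since $\varphi(\mathfrak m)S_\xi\subseteq\mathfrak m_\xi$, it follows that $\varphi(\mathfrak a_t(w))S_\xi\subseteq\mathfrak m_\xi^{\lceil\epsilon t\rceil}$. Hence each extended filtration is linearly bounded, with limiting multiplicity at least $\epsilon^r e(\mathfrak m_\xi)>0$; recall that $\dim S_\xi=r$ by going down. With this added, your proof is complete.
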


\begin{proof}
Put $g:=h^a$, $Q:=q^a$, $c:=\dim R$, and
$w:=\Phi^a(v)=Q^{-1}g_*v$.
Both $v$ and $w$ are centered at $C$ and have finite discrepancy
by \eqref{eq:finite-divisorial-discrepancy}.
The argument in the proof of \cref{lem:local-klt-tools} gives
\[
 0<\vol_R(v),\vol_R(w)<\infty,
\]
as in \eqref{eq:singular-volume-bounds}.

Let $D$ run through the irreducible components of $g^{-1}(C)$,
and put
\[
 R_D:=\mathcal O_{X,\eta_D},\qquad
 \mathfrak m_D:=\mathfrak m_{R_D},\qquad
 d_D:=\deg(g|_D).
\]
Each $D$ dominates $C$ by going down over the normal target,
and $\dim R_D=c$.
The finite $R$-algebra $(g_*\mathcal O_X)_{\eta_C}$ has rank
$\deg g$.
The multiplicity calculation in the proof of
\eqref{eq:nonflat-filtration} therefore gives
\[
 (\deg g)e_R(I)
 =\sum_D d_D e_{R_D}(g^\#(I)R_D)
\]
for every $\mathfrak m_R$-primary ideal $I$.

Set
\[
 \mathfrak b_{D,t}:=g^\#(\mathfrak a_t(w))R_D.
\]
The Izumi estimate for $w$ gives $\epsilon>0$ such that
\[
 \mathfrak a_t(w)\subseteq
 \mathfrak m_R^{\lceil\epsilon t\rceil},
 \qquad
 \mathfrak b_{D,t}\subseteq
 \mathfrak m_D^{\lceil\epsilon t\rceil}.
\]
Thus each $\mathfrak b_{D,\bullet}$ is a linearly bounded
$\mathfrak m_D$-filtration with positive multiplicity.
For $D=C$, we also have
\[
 \mathfrak b_{C,t}\subseteq\mathfrak a_{Qt}(v).
\]
Applying the multiplicity formula to $I=\mathfrak a_t(w)$,
dividing by $t^c$, and taking limits yields
\[
 \begin{aligned}
 (\deg g)\vol_R(w)
 &=\sum_D d_D e_{R_D}(\mathfrak b_{D,\bullet})\\
 &\geq d_CQ^c\vol_R(v)
  =(\deg g)\vol_R(v),
 \end{aligned}
\]
where the last equality follows from polarization.
This proves the volume inequality.

Suppose that $\vol_R(w)=\vol_R(v)$.
Since every summand is positive, there can be no component
$D\ne C$, so $g^{-1}(C)=C$.
Thus $g^\#\colon R\to R$ is finite, and every extension of
$w$ through $g$ is centered at $C$.
The equality argument in the proof of
\cref{lem:nonflat-volume-equality}, applied to this local map
with normalization factor $Q$, shows that its only extension
is $v/Q$.

Finally, applying \eqref{eq:nonflat-stationarity} to $g$,
and using $s_{g,B}=s$ and homogeneity, gives
\[
 (A-s)(w)=(A-s)(v/Q)=Q^{-1}(A-s)(v),
\]
as required.
\end{proof}

\begin{proposition}\label{prop:nr-discrepancy-gap}
Assume $\dim X>0$.
There are constants $C\geq1$ and $1<\rho<q$ such that
\begin{equation}\label{eq:nr-exponential-gap}
 A((h^r)_*v)\leq C\rho^rA(v)
 \qquad(r\geq0,\ v\in\mathcal V_Z,\ 0<A(v)<\infty).
\end{equation}
Consequently, for $U=X\setminus Z$,
\begin{equation}\label{eq:nr-threshold-gap}
 \lct_{(U,B|_U)}
       \left(\frac{R_r|_U}{q^r-1}\right)
 \geq\frac{q^r-1}{C\rho^r-1}\longrightarrow+\infty.
\end{equation}
In particular, $(X,B+R_r/(q^r-1))$ is klt on $U$
for all sufficiently large $r$.
\end{proposition}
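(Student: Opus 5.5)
The plan is to reduce \eqref{eq:nr-exponential-gap} to a bound on the values $v(R_r)$, and then to derive \eqref{eq:nr-threshold-gap} from it formally. By \eqref{eq:finite-divisorial-discrepancy} applied to $h^r$ and the extension $v$ of $(h^r)_*v$,
\[
 A((h^r)_*v)=A(v)+v(R_r),
\]
so \eqref{eq:nr-exponential-gap} is equivalent to $v(R_r)\leq (C\rho^r-1)A(v)$ on $\mathcal V_Z$. Granting this, for a divisorial $v$ with center meeting $U=X\setminus Z$ we get
\[
 v\Bigl(\frac{R_r}{q^r-1}\Bigr)\leq\frac{C\rho^r-1}{q^r-1}A(v),
\]
which is \eqref{eq:nr-threshold-gap}. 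Since $\rho<q$, the threshold tends to $+\infty$, so for large $r$ it exceeds $1$ strictly. Because $(U,B|_U)$ is klt, this gives kltness of $(X,B+R_r/(q^r-1))$ on $U$.

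For the bound on $v(R_r)$, I first record the crude estimate and then improve it using that $v$ lives off $Z$. Since $R_r\equiv (K_X+B)-(h^r)^*(K_X+B)$ and $h^*H\sim qH$, one has $R_r\cdot H^{n-1}=O(q^r)$. The divisor $R_r$ contains no lc center by \cref{prop:log-lc-center-avoidance}, so \cref{lem:lc-log-degree} gives $v(R_r)\leq C'q^rA(v)$. This only reaches $\rho=q$, and the real content is the strict improvement. In terms of $\Phi=q^{-1}h_*$ the claim reads
\[
 A(\Phi^r v)\leq C(\rho/q)^rA(v),
\]
so I must show that $\Phi$ contracts $A$ at an exponential rate on $\mathcal V_Z$. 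I will split $A=s+(A-s)$. The $s$-part is controlled by \cref{thm:lc-log-tail} together with the identity $s(\Phi v)=q^{-1}s(h_*v)$ and the stationarity $T_{h,B}=P_h([R_{h,B}]+T_{h,B})$.

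For the $(A-s)$-part I plan a compactness-and-contradiction argument. Normalize by $A(v)=1$ and work on compact sets of the form
\[
 \{v:\ A(v)\leq 1,\ c_X(v)\not\subseteq Z\}
\]
localized at finitely many generic points, using \cref{lem:local-klt-tools}. Suppose no $\rho<q$ works. Then there are valuations with $A(\Phi^r v)/A(v)$ decaying no faster than $q^{-r(1-\epsilon)}$. Passing to limits along the orbit, I expect to produce a valuation $w$, centered at a periodic subvariety $C\not\subseteq Z$, for which equality holds in the volume inequality of \cref{lem:nr-periodic-volume}. That lemma then makes $C$ totally invariant for an iterate and gives $(A-s)(\Phi^a w)=q^{-a}(A-s)(w)$, so the defect $A-s$ is exactly scaled by $\Phi^a$. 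Combining this with the analytic klt gap off $Z$ from \cref{lem:analytic-klt-discrepancy-gap}, which gives $s\leq cA$ with $c<1$ near such $C$, should force $A(\Phi^{a} w)=q^{-a}A(w)$ exactly. This would mean $w(R_a)=0$, i.e.\ the center of $w$ avoids the ramification. This is the situation of \cref{prop:lc-valuation-center}, which puts the center inside $Z$, a contradiction.

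The main obstacle is this last step. The difficulty is making the limiting argument uniform: the centers of $v\in\mathcal V_Z$ can approach $Z$, and there $A$ is not comparable to the local data. I would first try to handle this by working on the finitely many periodic strata produced by the volume monotonicity, and by using the uniform tail estimate of \cref{thm:lc-log-tail} to keep the constant $C$ independent of the center.
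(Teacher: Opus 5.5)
Your two ends are fine and agree with the paper: the identity $A((h^r)_*v)=A(v)+v(R_r)$, the coarse bound $A(\Phi^r v)\le MA(v)$ from \cref{lem:lc-log-degree}, and the passage from \eqref{eq:nr-exponential-gap} to \eqref{eq:nr-threshold-gap}. The gap is the middle step, which is the whole content of the proposition.

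First, you give no mechanism that produces a single valuation $w$ with a periodic center and \emph{equality} in \cref{lem:nr-periodic-volume}. Centers move along orbits, limit points of orbits need not have periodic centers, and that lemma is only a one-sided monotonicity, so equality cannot be read off from a limit. Second, even granting such a $w$, your contradiction does not go through. The relation $(A-s)(\Phi^aw)=q^{-a}(A-s)(w)$ together with $s\le cA$ gives only $A(\Phi^aw)\le q^{-a}(1-c)^{-1}A(w)$, not $A(\Phi^aw)=q^{-a}A(w)$. Moreover, $w(R_a)=0$ is not the hypothesis of \cref{prop:lc-valuation-center}, which needs $A(w)=s(w)$. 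For a single valuation, exact scaling of $A-s$ is not contradictory. Your separate treatment of the $s$-part is also unexplained: neither \cref{thm:lc-log-tail} nor the stationarity of $T_{h,B}$ by itself bounds $s(\Phi^rv)$ from above, and no such bound is needed.

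The missing idea is an invariant measure. The paper proves a uniform halving statement: some $k$ satisfies $A(\Phi^kv)\le\tfrac12A(v)$ on $\mathcal V_Z$. Combined with the coarse bound, this gives any $\rho\in(\max\{1,q2^{-1/k}\},q)$.

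If halving fails, choose $v_N$ with $A(v_N)=1$ and $A(\Phi^Nv_N)>\tfrac12$. The coarse bound then traps the whole segment $\Phi^jv_N$, $0\le j\le N$, in $(2M)^{-1}\le A\le M$. This lies inside the compact set $\{v\in\mathcal V_Z\mid A(v)\le M\}$, which is compact by the argument of \cref{lem:local-klt-tools} on a finite affine cover of the klt open set $X\setminus Z$. That disposes of your worry about centers approaching $Z$.

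The Ces\`aro averages of Dirac masses along these segments have a $\Phi$-invariant limit $\mu$. It is nontrivial because $\int v(R_1)\,d\mu\ge(q-1)/(2M)$. Take a center $C$ of minimal dimension in $\Supp\mu$. Poincar\'e recurrence gives $h^a(C)=C$ and a nonzero $\Phi^a$-invariant measure $\nu=\mu|_{U_C}$. Invariance of $\nu$ together with $\vol(v)\le\vol(\Phi^av)$ forces volume equality $\nu$-a.e. The equality case of \cref{lem:nr-periodic-volume} then gives $(A-s)\circ\Phi^a=q^{-a}(A-s)$ $\nu$-a.e.

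Only now is \cref{prop:lc-valuation-center} used, to get $A-s>0$ on $U_C$. Integrating yields $0<\int(A-s)\,d\nu=q^{-a}\int(A-s)\,d\nu<\infty$, the contradiction. The measure does three jobs at once: it supplies periodicity, it forces volume equality, and it turns the exact scaling of the defect $A-s$ into a contradiction.
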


\begin{proof}
By \eqref{eq:finite-divisorial-discrepancy},
\begin{equation}\label{eq:nr-discrepancy-identity}
 A((h^r)_*v)=A(v)+v(R_r).
\end{equation}
For a very ample polarization $H$, the projection formula gives
\[
 R_r\cdot H^{\dim X-1}
 =(q^r-1)\bigl(-(K_X+B)\cdot H^{\dim X-1}\bigr).
\]
The divisor $R_r$ contains no lc center by
\cref{prop:log-lc-center-avoidance}.
Hence \eqref{eq:lc-log-degree} gives $M\geq1$ such that
\begin{equation}\label{eq:nr-coarse-bound}
 A(\Phi^r(v))=q^{-r}\bigl(A(v)+v(R_r)\bigr)
 \leq MA(v)\qquad(r\geq0).
\end{equation}

We claim that there is $k\geq1$ such that
\begin{equation}\label{eq:nr-halving}
 A(\Phi^k(v))\leq\tfrac12 A(v)
 \qquad(v\in\mathcal V_Z,\ A(v)<\infty).
\end{equation}
Otherwise, by homogeneity, for every $N\geq1$ there is
$v_N\in\mathcal V_Z$ such that
\[
 A(v_N)=1,\qquad A(\Phi^N(v_N))>\tfrac12.
\]
Equation \eqref{eq:nr-coarse-bound} gives
\begin{equation}\label{eq:nr-orbit-bounds}
 (2M)^{-1}\leq A(\Phi^j(v_N))\leq M
 \qquad(0\leq j\leq N).
\end{equation}
Applying the compactness argument in the proof of \cref{lem:local-klt-tools} on a finite affine cover of
$X\setminus Z$, we see that
\[
 \mathscr K_M:=\{v\in\mathcal V_Z\mid A(v)\leq M\}
\]
is compact Hausdorff.
By \eqref{eq:nr-coarse-bound} and continuity of $\Phi$,
\[
 K:=\overline{\{\Phi^j(v_N)\mid N\geq1,\ j\geq0\}}
 \subseteq\mathscr K_M
\]
is compact and satisfies $\Phi(K)\subseteq K$.
By the argument in the proof of \cref{prop:invariant-average}, the probability measures
\[
 \mu_N:=\frac1N\sum_{j=0}^{N-1}\delta_{\Phi^j(v_N)}
\]
have a weak-* convergent subnet whose limit $\mu$ is a $\Phi$-invariant regular Borel probability measure on $K$.
Moreover, \eqref{eq:nr-discrepancy-identity} and
\eqref{eq:nr-orbit-bounds} give
\[
 \begin{aligned}
 \int v(R_1)\,d\mu_N(v)
 &=\frac{q-1}{N}\sum_{j=0}^{N-1}A(\Phi^j(v_N))
   +\frac qN\bigl(A(\Phi^N(v_N))-1\bigr)\\
 &\geq\frac{q-1}{2M}-\frac qN.
 \end{aligned}
\]
Since $v\mapsto v(R_1)$ is continuous,
\[
 \int v(R_1)\,d\mu(v)\geq\frac{q-1}{2M}>0,
\]
so $\mu\ne\delta_0$.

For an irreducible closed subvariety $C\subsetneq X$, put
\[
 U_C:=\{v\in\Supp\mu\mid v(\mathcal I_C)>0\}
     =\{v\in\Supp\mu\mid c_X(v)\subseteq C\}.
\]
These sets are open in $\Supp\mu$.
Since $\mu\ne\delta_0$, choose $C$ of minimal dimension among
the centers of nontrivial valuations in $\Supp\mu$.
Then $U_C$ is nonempty, so $\mu(U_C)>0$, and minimality gives
\[
 c_X(v)=C\qquad(v\in U_C).
\]

By Poincar\'e recurrence, there is $a\geq1$ such that
$\mu(U_C\cap\Phi^{-a}(U_C))>0$.
For any $v\in U_C\cap\Phi^{-a}(U_C)$, we have
\[
 C=c_X(\Phi^a(v))=h^a(c_X(v))=h^a(C).
\]
Since $\Phi^a(U_C)\subseteq U_C$ and $\mu$ is
$\Phi^a$-invariant, the restriction $\nu:=\mu|_{U_C}$
is a nonzero finite $\Phi^a$-invariant measure.
As $U_C\subseteq K\subseteq\mathcal V_Z$, we have
$C\nsubseteq Z$, and hence $(X,B)$ is klt at $\eta_C$.

Put $V(v):=\vol_{\mathcal O_{X,\eta_C}}(v)$ for $v\in U_C$.
As in the proof of \cref{prop:invariant-average}, $V$ is measurable.
By \cref{lem:nr-periodic-volume},
\[
 0<V(v)\leq V(\Phi^a(v))<\infty
 \qquad(v\in U_C).
\]
Invariance of $\nu$, applied to $V/(1+V)$, forces equality
$\nu$-almost everywhere.
The equality case of \cref{lem:nr-periodic-volume} therefore gives
\[
 (A-s)\circ\Phi^a=q^{-a}(A-s)
 \qquad\nu\text{-almost everywhere}.
\]
The functions $A$ and $s(v)=\lim_N v(S_{h,B,N})$ are Borel
measurable.
Since $C\nsubseteq Z$, \cref{prop:lc-valuation-center} gives
\[
 0<A(v)-s(v)\leq A(v)\leq M
 \qquad(v\in U_C).
\]
As $\nu$ is nonzero, finite, and $\Phi^a$-invariant,
\[
 0<\int(A-s)\,d\nu
   =\int(A-s)\circ\Phi^a\,d\nu
   =q^{-a}\int(A-s)\,d\nu<\infty,
\]
a contradiction. This proves \eqref{eq:nr-halving}.

Write $r=mk+j$ with $0\leq j<k$.
By \eqref{eq:nr-coarse-bound} and \eqref{eq:nr-halving},
\[
 A(\Phi^r(v))\leq M2^{-m}A(v)
 \leq 2M\,2^{-r/k}A(v).
\]
Choose $\max\{1,q2^{-1/k}\}<\rho<q$ and put $C:=2M$.
Then \eqref{eq:nr-exponential-gap} follows from
$A((h^r)_*v)=q^rA(\Phi^r(v))$.
For every divisorial valuation over $U$,
\[
 v(R_r)\leq(C\rho^r-1)A(v).
\]
The valuative threshold formula gives \eqref{eq:nr-threshold-gap}
for $r\geq1$, with the threshold of the zero divisor understood
as $+\infty$.
\end{proof}

\subsection{Birational compatibility, adjunction, and induction}

\begin{lemma}\label{lem:nr-compatibility}
\begin{enumerate}
\item\label{it:nr-model}
There exists an absorbing model,
\[
 \pi\colon(W,B_W,g)\longrightarrow(X,B,h^a),
\]
such that, for every $r\geq1$,
\begin{equation}\label{eq:nr-model-crepancy}
 K_W+B_W+\frac{R_{g^r,B_W}}{q^{ar}-1}
 =\pi^*\left(K_X+B+\frac{R_{h^{ar},B}}{q^{ar}-1}\right).
\end{equation}
\item\label{it:nr-adjunction}
Let $C$ be a totally invariant lc center of $(X,B)$, with adjoint triple $(C^\nu,B_C,h_C)$ from \cref{prop:ordinary-corrected-adjunction}.
Then
\begin{equation}\label{eq:nr-adjunction-identity}
 \left.\frac{R_{h^r,B}}{q^r-1}\right|_{C^\nu}
 =\frac{R_{h_C^r,B_C}}{q^r-1}.
\end{equation}
If $(C^\nu,B_C+R_{h_C^r,B_C}/(q^r-1))$ is lc, then
$(X,B+R_{h^r,B}/(q^r-1))$ is lc near $C$.
\end{enumerate}
\end{lemma}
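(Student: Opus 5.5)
For part~(\ref{it:nr-model}), we take the absorbing model of \cref{thm:all-dimensional-lc-model}, with $a$ the iterate chosen there, and reduce \eqref{eq:nr-model-crepancy} to \cref{lem:model-transformation} applied one extraction at a time. By \cref{rem:polarized-models}, every eigen-divisor extracted in the inductive construction (via \cref{prop:absorb-new-component}) satisfies $g^*F=q^{a}F$ for the relevant iterate. So at each step
\[
 F:=K_Y+B_Y-\rho^*(K_X+B),\qquad f^*F=q^{a}F,
\]
with the exponent matching the iterate used. For the map $f^r$ we then have $(f^r)^*F=q^{ar}F$. The ramification formula \eqref{eq:model-ramification}, applied to $f^r$, gives
\[
 R_{f^r,B_Y}=\rho^*R_{h^{ar},B}-(q^{ar}-1)F.
\]
Dividing by $q^{ar}-1$ and adding $K_Y+B_Y=\rho^*(K_X+B)+F$, the $F$-terms cancel, which gives \eqref{eq:nr-model-crepancy} for $\rho$. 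Composing the crepant identities along the tower of extractions $\pi=\rho\circ\tau$ then gives the identity for $\pi$, as in the proof of \cref{thm:all-dimensional-lc-model}. Absorption for $g^r$ follows from $T_{g^r,B_W}=T_{g,B_W}$.

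For part~(\ref{it:nr-adjunction}), we first note that $R_{h^r,B}$ contains no lc center by \cref{prop:log-lc-center-avoidance}, so its restriction to $C^\nu$ is defined. The finite adjunction formula \eqref{eq:ordinary-center-ramification}, applied to the iterate $h^r$, with $(h^r)_C=h_C^r$ and the same discriminant $B_C$ (this $B_C$ is independent of the endomorphism), gives $R_{h_C^r,B_C}=R_{h^r,B}|_{C^\nu}$. Dividing by $q^r-1$ yields \eqref{eq:nr-adjunction-identity}. Alternatively, one can use the identity
\[
 R_{h^r,B}=\sum_{i<r}(h^i)^*R_{h,B},
\]
together with compatibility of pullback with restriction.

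For the lc statement, we copy the end of the proof of \cref{prop:ordinary-corrected-adjunction} with $D:=R_{h^r,B}/(q^r-1)$ in place of $D_N$. This $D$ is effective and $\Q$-Cartier, and its support does not contain $C$. Hence $C$ stays an lc center of $(X,B+D)$, and the discriminant b-divisor becomes $\mathbf B+p^*(D|_{C^\nu})$, which is crepant to $K_{C^\nu}+B_C+D|_{C^\nu}$ by \eqref{eq:adjunction-discriminant-crepant}. If the restricted pair is lc, all coefficients are at most one, and \cite{FHAdjunction}*{Theorem 1.2(iv)} gives that $(X,B+D)$ is lc near $C$.

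The main obstacle is part~(\ref{it:nr-model}): we must check that the eigenvalue of every extracted divisor is exactly the power of $q$ matching the iterate, and that this exponent bookkeeping survives the repeated replacements of $h$ by iterates inside the induction. This rests on \cref{rem:polarized-models}, which forces $e=q^k$ for any $k$-th iterate lift.
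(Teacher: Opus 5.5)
Your proposal is correct and follows essentially the paper's proof. The paper also takes the absorbing model of \cref{thm:all-dimensional-lc-model}, uses \eqref{eq:absorption-step-transform} and \cref{rem:polarized-models} to get $g^*F=q^aF$, and applies \eqref{eq:model-ramification} to the iterates; it then iterates \eqref{eq:ordinary-center-ramification} and reruns the end of the proof of \cref{prop:ordinary-corrected-adjunction} with $D_N$ replaced by $R_{h^r,B}/(q^r-1)$. The only difference is that you check the crepancy one extraction at a time and compose along the tower. The paper instead asserts $g^*F=q^aF$ directly for the total divisor $F=K_W+B_W-\pi^*(K_X+B)$ of the composite model, which involves the same bookkeeping.
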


\begin{proof}
By \cref{thm:all-dimensional-lc-model}, \eqref{eq:absorption-step-transform}, and
\cref{rem:polarized-models}, there exists an absorbing model satisfying
\[
 F:=K_W+B_W-\pi^*(K_X+B),\qquad g^*F=q^aF.
\]
Equation \eqref{eq:model-ramification}, applied to the iterates,
gives
\[
 R_{g^r,B_W}
 =\pi^*R_{h^{ar},B}+F-(g^r)^*F
 =\pi^*R_{h^{ar},B}-(q^{ar}-1)F.
\]
This proves \eqref{eq:nr-model-crepancy}.

Iterating \eqref{eq:ordinary-center-ramification} gives \eqref{eq:nr-adjunction-identity}.
The final assertion follows by the argument in the proof of
\cref{prop:ordinary-corrected-adjunction}, with
$D_N$ replaced by $R_{h^r,B}/(q^r-1)$.
\end{proof}

\begin{proof}[Proof of \cref{thm:nr-lc}]
By \eqref{eq:nr-discrepancy-identity},
\begin{equation}\label{eq:nr-lc-criterion}
 \left(X,B+\frac{R_{h^r,B}}{q^r-1}\right)\text{ is lc}
 \quad\Longleftrightarrow\quad
 A((h^r)_*v)\leq q^rA(v)
 \quad\text{for every divisorial }v.
\end{equation}
Since finite pushforward preserves divisorial valuations up to
scaling, iterating this inequality shows that lc for one exponent
implies lc for all its positive multiples;
cf.\ \cite{LuoMeng26}*{Proof of Theorem~1.3}.

We follow the induction on dimension as in the proof of
\cref{thm:all-dimensional-lc-correction}.
Replace \cref{lem:uniform-nlc-confinement} by
\cref{prop:nr-discrepancy-gap}, and replace the crepancy and
inversion-of-adjunction statements
\eqref{eq:correction-crepancy} and
\cref{prop:ordinary-corrected-adjunction} by
\cref{lem:nr-compatibility}.
The induced maps on the normalizations of the lc centers
are polarized by \cref{rem:polarized-models}.
Taking a sufficiently large common multiple of the exponents
provided by induction proves the assertion.
\end{proof}

\begin{proof}[Proof of \cref{thm:intro-normalized-ramification}]
Taking $B=0$ in \cref{thm:nr-lc} proves
\cref{thm:intro-normalized-ramification}.
\end{proof}

\bibliographystyle{skalpha}
\bibliography{int-amplified-cy}
\end{document}